\newcommand{\commentout}[1]{}
\newcommand{\f}[1]{{\mathfrak{#1}}}
\newcommand{\Span}{\mathrm{span}}
\newcommand{\rank}{\mathrm{Rank\, }}
\renewcommand{\det}{\mathrm{det}}
\theoremstyle{plain}
\numberwithin{equation}{section}
\newtheorem{theorem}{Theorem}[section]
\newtheorem{corollary}[theorem]{Corollary}
\newtheorem{lemma}[theorem]{Lemma}
\newtheorem{proposition}[theorem]{Proposition}
\theoremstyle{definition}
\newtheorem{remark}[theorem]{Remark}
\newtheorem{example}[theorem]{Example}
\def\text#1{\;\;\;\;{\rm \hbox{#1}}\;\;\;\;}
\def\qquad{\quad\quad}
\def\msy#1{{\mathbb #1}}
\def\C{{\msy C}}
\def\N{{\msy N}}
\def\Z{{\msy Z}}
\def\K{{\msy K}}
\def\T{{\msy T}}
\def\R{\mathbb{R}}
\def\ga{\alpha}
\def\e{\epsilon}
\def\fe{{\mathfrak e}}
\def\ff{{\mathfrak f}}
\def\frak#1{\mathfrak #1}
\def\fa{{\mathfrak a}}
\def\fc{{\mathfrak c}}
\def\fg{{\mathfrak g}}
\def\fh{{\mathfrak h}}
\def\fk{{\mathfrak k}}
\def\fl{{\mathfrak l}}
\def\fm{{\mathfrak m}}
\def\fn{{\mathfrak n}}
\def\fp{{\mathfrak p}}
\def\fq{{\mathfrak q}}
\def\fs{{\mathfrak s}}
\def\ft{{\mathfrak t}}
\def\fu{{\mathfrak u}}
\def\fz{{\mathfrak z}}
\newcommand{\bc}{\mathbf{c}}
\newcommand{\bt}{\mathbf{t}}
\newcommand{\bC}{\mathbf{C}}
\def\to{\rightarrow}
\def\Im{{\rm Im}\,}
\def\Ad{{\rm Ad}}
\def\ad{{\rm ad}}
\def\pr{{\rm pr}}
\newcommand{\id}{\mathrm{id}}
\def\GL{\mathrm{GL}}
\newcommand{\SL}{\mathrm{SL}}
\newcommand{\so}{\mathfrak{so}}
\newcommand{\su}{\mathfrak{su}}
\newcommand{\SU}{\mathrm{SU}}
\newcommand{\U}{\mathrm{U}}
\renewcommand{\sp}{\mathfrak{sp}}
\newcommand{\gl}{\mathfrak{gl}}
\newcommand{\lsl}{\mathfrak{sl}}
\renewcommand{\sl}{\mathfrak{sl}}
\def\cD{{\mathcal D}}
\def\cO{{\mathcal O}}
\def\cP{{\mathcal P}}
\theoremstyle{definition}
\newtheorem*{Basic Example}{Basic Example}
\def\rmS{{\rm S}}
\def\rmU{{\rm U}}
\def\SU{\rmS\rmU}
\def\sideremark#1{\ifvmode\leavevmode\fi\vadjust{\vbox to0pt{\vss
 \hbox to 0pt{\hskip\hsize\hskip1em%
 \vbox{\hsize2cm\tiny\raggedright\pretolerance10000
 \noindent #1\hfill}\hss}\vbox to8pt{\vfil}\vss}}}
\newcommand{\chC}{{\mathfrak{c}^{\C}_{h}}}
\newcommand{\gh}{{\mathfrak{g}_h}}
\newcommand{\ghC}{{\mathfrak{g}^{\C}_{h}}}
\newcommand{\ghCnu}{{\mathfrak{g}_{h}^{\C [\nu ]}}}
\newcommand{\ghnu}{{\mathfrak{g}_{h}^{[\nu ]}}}
\newcommand{\ghone}{\mathfrak{g}_h^{(1)}}
\newcommand{\gone}{\mathfrak{g}^{(1)}}
\newcommand{\ghCZ}{{\mathfrak{g}_{h}^{\C [0]}}}
\newcommand{\ghCOne}{{\mathfrak{g}_{h}^{\C [1 ]}}}
\newcommand{\ghCTwo}{{\mathfrak{g}_{h}^{\C [2]}}}
\newcommand{\ghZ}{{\mathfrak{g}_{h}^{[0]}}}
\newcommand{\gZ}{{\mathfrak{g}_{h}^{[0]}}}
\newcommand{\ghOne}{{\mathfrak{g}_{h}^{[1]}}}
\newcommand{\ghTwo}{{\mathfrak{g}_{h}^{[2]}}}
\newcommand{\ghEven}{{\mathfrak{g}_{h}^{[{\rm even}]}}}
\newcommand{\kh}{\mathfrak{k}_h}
\newcommand{\khZ}{\mathfrak{k}_h^{[0]}}
\newcommand{\khTwo}{\mathfrak{k}_h^{[2]}}
\newcommand{\khC}{{\mathfrak{k}_{h}^{\C}}} 
\newcommand{\phKa}{\mathfrak{p}_{h\kappa}^*}
\newcommand{\php}{{\fp_{h+}}}
\newcommand{\phm}{{\fp_{h-}}}
\newcommand{\phPm}{\mathfrak{p}_{h\pm}}
\newcommand{\PhPm}{\mathbb{P}_{h\pm}}
\newcommand{\phPOne}{\mathfrak{p}_{h+}^{[1]}}
\newcommand{\phPTwo}{\mathfrak{p}_{h+ }^{[2]}}
\newcommand{\phPmZ}{\mathfrak{p}_{h\pm}^{[0]}}
\newcommand{\phPmOne}{\mathfrak{p}_{h\pm}^{[1]}}
\newcommand{\phPmTwo}{\mathfrak{p}_{h\pm}^{[2]}}
\newcommand{\phone}{\mathfrak{p}^{(1)}_h}
\newcommand{\phtwo}{\mathfrak{p}^{(2)}_h}
\newcommand{\phtwoC}{\mathfrak{p}^{(2)\C}_h}
\newcommand{\phoneP}{\mathfrak{p}^{(1)}_{h+}}
\newcommand{\phoneM}{\mathfrak{p}^{(1)}_{h-}}
\newcommand{\phonePM}{\mathfrak{p}^{(1)}_{h\pm}}
\newcommand{\nP}{\mathfrak{n}_+}
\newcommand{\nM}{\mathfrak{n}_-}
\newcommand{\ph}{{\fp_h}}
\newcommand{\ah}{{\fa_h}}
\renewcommand{\th}{{\mathfrak{t}_h}}
\newcommand{\phC}{\fp_{h}^{\C} }
\newcommand{\Gh}{{G_h}}
\newcommand{\GhC}{\mathbb{G}_{h}}
\newcommand{\GC}{\mathbb{G}}
\newcommand{\Kh}{K_h}
\newcommand{\KhC}{{\mathbb{K}_{h}}}
\newcommand{\KC}{{\mathbb{K}}}
\newcommand{\Php}{\mathbb{P}_{h+}}
\newcommand{\Phm}{\mathbb{P}_{h-}}
\newcommand{\phpm}{{\mathfrak{p}_{h\pm}}}
\newcommand{\Dh}{\cD_h}
\newcommand{\NhC}{{\mathbb{N}_{h}}}
\newcommand{\AhC}{{\mathbb{A}_{h}}}
\newcommand{\DP}{{\cD_{h+}}}
\newcommand{\DPt}[1]{\cD_{h+}^{#1}}
\newcommand{\Oh}{\Omega_h}
\newcommand{\ch}{{\mathfrak{c}_h}}
\newcommand{\zh}{{\mathfrak{z}_{\kh}}}
\newcommand{\Sh}{{\Sigma_h}}
\newcommand{\ahp}{{\mathfrak{a}_h^+}}
\newcommand{\PhIe}{{Q_{h}(I,\epsilon )}}
\newcommand{\PhIeC}{{\mathbb{Q}_{h}(I,\epsilon)}}
\newcommand{\qk}{\mathfrak{q}_{h\kappa}}
\newcommand{\qRk}{\mathfrak{q}_\kappa}
\newcommand{\ghbKTwo}{\mathfrak{g}_{h\kappa}^{(2)}}
\newcommand{\ghKappa}{\mathfrak{g}_{h\kappa}}
\newcommand{\ghKa}{\mathfrak{g}_{h\kappa}^*}
\newcommand{\khKa}{\mathfrak{k}_{h\kappa}^*}
\newcommand{\khKaC}{\mathfrak{k}_{h\kappa}^{*\C}}
\newcommand{\fbh}{\mathfrak{b}_h}
\newcommand{\fbhC}{\mathfrak{b}_h^{\mathbb{C}}}
\newcommand{\nhOne}{\mathfrak{n}_{h\kappa}^1}
\newcommand{\nhOneC}{\mathfrak{n}_{h\kappa}^{1\C}}
\newcommand{\nhTwo}{\mathfrak{n}_{h\kappa}^2}
\newcommand{\nhTwoC}{\mathfrak{n}_{h\kappa}^{2\C}}
\newcommand{\nh}{\mathfrak{n}_{h\kappa}}
\newcommand{\mhZero}{\mathfrak{m}_{h\kappa}^0}
\newcommand{\mhone}{\mathfrak{m}_{h\kappa}^{(1)}}
\newcommand{\mhtwo}{\mathfrak{m}_{h\kappa}^{(2)}}
\newcommand{\mhtwoC}{\mathfrak{m}_{h\kappa}^{(2)\C}}
\newcommand{\Qkappa}{Q_{h\kappa}}
\newcommand{\Nkappa}{N_{h\kappa}}
\newcommand{\MkappaZ}{M_{h\kappa}^0}
\newcommand{\MkappaP}{M_{h\kappa}^0}
\newcommand{\nOne}{\mathfrak{n}_{\kappa}^1}
\newcommand{\nTwo}{\mathfrak{n}_{\kappa}^2}
\newcommand{\mZero}{\mathfrak{m}_{\kappa}^0}
\newcommand{\mtwo}{\mathfrak{m}_{\kappa}^{(2)}}
\newcommand{\Sbh}{\Sigma (\ghC,\mathfrak{b}_h^\C)}
\newcommand{\Sc}{\Sigma_c}
\newcommand{\Scc}{\Sigma_{cc}}
\newcommand{\Scn}{\Sigma_{cn}}
\newcommand{\SccP}{\Sigma_{cc}^+}
\newcommand{\ScnP}{\Sigma_{cn}^+}
\newcommand{\Deh}{\Delta_h}
\newcommand{\Dhn}{\Delta_{hn}}
\newcommand{\Dhc}{\Delta_{hc}}
\newcommand{\tDh}{\tau_{\Dh}}
\newcommand{\thh}{\theta_h}
\newcommand{\dthh}{\dot \theta_h}
\newcommand{\dtau}{\dot\tau}
\newcommand{\dsh}{\dot\sigma_h}
\newcommand{\kpr}{\dot\kappa^\prime_{I,\epsilon}}
\newcommand{\dt}{\dot\tau}
\newcommand{\dtheta}{\dot\theta}
\newcommand{\Ght}{G_{h0}^\tau}
\newcommand{\ght}{\mathfrak{g}_{h}^{\dot\tau}}
\newcommand{\ws}{\widetilde{\sigma}}
\newcommand{\wt}[1]{\widetilde{#1}}
\newcommand{\wXi}{\widetilde{\Xi}}
\begin{document}

\title{Extensions of real bounded symmetric domains}
\author{Gestur {\'O}lafsson}
\address{Department of Mathematics, Louisiana State University, Baton Rouge, LA 70803}
\email{olafsson@math.lsu.edu}
\author{Robert J. Stanton}
\address{1937 Beverly Rd, Columbus, OH 43221}
\email{stanton.2@osu.edu}

\begin{abstract}
For a real bounded symmetric domain, $G/K$, we construct various natural enlargements to which several aspects of 
harmonic analysis on $G/K$ and $G$ have extensions. Our starting point is the realization of $G/K$ as a totally
real submanifold in a bounded domain $G_h/K_h$. We describe the boundary orbits and relate them to the
boundary   orbits of $G_h/K_h$. We relate the crown and the split-holomorphic crown of $G/K$ to the
crown $\Xi_h$ of $G_h/K_h$. We identify an extension of a representation of $K$ to
a larger group $L_c$ and use that to extend sections of vector bundles over the Borel compactification of $G/K$ to its closure. Also, we show there is an analytic extension of $K$-finite matrix coefficients of $G$ to a specific Matsuki cycle space.
\end{abstract}
\date{}
\keywords{
Structure of semisimple symmetric spaces, bounded domains, duality of symmetric spaces, extension of 
representations, crown}
\subjclass[2000]{Primary 32M15,  22E46, Secondary: 22E50, 53C35}
\maketitle
\section*{Introduction}
\setcounter{section}{0}

\noindent
\'Elie Cartan was the first to prove the existence of a compact real form of a complex semisimple Lie algebra. 
This can be considered the introduction of duality into the theory of Riemannian symmetric spaces. Subsequently, even
in the more general context of symmetric spaces, various people have identified several types of
duality. In this paper we explore some of the consequences of a type of duality
involving compactly causal spaces and noncompactly causal spaces or, said
geometrically, involving Hermitian and split-Hermitian
spaces\footnote{split-Hermitian
and split-complex are called para-Hermitian in \cite{K85,K87} and elsewhere.}
We describe here, in heuristic form, various results to which one is lead
(to conjecture) from this viewpoint. Some of this is, without a doubt, known to experts. Thus, 
as we use standard terminology, we relegate precise definitions and careful notation 
to subsequent sections, for now we take a more casual approach.

Let $\Gh$ be a semisimple Hermitian Lie group of noncompact type with maximal compact subgroup $\Kh$, i.e. $\Gh/\Kh$ is a Hermitian Riemannian symmetric space. Let $\tau$ be an involution commuting with $\theta$ and such that, $G$, the fixed point set of $\tau$ has Riemannian symmetric space, $G/K$, a real form of $\Gh/\Kh$. Denote by $\gh$ the Lie algebra of $\Gh$ and by $\ghC$ its complexification. Of course $\tau$ induces an involution $\dot\tau$ on $\gh$. We let $\dot\tau$ also denote the complex linear extension of $\dot\tau$ to $\ghC$; while we let $\dot\eta$ be its complex ${\it conjugate-linear}$ extension to $\ghC$. The associated holomorphic (resp. anti-holomorphic) involutions on $\GhC$ are denoted $\tau$ (resp. $\eta$). Then $\Gh$ (resp. $\Kh$) is the fixed point set of $\tau$ in $\GhC$ (resp. $\KhC$), and let $G_c$ (resp. $L_c$) be the fixed point set of $\eta$  in $\GhC$ (resp. $\KhC$). Then $G_c$ is a semisimple split-Hermitian Lie group, i.e. $G_c/L_c$ is a split-complex pseudo-Riemannian symmetric space with an integrable bi-Lagrangian structure. Now $\tau$ restricts to $G_c$ giving an involution ${\tau}_c$ having fixed point set $G$ such that $G/K$ is a split-real form of $G_c/L_c$. One could repeat the above with $G_c$ and $\dot{\tau}_c$ the complex linear extension to ${\mathfrak g}_c^{\mathbb C}$. Notice that ${\mathfrak g}_c^{\mathbb C}\cong \ghC$ but not equal. Various properties of  $\{\Gh,G,\eta\}$ and $\{G_c,G,{\tau}_c\}$ are the main focus of this paper. Detailed discussion about $G_c/L_c$ and its compactification can
be found in the work of Kaneyuki \cite{K85,K87}.

We begin with several decompositions involving $\{\Gh,G,\eta\}$. 

From Harish-Chandra we have the open subset
\begin{equation} \tag{a}\Gh \mathbb {B}_h =\textrm {exp }\DP\, \KhC\,\Phm \subset \GhC,
\end{equation}
 then applying $\eta$ we should obtain  similarly the open containment
 \begin{equation}\tag{a'} GP_{min} = \textrm{exp}\,\cD_+ \,L_c\,N_{c-}\subset G_c.
 \end{equation}

  From \cite{KS} we have the complex open neighborhood of $\Gh$
 \begin{equation} \tag{b} \Gh\, \textrm{exp} \, i\Omega_h \,\KhC \subset \GhC,
 \end{equation}
 then with $\eta$ we should obtain an open neighborhood of $G$ 
 \begin{equation} \tag{b'} G\, \textrm{exp}\, i\Omega_h^{\dot\eta}\,  L_c \subset G_c.
 \end{equation}

 Also from \cite{KS} we have the holomorphic extension of the Iwasawa decomposition
 \begin{equation}\tag{c} \textrm{exp}\,i\Omega_h\, \Gh \subset \KhC\AhC\NhC,
 \end{equation}
 so that with $\eta$ we get
 \begin{equation}\tag{c'} \textrm{exp}\,i\Omega_h^{\dot\eta}\,G \subset L_c\AhC^{\eta}\NhC^{\eta}\subset G_c.
 \end{equation}
 
 The Akhiezer-Gindikhin crown of $\Gh$ is an open subset
 \begin{equation}\tag{d} \Xi_h : = \{\Gh \textrm{exp}\,i\Omega_h \KhC\}/\KhC\subset \GhC/\KhC,
 \end{equation}
 so with $\eta$ we should get for the \lq \,real\,\rq crown of $G$
 \begin{equation}\tag{d'} \Xi \cong \{G\, \textrm{exp}\, i\Omega_h^{\dot\eta}\,  L_c\}/L_c\subset G_c/L_c.
 \end{equation}

Now in \cite {KSII} and for a real form $G/K$, the existence of an open subset $\Xi_0\subset \Xi$ is shown such that
\begin{equation}\tag{e} \Xi_0 \,\textrm{ is biholomorphic to } \Gh/\Kh.
\end{equation}
A straightforward variation of that argument shows that 
 \begin{equation} \tag{e'}  \Xi_0 \, \textrm{ is split--biholomorphic to an open subset of } G_c/L_c.
 \end{equation}

 From various sources we have the crown of $\Gh$ is biholomorphic to an open subset of flag manifolds
 \begin{equation} \tag{f} \Xi_h  \subset \GhC/{\KhC\Phm} \times \GhC/\KhC\Php,
 \end{equation}
 so that applying $\eta$ we have for the crown of $G$ an open subset
 \begin{equation}\tag{f'} \Xi\subset G_c/L_cN_{c-} \times G_c/L_cN_{c+}.
 \end{equation}

In the various parts of the text we will identify several of these fixed point sets for $\eta$. The intent of this summary is to motivate several results. Now we give a more careful outline of the paper. The bounded Hermitian domain $\Gh/\Kh$ has a boundary that is a finite union of $\Gh$ orbits whose geometric structure is described in considerable detail in \cite {Sa}. We summarize this in \S1 so that in \S2 and \S3 using $\eta$ we may give a similar description of the boundary $G$ orbits for $G/K$. This geometric description was crucial in \cite {MSIII} to describe the decomposition of a natural holomorphic extension of homogeneous vector bundles to the boundary along these $\Gh$ orbits.  For the $\R$-form $G/K$ an extension of homogeneous vector bundles over $G/K$ to the boundary will be needed and a geometric description of their decomposition on the orbits. An extension of homogeneous vector bundles over $G/K$ is the content of \S4, \S5 and \S6. In \S7 we give a proof of the open neighborhood (c') using both $\eta$ and the main result in \cite{Ma}. Using this, the holomorphic extension of the Iwasawa decomposition (c) from \cite{KS}, together with $\eta$ we obtain then in  \S7  an analytic extension of $K$-finite matrix coefficients of irreducible representations of $G$ to $D = L_c \, \textrm{exp}\, i\Omega_h^{\dot\eta}\, G$.
 
\section{Bounded Symmetric Domains: Complex Case}\label{Section1}
\noindent 
We recall some facts about bounded symmetric domains in $\C^n$. This goes back
to \cite{KW65a,KW65b,W69,W72}, but for structure theory our reference is \cite{Sa}, 
although we shall alter his presentation to suit our needs; for analysis see 
\cite {KSII}, \cite {MSIII}.

\subsection{Notation}\hfill

Let $\Dh$ be a bounded symmetric domain in $\C^n$. The identity component of the group of holomorphic  automorphisms of $\Dh$ is a connected noncompact semisimple Lie group that we shall denote by $\Gh$ \footnote{The subscript $h$ will be used for objects related to the Hermitian symmetric space.}. The group $\Gh$ acts transitively, and the isotropy at any base point is a maximal compact subgroup of $\Gh$. We fix one and denote it by $\Kh$, so that $\Dh\simeq\Gh/\Kh$.  The Lie algebra of $\Gh$ (resp. $\Kh$) is denoted by $\gh$ (resp. $\kh$), while the superscript ${}^\C$ denotes a complexification of the indicated Lie algebra.  For a cleaner presentation we assume that $\Gh$ is simple, and that it is contained in a simply connected complex Lie group $\GhC$ whose Lie algebra is $\ghC$. The analytic subgroup of $\GhC$ corresponding to $\khC$ is denoted $\KhC$. The reason for requiring $\GhC$ to be simply connected comes from the following result, see \cite[Thm. 8.2, p. 320 and p. 351]{He78}. 
\begin{proposition}\label{le:conn} Let $G$ be a connected simply connected semisimple Lie group with finite center and $\sigma :G\to G$ an involutive homomorphism. Then
$G^\sigma :=\{a\in G\mid \sigma (a)=a\}$ is connected.
\end{proposition}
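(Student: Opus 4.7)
The plan is to reduce, via a $\sigma$-compatible Cartan decomposition, to the assertion that on a compact, connected, simply connected Lie group the fixed points of an involution form a connected subgroup. First, pick any Cartan involution $\theta$ of $G$. Since $\sigma\theta\sigma^{-1}$ is again a Cartan involution and any two Cartan involutions of $G$ are conjugate by an inner automorphism, a standard averaging argument produces a Cartan involution (still denoted $\theta$) that commutes with $\sigma$. This is the technical but routine input; the rest of the reduction is then automatic.

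With this choice, let $K = G^\theta$ and $\mathfrak{g}=\mathfrak{k}\oplus\mathfrak{p}$ be the Cartan decomposition. The polar map $K\times\mathfrak{p}\to G$, $(k,X)\mapsto k\exp X$, is a diffeomorphism; since $\mathfrak{p}$ is contractible, the simple connectedness of $G$ passes to $K$, which is therefore a compact, connected, simply connected Lie group. Because $\sigma$ commutes with $\theta$, both $K$ and $\mathfrak{p}$ are $\sigma$-stable. For $a\in G^\sigma$, write uniquely $a = k\exp X$; applying $\sigma$ and invoking uniqueness of the Cartan decomposition yields $\sigma(k)=k$ and $\sigma(X)=X$, so $k\in K^\sigma$ and $X\in\mathfrak{p}^\sigma$. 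The straight-line path $t\mapsto k\exp(tX)$ remains in $G^\sigma$ and joins $a$ to $k$. Thus $G^\sigma$ is connected if and only if $K^\sigma$ is.

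What remains---and constitutes the main obstacle---is the compact case: for a compact, connected, simply connected Lie group $K$ with involution $\sigma$, the fixed-point set $K^\sigma$ is connected. Given $x\in K^\sigma$, one produces a $\sigma$-stable maximal torus $T$ of $K$ containing $x$ by taking a maximal torus of the $\sigma$-stable identity component $Z_K(x)^0$. Writing $\mathfrak{t}=\mathfrak{t}^+\oplus\mathfrak{t}^-$ for the $(\pm 1)$-eigenspace decomposition of $\sigma$ on $\mathrm{Lie}(T)$ and $x=\exp H$, the condition $\sigma(x)=x$ puts $(\sigma-1)H$ in the unit lattice $L=\ker(\exp|_T)$. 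The decisive step is to modify $H$ by an element of $L$ so as to obtain $H'\in\mathfrak{t}^+$ with $\exp H' = x$; then $t\mapsto \exp(tH')$ exhibits $x$ in $(K^\sigma)^0$. Simple connectedness enters here in an essential way: it forces $L$ to be the coroot lattice, and the standard structural lemma on root systems equipped with an involution gives $(\sigma-1)\mathfrak{t}\cap L = (\sigma-1)L$, which is exactly what is needed to carry out the adjustment of $H$. Once this compact-group statement is in hand, the proposition follows by the Cartan-decomposition reduction above; I expect the lattice-theoretic step in the compact case to be where all the real work lies.
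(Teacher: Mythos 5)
Your reduction is exactly the paper's: choose a Cartan involution $\theta$ commuting with $\sigma$, note that $K=G^\theta$ is compact, connected and simply connected (finite center of $G$ being what makes $K$ compact), and observe via uniqueness of the polar decomposition that $G^\sigma = K^\sigma\exp(\fp^\sigma)$, so the claim reduces to connectedness of $K^\sigma$. At that point the paper simply cites Helgason \cite{He78} for the compact, simply connected case; your further coroot-lattice sketch of that case is extra material, and as you yourself acknowledge its crucial step, the identity $(\sigma-1)\mathfrak{t}\cap L=(\sigma-1)L$, is asserted without proof, so it cannot substitute for the citation.
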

\begin{proof} In \cite{He78} $G$ is assumed to be compact; in \cite{Ra} $G$ is just simply connected. If $G$ is semisimple with finite center here is an easier argument. Let $\theta $ be a Cartan involution commuting with $\sigma$, 
$\fg =\fk\oplus \fp$ the associated Cartan decomposition, and $K=G^\theta$. Then $K$ is compact, connected, and simply connected by hypothesis. Furthermore  $G^\sigma = K^\sigma \exp (\fp^\sigma)$. Since
$K^\sigma$ is connected, the claim follows.
\end{proof}

If $\fh$ is a Lie algebra and if $\dot\varphi : \gh\to \fh$ is a Lie algebra homomorphism, then we denote by the same letter the
\textit{complex linear} extension, i.e. $\dot\varphi :\ghC\to \fh^\C$. Similarly on the group level, if $\tau : \Gh\to H$ is an analytic homomorphism, and
if  $H$ is contained in a complex Lie group $\mathbb{H}$ with Lie algebra $\fh^\C$, then
we will denote by the same letter the holomorphic extension, i.e. $\tau : \GhC\to \mathbb{H}$. This extension always
exists as we are assuming
that $\GhC$ is simply connected. The same convention will be used for other Lie groups without comment.

Let $\thh: \Gh\to \Gh$ be the Cartan involution corresponding to $\Kh$, i.e. $\thh^2=\id$ and $\Gh^{\thh}=\Kh$. Denote by $\dthh: \gh \to \gh$ the derived involution. Then $\kh =\{X\in \gh\mid \dthh (X)=X\}$ and with $\ph :=\{X\in \gh\mid \dthh (X)=-X\}$, one has $\gh=\kh\oplus \ph$.  The subspace $\ph$ can be identified with the tangent space of $\Dh$ at $e\Kh$. As  $\Dh$ is a complex domain, there is a complex structure $J: \ph \to \ph$. Moreover, $J$ extends to a derivation of $\gh$ which, as $\gh$ is semisimple, must be inner. Since  $J$ commutes with $\ad \kh|_{\ph}$, the derivation is represented by an element $Z_h$  in $\zh$, the center of $\kh$, i.e. $J = \ad Z_h|_{\ph} $. As we also assume that $\Gh$ is simple, one knows that $\zh$ is one dimensional, hence $J$ is essentially unique. 

As $ (\ad Z_h|_{\ph})^2=-1$, $\ad Z_h$ has eigenvalues $0$, $i$, and $-i$. For the respective eigenspaces we have $\ghC (\ad Z_h;0)=\khC$, and we set $\phPm :=\ghC(\ad Z_h;\pm i)$.  Then $\phPm$ is a complex abelian subalgebra of dimension $n$; $\KhC$ acts on $\phPm$; and $\phC=\php\oplus \phm$ as a $\KhC$-module. The $\KhC$-modules $\php$ and $\phm$ are contragredient and, as the center acts by a different constant, inequivalent.

Denote by $\Php$, resp. $\Phm$, the analytic subgroup of $\GhC$ corresponding to the Lie algebra  $\php$, resp. $\phm$. Then
$\PhPm$ is abelian, simply connected and $\exp : \phPm \to \PhPm$ is a holomorphic diffeomorphism and group homomorphism. We denote the inverse of $\exp|_{\php}$ by $\log$.

\begin{proposition}  $\Php \KhC\Phm$ is open and dense in $\GhC$, and the multiplication map
\[\Php \times \KhC\times \Phm \to \Php\KhC\Phm \, ,\quad (p_+, k,p_-)\mapsto p_+k p_-\]
is a holomorphic diffeomorphism. We denote the inverse by
\begin{equation}\label{eq-inv}
a\mapsto (p_+(a),{k}_h(a), p_-(a))\, .
\end{equation}
\end{proposition}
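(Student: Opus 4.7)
The plan is to follow the classical Harish-Chandra construction in four moves.

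First, I would check that $m:\Php\times\KhC\times\Phm\to\GhC$, $(p_+,k,p_-)\mapsto p_+kp_-$, is a local biholomorphism at $(e,e,e)$: its differential there is the linear map $(X^+,Y,X^-)\mapsto X^+ + Y + X^-$ from $\php\oplus\khC\oplus\phm$ to $\ghC$, an isomorphism by the $\ad Z_h$ eigenspace decomposition with eigenvalues $i,0,-i$. To promote this to a local biholomorphism at an arbitrary point $(p_+,k,p_-)$ I would pre- and post-compose with translations, using that $\Ad(k^{-1})$ preserves $\php$ (since $\khC$ normalizes $\php$), to reduce the differential to an $\Ad$-conjugate of the identity case. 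Hence $\Php\KhC\Phm$ is open in $\GhC$.

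Next, I would derive injectivity from the two intersection identities
\[\Php\cap \KhC\Phm = \{e\}, \qquad \KhC\cap\Phm = \{e\}.\]
A rearrangement of $p_+kp_-=p_+'k'p_-'$ yields $(p_+')^{-1}p_+ = k'p_-'p_-^{-1}k^{-1}$: the left side is in $\Php$, and since $\khC$ normalizes $\phm$ (hence $\KhC$ normalizes $\Phm$ and $\KhC\Phm=\Phm\KhC$ is a subgroup), the right side lies in $\KhC\Phm$. The first identity forces $p_+=p_+'$; rearranging the residual $kp_-=k'p_-'$ gives $k^{-1}k' = p_-p_-'^{-1}\in\KhC\cap\Phm = \{e\}$, finishing uniqueness. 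Both identities I would prove by a single computation with $Z_h$: using $[Z_h,X^\pm]=\pm iX^\pm$ and $[\fp_{h\pm},\fp_{h\pm}]=0$, the BCH series truncates to
\[\Ad(\exp X^+)Z_h = Z_h - iX^+,\qquad \Ad(\exp X^-)Z_h = Z_h + iX^-,\]
while $\Ad(k)Z_h=Z_h$ for $k\in\KhC$. If $\exp X^+ = k\exp X^-$, comparing $Z_h$-images gives $X^+ = -\Ad(k)X^- \in \Ad(k)\phm = \phm$, so $X^+\in\php\cap\phm = 0$; substituting back yields $k\exp X^- = e$, and the same trick gives $X^-=0$ and $k=e$.

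For density I would exploit that $\fq := \khC\oplus\phm$ is a parabolic subalgebra (the sum of the non-positive $\ad(iZ_h)$-eigenspaces), so $Q := \KhC\Phm$ is a closed parabolic subgroup and $\GhC/Q$ is a compact flag manifold. The orbit map $\Php\to\GhC/Q$, $p_+\mapsto p_+Q$, is injective by the first intersection identity and an immersion by the differential calculation; being a holomorphic map between equidimensional complex manifolds, it is an open embedding onto the \emph{big cell}, whose complement is a proper closed analytic subset of the connected compact $\GhC/Q$ and hence nowhere dense. Pulling back to $\GhC$, the complement of $\Php\KhC\Phm$ is nowhere dense, so $\Php\KhC\Phm$ is open and dense, and combining openness with Steps 1--2 shows $m$ is the asserted holomorphic diffeomorphism, so that the inverse \eqref{eq-inv} is well defined and holomorphic.

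The main obstacle is the pair of intersection identities; once $\Php\cap\KhC\Phm=\{e\}$ is secured, bijectivity, openness, and density follow either directly or from generic parabolic structure theory. The $Z_h$-trick is the cleanest route in the Hermitian setting; outside it one would have to invoke a faithful finite-dimensional representation with a highest-weight line stabilized by $Q$, or the full Bruhat decomposition.
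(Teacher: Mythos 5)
The paper states this proposition without proof, treating it as the classical Harish-Chandra decomposition and deferring to \cite{Sa} and \cite{KW65a} for the structure theory. Your argument is correct and is the standard derivation: the $\ad Z_h$-eigenspace splitting $\ghC=\php\oplus\khC\oplus\phm$ gives the local biholomorphism, the $\Ad(\cdot)Z_h$ computation (truncating since $\phpm$ is abelian and $Z_h$ is central in $\khC$) gives the intersection identities $\Php\cap\KhC\Phm=\{e\}$ and $\KhC\cap\Phm=\{e\}$ and hence global injectivity, and the big-cell argument in the compact flag manifold $\GhC/\KhC\Phm$ gives density (the one step where, as you note, one ultimately leans on the Bruhat decomposition or the Zariski-open structure of the cell to see the complement is thin).
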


We consider the usual generalized flag manifold $\cP_h=\GhC/\KhC\Phm$ and a basepoint $x_o=e\KhC \Phm$. The $\Gh$ orbit of the basepoint, $\Gh\cdot x_o$, is $\Gh/\Kh\simeq\Dh$. On the other hand, the Bruhat cell $\Php\cdot x_o$ is open and dense in $\cP_h$. By means of  $\log$ one obtains a holomorphic isomorphism $\Php\cdot x_o\simeq \Php \simeq \php$, denoted by $g\cdot x_o \mapsto z(g\cdot x_o)$, such that for $p \in \Php\cdot x_o$, $k\in \KhC$ and
$X\in \php$
\begin{enumerate}
\item $z(k\cdot p)=\Ad (k)z(p)$
\item $z(\exp (X)\cdot p)=X+z(p)$.
\end{enumerate}

Restricted to $\Gh\cdot x_o$ the map has image $\DP\subset \php $, the Harish-Chandra bounded realization of $\Dh$.  

 \begin{theorem}\label{thm:1.3}
$\displaystyle
\php\supset \DP\simeq \Dh \simeq \Gh/\Kh \subset \cP_h =\GhC/\KhC\Phm. 
$
\end{theorem}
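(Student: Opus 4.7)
The plan is to prove the two non-tautological assertions: the embedding $\Gh/\Kh\hookrightarrow\cP_h$ as the orbit of the basepoint, and its biholomorphic identification with the bounded domain $\DP\subset\php$ via the map $z$ constructed just before the statement. Since $\Dh\simeq\Gh/\Kh$ is built into our setup and $\DP\subset\php$ is merely a notation for the image of $z$, the whole content is: (i) $\Gh\cap\KhC\Phm=\Kh$, and (ii) $\Gh\subset\Php\KhC\Phm$, so that $z$ is defined on the $\Gh$-orbit with image a bounded domain biholomorphic to $\Gh/\Kh$.

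For (i), I would start with the observation that the isotropy of $x_o=e\KhC\Phm$ in $\GhC$ is exactly $\KhC\Phm$, so the $\Gh$-isotropy is $\Gh\cap\KhC\Phm$. Given $g\in\Gh\cap\KhC\Phm$, write $g=k_0p_-$ with $k_0\in\KhC$, $p_-\in\Phm$. Apply the holomorphic extension of $\thh$ to $\GhC$: since $\thh(\phm)=\php$ and $\thh(\khC)=\khC$, one has $\thh(g)=k_0'p_+$ with $k_0'\in\KhC$ and $p_+\in\Php$. Combine with the $\Gh$-Cartan decomposition $g=k\exp X$ ($k\in\Kh$, $X\in\ph$), which gives $\thh(g)=k\exp(-X)$, so $g\thh(g)^{-1}=\exp(2X)\in\Gh$. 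Since both sides of the equality $\exp(2X)=k_0p_-p_+^{-1}(k_0')^{-1}$ lie in $\Php\KhC\Phm$ and that decomposition is unique by the Proposition, comparing components forces $p_-=e$, whence $g=k_0\in\Gh\cap\KhC=\Kh$.

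For (ii), the claim $\Gh\subset\Php\KhC\Phm$ is the Harish-Chandra global Gauss decomposition. The clean route is a connectedness argument: the set $U:=\{g\in\Gh\,|\,g\in\Php\KhC\Phm\}$ is open in $\Gh$ (as the preimage of an open set under a continuous map) and contains $\Kh\exp(\fp_{h0})$ for a neighborhood $\fp_{h0}$ of $0$ in $\ph$. To extend to all of $\Gh=\Kh\exp\ph$, one shows that the rational $\Gh$-action $g\cdot Z:=\log p_+(g\exp Z)$ on $\php$, well-defined when $g\exp Z\in\Php\KhC\Phm$, actually extends to all of $\ph$ by the standard estimate on the nilpotent brackets within the graded algebra $\phm\oplus\khC\oplus\php$: the eigenvalues of $\ad(X)$ on $\phpm$ are bounded by those of $\ad X$ on $\ghC$, so for $X\in\ph$ a direct computation in $\sl_2$-triples associated to the strongly orthogonal (restricted) noncompact roots shows $\exp(X)\cdot x_o=\exp(Z(X))\cdot x_o$ with $Z(X)\in\php$ explicitly computable (via $\tanh$ on the strongly orthogonal root directions). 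This both puts $\Gh$ inside $\Php\KhC\Phm$ and exhibits $z(\Gh\cdot x_o)$ as a bounded subset of $\php$, since $\tanh$ is bounded.

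With (i) and (ii) in place, the orbit map $\Gh\cdot x_o\to\DP$, $g\cdot x_o\mapsto z(g\cdot x_o)=\log p_+(g)$, is $\Gh$-equivariant, holomorphic (as the composition of holomorphic maps from the Proposition), and injective with open image in $\php$ by invariance of domain. Together with (i) this gives the biholomorphism $\Gh/\Kh\simeq\DP$, while the embedding $\Gh/\Kh\hookrightarrow\cP_h$ is tautological from (i). I expect the main obstacle to be (ii): estimating explicitly where $\Gh\cdot 0$ sits inside $\php$ so that $z$ is bounded and a biholomorphism onto its (open) image, which is where one must invoke the $\sl_2$-reduction along a maximal set of strongly orthogonal noncompact roots rather than argue abstractly.
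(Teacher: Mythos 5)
Your part (ii) --- $\su(1,1)$-reduction along a maximal set of strongly orthogonal noncompact roots together with the explicit $\tanh$-formula for the $A$-orbit --- is exactly the mechanism the paper uses in \S 1.3 (the Cayley-transform identity $\bC_{I,\epsilon}H(I,\epsilon)=X(I,\epsilon)$ and the computation $a_{\bt}\cdot 0=\sum\tanh(t_\nu)E_\nu$) to realize $\DP$ as a bounded domain; the paper otherwise treats Theorem 1.3 as classical, citing Satake and Kor\'anyi--Wolf. Your part (i), however, has a gap that makes it fail as written. Beyond the small slip that $g\thh(g)^{-1}=k\exp(2X)k^{-1}$ rather than $\exp(2X)$, the decisive problem is the step ``comparing components forces $p_-=e$'': the right-hand side $k_0p_-p_+^{-1}(k_0')^{-1}$ is \emph{not} a product in the normal form $\Php\KhC\Phm$ (the factors $p_-\in\Phm$ and $p_+^{-1}\in\Php$ sit in the wrong order), so the uniqueness statement of the Proposition gives you no way to read off any single component. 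Rewriting $p_-p_+^{-1}$ in Gauss form presupposes it lies in the big cell and, after conjugating by $k_0$, entangles all three factors; there is no componentwise comparison available.

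The fix keeps your idea of using an involution but should be run with the conjugation of $\GhC$ with respect to $\Gh$ rather than $\thh$, and without the Cartan decomposition. Since $\dsh$ preserves $\khC$ and swaps $\php$ with $\phm$, any $g\in\Gh\cap\KhC\Phm$ satisfies $g\in\KhC\Phm\cap\KhC\Php$; uniqueness of the opposite decomposition $\Phm\KhC\Php$ (obtained from the Proposition by applying $\thh$, i.e.\ replacing $Z_h$ by $-Z_h$) forces this intersection to be $\KhC$. Because $\thh=\Ad(\exp\pi Z_h)$ is inner and $Z_h$ is central in $\khC$, one then has $\Gh\cap\KhC\subset\Gh^{\thh}=\Kh$, which is (i). Alternatively, and closer in spirit to your (ii), deduce (i) directly from (ii): if $g=k\exp X$ with $X\in\fp_h$ fixes $x_o$, conjugate $X$ into $\ah$ and read off from the $\tanh$-formula that $\exp X\cdot 0=\sum\tanh(t_\nu)E_\nu=0$, hence $X=0$. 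Either route is sound; the $g\thh(g)^{-1}$ argument, as you have framed it, is not.
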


In a moment we will discuss the boundary components of $\Gh/\Kh$. For that we note that we can
take a closure in $\php$ or the closure in $\cP_h$. It is a priori not clear that those two closures should
be isomorphic. It is however clear that the closure in $\cP_h$ is $\Gh$-invariant, but
it is not clear that $\Gh$ acts on the closure in $\php$. In Lemma \ref{lem:ClosureSame} we
show that  $c({\DP})$, the closure of $\DP$ in $\php$, viewed as a subset of $\cP_h$ is
the same as the closure in $\cP_h$. In particular, $G_h$ acts on
$c({\DP})$\footnote{Note that this is not correct for the
unbounded realization of $\Gh/\Kh$ as the example of the upper half-plane shows.}.
Let $\partial \DP := c({\DP})\setminus \DP$ be the topological boundary
of $\DP$ in $\fp_h^+$. The action of $\Gh$ on $\DP$ extends to one on $\partial \DP$ which then decomposes into a finite disjoint union of $\Gh$-orbits. In a later section we shall give a complete parameterization  of the orbits and determination of the isotropy. This is well known, e.g. \cite {Sa},  but we include the proof because of its importance for our treatment of real domains.
 
\subsection{Essential Structure Theory - $\C$ forms}\hfill
 
Let $\ch$ be a Cartan subalgebra of $\gh$ containing $Z_h$, hence $\ch\subset \kh$. Let $\Deh$ be the set of roots of $\chC$ in $\ghC$. Since $\ch\subset \kh$, $\dthh|_{\ch}=\id $. Then $\dthh ({\ghC}_\alpha )={\ghC}_\alpha $, and as $\dim_\C \ghC_\alpha =1$,
either $\ghC_\alpha \subset \khC$ in which case one calls $\alpha$ a compact root, or $\ghC_\alpha\subset \phC $ and $\alpha$ is called noncompact. Denote by $\Dhc$ the set of compact roots, and by $\Dhn$ the set of noncompact roots. Then
\begin{align}\Dhc &=\{\alpha\in\Deh\mid \alpha (Z_h)=0\},\\
\Dhn &= \{\alpha \in \Deh\mid\alpha (Z_h)=\pm i\}.
\notag\end{align}
We choose the set of positive roots, $\Deh^+$, so that $\{\alpha \mid \alpha (Z_h)=i\}\subset \Deh^+$. 
Denote by $W_h=W(\Deh)$ the Weyl group generated by
reflections $s_\alpha$, $\alpha \in \Deh$, and denote by $W_{hc}$ the
subgroup generated by $s_\alpha$, $\alpha \in \Dhc$. As $\alpha (Z_h)=0$ for
all $\alpha \in \Dhc$ it follows that  $\Deh^+$ is invariant under   $W_{hc}$.

Recall that $\alpha,\beta\in \Deh$ are called \textit{strongly orthogonal} if $\alpha \pm \beta\not\in \Deh$. In the usual way one constructs a maximal set $\{\gamma_1,\ldots ,\gamma_{r_h}\}$ of strongly orthogonal roots in $\Dhn^+$. 

Denote by $\dsh:\ghC\to \ghC$ the conjugation
 with respect to $\gh$. For each $j=1,\ldots ,r_h$ choose $E_j\in \ghC_{\gamma_j}$ and set $F_j=\dsh (E_j)\in \ghC_{-\gamma_j}$. One can normalize $E_j$ so that with $H_j=[E_j,F_j]\in i\ch$ one has $\gamma_j (H_j)=2$. Let $Z_j:= iH_j$, $X_j:= E_j+F_j$, and $Y_j:=i(E_j-F_j)$. We set
\begin{equation}\label{def-th}
\th:=\bigoplus_{j=1}^{r_h}\R H_j\subset i\gh \text{ and } \ah:=\bigoplus_{j=1}^{r_h} \R X_j\subset \ph\, .
\end{equation}
Then $\ah$ is maximal abelian in $\ph$.

More generally, for $I\subseteq \{1,\ldots ,r_h\} $ and $\epsilon \in \{-1,1\}^{\# I}$ let $Z (I,\epsilon ):=\sum_{j\in I}\epsilon_j \, iH_j\,\in \ch $, $E(I,\epsilon ):=\sum_{j\in I}\epsilon_j E_j\,\in\ph_+$, $F(I,\epsilon ):=\sum_{j\in I}\epsilon_j F_j\,\in\ph_-$. Similarly $H (I,\epsilon ):=-iZ (I,\epsilon )$,
  $X(I,\epsilon ):= E(I,\epsilon )+F(I,\epsilon)\,\in\ph$ and $Y (I,\epsilon ):=i(E(I,\epsilon )-F(I,\epsilon))\,\in\kh$. We set $H_0=-iZ_h$. If $I=\{1,\ldots ,b\}$  then we write $E(b,\epsilon)$ instead of $E(I,\epsilon)$, etc., and if
furthermore $\epsilon =1$ then we simply write $E(b)$ etc.

Then $Z (I,\epsilon)$, $X (I,\epsilon)$ and $Y (I,\epsilon)$ generate a subalgebra of $\gh$ isomorphic to $\su (1,1)$.  These determine equivalence classes of holomorphic disks in $\Dh$ which, since we are in a homogeneous space, lift to equivalence
classes of compatible homomorphisms $\dot\kappa:\su (1,1))\to \gh$ together with their holomorphic extensions $\dot\kappa:\sl (2,\C)\to \ghC$, here $\sl (2,\C) :=\su (1,1)^\C$ . Amongst these, the 
homomorphisms associated to  $Z(b)$, $X (b)$ and $Y (b)$ play a critical role and will be referred to as basic homomorphisms. 
In passing we note that  $H (I,\epsilon )$, $E (I,\epsilon)$ and $F (I,\epsilon)$ generate a subalgebra of $\ghC$ isomorphic to $\sl (2,\R)$ which is the Cayley transform of the $\su (1,1)$ described above. Much of this notation is not needed in the complex case but will be needed when we do the real case. 

The word \lq Essential\rq \,in the subsection title refers to the fact that we have fixed the structure theory, whereas if, as in \cite {Sa}, one chooses first the geometry of holomorphic disks, one would have a different but equivalent choice of structure theory.

\begin{Basic Example} $\SU (1,1)$\hfill

The prototype bounded domain is $\SU (1,1)/\U(1)\simeq\{z\in \C\mid |z|<1\}$. In this case we have $\GhC=\SL (2,\C)$.
The conjugation $\dot\sigma_1$ \footnote{The subscript $1$ will be used for objects related to this basic case.} on $\sl (2,\C) (:=\su (1,1)^\C)$ with respect to $\su (1,1)$, and the holomorphic extension of the
 standard Cartan involution $\dot\theta_1$ of $\su (1,1)$ are given by
\[\dot\sigma_1\left(\begin{pmatrix} a & b\\ c & d\end{pmatrix}\right)=\begin{pmatrix} \overline{d} & \overline{c}\\
\overline{b} & \overline{a}\end{pmatrix}\text{ and }
\dot\theta_1\left(\begin{pmatrix} a & b\\ c & d\end{pmatrix}\right)=\begin{pmatrix} a & -b\\ -c & d\end{pmatrix}.\]
Let
\[h=\begin{pmatrix} 1 & 0 \\ 0 & -1\end{pmatrix}, \quad e=\begin{pmatrix} 0 & 1 \\ 0 & 0\end{pmatrix}
\text{ and } f=\begin{pmatrix} 0 & 0 \\ 1 & 0\end{pmatrix}.\]
Then $[h,e]=2e$, $[h,f]=-2f$, $[e,f]=h$, $\dot\sigma_1(h)=-h$ and $\dot\sigma_1 (e)=f$. Taking $Z_1=ih$ gives $\php=\C e$, $\khC=\C h$, and $\phm =\C f$.

A computation gives
\[\begin{pmatrix} 1 & z\\ 0 & 1\end{pmatrix} \begin{pmatrix} k & 0 \\
0 & k^{-1}\end{pmatrix} \begin{pmatrix} 1 & 0\\ y & 1\end{pmatrix}
=\begin{pmatrix} k+zy/k & z/k\\ y/k & 1/k \end{pmatrix}.\]
Thus
\[\Php \KhC\Phm =\left\{\left. \begin{pmatrix} a & b\\ c & d\end{pmatrix}\in \SL (2,\C)
\, \right|\, d\not= 0\right\}. \]
Identifying $\Php\cong \php \cong \C$, $\Phm\cong \phm\cong \C$ and  $\KhC\cong\C^*$
the maps in (\ref{eq-inv}) are given by
\begin{eqnarray*}
p_+\left(\begin{pmatrix} a & b\\ c & d\end{pmatrix}\right) &=& \begin{pmatrix} 1 &  b/d \\ 0 & 1\end{pmatrix}\mapsto b/d,\\
k_h \left(\begin{pmatrix} a & b\\ c & d\end{pmatrix}\right) &= &\begin{pmatrix} 1/ d & 0\\ 0 & d\end{pmatrix}\mapsto 1/d,\\
p_-\left(\begin{pmatrix} a & b\\ c & d\end{pmatrix}\right)&=&\begin{pmatrix} 1 & 0 \\ c/d & 1 \end{pmatrix}\mapsto c/d\, .
\end{eqnarray*}
This gives the Harish-Chandra realization of $\Dh$ as $\DP=D_1=\{z\in \C \mid |z|<1\}$. As
\[\begin{pmatrix} a & b\\ c & d\end{pmatrix}\begin{pmatrix} 1 & z \\ 0 & 1\end{pmatrix}=
\begin{pmatrix} a  & az+b\\ c & cz+d\end{pmatrix},\]
it follows that the action of $\SU (1,1)$ on $\Dh$ is the usual action $g\cdot z = \frac{az + b}{cz+d}$.
\end{Basic Example}

To return to the general situation we let in $\su (1,1)$
\begin{equation}\label{def-X1Y1}
Z_1=ih =\begin{pmatrix} i & 0 \\ 0 & -i\end{pmatrix},\quad  X_1:=e+f=\begin{pmatrix} 0 & 1\\ 1 & 0\end{pmatrix},  \text{and}Y_1:= i(e-f) = \begin{pmatrix} 0 & i \\ -i & 0\end{pmatrix}.
\end{equation}
For $I$ and $\epsilon $ as above,
\begin{equation}\label{def-kj}
\dot \kappa_{I,\epsilon} : Z_1\mapsto Z (I,\epsilon) \, ,\quad X_1\mapsto X (I, \epsilon), \, \quad \text{ and }\quad Y_1\mapsto Y (I,\epsilon)
\end{equation}
defines a Lie algebra homomorphism of $\sl (2,\C)$ into $\ghC$ such that
\begin{equation}\label{eq-kappaSigma}
\dot\kappa_{I,\epsilon} \circ\dot \sigma_1=\dsh \circ \dot\kappa_{I,\epsilon}\text{ and  }
\dot \kappa_{I,\epsilon}\circ \dot\theta_1=\dthh \circ\dot  \kappa_{I,\epsilon}\,.
\end{equation}
It follows, in particular, that
\[\su (1,1)_{I,\epsilon}:=\dot \kappa_{I,\epsilon} (\su (1,1))=\Span\{Z(I,\epsilon),X(I,\epsilon),Y(I,\epsilon)\}\subseteq \gh.\]
These are the lifts of holomorphic disks embedded into $\Dh$ and are those called standard homomorphisms.

Similarly
\begin{equation}\label{eq:SLkappa}
\sl (2,\R)_{I,\epsilon}:=\dot \kappa_{I,\epsilon}(\sl (2,\R))=\Span\{H (I,\epsilon),E(I,\epsilon),F(I,\epsilon)\}\, .
\end{equation}

As $\SL (2,\C)$ is simply connected,  there exists a group homomorphism $\kappa_{I,\epsilon} : \SL (2,\C)\to \GhC$ such that
$d\kappa_{I,\epsilon}=\dot \kappa_{I,\epsilon}$. In particular, $\kappa_{I,\epsilon} (\SU (1,1))\subseteq \Gh$. We set
$\dot \kappa_j=\dot \kappa_{\{j\},1}$ and $\kappa_j=\kappa_{\{j\},1}$. We also note that if $I\cap J=\emptyset$ then
$[\dot \kappa_{I,\epsilon}(\sl (2,\C)),\dot \kappa_{J,\epsilon^\prime}(\sl (2,\C))]=\{0\}$ and similarly for $\kappa_{I,\epsilon}$ and
$\kappa_{J,\epsilon^\prime}$. In particular, $\kappa_1,\ldots ,\kappa_{r_n}$ is a maximal family  of commuting standard homomorphisms
$\SL (2,\C)\to \GhC$.

A simple matrix calculation shows that
\[\exp \left(\frac{\pi i}{4}Y_1\right)=\frac{1}{\sqrt{2}}\begin{pmatrix} 1 & -1\\ 1 & 1\end{pmatrix}
\text{ and } \Ad \left(\exp \left(\frac{\pi i}{4}Y_1\right)\right)h=X_1\, .
\]
Thus if we set
\[\bc_{I,\epsilon} :=\exp \left(\frac{\pi i}{4}Y (I,\epsilon )\right)=\kappa_{I,\epsilon}\left(\frac{1}{\sqrt{2}}\begin{pmatrix} 1 & -1\\ 1 & 1\end{pmatrix}\right)
\text{ and } \bC_{I,\epsilon} := \Ad (\bc_{I,\epsilon})\, ,\]
then
\begin{equation}\label{eq-ZjXj}
\bC_{I,\epsilon} H (I,\epsilon) =X (I,\epsilon )\, .
\end{equation}

\begin{lemma}
Let $\bC =\bC_{\{1,\ldots ,r_h\},(1,\ldots ,1)}$. Then $\bC (\th)=\ah$.
\end{lemma}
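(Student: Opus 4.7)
The plan is to decompose the Cayley transform $\bC$ into a product of commuting single-root Cayley transforms $\bC_j$ and apply equation (\ref{eq-ZjXj}) one index at a time.

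First I would exploit the strong orthogonality of $\gamma_1, \ldots, \gamma_{r_h}$. The earlier construction notes that when $I \cap J = \emptyset$, the subalgebras $\dot\kappa_{I,\epsilon}(\sl(2,\C))$ and $\dot\kappa_{J,\epsilon'}(\sl(2,\C))$ commute. In particular, for $j \neq k$ the elements $Y_j = Y(\{j\},1)$ and $Y_k = Y(\{k\},1)$ commute, as do $H_k, E_k, F_k, X_k$ with all of the $\sl(2,\C)_{\{j\},1}$. Since $Y(\{1,\ldots,r_h\},(1,\ldots,1)) = \sum_{j=1}^{r_h} Y_j$ and the summands commute pairwise,
\begin{equation*}
\bc = \exp\!\left(\tfrac{\pi i}{4}\sum_{j=1}^{r_h} Y_j\right) = \prod_{j=1}^{r_h} \exp\!\left(\tfrac{\pi i}{4} Y_j\right) = \prod_{j=1}^{r_h} \bc_{\{j\},1},
\end{equation*}
with factors that mutually commute in $\GhC$. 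Consequently $\bC = \prod_{j=1}^{r_h} \bC_{\{j\},1}$ as operators on $\ghC$, and the factors mutually commute.

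Next I would compute $\bC(H_k)$ for each fixed $k$. For $j \neq k$ the subalgebra $\sl(2,\C)_{\{j\},1}$ centralizes $H_k$, so $\bC_{\{j\},1}(H_k) = H_k$. Applying equation (\ref{eq-ZjXj}) with $I = \{k\}$ and $\epsilon = 1$ gives $\bC_{\{k\},1}(H_k) = X_k$. Combining these, and using that the $\bC_{\{j\},1}$ also centralize $X_k$ for $j \neq k$ (same strong-orthogonality argument), we obtain $\bC(H_k) = X_k$ for every $k = 1, \ldots, r_h$.

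Finally, since $\th = \bigoplus_{k=1}^{r_h} \R H_k$ and $\ah = \bigoplus_{k=1}^{r_h} \R X_k$ by the definitions in (\ref{def-th}), the $\R$-linear map $\bC$ sends a basis of $\th$ to a basis of $\ah$, yielding $\bC(\th) = \ah$. There is no real obstacle here; the only subtlety worth flagging is to confirm that the exponential of a sum equals the product of exponentials, which is immediate from commutativity of the $Y_j$, and that this commutativity descends to $\Ad$.
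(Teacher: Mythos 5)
Your proof is correct and is the natural argument: since the paper gives no proof for this lemma, the expected reasoning is exactly what you wrote — use the commutativity of $\dot\kappa_{\{j\},1}(\sl(2,\C))$ for distinct $j$ (which the paper has already recorded as a consequence of strong orthogonality) to factor $\bc$ into commuting pieces and then apply (\ref{eq-ZjXj}) coordinatewise. No gap.
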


\begin{theorem}[Moore]\label{th-Moore}
Let $\beta_j:= [\bC^{-1}]^{t}(\gamma_j|_{\th})$.
  There are two possibilities for the restricted roots $\Sh= \Sigma (\gh ,\ah)$:
\medskip

\noindent
\textbf{Case I:} $\displaystyle{ \Sh = \pm \left\{\left. \beta_i, \frac{1}{2}\left(\beta_j\pm \beta_k\right)\, \right|\,
i=1,\ldots , r_h\, ,\,\, 1\le j<k\le r_h\right\}}$,\\ 
\noindent
\textbf{Case II:} $\displaystyle{ \Sh = \pm \left\{\left.\beta_i, \frac{1}{2}\beta_i, \frac{1}{2}\left(\beta_j\pm \beta_k\right)\, \right|\,
i=1,\ldots , r_h\, ,\,\, 1\le j<k\le r_h\right\}}$.
\medskip

\noindent
The first case occurs if and only if $\Dh$ is a tube type domain.
 
\end{theorem}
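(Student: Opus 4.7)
The plan is to reduce the description of $\Sigma_h$ to a problem about restrictions of roots in $\Delta_h$ to the "compact Cartan" $\mathfrak{t}_h = \bigoplus \mathbb{R} H_j$, via the Cayley transform $\mathbf{C}$, and then to invoke the Harish-Chandra structure theory of noncompact and compact roots relative to a maximal strongly orthogonal set.

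First, I would use the preceding lemma, $\mathbf{C}(\mathfrak{t}_h) = \mathfrak{a}_h$, to transport everything. Since $\mathbf{C} = \mathrm{Ad}(\mathbf{c})$ is an inner automorphism of $\ghC$, the subspace $\mathbf{C}(\mathfrak{c}_h^{\mathbb{C}})$ is again a Cartan subalgebra of $\ghC$, and its root system is $\{\alpha \circ \mathbf{C}^{-1} : \alpha \in \Delta_h\}$. Because $\mathfrak{a}_h = \mathbf{C}(\mathfrak{t}_h)$ is a real form of this new Cartan sitting inside $\mathfrak{g}_h$ (maximally split by construction), the set $\Sigma_h$ is precisely the set of nonzero restrictions of these transported roots to $\mathfrak{a}_h$. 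Under the identification $\beta_j = [\mathbf{C}^{-1}]^t (\gamma_j|_{\mathfrak{t}_h})$, this reduces the theorem to a description of $\{\alpha|_{\mathfrak{t}_h} : \alpha \in \Delta_h\}\setminus \{0\}$ as a subset of $\bigoplus_{j=1}^{r_h} \mathbb{R}(\gamma_j|_{\mathfrak{t}_h})^*$.

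Second, I would carry out the classification of these restrictions by separating noncompact and compact roots and using the strong orthogonality of $\{\gamma_1,\ldots,\gamma_{r_h}\}$. For a noncompact positive root $\alpha$, strong orthogonality forces $\langle \alpha, \gamma_j \rangle \in \{0, \tfrac{1}{2}\langle \gamma_j,\gamma_j\rangle\}$ on each index, so only a short list of restriction patterns survives: $\alpha|_{\mathfrak{t}_h} \in \{\gamma_i, \tfrac{1}{2}(\gamma_j + \gamma_k),\tfrac{1}{2}\gamma_i\}|_{\mathfrak{t}_h}$. A parallel analysis, using that $\dot\theta_h$ preserves the relevant pairings and permutes the $\gamma_j$'s trivially, shows compact roots restrict to $0$, $\pm\tfrac{1}{2}(\gamma_j-\gamma_k)$, or $\pm\tfrac{1}{2}\gamma_i$. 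That each listed restriction is actually realized follows by applying elements of $W_{hc}$ (and of the Weyl groups of the rank-one subsystems generated by the $\dot\kappa_j(\mathfrak{sl}(2,\mathbb{C}))$) to exhibit roots with the prescribed restriction.

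Finally, I would identify the short roots $\pm\tfrac{1}{2}\beta_i$ as the dividing feature between Case I and Case II. Such a restricted root arises if and only if there is a noncompact root $\alpha$ with $\alpha|_{\mathfrak{t}_h} = \tfrac{1}{2}\gamma_i|_{\mathfrak{t}_h}$; and this is well-known to be equivalent to $\Dh$ not being of tube type (equivalently, $\phP$ not being a Jordan triple of "tube" type). Taking this characterization, Case I is exactly the tube case, Case II the non-tube case, which yields the stated form of $\Sigma_h$.

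The main obstacle I anticipate is the second step: proving that no restrictions outside the listed three forms can occur, and that every listed form is actually attained. This requires the detailed Harish-Chandra/Moore analysis of root pairings under strong orthogonality and a careful use of the $W_{hc}$-action, rather than anything formally easy. The rest (the Cayley transport and the tube-type dichotomy) is essentially bookkeeping given the lemma and the structure theory already set up.
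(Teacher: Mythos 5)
The paper does not prove Theorem \ref{th-Moore}: it is stated (attributed to Moore) as a standing piece of classical structure theory, inherited from \cite{Sa}, \cite{KW65a}, \cite{KW65b}, without argument. So there is no in-paper proof against which to compare your write-up.

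That said, your outline follows exactly the classical Harish-Chandra--Moore line: transport by the Cayley transform $\bC$ from the compact Cartan piece $\th$ to $\ah$ (justified by the preceding lemma $\bC(\th)=\ah$ together with the fact that $\ah$ is maximal abelian in $\ph$), reduce the computation of $\Sh$ to the restriction of $\Deh$ to $\th$, and then classify those restrictions using the maximal strongly orthogonal family $\{\gamma_1,\dots,\gamma_{r_h}\}$. This is the right and indeed the standard route. A couple of places where the argument needs more than you let on, so you should be explicit if you carry it out: (i) the statement that for a noncompact positive $\alpha\neq\gamma_j$ one has $2\langle\alpha,\gamma_j\rangle/\langle\gamma_j,\gamma_j\rangle\in\{0,1\}$ uses \emph{maximality} of the strongly orthogonal set (to exclude the value $0$ for all $j$ simultaneously) and the eigenvalues of $\ad Z_h$ (the root $\alpha+\gamma_j$ would have $Z_h$-eigenvalue $2i$, impossible), plus a short argument to rule out the Cartan integer $2$; pure strong orthogonality among the $\gamma_j$'s is not by itself enough. (ii) For compact roots the relevant $Z_h$-eigenvalue observation is that $\alpha(Z_h)=0$, which together with the $\gamma_j$-string analysis gives the restrictions $0$, $\tfrac12(\gamma_j-\gamma_k)$, $\tfrac12\gamma_i$; you should spell out why $\alpha|_{\th}=\tfrac12(\gamma_j+\gamma_k)$ cannot occur for a compact root. (iii) That every listed restriction is realized is best shown, as you suggest, by exhibiting Weyl conjugates, and this is where the $W_{hc}$-invariance of $\Deh^+$ (established just before Moore's theorem in the paper) earns its keep. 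Finally, the equivalence of ``no half-roots'' with tube type is a separate structural fact you would need to quote or prove (e.g.\ via the Jordan-algebraic characterization or via $\phPTwo=\php$ for the full-rank standard homomorphism), since it is asserted rather than derived in your sketch.
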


We will use
\[\ahp=\left\{\left. \sum_{j=1}^{r_h} x_jX_j\,\right|\,  x_1>x_2>\ldots >x_{r_h}> 0\right\}\]
as a positive Weyl chamber. The corresponding set of positive roots are obtained by taking $+$ in front of
the parenthesis in Case I and II above.

\subsection{Boundary orbits}\hfill
 
 Using $\SU (1,1)$-reduction, eq. (\ref{eq-ZjXj}) is the main step in the proof that
 $\Dh \simeq \Gh/\Kh$ is diffeomorphic to a  bounded domain in $\php$. Indeed let
$\Oh=\sum_{j=1}^{r_h} (-1,1)E_j\subset \php$. For $\bt\in \R^{r_h}$ let $a_{\bt }:=
\exp \sum_{j=1}^{r_h}t_jX_j$. By a calculation in $\SU (1,1)$  we have
\begin{equation}\label{eq-actXj}
a_{\bt} \cdot \sum_{\nu =1}^{r_h}\xi_\nu  E_\nu
=\sum_{\nu =1}^{r_h} \frac{\cosh (t_\nu )\xi_{\nu}+\sinh (t_\nu)}{\sinh (t_\nu )\xi_\nu +\cosh (t_\nu)}\, E_\nu\, .
\end{equation}
In particular,
\[a_{\bt} \cdot 0=\sum_{\nu=1}^{r_h} \tanh (t_\nu ) E_{\nu}\, .\]
 Thus we have
\begin{equation}
\Dh \simeq \Gh/\Kh \simeq \DP=\Ad (\Kh )\Oh\subset \php, 
\end{equation}
the Harish-Chandra bounded realization of $\Dh$. 
Now it is clear that $\Gh$ acts on $\partial \DP$.  For $b\in \{1,\ldots ,r_h\}$ recall $E(b):=E_1+\ldots +E_b$
 and set $\cO_h(b):= \Gh\cdot E(b)$.
\begin{theorem}\label{th-boundh} Let $z\in \partial \DP$. Then there exists $b\in \{1,\ldots ,r_h\}$ and $g\in \Gh$ such that
$z=g \cdot E(b)$. In particular,
\[\partial \DP =\dot \bigcup_{b=1}^{r_h} \cO_h(b)\, .\]
Thus, the boundary orbits are parameterized by  $\{1,\ldots ,r_h\}$.
\end{theorem}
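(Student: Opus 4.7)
The plan is a classical polydisk reduction: using $\Kh$ reduce an arbitrary boundary point to $c(\Oh)$, normalize within the polydisk using $A_h = \exp(\ah)$, then use $\Kh$ once more to handle signs and permutations; finally distinguish the orbits by a rank invariant.

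First, since $\DP = \Ad(\Kh)\,\Oh$ and $\Kh$ is compact, the continuous map $\Kh \times c(\Oh) \to \php$, $(k,\zeta) \mapsto \Ad(k)\zeta$, has compact, and hence closed, image which equals $c(\DP)$. Therefore every $z \in \partial \DP$ is $\Kh$-conjugate to some $\zeta = \sum_{\nu=1}^{r_h} s_\nu E_\nu$ with $s_\nu \in [-1,1]$ and $|s_{\nu_0}| = 1$ for at least one $\nu_0$.

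Next, formula (\ref{eq-actXj}) shows that $a_\bt \in A_h$ acts on such a $\zeta$ coordinatewise by disk-preserving M\"obius transformations fixing the endpoints $\pm 1$. For each index $\nu$ with $|s_\nu| < 1$ I would pick $t_\nu$ with $\tanh t_\nu = -s_\nu$ to push $s_\nu$ to $0$; the endpoints $s_\nu = \pm 1$ are preserved. This reduces $\zeta$ to $\sum_{\nu \in J^+} E_\nu - \sum_{\nu \in J^-} E_\nu$ for disjoint $J^\pm \subseteq \{1,\ldots,r_h\}$ with $J^+ \cup J^- \neq \emptyset$.

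To finish the normalization I would use $\Kh$. Strong orthogonality of $\gamma_1,\ldots,\gamma_{r_h}$ forces $\gamma_j(H_k) = 2\delta_{jk}$, so $[Z_k,E_j] = 2i\delta_{jk}E_j$ and hence $\exp(\pi Z_j/2) \in \Kh$ flips the sign of $E_j$ while fixing the remaining $E_k$; using these elements one at a time I arrange $J^- = \emptyset$. By Moore's theorem (Theorem \ref{th-Moore}) $W_h$ is of type $C_{r_h}$ or $BC_{r_h}$, so in either case it contains $S_{r_h}$ acting by permutation on $\{X_1,\ldots,X_{r_h}\}$; lifting such a permutation to a representative in $N_{\Kh}(\ah)$ and applying $\Ad$ permutes the $E_\nu$ correspondingly, bringing $\zeta$ to $E(b)$ where $b := |J^+ \cup J^-| \in \{1,\ldots,r_h\}$. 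Each $E(b)$ does lie in $\partial\DP$: it is the limit $\lim_{t\to\infty}\sum_{\nu=1}^{b}\tanh(t)E_\nu$ of interior points, yet $E(b) \notin \DP$ since $\DP$ is a bounded open set and the $A_h$-orbit through $E(b)$ is contained in $\partial \DP$ by the endpoint calculation above.

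The main obstacle I anticipate is disjointness of $\cO_h(b)$ for distinct $b$. The cleanest route is the Jordan-triple-system structure on $\php$: the $E_\nu$ are minimal tripotents, $E(b)$ is a tripotent of rank $b$, and $\Gh$ acts on $\php$ by triple automorphisms preserving rank, so different values of $b$ give different orbits. A more elementary alternative is to show directly that the orbit closures form a strict chain $c(\cO_h(1)) \subsetneq c(\cO_h(2)) \subsetneq \cdots \subsetneq c(\cO_h(r_h))$ by using the $\kappa_j$-factors to degenerate one unit coordinate at a time and then comparing dimensions of isotropy subgroups.
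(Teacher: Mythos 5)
Your argument is essentially the paper's own: both rely on the polydisk reduction $\DP=\Ad(\Kh)\Oh$, the $\SU(1,1)$ formula (\ref{eq-actXj}) to move the strictly interior coordinates to zero while fixing the endpoints $\pm 1$, and Weyl-group/$\Kh$ normalization to reach $E(b)$; your observation that $c(\DP)=\Ad(\Kh)\,c(\Oh)$ by compactness of $\Kh$ and $c(\Oh)$ is a clean repackaging of the paper's passage to a convergent subsequence $k_n\to k$, $t_{jn}\to t_j$. You are also right to single out disjointness of the $\cO_h(b)$ as needing its own argument --- the proof given in the paper establishes only that every boundary point lies in some $\cO_h(b)$, leaving orbit distinction implicit (it is taken from \cite{Sa}); either of your suggested remedies, the Jordan-triple rank of the tripotent $E(b)$ or the strict chain of orbit closures, would close that gap.
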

\begin{proof}
Let $\{z_n\}$ be a sequence in $\DP$ such that $z_n\to z$. As $\ah$ is maximal abelian in $\ph$, there exists $k_n\in\Kh$ and $t_{jn}\in (-1,1)$ such that $z_n=k_n\exp \sum_{j=1}^{r_h}t_{jn}E_j$. By applying a Weyl group element we can assume that $t_{1n}\ge t_{2n}\ge \ldots \ge t_{r_hn}\ge 0$. As $K$ and $[-1,1]$ are compact we can assume (by going to subsequences) that $k_n\to k\in K$ and $t_{jn}\to t_j\in [-1,1]$. Let $b$ be such that $t_1,\ldots ,t_b=1$ and
$1> t_j\ge 0$ for $j>b$. Let $s_j=- \tanh^{-1} (t_j)$ for $j >b$. By (\ref{eq-actXj}) we have
\[\exp \sum_{j>b}s_jX_j\cdot  \sum_{j=1}^{r_h} t_jE_j =E_1+\ldots + E_b=E(b)\, .\]
We can therefore take $g=\exp ( \sum_{j>b}s_jX_j)k^{-1}$.
\end{proof}

The closure  of ${\cD_h}$ in $\cP_h$ appears to be 
bigger than  $c({\DP})$ the closure of $\DP$ in $\php$. In fact we show,

\begin{lemma}\label{lem:ClosureSame} 
The closure of ${\cD_h}$ in $\cP_h$ is the same
as the closure, $c({\DP})$, of $\DP$ in $\exp (\php)\cdot x_o$. 
In particular, the action of $\Gh$ extends to $c({\DP})$.
\end{lemma}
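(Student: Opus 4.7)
The plan is to exploit two facts already in place: the Bruhat cell map
\[
\iota : \php \longrightarrow \cP_h,\qquad X\longmapsto \exp(X)\cdot x_o,
\]
is a holomorphic embedding onto the open dense subset $\Php\cdot x_o$, and, by the Harish--Chandra realization $\DP\subset \php$, the set $\DP$ is \emph{bounded} in $\php\cong\C^n$. Together these two inputs should be enough to force both closures to agree, so no analysis of the boundary is needed.

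First, I would argue $\overline{\cD_h}^{\cP_h}\subseteq \iota(c(\DP))$. Since $\DP$ is bounded in $\php$, its closure $c(\DP)$ is a compact subset of $\php$. The map $\iota$ is continuous (in fact a homeomorphism onto its image), so $\iota(c(\DP))$ is a compact, hence closed, subset of $\cP_h$ that contains $\iota(\DP)=\cD_h$. Being a closed set in $\cP_h$ containing $\cD_h$, it must contain $\overline{\cD_h}^{\cP_h}$.

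For the reverse inclusion $\iota(c(\DP))\subseteq \overline{\cD_h}^{\cP_h}$, take any $X\in c(\DP)$. By definition of the closure in $\php$ there is a sequence $X_n\in \DP$ with $X_n\to X$ in $\php$. Continuity of $\iota$ then gives $\iota(X_n)\to \iota(X)$ in $\cP_h$, and since $\iota(X_n)\in \cD_h$ we conclude $\iota(X)\in \overline{\cD_h}^{\cP_h}$. Combining the two inclusions and identifying $c(\DP)$ with $\iota(c(\DP))\subset \cP_h$ via the embedding $\iota$, we obtain $\overline{\cD_h}^{\cP_h}=c(\DP)$, which is the asserted equality.

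Finally, the $\Gh$-action is inherited: $\cD_h=\Gh\cdot x_o$ is a $\Gh$-orbit, and $\Gh$ acts on $\cP_h$ by homeomorphisms, so $\overline{\cD_h}^{\cP_h}$ is $\Gh$-invariant; transporting this action through the identification gives the extension of the $\Gh$-action to $c(\DP)$. The only subtle point of the whole argument is the boundedness of $\DP$ in $\php$ (which is why the analogous statement fails for unbounded realizations, as noted in the footnote); everything else is a routine closure manipulation in the Hausdorff space $\cP_h$.
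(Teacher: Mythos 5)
Your proof is correct, and it takes a cleaner, more purely topological route than the paper's. The key input in both cases is the boundedness of $\DP$ in $\php$, but you use it differently. The paper argues by writing a point $z$ in $\overline{\cD_h}^{\cP_h}$ as $\lim_j k_j a_j\cdot x_o$ (borrowing the $K_hA_h$-picture from the proof of Theorem \ref{th-boundh}), extracting a convergent subsequence $k_j\to k\in K_h$, and then using the fact that $k$ normalizes $\php$ while the $a_j\cdot x_o$ remain in a bounded region of $\exp\php\cdot x_o$, so that $z\in k\cdot\exp\php\cdot x_o=\exp\php\cdot x_o$; equality of the two closures then follows. You instead skip the decomposition entirely and observe that $c(\DP)$ is a closed bounded subset of $\C^n$, hence compact, so its image under the continuous embedding $\iota$ is compact and therefore closed in the Hausdorff compact manifold $\cP_h$; this immediately gives the nontrivial inclusion $\overline{\cD_h}^{\cP_h}\subseteq\iota(c(\DP))$, and the reverse inclusion is the trivial direction in both proofs. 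Your argument is more economical and avoids invoking the Cartan decomposition and the normalization property of $K_h$, making it slightly more transparent about why the bounded realization is the essential hypothesis (as both you and the paper's footnote note for the unbounded case).
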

\begin{proof} It is clear that the closure in $\exp (\php)\cdot x_o$ is contained in the
closure in $\cP_h$. As in the proof above, let $z= \lim_{j} k_ja_j\cdot x_o$ be in the closure of $\cD_h$ in $\cP_h$. Again let $k$ be a limit of a subsequence of $\{k_j\}$ and recall that $k_j$ and $k$ normalize
$\php$. As $\pm E_j\in \php$ it follows that $z\in k\cdot \exp \php\cdot x_o=\exp \php\cdot x_0$.
\end{proof}

Consequently, we do not have to distinguish if we are talking about the closure of $\cD_h$ in $\cP_h$, 
or the closure of ${\DP}$ in $\php$.

\subsection{Isotropy of boundary orbits}\hfill

We come to the determination of the isotropy of the various orbits in the boundary. Again,
we provide more details than are needed in the complex case, but they will be used later in the real case. Let

\begin{align}
\PhIe&:=\{g\in \Gh\mid g\cdot E (I,\epsilon) =E (I,\epsilon)\}\text{ and }\label{eq-Pba}\\
 \PhIeC&:=\{g\in\GhC\mid g\cdot E (I,\epsilon) =E (I, \epsilon)\}\, . \label{eq-Pbb}
\end{align}
Then the boundary orbit  $\cO_h (I,\epsilon ):=\Gh\cdot E (I,\epsilon)$ is isomorphic to  $\Gh/\PhIe$.
If $I=\{1,\ldots ,b\}$ and $\epsilon \in \{-1,1\}^b$ then we simply write $E(b,\epsilon) , Q_h(b,\epsilon),\cO_h(b,\epsilon)$, etc.
If $\epsilon =(1,\ldots ,1)$ then we do not include it in the notation.

As before, for a standard homomorphism $\dot \kappa : \sl (2,\C)\to \ghC$, i.e. (\ref{def-kj}),  we write $E_\kappa$ for $\dot\kappa (e)$,
$X_\kappa =\dot \kappa (e+f)$ etc. The corresponding homomorphism $\SL (2,\C)\to \GhC$ and the restriction to
$\SU (1,1)$ is denoted by $\kappa$. The following is valid for an arbitrary standard homomorphism.
To avoid even more burdensome notation we will use subscripts involving $\kappa$ only when it seems useful. We remark that
\cite{Sa} (Chapter 2 and 3) refers to a standard homomorphism as one $\dot\kappa : \lsl (2,\R)\to \gh$.
There should be no confusion from the terminology herein as the two are related by the Cayley transform introduced 
earlier, see \cite[p. 107--109]{Sa} for a detailed discussion.

Given a standard homomorphism let $\pi_\kappa :=\ad \circ \dot\kappa $. Then $\pi_\kappa$ is a finite dimensional representation of
$\sl (2,\C)$. As the irreducible representations of $\sl (2,\C)$ are  determined by their dimension with the  $1$-dimensional
representation being the trivial representation, the $2$-dimensional representation being the natural
representation of $\sl (2,\C)$ acting on $\C^2$, and the $3$-dimensional representation being the adjoint representation of
$\sl (2,\C)$ acting on itself.  The corresponding highest weights are $0$, $1$, and $2$. According
to \cite{Sa} Lemma 1.1 p. 90, every irreducible $\sl (2,\C)$ representation occurring
in $\pi_\kappa$ has dimension less than or equal to $3$. Following \cite{Sa}, for $\nu\in\{0,1,2\}$
denote by $\ghCnu$ (resp. $\ghnu$) the corresponding isotypic subspace. Then (as $ \lsl (2,\R)$ is split)
\begin{equation}\label{eq-gdecomp}
\gh = \ghZ\oplus \ghOne\oplus \ghTwo,
\end{equation}
and similarly for the complexification $\ghC$. From \cite{Sa} \S 1, Chapter 3 we obtain

\begin{lemma} Let $\kappa :\SU (1,1)\to \Gh$ be a standard homomorphism.  Let
\[Z_\kappa := \dot\kappa (Z_1)\text{ and } Z^{(1)}_\kappa := Z_h-\frac{1}{2}Z_\kappa \, .\]
Then the following conditions are equivalent:
\begin{enumerate}
\item $Z^{(1)}_\kappa=0$;
\item $\ghOne =\{0\}$ and $\ghZ$ is compact.
\end{enumerate}
\end{lemma}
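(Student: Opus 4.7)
The plan rests on two preliminary observations. First, $Z^{(1)}_\kappa$ always centralizes $\dot\kappa(\sl(2,\C))$, so $Z^{(1)}_\kappa\in\ghCZ$ regardless of hypothesis: for $E_\kappa=\dot\kappa(e)\in\php$ one has $[Z_h,E_\kappa]=iE_\kappa$ (since $E_\kappa\in\php$) and $[Z_\kappa,E_\kappa]=2iE_\kappa$ (since $E_\kappa$ has $\dot\kappa(h)$-weight $2$), giving $[Z^{(1)}_\kappa,E_\kappa]=0$; the same computation works for $F_\kappa$ and $Z_\kappa$. In particular $\ad Z_h$ and $\ad Z_\kappa$ commute and are simultaneously diagonalizable. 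Second, on $\ghCnu$ the operator $\ad Z_\kappa$ has eigenvalues $i\nu,i(\nu-2),\ldots,-i\nu$, whereas $\ad Z_h$ has eigenvalues only in $\{0,\pm i\}$ on all of $\ghC$.

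For $(1)\Rightarrow(2)$: assuming $Z^{(1)}_\kappa=0$ gives $\ad Z_h=\tfrac12\ad Z_\kappa$. On $\ghCOne$ the right-hand side has eigenvalues $\pm i/2$, which lie outside $\{0,\pm i\}$, forcing $\ghCOne=0$. On $\ghCZ$ the right-hand side vanishes (trivial isotypic), so $\ad Z_h=0$ on $\ghCZ$, i.e., $\ghCZ\subset\ker\ad Z_h=\khC$; hence $\ghZ\subset\kh$ is compact.

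For $(2)\Rightarrow(1)$, since $\gh$ is simple it suffices to prove $\ad Z^{(1)}_\kappa=0$ on $\ghC=\ghCZ\oplus\ghCTwo$ (the $\ghCOne$ summand is absent by hypothesis). On $\ghCZ\subset\khC$ (from the compactness of $\ghZ$), both $\ad Z_h$ (as $Z_h$ is central in $\kh$) and $\ad Z_\kappa$ (by definition of the trivial isotypic) vanish, so $\ad Z^{(1)}_\kappa|_{\ghCZ}=0$. The core is $\ghCTwo$. Combining $\ghCOne=0$ with $\ghCZ\subset\khC$ and the decomposition $\ghC=\khC\oplus\php\oplus\phm$, one first observes $\php,\phm\subset\ghCTwo$. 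Since $\php$ is abelian and contains $E_\kappa$, each $X\in\php$ satisfies $[E_\kappa,X]=0$, making $X$ a highest weight vector of the $\sl(2,\C)$-action; as only adjoint irreps contribute to $\ghCTwo$, this forces $\ad Z_\kappa X=2iX$, hence $\ad Z^{(1)}_\kappa X=iX-iX=0$ on $\php$.

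To promote the vanishing to all of $\ghCTwo$ I would show that $\php$ coincides with the top ($+2i$) weight space of $\ad Z_\kappa$ in $\ghCTwo$; then $\ad Z^{(1)}_\kappa=0$ on $\ghCTwo$ follows by Schur's lemma, or equivalently because $\ad Z^{(1)}_\kappa$ commutes with $\ad F_\kappa$ and $\ghCTwo$ is generated from its top weight space by repeated application of $\ad F_\kappa$. The top weight space is $\ad Z_h$-stable, so decomposes into $\ad Z_h$-eigenspaces for $+i$, $0$, $-i$, meeting $\php$, $\khC$, $\phm$ respectively. The $\phm$ part is $0$ because $\phm$ lies in the bottom weight space by the argument symmetric to $\php$. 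The $\khC$ part vanishes by a weight-clash: a nonzero highest weight $X\in\khC\cap\ghCTwo$ would give $[F_\kappa,X]\in\phm$ of weight $0$ under $\ad Z_\kappa$, contradicting $\phm\subset\ghCTwo^{-2i}$ unless $[F_\kappa,X]=0$, but then $X$ heads a trivial irrep rather than the adjoint. This last step, which combines the abelian-ness of $\phm$ with $\ghCOne=0$, is the main obstacle.
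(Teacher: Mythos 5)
Your argument is correct, and it fills a genuine gap: the paper gives no proof of this lemma, deferring instead to Satake (\S 1, Chapter 3 of \cite{Sa}). The key structural observation you isolate --- that $Z_\kappa^{(1)}$ \emph{always} centralizes $\dot\kappa(\sl(2,\C))$, so that $\ad Z_h$ and $\ad Z_\kappa$ commute and are simultaneously diagonalizable --- is exactly what makes both implications a clean eigenvalue comparison, and $(1)\Rightarrow(2)$ is immediate from it. For $(2)\Rightarrow(1)$, a couple of small points. First, the step $\php,\phm\subset\ghCTwo$ quietly uses that the isotypic decomposition $\ghC=\ghCZ\oplus\ghCOne\oplus\ghCTwo$ is $\dthh$-stable (which holds because $\kappa$ is a \emph{standard} homomorphism); from that, $\ghCZ\subset\khC$ forces $\ghCZ\cap\phC=0$, hence $\phC\subset\ghCTwo$. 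You should state this. Second, your worry about ``the main obstacle'' is unfounded: the weight-clash argument you sketch is complete. If $X\in\khC$ lies in the top weight space $V_{+2}$ of $\ad Z_\kappa$ in $\ghCTwo$, then $[F_\kappa,X]\in[\phm,\khC]\subset\phm\subset V_{-2}$, while it also lies in $V_0$; so $[F_\kappa,X]=0$, and since $[E_\kappa,X]=0$ too, $X$ generates a trivial $\sl(2)$-module, incompatible with $X\in\ghCTwo$ having $\ad\dot\kappa(h)$-weight $2$ unless $X=0$. Thus $V_{+2}=\php$, and because $\ad Z_\kappa^{(1)}$ commutes with $\ad F_\kappa$ and vanishes on $\php$, it vanishes on all of $\ghCTwo=\php\oplus\ad F_\kappa(\php)\oplus(\ad F_\kappa)^2(\php)$. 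Combined with the vanishing on $\ghCZ$ and the simplicity of $\gh$ (so $\ad$ is faithful and $\fz(\gh)=0$), this gives $Z_\kappa^{(1)}=0$. The proof is entirely self-contained, using only $\sl(2)$-module theory and the abelianness of $\phpm$, which is a pleasant alternative to chasing the reference.
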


Notice that each of the spaces $\ghCnu$ is $\dsh$ and $\dthh$ stable. As $\dot\kappa$ is standard, it intertwines the respective Cartan involutions and conjugations. Hence we have similar decompositions for $\kh$, $\khC$, $\ph$, $\phC$, and   $\phpm$.  In particular, we have
\begin{equation}\label{eq-phpm}
\phpm =\phPmZ\oplus \phPmOne \oplus \phPmTwo \, .
\end{equation}
Also, $\ghCZ (=\mathfrak{z}_{\ghC}( \dot\kappa (\lsl (2,\C) ))$ is a subalgebra, as is
\begin{equation}\label{eq-gEven}
\ghEven =\ghZ\oplus \ghTwo\, .
\end{equation}
Since $\ghEven$ is $\dthh$-stable, it follows that $\ghEven$ is a reductive subalgebra. Furthermore $Z_h\in \ghEven$,
so each non-compact ideal of $\ghEven$ is of Hermitian type.

Next we decompose $\ghEven $ into ideals, $\ghEven= \bigoplus_j \gh_j $, such that $\gh_0$ is the maximal compact ideal, while $\gh_j$ is simple and noncompact for $j\ge 1$. It follows that the maximal abelian ideal of $\ghEven$ is contained in $\gh_0$, and each $\gh_j$, $j\ge 1$, is of Hermitian type. Define
\begin{eqnarray}
\ghone&:=&\bigoplus_{\gh_j\subseteq \ghZ}\gh_j\nonumber\\
\mathfrak{l}_2&:=&\gh_0\nonumber\\
\ghKappa&:=&\bigoplus_{\gh_j\subseteq\ghZ,\, j\ge 1}\gh_j\label{eq-decomp}\\
\ghKa&:=&\bigoplus_{\gh_j\varsubsetneq \ghZ}\gh_j\nonumber\\
&=&\khKa\oplus\phKa\, .\nonumber
\end{eqnarray}
Then $\ghone\subseteq\ghZ, \ghTwo \subseteq \ghKa$ and $\ghEven = \ghone \oplus \ghKa$. The corresponding analytic subgroups of $\Gh$ will be denoted by the respective upper case Latin letter.

Finally we arrive at the parabolic subalgebra corresponding to $\kappa$. Recall that $X_\kappa =\dot \kappa (e+f)$. Let $
\mhZero:= \gh (\ad X_\kappa ;0)$, $\nhOne:= \gh (\ad X_\kappa ;1)$,  $\nhTwo :=\gh (\ad X_\kappa ;2)$,
$\nh:= \nhOne\oplus \nhTwo$ and $\qk:= \mhZero \oplus \nh$\footnote{Note that our notation here differs from
\cite[p.95]{Sa} where $\nhOne$ is denoted by $V_\kappa$ and $\nhTwo$ is
denoted by $U_\kappa$.} 
Then
$\qk$
is a maximal parabolic subalgebra of $\gh$. Denote by $\Qkappa=\MkappaZ \Nkappa$ the
corresponding maximal parabolic subgroup in $\Gh$. 
It will be useful to give a more detailed description of $\Qkappa$, the nilradical $\Nkappa$,
the structure of $\MkappaZ$ and the connected component
of $\MkappaZ$.

Let  $F_\kappa$ be the finite abelian group generated $\gamma_\kappa = \exp (\pi i X_\kappa)$. We have the
Levi factor
\begin{equation}
\MkappaZ:= F_\kappa (\MkappaZ)_0 = (\MkappaZ\cap \Kh)(\MkappaZ)_0. 
\end{equation}
We note that by \cite[p. 287]{Va} every $\Ad (m)$, $m\in \MkappaZ$, is in $\Ad (\fm_{h\kappa}^{0\C} )$ but not necessarily,
as the set $F_\kappa$ shows, in $\Ad (\mhZero)$. 
 
Now $F_\kappa$ preserves the decomposition
(\ref{eq-gdecomp}), and as $F_\kappa \subset \Kh$ it also preserves (\ref{eq-phpm}).
Finally
\[\Ad (\gamma_\kappa)|_{\ghEven}=\id\text{ and } \Ad (\gamma_\kappa )|_{\ghOne}=-\id\, .\]
Thus each $\mathfrak{p}^{[\nu ]}_{h\pm}$ is a $K_h\cap \MkappaP$-module.

Consider next the Lie algebras defined in (\ref{eq-decomp}) and their relationship to the Levi factor $\mhZero$. Recall $\bc_\kappa =\exp (\frac{\pi i}{4}Y_\kappa)$ and  $\bC_\kappa =\Ad (c_\kappa)$. Set $\ghbKTwo =\bC_\kappa^{-1}({\khKaC})\cap\gh$.
Then $\mhZero = \mhone \oplus
\mhtwo\oplus \R(X_\kappa)$  where
$\mhone =\mathfrak{l}_2\oplus
\ghone$,
$\mathfrak{l}_2$ is a compact ideal in $\mhZero$ and
$\ghone $ is of Hermitian non-compact type having
$Z^{(1)}_\kappa$
defining the almost complex structure, and $\mhtwo\oplus \R(X_\kappa) = \ghbKTwo $.
Let $M^{(i)}_\kappa$ be the connected
subgroup with Lie algebra $\frak m^{(i)}_\kappa$.  Then we have
$(M^0_\kappa )_o =M^{(1)}_\kappa M^{(2)}_\kappa$ exp $\R (X_\kappa)$ and
$M^0_\kappa =F_\kappa M^{(1)}_\kappa  M^{(2)}_\kappa$exp $\R(X_\kappa) $.

\begin{lemma}\label{le:Complex} The following holds true:
\begin{enumerate}
\item $\ad\,{\ghone}|_{\nhOne}=0$ and $\ad\,{\ghbKTwo}|_{\nhOne}$ is faithful. The orbit $G_h^{(2)}\cdot E_\kappa$ is a self-dual cone.
\item Let $I_o=-\ad (Y_{\kappa})\circ \dthh|_{\nhOne}=\dthh\circ \ad (Y_\kappa)|_{\nhOne}=2\ad\,(Z^{(1)}_\kappa)|_{\nhOne}$. Then $I_o$ defines a complex structure on $\nhOne$. We have
\[\nhOneC (I_o;i)=\nhOneC \cap C_\kappa^{-1}(\phC )\text{ and }\nhOneC (I_o;-i)=\nhOneC \cap C_\kappa^{-1}(\khC)\, .\]
\end{enumerate}
\end{lemma}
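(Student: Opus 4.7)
The plan is to reduce both parts to explicit $\sl(2,\C)$-representation-theoretic computations performed inside the isotypic decomposition $\gh=\ghZ\oplus\ghOne\oplus\ghTwo$ of (\ref{eq-gdecomp}), combined with the ideal decomposition $\ghEven=\ghone\oplus\ghKa$ from (\ref{eq-decomp}).

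For part (1), I would first observe that $\nhOne\subseteq\ghOne$: since $X_\kappa=\dot\kappa(e+f)$, the operator $\ad X_\kappa$ has eigenvalues in $\{0,\pm1,\pm2\}$ under the $\sl(2,\C)$-isotypic decomposition, and the eigenvalue $+1$ appears only in the natural $2$-dimensional isotypic $\ghOne$. Since $\ghone\subset\ghZ$ centralizes $\dot\kappa(\sl(2,\C))$, its adjoint action preserves each isotypic component, and the vanishing $\ad\ghone|_{\nhOne}=0$ therefore reduces to $[\ghone,\ghOne]=0$. The latter follows from the decomposition $\ghEven=\ghone\oplus\ghKa$ into mutually commuting ideals together with the structure theory of \cite[Chap.~III]{Sa}, which shows that the natural-isotypic part $\ghOne$ is generated purely by $\ghKa$. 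Faithfulness of $\ad\ghbKTwo|_{\nhOne}$ then follows because $\ghbKTwo=\bC_\kappa^{-1}(\khKaC)\cap\gh$ picks out precisely the Cayley-transformed $\khKa$-component acting on the natural iso, and the action of $\ghKa$ on $\ghOne$ is faithful by semisimplicity (no nontrivial ideal can act trivially on its associated weight pieces). The self-dual cone claim is standard Jordan-algebra theory: $\nhTwo$ is a Euclidean Jordan algebra with unit $E_\kappa$, $G_h^{(2)}$ acts as the identity component of its structure group, and $G_h^{(2)}\cdot E_\kappa$ is the symmetric cone of squares.

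For part (2), the three expressions for $I_o$ are reconciled directly. The equality $\dthh\circ\ad Y_\kappa=-\ad Y_\kappa\circ\dthh$ is immediate from $\dthh(Y_\kappa)=-Y_\kappa$ (since $Y_\kappa\in\ph$), giving the first equality; note that the composition $\dthh\circ\ad Y_\kappa$ sends $\nhOne$ to itself because $\ad Y_\kappa$ interchanges the $\pm1$-eigenspaces of $\ad X_\kappa$ while $\dthh$ does the same. For the equality with $2\ad Z^{(1)}_\kappa$, expand $Z^{(1)}_\kappa=Z_h-\tfrac{1}{2}Z_\kappa$ and reduce to the $V_2$-isotypic; the $\su(1,1)$-relations from the Basic Example show that $2\ad Z_h$ and $\ad Z_\kappa$ differ precisely by $\dthh\circ\ad Y_\kappa$ on $\nhOne$, upon checking cyclic brackets in the basic triple $(Z_1,X_1,Y_1)$ and the action of $Z_h$ on $\phPm$. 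That $I_o$ is a complex structure follows from $I_o^2=\dthh\,\ad Y_\kappa\,\dthh\,\ad Y_\kappa=-(\ad Y_\kappa)^2\circ\dthh^2=-(\ad Y_\kappa)^2$, together with $(\ad Y_\kappa)^2=\id$ on $\ghOne$, which is verifiable in $\sl(2,\C)$ since $Y_1^2=\id$ on the natural $2$-dimensional representation.

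For the eigenspace identification, I would use $(\ad Y_\kappa)^2=\id$ on $\ghOneC$ to simplify the Cayley operator:
\[
\bC_\kappa=\exp\!\bigl(\tfrac{\pi i}{4}\ad Y_\kappa\bigr)=\cos(\tfrac{\pi}{4})\id+i\sin(\tfrac{\pi}{4})\ad Y_\kappa=\tfrac{1}{\sqrt{2}}(\id+i\ad Y_\kappa).
\]
For $v\in\nhOneC$ with $I_o v=iv$, so that $\ad Y_\kappa v=i\dthh v$, a short calculation yields $\bC_\kappa(v)=\tfrac{1}{\sqrt{2}}(v-\dthh v)$, which lies in $\phC$ since $\dthh(v-\dthh v)=-(v-\dthh v)$. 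The reverse inclusion is obtained by running the computation backwards using $\dthh\bC_\kappa(v)=-\bC_\kappa(v)$ and re-expressing the constraint as $I_o v=iv$; the $-i$-eigenspace case is strictly parallel, with $\khC$ replacing $\phC$. The main obstacle is the structural input in part (1)---the vanishing $[\ghone,\ghOne]=0$---which genuinely relies on Satake's detailed description of the simple-ideal structure of $\ghEven$ rather than any formal $\sl(2,\C)$-isotypic argument; part (2) is then a matter of careful sign bookkeeping once the reduction to the Basic Example is in place.
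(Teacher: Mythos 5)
The paper does not supply a proof: Lemma~\ref{le:Complex} is collected from Satake \cite[Ch.~III]{Sa}, so there is no internal argument against which to measure your write-up.

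Your treatment of the eigenspace identification in part~(2) is correct and is the right argument: since $(\ad Y_\kappa)^2=\id$ on the $V_2$-isotypic component, $\bC_\kappa$ linearizes to $\tfrac{1}{\sqrt{2}}(\id+i\ad Y_\kappa)$ there, and from $I_o v=iv$ one gets $\bC_\kappa v=\tfrac{1}{\sqrt{2}}(v-\dthh v)\in\phC$; likewise $I_o^2=-(\ad Y_\kappa)^2=-\id$ on $\nhOne$.

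The genuine gap is the middle identity $\dthh\circ\ad Y_\kappa=2\ad Z^{(1)}_\kappa$ on $\nhOne$. This is not an $\su(1,1)$ bookkeeping exercise, because $Z_h$, hence $Z^{(1)}_\kappa=Z_h-\tfrac12 Z_\kappa$, lies in $\zh$ and not in $\dot\kappa(\sl(2,\C))$; its adjoint action on $\ghOne$ is governed by the ambient Hermitian structure. What is actually required is the compatibility of the $\ad Z_h$-grading with the $\ad Z_\kappa$-grading on $\ghCOne$ --- concretely that $\phPmOne\subset\ghC(\ad Z_\kappa;\pm i)$, which one extracts from Moore's theorem --- and you never establish it. Worse, filling in the arithmetic in $\gh=\su(2,1)$ with $\kappa$ attached to $\gamma_1=\epsilon_1-\epsilon_3$ (so $Z_h=\tfrac{i}{3}(E_{11}+E_{22}-2E_{33})$, $Z_\kappa=i(E_{11}-E_{33})$, $Y_\kappa=i(E_{13}-E_{31})$, $\nhOneC=\Span_\C\{E_{12}+E_{32},\,E_{23}-E_{21}\}$) gives $I_o(E_{12}+E_{32})=i(E_{12}+E_{32})$ while $2\ad Z^{(1)}_\kappa(E_{12}+E_{32})=-i(E_{12}+E_{32})$: the two operators differ by a sign with the stated conventions. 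A proof that consists of a reduction-to-$\sl_2$ slogan cannot surface, let alone resolve, this; pinning the sign down is exactly the content of the claim.

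For part~(1) you defer the structural input --- $[\ghone,\ghOne]=0$, faithfulness of $\ad\ghbKTwo|_{\nhOne}$, the symmetric cone --- entirely to Satake, and that is appropriate; but it means this part of your proposal is a citation rather than an argument. The one-line ``faithfulness by semisimplicity'' remark is too loose to stand on its own: knowing which ideals of $\ghbKTwo$ can annihilate $\nhOne$ is again Satake's structure theory, not a formal consequence of semisimplicity.
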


Now we have all the notation to give a detailed description of the stabilizer of $E_\kappa\in \DP$ and hence the isotropy of the orbit  $\cO_h(\kappa)$, see \S 1, Chapter 3 and Proposition 8.5, p. 142 in \cite{Sa}.

\begin{theorem}\label{thm:MainS2} Let $\kappa :\SU (1,1)\to \Gh$ be a standard homomorphism.
Then one has the following:
\begin{enumerate}
\item $Z_\kappa \in \khTwo$.
\item $Z_\kappa^{(1)} \in \khZ$.
\item $\ghEven = \gh (\ad Y_\kappa; 0)\oplus \gh (\ad Y_\kappa , 2)\oplus \gh (\ad Y_\kappa ;- 2)
=\gh (\bC_\kappa^4;1)$ as a 
$\MkappaZ \cap \Kh$-module.
\item $\nhTwoC = \bC_\kappa^{-1}(\phPTwo )$.
\item $\ghOne = \gh (\ad Y_\kappa ; 1)\oplus \gh (\ad Y_\kappa ; -1)=\gh (\bC_\kappa^4;-1)$.
 \item $\ad Z_\kappa^{(1)}|_{\ghZ}=\ad Z_h|_{\ghZ}$.  In particular, the $Z_h$-element in the Hermitian type Lie algebra
$\ghZ$ is $Z_\kappa^{(1)}$.
\item If $Z_\kappa^{(1)}\not= 0$ then the stabilizer of $E_\kappa$ in $\Gh$ is $Z_{\Gh}(X_\kappa,Z^{(1)}_\kappa)N_{h\kappa}$. Hence there is a fibration
 
$$M_{h\kappa}^{(1)}/K_h\cap M^{(1)}_{h\kappa} \to \cO_h(\kappa)\to G/Q_{h\kappa}\cong K_h/K_h\cap M^{0}_{h\kappa} $$  
with typical fiber a Hermitian symmetric space. 
\item If $Z_\kappa^{(1)}=0$  then the stabilizer of $E_\kappa$ in $\Gh$ is $Z_{\Gh}(X_\kappa)N_{h\kappa} = Q_{h\kappa}$. Hence the orbit $\cO_h(\kappa) \cong G/Q_{h\kappa}\cong K_h/K_h\cap M^{0}_{h\kappa}$. In particular,
in this case $\cO_h(\kappa)$ is compact.
\end{enumerate}
\end{theorem}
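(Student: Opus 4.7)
The plan is to leverage the $\sl(2,\C)$-isotypic decomposition $\gh = \ghZ \oplus \ghOne \oplus \ghTwo$ determined by the standard homomorphism $\dot\kappa$, and to reduce every claim to a direct computation in $\sl(2,\C)$ transported by $\dot\kappa$ and the Cayley element $\bc_\kappa$.

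For items (1)--(6) I would dispatch the structural identities one by one. Item (1): $Z_\kappa = \dot\kappa(Z_1)$ belongs to $\dot\kappa(\sl(2,\C))$, which by construction is an $\ghTwo$-isotypic component, and $Z_\kappa \in \kh$ because $\dot\kappa$ intertwines Cartan involutions by \eqref{eq-kappaSigma}; hence $Z_\kappa \in \ghTwo \cap \kh = \khTwo$. Item (2): $Z_h \in \kh$ forces $Z_\kappa^{(1)} \in \kh$, and since $Z_h \in \zh$ commutes with all of $\dot\kappa(\sl(2,\C))$, the $\sl(2)$-Schur-type argument forces its $\ghOne$- and $\ghTwo$-parts to be central in $\sl(2)$, which leaves only a multiple of $Z_\kappa$ in $\ghTwo$ and nothing in $\ghOne$, so $Z_\kappa^{(1)} \in \khZ$. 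Items (3) and (5) follow by Cayley transform: by \eqref{eq-ZjXj} we have $\Ad(\bc_\kappa) H_\kappa = X_\kappa$, so $\ad X_\kappa$-weight spaces are $\Ad(\bc_\kappa)$-images of $\ad H_\kappa$-weight spaces, and the weights $\{-2,0,2\}$ on $\ghTwo$ (resp. $\{-1,1\}$ on $\ghOne$) are exactly the non-trivial $\sl(2)$-weights in each isotypic block; the identification with $\gh(\bC_\kappa^4;\pm 1)$ then uses the noted fact that $\bC_\kappa^4 = \Ad(\gamma_\kappa)$ is $+\id$ on $\ghEven$ and $-\id$ on $\ghOne$. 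Item (4) is the same Cayley argument, applied to $\phPTwo$ intertwined with the top $\ad X_\kappa$-weight space inside $\nhTwoC$. Item (6) is immediate: $\ad Z_\kappa$ kills $\ghZ$, so $\ad Z_h|_{\ghZ} = \ad Z_\kappa^{(1)}|_{\ghZ}$, and since $\ghone$ is the Hermitian part of $\ghZ$, this identifies $Z_\kappa^{(1)}$ as its $Z_h$-element.

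For (7) and (8) I would compute the Lie algebra stabilizer of $E_\kappa \in \partial\DP$ under the holomorphic $\Gh$-action on $\cP_h$. Using the $\ad X_\kappa$-grading $\gh = \bigoplus_{j=-2}^{2}\gh(\ad X_\kappa;j)$ and reducing to $\sl(2,\C)$ factor by factor in the Harish-Chandra embedding: the nilradical $\nh = \nhOne \oplus \nhTwo$ fixes $E_\kappa$; the factor $\mhtwo$ fixes $E_\kappa$ since by Lemma \ref{le:Complex} the orbit $G_h^{(2)}\cdot E_\kappa$ is the self-dual cone whose vertex stabilizer is all of $M^{(2)}_{h\kappa}$; the factor $\R X_\kappa$ does not fix $E_\kappa$, as it induces a hyperbolic translation on the boundary of each rank-one disk; and within $\mhone = \mathfrak{l}_2 \oplus \ghone$, the Hermitian factor $\ghone$ fixes $E_\kappa$ exactly when it commutes with the complex structure $\ad Z_\kappa^{(1)}$, which cuts out the maximal compact $K_h \cap M^{(1)}_{h\kappa}$ precisely when $Z_\kappa^{(1)} \neq 0$. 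Exponentiating and invoking Proposition \ref{le:conn} for the connectedness of the relevant centralizers yields the stabilizer $Z_{\Gh}(X_\kappa,Z_\kappa^{(1)})\Nkappa$ in case (7), with fiber $M^{(1)}_{h\kappa}/K_h\cap M^{(1)}_{h\kappa}$ which is Hermitian symmetric by item (6); and the full $\Qkappa$ in case (8) when $\ghone = 0$. The compactness of $\cO_h(\kappa)$ in (8) follows from $\Gh/\Qkappa \cong \Kh/\Kh\cap M_{h\kappa}^0$ via the Iwasawa decomposition together with the compactness of $\Kh$.

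The main obstacle will be pinning down exactly which $g \in M^{(1)}_{h\kappa}$ fix $E_\kappa$ as a point of $\cP_h$, as opposed to merely centralizing $X_\kappa$ under the adjoint action. This requires distinguishing the real Lie-algebra action on $X_\kappa \in \ph$ from the holomorphic action on $\exp(E_\kappa)\cdot x_o \in \cP_h$ and invoking the compatibility of $Z_\kappa^{(1)}$ with the Harish-Chandra embedding of the sub-domain attached to $\ghone$; once that analytic identification is settled, the remainder reduces to rank-one $\SU(1,1)$ bookkeeping carried out via each $\kappa_j$.
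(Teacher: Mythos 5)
The paper does not actually prove this theorem; immediately before its statement the authors write ``see \S1, Chapter 3 and Proposition 8.5, p.~142 in [Sa]'', so there is no paper-internal argument to compare against. Your proposal is therefore a reconstruction of Satake's result, and on its own terms it has one concrete error and one substantial gap.

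Your argument for item (2) rests on the claim that $Z_h\in\zh$ commutes with all of $\dot\kappa(\sl(2,\C))$. This is false: $E_\kappa\in\php$ and $\ad Z_h|_{\php}=i\,\id$, so $[Z_h,E_\kappa]=iE_\kappa\ne 0$; the center of $\kh$ centralizes $\kh$ but not $\ph$. What is true is the reverse: $Z_\kappa^{(1)}$, not $Z_h$, centralizes $\dot\kappa(\sl(2,\C))$, and this is precisely what needs proving. It follows from the direct identity $[Z_\kappa^{(1)},E_\kappa]=[Z_h,E_\kappa]-\tfrac12[Z_\kappa,E_\kappa]=iE_\kappa-\tfrac12(2i)E_\kappa=0$ (similarly for $F_\kappa$, trivially for $H_\kappa$), whence $Z_\kappa^{(1)}\in\fz_{\gh}(\dot\kappa(\sl(2,\C)))\cap\kh=\khZ$. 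Note also that even granting your premise, a Schur-type argument would not force the $\ghTwo$-part of $Z_h$ to be a multiple of $Z_\kappa$, since $\ghTwo$ is generally several copies of the adjoint representation and its $\dot\kappa(\sl(2))$-invariants are correspondingly higher-dimensional. For items (7)--(8) the sketch stops where the work begins, as you yourself flag at the end: the Lie-algebra stabilizer of $E_\kappa\in\cP_h$ is $\gh\cap\Ad(\exp E_\kappa)(\khC\oplus\phm)$, which must be extracted from the quadratic vector-field formula for the $\Gh$-action in the Harish-Chandra chart, and identifying this with $\fz_{\gh}(X_\kappa,Z_\kappa^{(1)})\oplus\nh$ (resp.\ $\qk$ when $Z_\kappa^{(1)}=0$) is the content of Satake's Proposition~8.5, not ``rank-one $\SU(1,1)$ bookkeeping''. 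The remaining items are fine in outline, though for (3) and (5) the detour through $H_\kappa$, $X_\kappa$ and the Cayley element is unnecessary: $Y_\kappa$ is already a semisimple element of $\dot\kappa(\sl(2,\C))$ with $\ad$-eigenvalues $0,\pm1,\pm2$ distributed across the isotypic blocks exactly as required, and $\bC_\kappa^4=\Ad(\gamma_\kappa)$ is $e^{\pi i\lambda}$ on the $\ad Y_\kappa$-eigenvalue-$\lambda$ space, which gives the $\pm1$ dichotomy directly.
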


Next consider the Cartan decomposition of $\mhZero$ corresponding to the Cartan involution
$\dot\theta|_{\mhZero}$. First we have
$\mathfrak{g}^{(1)}_{h}=\mathfrak{k}^{(1)}_h\oplus \phone$,
and $(\phone)^{\C}=\phoneP\oplus \phoneM$, with
$\phonePM$ simultaneous $\pm i$ eigenspaces of $\ad Z^{(1)}_\kappa$ and $\ad\, Z_h$.  Moreover we have the
identification $\phPmZ=\phonePM$.

Now consider $\mhtwo$, the other summand of $\mhZero$,  with Cartan
decomposition $\kh\cap \mhtwo \oplus \phtwo$. Note that
\begin{equation}
\mathfrak{k}_{h\kappa}^*=\kh\cap\mhtwo \oplus \khTwo\, .
\end{equation}
\begin{lemma} $Z_\kappa$ is in the center of $\khKa$ and
$\bC_\kappa^{-1}(\khKaC) = \mhtwoC  \oplus \C X_\kappa$.
\end{lemma}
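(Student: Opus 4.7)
The lemma has two separable claims, which I plan to handle in turn.

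For the first, $Z_\kappa \in Z(\khKa)$: the membership $Z_\kappa \in \khKa$ follows by combining Theorem~\ref{thm:MainS2}(1), which places $Z_\kappa \in \khTwo$, with the inclusion $\ghTwo \subseteq \ghKa$ recorded right after \eqref{eq-decomp}. For centrality I would rewrite $Z_\kappa = 2Z_h - 2Z_\kappa^{(1)}$ via the definition $Z_\kappa^{(1)} = Z_h - \tfrac{1}{2}Z_\kappa$. Then $[Z_h, \khKa] = 0$ because $Z_h \in \fz(\kh)$, and $[Z_\kappa^{(1)}, \khKa] = 0$ because $Z_\kappa^{(1)}$ sits in $\ghone$ (being the element that defines the almost complex structure on that Hermitian ideal, it must lie in the center of the compact part of $\ghone$), while $\ghone$ and $\ghKa$ are sums of distinct simple ideals of $\ghEven$ and therefore commute. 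The one nonroutine point is the identification $Z_\kappa^{(1)} \in \ghone$, which relies on the structural discussion of $\mhZero = \mhone \oplus \mhtwo \oplus \R X_\kappa$ immediately preceding the lemma.

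For the second claim, the right-hand side is, by construction of $\mhtwo$, the complexification of $\ghbKTwo = \bC_\kappa^{-1}(\khKaC) \cap \gh$. So the identity is equivalent to the $\dsh$-stability of the complex subspace $V := \bC_\kappa^{-1}(\khKaC)$, for then $V = (V \cap \gh)^\C = \mhtwoC \oplus \C X_\kappa$. Using $Y_\kappa \in \gh$, the anti-holomorphic involution gives $\sigma_h(\bc_\kappa) = \bc_\kappa^{-1}$, whence $\dsh \circ \bC_\kappa = \bC_\kappa^{-1} \circ \dsh$, and the desired $\dsh$-stability of $V$ reduces to the assertion $\bC_\kappa^2(\khKaC) = \khKaC$.

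The verification of $\bC_\kappa^2(\khKaC) = \khKaC$ is the technical heart. For $v \in \khKaC$ I would check two things: first, $\bC_\kappa^2 v \in \ghKaC$, because $\bc_\kappa^2 \in \kappa(\SL(2,\C))$ normalizes each ideal of $\ghEven$ and in particular preserves $\ghKaC$; second, $\bC_\kappa^2 v$ is $\dthh$-fixed, by the analogous identity $\dthh \circ \bC_\kappa = \bC_\kappa^{-1} \circ \dthh$ (from $\theta_h(\bc_\kappa) = \bc_\kappa^{-1}$), which yields $\dthh(\bC_\kappa^2 v) = \bC_\kappa^{-2} v$; Theorem~\ref{thm:MainS2}(3) identifies $\ghEven$ as the $\bC_\kappa^4$-fixed subspace, so on $\khKaC \subseteq \ghEven^\C$ one has $\bC_\kappa^{-2} v = \bC_\kappa^2 v$. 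These two facts together give $\bC_\kappa^2 v \in \kh^\C \cap \ghKaC = \khKaC$, and bijectivity (again from $\bC_\kappa^4 = \id$ on $\khKaC$) upgrades this inclusion to equality. The algebraic bookkeeping with Cayley transforms here, anchored in Theorem~\ref{thm:MainS2}(3), is the main place I expect care will be needed.
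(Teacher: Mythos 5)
The paper states this lemma without proof, so there is no argument in the text to compare against; your proof is correct, and the approach through the Cayley transform relations and the ideal decomposition of $\ghEven$ is the natural one.

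Two remarks that may be useful. Your identity $\theta_h(\bc_\kappa) = \bc_\kappa^{-1}$ is right precisely because $Y_\kappa \in \ph$, so that the $\C$-linear $\dthh$ sends $\tfrac{\pi i}{4}Y_\kappa \mapsto -\tfrac{\pi i}{4}Y_\kappa$; note the text's parenthetical $Y(I,\epsilon)\in\kh$ in \S 1.2 is a typo for $Y(I,\epsilon)\in\ph$, as one sees from $\dot\theta_1 Y_1 = -Y_1$ in $\su(1,1)$ together with $\dot\kappa\circ\dot\theta_1 = \dthh\circ\dot\kappa$. Also, the placement $Z_\kappa^{(1)} \in \ghone$, which you rightly identify as the one nonroutine step and correctly draw from the pre-lemma structure discussion, can be checked directly if you want an independent anchor: $\ad Z_\kappa^{(1)}$ commutes with $\ad\,\dot\kappa(\sl(2,\C))$ and with $\dthh$, hence acts by a scalar on each adjoint summand of $\ghCTwo$; writing $\ad Z_h = \ad Z_\kappa^{(1)} + \tfrac{1}{2}\ad Z_\kappa$ and matching against the known spectrum $\{0,\pm i\}$ of $\ad Z_h$ forces that scalar to be $0$, so $\ad Z_\kappa^{(1)}|_{\ghKa}=0$; since $\ghKa$ is a sum of simple ideals and therefore has trivial center, the $\ghKa$-component of $Z_\kappa^{(1)}$ vanishes, i.e.\ $Z_\kappa^{(1)} \in \ghone$.
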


Define now
\[L_\kappa =Z_{\Kh}(Z_\kappa )=Z_{\Kh}(Z_h^{(1)})\, .\]
\begin{lemma}
The Lie algebra $\mathfrak{l}_\kappa$ of $L_\kappa$ decomposes into ideals as
\[\mathfrak{l}_{\kappa}=\kh \cap \mhone\oplus \khKa\]
and $Z_\kappa$ defines an almost complex structure on $\Kh/L_{\kappa}$.
\end{lemma}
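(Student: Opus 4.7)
The plan is to identify $\mathfrak{l}_\kappa$ as $Z_{\kh}(Z_\kappa)$ (equivalently $Z_{\kh}(Z_\kappa^{(1)})$, since $Z_h\in\zh$ is central in $\kh$ and $Z_\kappa^{(1)}=Z_h-\tfrac{1}{2}Z_\kappa$), and then to decompose this centralizer using the $\sl(2,\C)$-isotypic splitting $\kh=\khZ\oplus\khOne\oplus\khTwo$ inherited from $\dot\kappa$.

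First I would analyze $\ad Z_\kappa=\ad\,\dot\kappa(iH_1)$ on the isotypic pieces. On $\ghZ$ it vanishes; on $\ghOne$ it has complex eigenvalues $\pm i$ and hence no real kernel; on $\ghTwo$ it has complex eigenvalues $0,\pm 2i$. Consequently $\khZ\subseteq\mathfrak{l}_\kappa$ automatically and $\khOne\cap\mathfrak{l}_\kappa=0$. For the $\khTwo$-contribution I invoke the preceding lemma (\emph{$Z_\kappa$ is central in $\khKa$}); combined with $\khTwo\subseteq\khKa$, which follows from $\ghTwo\subseteq\ghKa$, this yields $\khTwo\subseteq\ker\ad Z_\kappa$, and therefore
\[\mathfrak{l}_\kappa=\khZ\oplus\khTwo.\]

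Next I would translate to the ideal decomposition $\ghEven=\ghone\oplus\ghKa$. Since $\ghone\subseteq\ghZ$ and $\ghTwo\subseteq\ghKa$, intersecting with $\ghZ$ gives $\ghZ=\ghone\oplus(\ghKa\cap\ghZ)$, so $\khZ=(\kh\cap\ghone)\oplus(\khKa\cap\ghZ)$ and $\khTwo=\khKa\cap\ghTwo$; summing yields $\mathfrak{l}_\kappa=(\kh\cap\ghone)\oplus\khKa$. To match the stated form $(\kh\cap\mhone)\oplus\khKa$ I would check that $\mathfrak{l}_2\subseteq\ghone$: the compact ideal $\mathfrak{l}_2$ sits in $\kh$, and the relations $[\ph,\kh]\subseteq\ph$ together with $\mathfrak{l}_2\cap\ph=0$ force $X_\kappa,Y_\kappa\in\ph\cap\ghEven$ to commute with $\mathfrak{l}_2$; hence so does $Z_\kappa=-\tfrac{1}{2}[X_\kappa,Y_\kappa]$, placing $\mathfrak{l}_2$ in the trivial $\sl_2$-isotypic and therefore in $\ghone$. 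The two summands commute and intersect trivially because $\mhone$ and $\ghKa$ are complementary ideals of $\ghEven$, so they are ideals of $\mathfrak{l}_\kappa$.

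For the almost complex structure, the decomposition identifies $\kh/\mathfrak{l}_\kappa$ canonically with $\khOne$, and the eigenvalue analysis gives $(\ad Z_\kappa)^2=-\id$ on $\khOne$; since $Z_\kappa\in\mathfrak{l}_\kappa$, the operator $\ad Z_\kappa|_{\khOne}$ is $\mathfrak{l}_\kappa$-equivariant and descends to a $\Kh$-invariant almost complex structure on $\Kh/L_\kappa$. The main subtle point throughout is the containment $\khTwo\subseteq\ker\ad Z_\kappa$: eigenvalue considerations alone allow a priori that the one-dimensional $0$-eigenspace of $\ad Z_\kappa$ on a $3$-dimensional $\sl_2$-summand of $\ghTwo$ lies in $\ph$ rather than $\kh$, and it is precisely the preceding lemma's assertion of $Z_\kappa$-centrality in all of $\khKa$ that closes this gap.
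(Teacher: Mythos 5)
The paper states this lemma without proof, implicitly deferring (as with most of the surrounding structure theory) to Satake \cite{Sa}, Ch.\ 3, \S1; so there is no internal argument to compare against. Your proof is a correct, self-contained verification and its overall strategy is natural: compute $\mathfrak{l}_\kappa = \fz_{\kh}(Z_\kappa)$ via the $\su(1,1)$-isotypic splitting $\kh = \khZ\oplus\khOne\oplus\khTwo$, obtain $\mathfrak{l}_\kappa = \khZ\oplus\khTwo$, and then translate through the ideal decomposition $\ghEven = \ghone\oplus\ghKa$ to the stated form.

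Your instinct about the one genuinely nontrivial input is right. Eigenvalue considerations on $\ghCTwo$ only say $\ad Z_\kappa$ has spectrum $\{0,\pm 2i\}$; since $\dthh$ commutes with $\ad Z_\kappa$ (and is only an $\su(1,1)$-intertwiner up to the inner twist $\theta_1 = \Ad\exp(\pi Z_1/2)$), the multiplicity space $\Hom_{\su(1,1)}(V_3,\ghTwo)$ could a priori split into two $\dthh'$-eigenspaces, one of which would put nonzero weight-$\pm 2i$ vectors into $\khTwo$. The preceding (also unproved) lemma that $Z_\kappa\in\fz(\khKa)$ is exactly what rules this out, and you invoke it in the right place with the correct auxiliary containment $\khTwo\subseteq\khKa$ (from $\ghTwo\subseteq\ghKa$). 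The remaining steps — $\khZ\subseteq\ker\ad Z_\kappa$ trivially, $\khOne\cap\ker\ad Z_\kappa=0$ from $(\ad Z_\kappa)^2=-\id$ on $\ghOne$, the ideal decomposition, and $\ad Z_\kappa|_{\khOne}$ descending to a $K_h$-invariant almost complex structure since $Z_\kappa\in\mathfrak{l}_\kappa$ — are all sound. Your check that $\mathfrak{l}_2\subseteq\ghone$ correctly resolves the paper's mild notational friction between the displayed definition $\ghone = \bigoplus_{\gh_j\subseteq\ghZ}\gh_j$ and the later equation $\mhone = \mathfrak{l}_2\oplus\ghone$: with either reading one lands on $\kh\cap\mhone = \kh\cap\ghZ\cap\ghone$, which is what your computation produces. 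The only thing I would add is a sentence confirming that $\khOne$ is $\mathfrak{l}_\kappa$-invariant, i.e.\ $[\khTwo,\khOne]\subseteq\khOne$, which follows from the absence of $V_4,V_5$ pieces in $\pi_\kappa$; this is implicit in your identification $\kh/\mathfrak{l}_\kappa\cong\khOne$ as $\mathfrak{l}_\kappa$-modules but worth making explicit.
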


This gives yet another fibration in Theorem 1.10 (7),(8) here with base K\"ahlerian, namely
$$L_\kappa/K_h\cap M^{0}_{h\kappa}\to G/Q_{h\kappa}\cong K_h/K_h\cap M^{0}_{h\kappa} \to \Kh/L_{\kappa}.$$
We have now according to \cite{Sa}:
\begin{lemma} $\bC_\kappa\circ\dthh\circ \bC_\kappa^{-1}(\phPOne) =
\khC (\ad Z_\kappa; i)$ as $\Kh\cap \MkappaZ$-modules.
\end{lemma}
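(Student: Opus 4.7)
My plan is to reduce the identification to $\sl(2,\C)$-representation theory on each $\dot\kappa(\sl(2,\C))$-isotypic summand of $\ghCOne$. By the very definition of $\ghCOne$, every irreducible summand is a copy of the standard $2$-dimensional representation $V$ of $\sl(2,\C)$, so I write $\ghCOne \simeq V\otimes W$ where $W$ is a multiplicity space on which $\sl(2)_\kappa := \dot\kappa(\sl(2,\C))$ acts trivially. The first step is algebraic simplification: since $Y_\kappa\in\ph$ and $\dthh$ is complex linear, $\dthh(Y_\kappa)=-Y_\kappa$; conjugating $\bC_\kappa=\exp(\tfrac{i\pi}{4}\ad Y_\kappa)$ by $\dthh$ therefore inverts it, $\dthh\circ\bC_\kappa = \bC_\kappa^{-1}\circ\dthh$, whence
$$\bC_\kappa\circ\dthh\circ\bC_\kappa^{-1} = \bC_\kappa^{2}\circ\dthh.$$
Since $\dthh|_{\phC}=-\id$ one has $\dthh(\phPOne)=\phPOne$, and the claim reduces to computing $\bC_\kappa^{2}(\phPOne)$.

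Next I identify the relevant subspaces. Choose a weight basis $v_\pm$ of $V$ with $\ad H_\kappa\cdot v_\pm = \pm v_\pm$, so that $\ad Z_\kappa\cdot v_\pm = \pm iv_\pm$. By Theorem~\ref{thm:MainS2}(6), on $\ghCOne$ one has $\ad Z_h = \tfrac12\ad Z_\kappa + \ad Z_h^{(1)}$; since $\ad Z_h^{(1)}$ commutes with $\sl(2)_\kappa$ it acts on the multiplicity space $W$, and requiring the spectrum of $\ad Z_h$ to lie in $\{0,\pm i\}$ forces a decomposition $W=W_{i/2}\oplus W_{-i/2}$ into $\ad Z_h^{(1)}$-eigenspaces. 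Unwinding the eigenvalue constraints then gives
$$\phPOne = v_+\otimes W_{i/2}, \qquad \khC(\ad Z_\kappa;i) = v_+\otimes W_{-i/2},$$
where the second identity uses that $\ad Z_\kappa$ has no $+i$-eigenvectors in $\ghCZ$ (trivial action) or in $\ghCTwo$ (eigenvalues $0,\pm 2i$), so $\khC(\ad Z_\kappa;i)\subset \ghCOne$.

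The Cayley square is now an elementary computation: the $\ad Y_\kappa$-eigenvectors in $V$ are $v_+\mp iv_-$ with eigenvalues $\pm 1$, so $\bC_\kappa^{2}=\exp(\tfrac{i\pi}{2}\ad Y_\kappa)$ acts as $\bigl(\begin{smallmatrix}0&-1\\1&0\end{smallmatrix}\bigr)$ in the basis $(v_+,v_-)$; in particular $\bC_\kappa^{2}(v_+)=v_-$. Therefore $\bC_\kappa^{2}(\phPOne) = v_-\otimes W_{i/2} \subset \khC\cap\ghCOne$. To finish, one identifies this subspace with $v_+\otimes W_{-i/2} = \khC(\ad Z_\kappa;i)$ as $\Kh\cap\MkappaZ$-modules via the conjugation $\dsh$, which swaps $W_{i/2}$ with $W_{-i/2}$ and commutes with the adjoint action of the real group $\Kh\cap\MkappaZ$. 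The principal obstacle is precisely this last step: as a subspace of $\ghC$ the image literally lands in the $(-i)$-eigenspace of $\ad Z_\kappa$, so the equality in the lemma has to be read at the level of abstract $\Kh\cap\MkappaZ$-modules, with the required isomorphism obtained from the real structure $\dsh$ once one verifies that $W_{i/2}$ and $W_{-i/2}$ are complex-conjugate representations of $\Kh\cap\MkappaZ$.
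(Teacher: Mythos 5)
Your reduction to $\sl(2,\C)$-representation theory is the right strategy, and the computation is correct up to the final identification: $\dthh\circ\bC_\kappa\circ\dthh=\bC_\kappa^{-1}$ because $Y_\kappa\in\ph$ and $\dthh$ is complex-linear; the decomposition $\ghCOne\simeq V\otimes W$ with $W=W_{i/2}\oplus W_{-i/2}$ under $\ad Z_\kappa^{(1)}$ correctly places $\phPOne$ at $v_+\otimes W_{i/2}$ and $\khC(\ad Z_\kappa;i)$ at $v_+\otimes W_{-i/2}$; and $\bC_\kappa^{2}(v_+)=v_-$. You therefore land on $v_-\otimes W_{i/2}=\khC(\ad Z_\kappa;-i)$, not on $\khC(\ad Z_\kappa;i)$.

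The patching step does not close this gap. Because $\dsh$ is conjugate-linear, the $\Kh\cap\MkappaZ$-equivariant map it provides from $\khC(\ad Z_\kappa;-i)$ to $\khC(\ad Z_\kappa;i)$ is conjugate-linear, so it shows only that the two eigenspaces are complex-conjugate (equivalently dual, since $\Kh\cap\MkappaZ$ is compact) $\Kh\cap\MkappaZ$-modules, not that they are isomorphic; and in general they are not. For $\gh=\su(2,1)$ with $\gamma_1=\epsilon_1-\epsilon_3$, $E_\kappa=E_{13}$, one has $\phPOne=\C E_{23}$, $\khC(\ad Z_\kappa;i)=\C E_{12}$, $\khC(\ad Z_\kappa;-i)=\C E_{21}$, and a direct computation gives $\bC_\kappa\circ\dthh\circ\bC_\kappa^{-1}(E_{23})=E_{21}$; the circle $\{\mathrm{diag}(d,d^{-2},d)\}\subset\Kh\cap\MkappaZ$ acts on $\C E_{12}$ by $d^{3}$ and on both $\C E_{23}$ and $\C E_{21}$ by $d^{-3}$, so $\khC(\ad Z_\kappa;\pm i)$ are genuinely inequivalent and $\phPOne$ matches the minus sign. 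In other words your calculation actually proves, with this paper's normalizations $\bC_\kappa H_\kappa=X_\kappa$ and $Z_\kappa=\dot\kappa(Z_1)$, that $\bC_\kappa\circ\dthh\circ\bC_\kappa^{-1}(\phPOne)=\khC(\ad Z_\kappa;-i)$, equivalently $\bC_\kappa\circ\dthh\circ\bC_\kappa^{-1}(\fp_{h-}^{[1]})=\khC(\ad Z_\kappa;i)$; the mismatch with the stated sign is a slip (likely inherited from the $\sl(2,\R)$-based Cayley conventions of the cited source) and should not be absorbed by reinterpreting subspace equality as an abstract isomorphism. Drop the $\dsh$ step; for the module statement it suffices to observe that $\bC_\kappa\circ\dthh\circ\bC_\kappa^{-1}$ is $\Kh\cap\MkappaZ$-equivariant because $\Ad(k)$ for $k\in\Kh\cap\MkappaZ$ commutes with $\dthh$ and fixes $Y_\kappa=[Z_h,X_\kappa]$, hence commutes with $\bC_\kappa$.
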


For convenience we summarize these various identifications in the next
statement.

\begin{proposition}   We have the following $\Kh\cap \MkappaZ$-isomorphisms:
\begin{enumerate}
\item $\phPmZ \cong\phonePM$.
\item $\phPOne \cong \khC (\ad Z_\kappa; i)$.
\item $\phPmTwo \cong \phtwoC \oplus\C X_\kappa$.
\end{enumerate}
\end{proposition}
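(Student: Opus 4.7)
The proposition summarises the identifications of the $[\nu]$-isotypic summands of $\phpm$ already extracted from Theorem \ref{thm:MainS2} and the preceding Lemmas. The plan is simply to assemble these, and to check $\Kh\cap\MkappaZ$-equivariance at each step.

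For (1), apply Theorem \ref{thm:MainS2}(6), which asserts $\ad Z_h|_{\ghZ} = \ad Z_\kappa^{(1)}|_{\ghZ}$. Hence the $\pm i$-eigenspaces of $\ad Z_h$ inside $\ghCZ$ agree with those of $\ad Z_\kappa^{(1)}$. Intersecting with $(\phone)^\C \subseteq \ghCZ$, where $\ad Z_\kappa^{(1)}$ provides the complex structure on the Hermitian ideal $\ghone$, yields $\phPmZ = \phonePM$. The $\Kh\cap\MkappaZ$-equivariance is automatic because both $Z_h$ and $Z_\kappa^{(1)}$ lie in the center of the Lie algebra of $L_\kappa$, which contains $\mathrm{Lie}(\Kh\cap\MkappaZ)$.

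For (2), the Lemma immediately preceding the Proposition furnishes the $\Kh\cap\MkappaZ$-equivariant isomorphism $\phPOne \xrightarrow{\bC_\kappa\circ\dthh\circ\bC_\kappa^{-1}} \khC(\ad Z_\kappa;i)$ directly, so there is nothing further to prove.

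For (3), combine Theorem \ref{thm:MainS2}(4), $\bC_\kappa^{-1}(\phPTwo) = \nhTwoC$, with the Lemma $\bC_\kappa^{-1}(\khKaC) = \mhtwoC \oplus \C X_\kappa$, and the Cartan decomposition $\mhtwoC = (\kh\cap\mhtwo)^\C \oplus \phtwoC$. Tracking the $\dthh$-antifixed (i.e.\ $\ph$-valued) versus $\dthh$-fixed (i.e.\ $\kh$-valued) components on both sides of the Cayley transform, and applying the conjugated involution $\bC_\kappa\circ\dthh\circ\bC_\kappa^{-1}$ as in part (2), one reads off the $\Kh\cap\MkappaZ$-equivariant isomorphism $\phPmTwo \cong \phtwoC \oplus \C X_\kappa$. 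The main obstacle is this third part, where two distinct Cayley-transform identifications (for $\phPTwo$ and for $\khKaC$) must be reconciled; the equivariance there reduces to the observation that $\Kh\cap\MkappaZ$ centralizes both $X_\kappa$ and $Z_\kappa$, and therefore preserves the relevant graded decompositions that the Cayley transform intertwines. Parts (1) and (2) are essentially restatements of results already in hand.
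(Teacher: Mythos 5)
The paper does not actually give a proof of this proposition: it is introduced with ``For convenience we summarize these various identifications in the next statement,'' so the intended ``proof'' is a pointer to the identifications made in the paragraphs immediately preceding it. Your parts (1) and (2) reproduce those identifications correctly: (1) is the explicit statement $\phPmZ=\phonePM$ in the text, and (2) is exactly the lemma quoting \cite{Sa} that $\bC_\kappa\circ\dthh\circ\bC_\kappa^{-1}(\phPOne) = \khC (\ad Z_\kappa; i)$; your remark that $\Kh\cap\MkappaZ$-equivariance comes from $\Kh\cap\MkappaZ$ centralizing $X_\kappa$ and $Z_\kappa$ (hence $Y_\kappa$ and $\bc_\kappa$) is the right justification.

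Part (3) is where your argument has a genuine gap. You cite the correct ingredients, but the proposed mechanism does not assemble them into the statement. Applying $\bC_\kappa\circ\dthh\circ\bC_\kappa^{-1}$ ``as in part (2)'' does not produce the desired space: one has $\bC_\kappa\circ\dthh\circ\bC_\kappa^{-1} = \bC_\kappa^2\circ\dthh$, and on $\ghCTwo$ this sends $\phPTwo$ to $\phMTwo$ (it reverses the $\ad H_\kappa$-grading), not into anything resembling $\phtwoC\oplus\C X_\kappa$. Likewise, Theorem \ref{thm:MainS2}(4) identifies $\phPTwo$ with $\nhTwoC$, which again lives in a nonzero $\ad X_\kappa$-eigenspace, not in $\mhZero$. ``Tracking $\dthh$-fixed vs.\ $\dthh$-antifixed components on both sides of the Cayley transform'' also cannot work as stated, since the Cayley transform does not commute with $\dthh$ and $\nhTwoC$ is not $\dthh$-stable. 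What is missing is the $\sl(2)$-raising operator: $\ad E_\kappa$ restricts to a $\Kh\cap\MkappaZ$-equivariant isomorphism from the zero $\ad H_\kappa$-weight space $\khC^{[2]}$ of $\ghCTwo$ onto the $+2$-weight space $\phPTwo$ (equivariance because $\Kh\cap\MkappaZ$ centralizes $E_\kappa$). Combining this with the decomposition $\khKaC = (\kh\cap\mhtwo)^\C\oplus\khC^{[2]}$, the Lemma $\bC_\kappa^{-1}(\khKaC)=\mhtwoC\oplus\C X_\kappa$, and the fact that $\bC_\kappa$ is the identity on $(\kh\cap\mhtwo)^\C$ (since $\kh\cap\mhtwo$ centralizes $Y_\kappa$), one gets $\bC_\kappa^{-1}(\khC^{[2]})=\phtwoC\oplus\C X_\kappa$, and (3) follows by concatenating the two equivariant isomorphisms. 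So your outline identifies the right supporting results, but the specific route you propose through the conjugated involution does not reach the target and the $\ad E_\kappa$ step needs to be supplied.
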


\section{Bounded Symmetric Domains: Real Case}\label{section2}\hfill

\noindent
In this section we consider homogeneous real forms of $\DP$, i.e. fixed point sets of anti-holomorphic automorphisms.
We continue to assume that $\Gh$ is simple or of the form  $\Gh=G\times G$ where $G/K$ is
a bounded symmetric domain in $\C^n$ with $G$ simple. Thus either $\gh$ is simple, or $\gh=(\fg,\fg)$ with
$\fg$ simple and $\tau (X,Y)=(Y,X)$. We use \cite{HO96,GO90,GO91} as standard references although
the perspective will be slightly different in this section.  We will present a parallel presentation for the material
for real domains vis \`a vis the complex case. The first observation in the real case will be a replacement for $\GhC$. 
This will be the Lie group $G_c$ to be described shortly.

\subsection{Real Bounded Symmetric Domains and Related Subgroups of $\GhC$}\hfill

 Let $\tau : \Gh \to \Gh$ be a non-trivial involution commuting with $\thh$. Let $\dot \tau : \gh \to \gh$
be the derived involution which then commutes with $\dot\theta_h$. 
Finally, we let $G:=\Ght$. Then $G$ is a connected, reductive subgroup of $\Gh$ having Lie algebra
$\fg:=\ght=\{X\in \gh\mid \dot \tau (X)=X\}$. 
With the usual notation, set $\fq_h:=\{X\in\gh\mid \dot\tau (X)=-X\}$. Then $\gh=\fg \oplus \fq_h$. 

As $\dt $ and $\dot \thh$ commute, it follows that $\dot\theta := \dot\thh|_{\fg}$ defines 
a Cartan involution on $\fg$ and $\fg=\fk\oplus \fp$ with $\fk :=\fg\cap \kh$ and $\fp:=\fg\cap \ph$.
Also, $\fq_h=\fq_{hk}\oplus \fq_{hp}$ with $\fq_{hk}=\fq_h\cap \kh$ and $\fq_{hp}=\fq_h\cap \ph$.

As $\tau$ and $\thh$ commute, $\tau $ induces an involution  on $\Gh/\Kh \simeq \Dh$ denoted \
$\tDh : \Dh\to \Dh$ such that $\tDh (g\cdot z)= \tau (g)\cdot \tDh (z)$. Via the biholomorphism
$\Dh\simeq\DP$,  $\tau $ induces an involution denoted $\sigma_h^+ :\DP\to \DP$ such that
$\sigma_h^+  (g\cdot z)= \tau (g)\cdot \sigma_h^+  (z)$. We assume that $\sigma_h^+ $ is anti-holomorphic, i.e., defines
a conjugation on $\DP\subset \php$. Then $\cD_+:=\DPt{\sigma_h^+ }$ is a totally real submanifold as follows from

\begin{lemma}\label{le-tauZo} $\dot\tau (Z_h)=-Z_h$.
\end{lemma}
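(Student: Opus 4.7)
The plan is to show the sign $-1$ by combining two ingredients: (i) $\dot\tau$ preserves the one‑dimensional center of $\kh$ (where $Z_h$ lives), so $\dot\tau(Z_h)=\pm Z_h$; (ii) the hypothesis that $\sigma_h^+$ is anti‑holomorphic forces the minus sign through the identification of $J$ with $\ad Z_h$ on $\ph$.

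First I would note that $\dot\tau$ commutes with $\dthh$ by assumption, hence preserves the $\pm1$‑eigenspace decomposition $\gh=\kh\oplus\ph$. Being a Lie algebra automorphism, $\dot\tau$ then sends the center $\zh$ of $\kh$ into itself. In the simple case, $\zh=\R Z_h$ is one‑dimensional, and since $\dot\tau^2=\id$ we conclude that $\dot\tau(Z_h)=\epsilon Z_h$ with $\epsilon\in\{+1,-1\}$. (In the reducible case $\gh=\fg\oplus\fg$ with $\dot\tau$ the swap, the analogous consideration picks out the natural choice $Z_h=(Z,-Z)$ for which $\dot\tau(Z_h)=-Z_h$; this is precisely the choice compatible with requiring that $\tau$ induce an anti‑holomorphic map on the product Hermitian space. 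I would dispense with this case in a short remark.)

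Next I would pin down $\epsilon=-1$. The differential of $\tDh$ at the base point $e\Kh$ equals $\dot\tau|_{\ph}$ under the identification $T_{e\Kh}\Dh\cong\ph$, and the complex structure $J$ on $T_{e\Kh}\Dh$ equals $\ad Z_h|_{\ph}$. For any $X\in\ph$,
\[
\dot\tau(JX)=\dot\tau([Z_h,X])=[\dot\tau(Z_h),\dot\tau(X)]=\epsilon[Z_h,\dot\tau(X)]=\epsilon\, J(\dot\tau(X)).
\]
Thus $d\tDh_{e\Kh}$ is $\C$‑linear if $\epsilon=+1$ and conjugate‑linear if $\epsilon=-1$. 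Since $\sigma_h^+$, and hence $\tDh$, is anti‑holomorphic, the differential at a fixed point must be conjugate‑linear, forcing $\epsilon=-1$.

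The step I expect to require the most care is making the last implication rigorous: one must check that $e\Kh$ is a fixed point of $\tDh$ (which follows because $\tau(\Kh)=\Kh$, as $\tau$ commutes with $\thh$, so $\tau(e\Kh)=e\Kh$), and that anti‑holomorphicity of $\sigma_h^+$ on the bounded realization $\DP$ translates correctly to conjugate‑linearity of $d\tDh$ on $\ph$ via the Harish‑Chandra embedding. Both are standard once one recalls that under the Harish‑Chandra biholomorphism $\Dh\simeq\DP\subset\php$ the origin corresponds to $e\Kh$ and the tangent space identification $\ph\cong\php$ is $\C$‑linear with respect to $J=\ad Z_h$. Once these identifications are in place, the sign is determined and the lemma follows.
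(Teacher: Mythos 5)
Your proof is correct, and it is exactly the argument the authors have in mind; the paper states this lemma without proof, treating it as an immediate consequence of the standing assumption that $\sigma_h^+$ is anti-holomorphic. You correctly identify the two facts needed: $\dot\tau$ commutes with $\dthh$ and hence stabilizes $\zh=\R Z_h$, forcing $\dot\tau(Z_h)=\pm Z_h$; and the computation $\dot\tau\circ\ad Z_h=\epsilon\,\ad Z_h\circ\dot\tau$ on $\ph$ shows the differential of $\tDh$ at the (fixed) base point is $\C$-linear iff $\epsilon=+1$, so anti-holomorphicity pins down $\epsilon=-1$. The remark on the group case $\gh=\fg\oplus\fg$ is also the right way to handle the one situation where $\zh$ is not one-dimensional.
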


Let $K:=G\cap \Kh=G^\theta$. Then $K$ is maximal compact in $G$ with Lie algebra $\fk$. We have
(see Lemma \ref{thm:1.3} for notation)
\begin{equation}\label{DR}
\cD_+\simeq G/K \hookrightarrow \Gh/\Kh \simeq \DP
\end{equation}
is a realization of the Riemannian symmetric space $G/K$ as a bounded totally real domain in $\php$. 

We come to the substitute for $\GhC$. Denote by $\dot \eta :=\dot \sigma_h\circ\dot\tau$ the {\it conjugate
linear} extension of $\dot\tau$ to $\ghC$ and, as usual, $\eta$ the corresponding involution on
$\GhC$. Set $\fg_c=(\ghC)^{\dot\eta}$ and let $G_c$ \footnote{The subscript $c$ will be used for
objects related to this group.} be the corresponding analytic subgroup of $\GhC$. 
By Lemma \ref{le:conn}, $G_c=\GhC^{\eta}$ as $\GhC$ is assumed simply
connected. $\fg_c$ is a real semisimple subalgebra of $\ghC$ which is stable under $\dt$ and $\dot\thh$.
Clearly 
\[\fg=\fg_c^{\dot \tau}=\{X\in \fg_c\mid \dot \tau (X)=X\}=\fg_c\cap \fg_h\]
and with $i\fq_h=\{X\in\fg_c\mid \dot\tau (X)=-X\}$, then 
\[\fg_c=\fg\oplus i\fq_h= (\fk\oplus i\fq_{hp})\oplus (\fp \oplus i\fq_{hk})\]
where $\fk=\fk_h\cap \fg$ and $\fp=\fp_h\cap \fg$.

On the
other hand, the involution $\dot \theta_c:=\dt\circ \dthh|_{\fg_c}$ defines a Cartan involution on
$\fg_c$ with corresponding decomposition 
$\fg_c=\fk_c\oplus \fs_c$
and corresponding Cartan involution $\theta_c$ on $G_c$ (we reserve the notation $\fp_c$ for a parabolic subalgebra).
Then $\fk_c=\fk\oplus i\fq_{hp}$ and $\fs_c=\fp\oplus i\fq_{hk}$.  Notice that $\dot\theta_c$ agrees with the conjugate linear extension of $\dot\theta_h$ restricted to $\fg_c$.
To streamline the notation we let $\fq_c:= i\fq_h$ so that $\fg_c=\fg\oplus\fq_c$. Then $\fq_c=\fq_c\cap \fk_c\oplus \fq_c\cap \fs_c = \fq_{ck} \oplus \fq_{cp}$, with
$\fq_{ck}=i\fq_{hp} $ and $\fq_{cp}=i \fq_{hk} $, i.e., the elliptic and hyperbolic parts have been interchanged.
In the special case that $\cD_+$ is a bounded complex domain, then $G_c=\GC$, the complexification of $G$. 
\begin{lemma} $\sigma_h^+=\dot\eta|_{\DP}$. In particular, $\cD_+=\DP\cap \fg_c$.
\end{lemma}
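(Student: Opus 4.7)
The plan is to verify that $\dot\eta$, viewed as a conjugate-linear involution of $\ghC$, preserves $\DP\subset \php$ and that its restriction satisfies the two characterizing properties of $\sigma_h^+$: fixing the base point $0$ and intertwining the $\Gh$-action via $\tau$. Since $\Gh$ acts transitively on $\DP\cong \Gh/\Kh$ with isotropy $\Kh$ at $0$, these two properties pin $\sigma_h^+$ down uniquely.

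First I would establish that $\dot\eta$ stabilizes each of $\khC$, $\php$, $\phm$. Because $\dt$ commutes with $\dthh$ it preserves the splitting $\ghC = \khC \oplus \phC$, and by Lemma \ref{le-tauZo} we have $\dt(Z_h) = -Z_h$; complex linearity then forces $\dt$ to interchange the $\pm i$-eigenspaces $\php$ and $\phm$ of $\ad Z_h$. The conjugation $\dsh$ preserves $\khC$ (as $\kh\subset \gh$) and likewise interchanges $\php$ and $\phm$ (it sends $X - iJX$ to $X + iJX$ for $X\in\ph$, where $J = \ad Z_h|_{\ph}$). Hence $\dot\eta = \dsh\circ \dt$ preserves each of $\khC$, $\php$, $\phm$ individually. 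Lifting to the simply connected $\GhC$, the involution $\eta$ preserves the analytic subgroups $\KhC$, $\Php$, $\Phm$, so it respects the Harish-Chandra decomposition on the open cell $\Php\KhC\Phm$. Moreover, $\dsh$ fixes $\Gh$ pointwise, so $\eta|_{\Gh} = \tau$.

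Second, using the realization of Theorem \ref{thm:1.3}, the $\Gh$-action on $\DP$ takes the form $g\cdot z = \log p_+(g\exp z)$. Applying $\dot\eta$ and invoking the compatibilities just established,
\[
\dot\eta(g\cdot z) \,=\, \log p_+\bigl(\eta(g)\exp\dot\eta(z)\bigr) \,=\, \tau(g)\cdot \dot\eta(z).
\]
Since $\dot\eta(0) = 0$, specializing $z = 0$ gives $\dot\eta(\Gh\cdot 0) = \Gh\cdot 0 = \DP$, so $\dot\eta|_{\DP}$ is a well-defined involution meeting the defining properties of $\sigma_h^+$. By the uniqueness noted above, $\sigma_h^+ = \dot\eta|_{\DP}$. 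The \emph{in particular} assertion follows at once, for $\cD_+$ is by definition the fixed set of $\sigma_h^+$ on $\DP$, hence the fixed set of $\dot\eta|_{\DP}$, which is exactly $\DP\cap \fg_c$ since $\fg_c = (\ghC)^{\dot\eta}$ by construction.

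The main obstacle is the eigenspace bookkeeping in the first step: one must be careful that $\dt$ and $\dsh$ \emph{each} swap $\phpm$, so that their composition preserves each summand. Once that is in place the rest follows from the naturality of the Harish-Chandra embedding under holomorphic and anti-holomorphic automorphisms, and from transitivity of $\Gh$ on $\DP$.
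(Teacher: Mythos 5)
Your argument is correct and follows essentially the same route as the paper: establish that $\dot\eta$ preserves $\khC$, $\php$, $\phm$ (hence respects the Harish--Chandra decomposition on the open cell), then apply $\eta$ to $g=\exp(g\cdot 0)\,k_\C(g)\,p_-(g)$ to read off the $\tau$-equivariance. The only cosmetic difference is that the paper sidesteps the ``double swap'' bookkeeping by noting $\dot\eta(H_0)=H_0$ with $H_0=-iZ_h$ and using the real eigenvalues $0,\pm 1$ of $\ad H_0$, rather than tracking how $\dt$ and $\dsh$ each interchange the $\pm i$-eigenspaces of $\ad Z_h$.
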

\begin{proof} Recall that $H_0=-iZ_h$. From Lemma \ref{le-tauZo} and $\dot \eta=\dot \sigma_h\circ\dot\tau$ we get $\dot \eta (H_0)=H_0$. 
As $\fp_{h\pm}=\ghC (\ad H_0;\pm 1)$ and $\khC=\ghC (\ad H_0;0)$ it
follows that $\dot \eta (\phpm)=\phpm $ and $\dot \eta (\khC)=\khC$, similarly $\eta (\PhPm)=\PhPm$ and $\eta (\KhC)=\KhC$. For $g\in G_h$ and $0\in \fp_{h+}$ write $g\cdot 0=Z\in\DP$. Then
$g=\exp (Z) k_\C(g)p_-(g)$ and
\[\tau (g)=\eta (g)=\exp (\dot \eta (Z))\eta (k_\C (g))\eta (p_-(g)).\]
From this the claim follows.
\end{proof}

\subsection{Essential Structure Theory - $\R$ forms}\hfill

We shall refine our choice of Cartan subalgebra $\ch\subset \gh$ to take into account the involution $\dot\tau$ and the associated decomposition $\gh=\fg \oplus \fq_h$. We still require $\ch$ to contain $Z_h$ but now choose the Cartan subalgebra $\ch$ such that $\fbh:=\ch\cap \fq_{hk}$ is maximal abelian in $\fq_{hk}$. Thus all the notation from \S 1.2 remains in force here so will be used freely when applicable. 

Denote by $\Sbh$ the set of roots of $\fbhC$ in $\ghC$. Set $\fa_c:=i\fbh\subset \fs_c$.
Recall that $\sigma_h^+$ is anti-holomorphic and $\dt (Z_h)= -Z_h$.

\begin{lemma} $\fbh$ is maximal abelian in $\fq_{hk}$ and $\fq_h$\,; $\fa_c$ is maximal abelian in $\fs_c$ and in $\fq_{cp}=\fs_c\cap \fq_c$.
\end{lemma}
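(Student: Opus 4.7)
The first clause, that $\fbh$ is maximal abelian in $\fq_{hk}$, is immediate from the construction of $\ch$. The rest of the proof will pivot on two observations: that $Z_h$ automatically lies in $\fbh$, and that $\ad Z_h|_{\ph}$ is invertible (since it equals the complex structure $J$).

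The plan is as follows. First I would check $Z_h\in\fbh$. By Lemma \ref{le-tauZo} one has $\dot\tau(Z_h)=-Z_h$, so $Z_h\in\fq_h$, while $Z_h\in\zh\subset\kh$; hence $Z_h\in\fq_{hk}$. Because $Z_h$ is central in $\kh$, it commutes with every element of $\fq_{hk}$, and maximality of $\fbh$ forces $Z_h\in\fbh$.

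Next I would prove that $\fbh$ is maximal abelian in $\fq_h=\fq_{hk}\oplus\fq_{hp}$. Suppose $X=X_k+X_p$ with $X_k\in\fq_{hk}$, $X_p\in\fq_{hp}$, and $[X,\fbh]=0$. Since $\fbh\subset\kh$ is $\dthh$-stable, applying $\dthh$ gives $[X_k-X_p,\fbh]=0$, hence $[X_k,\fbh]=[X_p,\fbh]=0$. The first relation together with $X_k\in\fq_{hk}$ forces $X_k\in\fbh$ by maximality in $\fq_{hk}$. The second, applied with $Z_h\in\fbh$, yields $[Z_h,X_p]=0$; but $X_p\in\fp\subset\ph$ and $\ad Z_h|_{\ph}=J$ is invertible, so $X_p=0$. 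Thus $X\in\fbh$.

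For the statements about $\fa_c=i\fbh$, note that the commutation relations in $\ghC$ translate exactly: an element $iY$ commutes with $i\fbh$ iff $Y$ commutes with $\fbh$. Hence maximality of $\fa_c$ in $\fq_{cp}=i\fq_{hk}$ follows directly from maximality of $\fbh$ in $\fq_{hk}$. For the final claim, decompose $\fs_c=\fp\oplus\fq_{cp}$ and write a centralizing element as $X=X_p+iY$ with $X_p\in\fp$, $Y\in\fq_{hk}$. The condition $[X,i\fbh]=0$ splits (projecting on the $\fg$- and $\fq_c$-parts) into $[X_p,\fbh]=0$ and $[Y,\fbh]=0$. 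The second gives $Y\in\fbh$; the first, applied to $Z_h\in\fbh$, gives $X_p=0$ as before. Hence $X\in i\fbh=\fa_c$.

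The only non-routine input is the recognition that $Z_h$ belongs to $\fbh$ and that its adjoint action on $\ph$ is invertible, and both have already been set up in the text. I do not foresee any genuine obstacle.
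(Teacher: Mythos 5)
Your proposal is correct and follows essentially the same route as the paper: both hinge on observing $Z_h\in\fbh$ and that $\ad Z_h|_{\ph}$ is invertible, and then using the centralizer condition to conclude. One small slip: in the middle paragraph you decompose $X=X_k+X_p$ along $\fq_h=\fq_{hk}\oplus\fq_{hp}$, so $X_p$ lies in $\fq_{hp}$ (not $\fp$, which is $\fg\cap\fp_h$), but since $\fq_{hp}\subset\fp_h$ the invertibility argument goes through unchanged.
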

\begin{proof} The first claim is by construction. Since $Z_h\in\fbh$  one has $\fz_{\ghC}(\fbh^\C)\subset \kh^\C = \fk^\C\oplus\fq_{hk}^\C,$ while $\fq_{cp} = i\fq_{hk}$. Hence  $\fs_c\cap \fz_{\ghC}(\fbh^\C) \subset  i\fq_{hk}$.
\end{proof}

\begin{corollary}\label{co-roots} $\Sigma (\fg_c,\fa_c)$, the set of restricted roots of $\fa_c$ in $\fg_c$, are all restrictions from the complex space $\Sbh$ to the real form $\fa_c$.
\end{corollary}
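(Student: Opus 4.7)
The plan is to exploit the identification $\fa_c^{\C}=\fbh^{\C}$ as complex subspaces of $\ghC$, which holds since $\fa_c=i\fbh$. Under this identification, the joint $\ad(\fa_c^{\C})$-weight decomposition of $\ghC$ coincides with the weight space decomposition of $\ghC$ with respect to the abelian subalgebra $\fbh^{\C}$, whose nonzero weights are precisely $\Sbh$. A preliminary observation is that $\ch\subset\kh$ (a compact Cartan subalgebra of $\gh$), so every $\alpha\in\Sbh$ takes purely imaginary values on $\fbh$, and consequently real values on $\fa_c=i\fbh$; thus the candidate restrictions $\alpha|_{\fa_c}$ are genuine real-linear functionals on $\fa_c$.

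Next I would establish the inclusion $\Sigma(\fg_c,\fa_c)\subseteq\{\alpha|_{\fa_c}\mid\alpha\in\Sbh\}\setminus\{0\}$. A nonzero eigenvalue $\lambda$ of $\ad(\fa_c)$ on the real form $\fg_c$ extends $\C$-linearly to a nonzero eigenvalue of $\ad(\fa_c^{\C})=\ad(\fbh^{\C})$ on $\ghC=\fg_c^{\C}$, and by the preceding observation such a $\lambda$ must equal $\alpha|_{\fa_c}$ for some $\alpha\in\Sbh$. For the converse, given $\alpha\in\Sbh$ with $\alpha|_{\fa_c}\neq 0$, I would produce a nonzero $\alpha|_{\fa_c}$-eigenvector in $\fg_c$ by averaging a chosen $0\ne X\in\ghC_\alpha$ against $\dot\eta=\dot\sigma_h\circ\dot\tau$. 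The crucial computation $\dot\eta|_{\fbh}=-\id$ is immediate, because $\fbh\subset\fq_{hk}$ forces $\dot\tau|_{\fbh}=-\id$ while $\dot\sigma_h$ fixes $\gh$ pointwise; hence $\dot\eta|_{\fa_c}=+\id$. For $H\in\fa_c$, conjugate-linearity of $\dot\eta$ together with $\alpha(H)\in\R$ then yields
\[
[H,\dot\eta(X)]=\dot\eta([H,X])=\overline{\alpha(H)}\,\dot\eta(X)=\alpha(H)\,\dot\eta(X),
\]
so that both $X+\dot\eta(X)$ and $i(X-\dot\eta(X))$ lie in $\fg_c$ as $\alpha|_{\fa_c}$-eigenvectors, and at least one of them is nonzero.

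The argument is essentially linear algebra with a conjugate-linear involution once the central identification $\fa_c^{\C}=\fbh^{\C}$ is in hand. I do not anticipate any serious obstacle; the only subtlety is keeping track of how $\dot\eta$ acts on $\fbh$ versus on $\fa_c$ (the factor of $i$ flips the sign), which is precisely what forces the reality of $\alpha|_{\fa_c}$ and makes the $\dot\eta$-averaging construction work.
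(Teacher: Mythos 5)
Your proof is correct, and it is exactly the argument the paper leaves implicit (the corollary is stated without proof, as an immediate consequence of $\fa_c = i\fbh$, hence $\fa_c^{\C}=\fbh^{\C}$, together with the preceding lemma). The only substance is the two observations you isolate — that $\ch\subset\kh$ forces $\Sbh$ to be real-valued on $\fa_c$, and that $\fg_c$ is the $\dot\eta$-fixed real form of $\ghC$ so root spaces can be intersected with it by $\dot\eta$-averaging — and you have them both right; in fact you prove the stronger two-sided identification $\Sigma(\fg_c,\fa_c)=\{\alpha|_{\fa_c}:\alpha\in\Sbh\}$, which the paper uses tacitly a few lines later when it says $\Sigma_c$ may be viewed interchangeably as roots of $\fa_c$ in $\fg_c$ or of $\fbh^{\C}$ in $\ghC$.
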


Let $\Sigma_c:=\Sigma (\fg_c,\fa_c)$. We will view $\Sigma_c$ either as the set of roots of $\fa_c$ in $\fg_c$ or the roots of
$\fbhC$ in $\ghC$ without further comment.
Recall that $H_0=-iZ_h\in\fa_c$. Then $\ad (H_0)$ has three eigenvalues: $0,\pm  1$. We set

\begin{eqnarray}
\mathfrak{l}_c \,&:=& \fg_c(\ad H_0;0)=\khC\cap \fg_c=\fk\oplus i\fq_{hk}\label{de-mPrime}\\
\nP&:=&\fg_c (\ad H_0;1)=\php\cap \fg_c\label{de-nP}\\
\nM&:=&\fg_c (\ad H_0;-1)=\phm\cap \fg_c\label{de-nM}\\
\cP_c&:=&G_c/ L_cN_{-}
\end{eqnarray}
where  $N_{-}$ denotes the analytic subgroup of $G_c$ with Lie algebra   $\nM$ and
$L_c:=Z_{G_c}(H_0)$. Note that $L_c$ has Lie algebra $\fl_c$, but that $L_c$ is not
necessarily connected. For future reference we set $\fl'_c := [\fl_c,\fl_c]$. We
also note that 
\[\fl_c=\mathfrak{k}_{h}^{\C\dot \eta},\quad \fn_{\pm}=\mathfrak{p}_{h\pm}^{\dot \eta}\quad
\text{and} \quad \fl_c\oplus
 \fn_{\pm}=(\fk_h^\C\oplus \fp_{h\pm})^{\dot\eta}.\]

The set $\Sc$ of restricted roots decomposes accordingly into two disjoint sets
\begin{align*}
\Scc&:=\{\alpha \in \Sc\mid {\fg_c}_\alpha\subset \mathfrak{l}^\prime_c,\, \alpha \not\equiv 0 \}\\
&\,=\{\alpha\in\Sc\mid \alpha (H_0)=0, \,
\alpha\not\equiv 0\}\\
&\,=\{\beta|_{\fa_c}\mid \beta\in \Dhc, \beta|_{\fa_c}\not\equiv 0\},
\end{align*}
and
\begin{align*}
\Scn&:=\{\alpha \in \Sc\mid \fg_{c\alpha} \subset \phC\cap \fg_c\}\\
&\,=\{\alpha\in\Sc\mid \alpha (H_0)=\pm 1\}\\
&\,=\{\beta|_{\fa_c}\mid \beta \in \Dhn \}\, .
\end{align*}

If $\alpha \in \Scn$ then $\alpha(H_0)=\pm 1$. We choose the system of positive roots in $\Sc$ such that
\[\ScnP=\{\alpha\in\Sc \mid \alpha (H_0)=1\}=\Dhn^+|_{\fa_c}=\{\beta|_{\fa_c}\mid \fg_{\beta}\subseteq \fp_{h+}\}\]
 and
$\SccP\cup \{0\}= \Dhc^+|_{\fa_c}$.

From $\mathfrak{l}_c  =\fk\oplus i\fq_{hk}$ notice that $K$ is a maximal compact subgroup of  $ (L_c)_o$ and preserves $\fn_{\pm}$;
$\fa_c$ is maximal abelian in $\mathfrak{l}_{cp}$; and $\Scc$ is the set of restricted roots of $\fa_c$ in $\mathfrak{l}_{c}$ .

\begin{lemma} Let $\fm_c$ denote the centralizer of $\fa_c$ in $\fk_c$. Then $\fm_c\subset \fk$.
\end{lemma}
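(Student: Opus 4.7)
The plan is to use the decomposition $\fk_c=\fk\oplus i\fq_{hp}$ (which follows from $\fg_c=\fg\oplus i\fq_h$ and the compatibility of $\dt$ and $\dot\theta_h$) and reduce the claim to the statement that $Z_h$ lies in $\fbh$. Given any $X\in \fm_c\subset \fk_c$, I will write uniquely $X=X_1+iX_2$ with $X_1\in\fk$ and $X_2\in\fq_{hp}$; the goal is to show $X_2=0$, since then $X=X_1\in\fk$.

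Next I will observe that $Z_h\in \fbh$. Indeed, $Z_h\in\ch$ by construction; by Lemma \ref{le-tauZo} one has $\dt(Z_h)=-Z_h$, so $Z_h\in\fq_h$; and $Z_h\in\zh\subset \kh$, hence $Z_h\in \fq_h\cap\kh=\fq_{hk}$. Combining, $Z_h\in \ch\cap\fq_{hk}=\fbh$.

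Now I will exploit that $X$ centralizes $\fa_c=i\fbh$, equivalently that $X$ centralizes $\fbh$. For $H\in\fbh\subset \gh$ we have
\[
0=[X,H]=[X_1,H]+i[X_2,H],
\]
with $[X_1,H]\in\gh$ and $[X_2,H]\in\gh$. Since $\ghC=\gh\oplus i\gh$ as a real vector space, both summands must vanish. Hence $X_2\in\fz_{\gh}(\fbh)$. Because $Z_h\in \fbh$, this gives $X_2\in \fz_{\gh}(Z_h)=\kh$, where the last equality holds because $\ad Z_h|_\ph=J$ has no real kernel (the defining property of the Hermitian element $Z_h$ introduced in Section \ref{Section1}). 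But $X_2\in\fq_{hp}\subset \ph$, so $X_2\in\kh\cap\ph=\{0\}$, and thus $X=X_1\in \fk$.

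There is no real obstacle here; the only thing to notice is that the choice of $\fbh$ forced by the structure from \S1-\S2 automatically places $Z_h$ inside $\fbh$, at which point the Hermitian identity $\fz_\gh(Z_h)=\kh$ does all the work. Writing up the argument amounts to recording the three ingredients above in order.
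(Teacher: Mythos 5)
Your proof is correct and takes essentially the same route as the paper: the key observation is that the Hermitian element ($Z_h$, equivalently $H_0=-iZ_h$) lies in the abelian subspace being centralized, so its centralizer in $\fg_c$ is $\fl_c=\fk\oplus i\fq_{hk}$, which intersected with $\fk_c=\fk\oplus i\fq_{hp}$ is $\fk$. Your version unfolds this intersection by examining components, but the substance is the same one-line argument the paper gives.
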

\begin{proof} Since $H_0\in\fa_c$ one  has $\fm_c\subseteq \fz_{\fg_c}(H_0)=\fk\oplus i\fq_{hk}$.
\end{proof}

Denote by $W_{cc}$ the Weyl group generated by the roots in $\Scc$.
\begin{lemma}\label{Wcc}

$W_{cc}=N_K(\fa_c)/Z_K (\fa_c)$ and $W_{cc}(\ScnP)=\ScnP$.

\end{lemma}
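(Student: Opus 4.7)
The plan is to treat the two assertions independently.

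For $W_{cc}(\ScnP)=\ScnP$, the argument is essentially a one-line calculation using the characterizations recalled just before the lemma: $\Scc=\{\alpha\in\Sc\mid\alpha(H_0)=0\}$ and $\ScnP=\{\beta\in\Sc\mid\beta(H_0)=1\}$, where $H_0=-iZ_h\in\fa_c$. For $\alpha\in\Scc$ and any $\beta\in\Sc$, the reflection formula gives
\[
s_\alpha(\beta)(H_0)=\beta(H_0)-\langle\beta,\alpha^{\vee}\rangle\,\alpha(H_0)=\beta(H_0),
\]
so $s_\alpha$ preserves $\ScnP$. Since the $s_\alpha$, $\alpha\in\Scc$, generate $W_{cc}$, the claim follows at once. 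I do not expect any obstacle here.

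For the identification $W_{cc}=N_K(\fa_c)/Z_K(\fa_c)$, the plan is to reduce the statement to the classical theorem that, for a Riemannian symmetric pair, the little Weyl group equals $N_K(\fa)/Z_K(\fa)$, applied to the pair $(L_c,K)$. The first step is to verify that $\fl_c=\fk\oplus i\fq_{hk}$ is a Cartan decomposition of the reductive algebra $\fl_c$ with Cartan involution $\theta_c|_{\fl_c}$; that $\fa_c=i\fbh\subset i\fq_{hk}$ is maximal abelian in the $(-1)$-eigenspace, which is immediate from the maximality of $\fbh$ in $\fq_{hk}$ established above; and that $\Scc$ is, by construction, the set of nonzero restricted roots of $(\fl_c,\fa_c)$.

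The second step is to check that $K$ sits in $L_c$ as (the identity component of) a maximal compact subgroup. Because $Z_h$ is central in $\fk_h$ and $\fk\subset\fk_h$, the group $K$ centralizes $H_0$, so $K\subset L_c$; its Lie algebra $\fk$ coincides with the compact part of the Cartan decomposition of $\fl_c$. This suffices to realize every reflection $s_\alpha$, $\alpha\in\Scc$, by an element of $K$ via the standard $\mathfrak{sl}_2$-triple argument inside $\fl_c$, giving $W_{cc}\subseteq N_K(\fa_c)/Z_K(\fa_c)$. The reverse inclusion follows by observing that any element of $N_K(\fa_c)/Z_K(\fa_c)$ permutes $\Scc$ and hence lies in $W(\Scc)=W_{cc}$.

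The only point requiring care — the main, if minor, obstacle — is the bookkeeping associated with the possible disconnectedness of $L_c$ and the matching of $K$ with the correct maximal compact subgroup of $L_c$. This is why I prefer the direct approach of realizing the reflections via $\mathfrak{sl}_2$-triples, which needs only that $K$ is a compact subgroup of $L_c$ with Lie algebra equal to the compact part of the Cartan decomposition, rather than invoking the symmetric-pair theorem as a black box.
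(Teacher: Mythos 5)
The paper states this lemma without proof, so there is nothing in the source to compare against directly. Your plan is the natural one: the structural facts you verify in your first two steps are exactly those the paper records in the paragraph immediately preceding the lemma — that $K$ is a maximal compact subgroup of $(L_c)_o$, that $\fa_c$ is maximal abelian in $\mathfrak{l}_{cp}$, and that $\Scc=\Sigma(\fl_c,\fa_c)$ — and, given those, $W_{cc}=N_K(\fa_c)/Z_K(\fa_c)$ is precisely the little Weyl group theorem for the reductive Riemannian pair $(\fl_c,\fk)$. Your one-line computation $s_\alpha(\beta)(H_0)=\beta(H_0)$ for $\alpha\in\Scc$ settles $W_{cc}(\ScnP)=\ScnP$ cleanly, since $\ScnP$ is defined by the condition $\beta(H_0)=1$.

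One step in the write-up is, however, not a proof as stated. You justify $N_K(\fa_c)/Z_K(\fa_c)\subseteq W_{cc}$ by saying that any element of the left-hand side permutes $\Scc$ and therefore lies in $W(\Scc)$. Preservation of a root system does not force membership in its Weyl group: $\mathrm{Aut}(\Scc)$ can properly contain $W(\Scc)$ (diagram automorphisms), so this inference is invalid on its own. What you are invoking here is exactly the nontrivial half of the little Weyl group theorem for the symmetric pair $(\fl_c,\fk)$, whose proof uses connectedness of $K$ and its transitive action on Weyl chambers in $\fl_c\cap\fs_c$ (or an equivalent regularity argument) — not merely the fact that the action preserves $\Scc$. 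So the stated goal of avoiding the symmetric-pair theorem as a black box is not actually achieved: the $\mathfrak{sl}_2$-triple construction gives $W_{cc}\subseteq N_K(\fa_c)/Z_K(\fa_c)$, but the reverse inclusion genuinely requires the theorem. Since that theorem is standard, the overall argument is sound; just do not present the reverse inclusion as following from the permutation observation alone.
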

 
As $Z_h\in\fbh$ and $\dt (Z_h)= -Z_h$  it follows that $\Dhn$, $\Dhn^+$ and $\Dhc$ are stable under the involution $\dot\tau^\sharp: \beta \mapsto -\beta\circ \dot\tau$. Moreover $\dot\tau^\sharp|_{i\ch^*}=\dot\eta^t |_{i\ch^*}$. Via the identification of $\Sigma (\fg_c,\fa_c)$ with $\Sbh$ we extend $\dot\tau^\sharp$ to $\Sc$.

The dichotomy present in Moore's classification of restricted roots in the complex case is reflected in the next several results.
\begin{lemma}[\cite{GO91}, Lemma 3.2] Let $\gamma \in \Dhn^+$. If $\dot\tau^\sharp(\gamma)\not= \gamma$ then $\gamma$ and $\dot\tau^\sharp(\gamma)$ are strongly orthogonal. In particular, if $\{\gamma_1,\ldots ,\gamma_{r_h}\}$ is a set of strongly orthogonal roots in $\Dhn^+$, then either
$\dot\tau^\sharp(\gamma_j)= \gamma_j$, or $\gamma_j$ and $\dot\tau^\sharp(\gamma_j)$ are strongly orthogonal.
\end{lemma}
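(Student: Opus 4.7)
The plan is to decompose the problem into three steps: first check that $\dot\tau^\sharp$ preserves $\Dhn^+$, then show that $\gamma+\dot\tau^\sharp\gamma$ is not a root, and finally (the main difficulty) show that $\gamma-\dot\tau^\sharp\gamma$ is not a root. For the first step I would use Lemma~\ref{le-tauZo}, which gives $\dot\tau(Z_h)=-Z_h$, and compute
\[
(\dot\tau^\sharp\gamma)(Z_h)=-\gamma(\dot\tau Z_h)=\gamma(Z_h)=i,
\]
so $\dot\tau^\sharp\gamma\in\Dhn^+$. For the sum, I would invoke the abelianness of $\fp_{h+}$: the root spaces ${\ghC}_\gamma$ and ${\ghC}_{\dot\tau^\sharp\gamma}$ both sit in $\fp_{h+}$, so their bracket vanishes and $\gamma+\dot\tau^\sharp\gamma\notin\Deh$. (Equivalently, $(\gamma+\dot\tau^\sharp\gamma)(Z_h)=2i$, which is not an admissible eigenvalue of $\ad Z_h$ on a root vector.)

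For the difference, I would suppose for contradiction that $\alpha:=\gamma-\dot\tau^\sharp\gamma$ is a root. Then $\alpha(Z_h)=0$ forces $\alpha\in\Dhc$, and one checks $\dot\tau^\sharp\alpha=-\alpha$. The key step is to examine the $\gamma$-string through $\dot\tau^\sharp\gamma$: by the previous paragraph it has no members above $\dot\tau^\sharp\gamma$, so it takes the form $\{\dot\tau^\sharp\gamma-n\gamma\mid 0\le n\le p\}$ with
\[
p=\frac{2\langle\dot\tau^\sharp\gamma,\gamma\rangle}{\langle\gamma,\gamma\rangle}\ge 1.
\]
Because $\dot\tau$ is a Lie algebra automorphism, it preserves the Killing form, so $\dot\tau^\sharp$ is a linear isometry of $\ch^*$; in particular $\|\dot\tau^\sharp\gamma\|=\|\gamma\|$. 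Cauchy--Schwarz together with $\dot\tau^\sharp\gamma\neq\gamma$ then forces $|p|<2$, so $p=1$.

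The main obstacle is to rule out this remaining case $p=1$. Here $\gamma$ and $\dot\tau^\sharp\gamma$ span an $A_2$-subsystem with positive roots $\gamma,\ \dot\tau^\sharp\gamma,\ \alpha$, and $\dot\tau^\sharp$ must act on this subsystem as the reflection $s_\alpha$. Choosing $0\neq E\in{\ghC}_\gamma$ and setting $E':=\dot\eta E\in{\ghC}_{\dot\tau^\sharp\gamma}$ via $\dot\eta=\dot\sigma_h\dot\tau$, a short bracket computation using $\dot\tau^2=\id$ yields the identity $\dot\tau[E,\dot\tau E]=-[E,\dot\tau E]$, and since $[E,\dot\tau E]\in{\ghC}_\alpha\setminus\{0\}$ this forces ${\ghC}_\alpha\subseteq\fq_{hk}^\C$. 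The final contradiction would come from combining this structural restriction with the incompatibility between $\dot\tau Z_h=-Z_h$ and the fact that $s_\alpha$ fixes $Z_h$ (since $\alpha(Z_h)=0$); at worst one falls back on the classification of antiholomorphic involutions on irreducible Hermitian Lie algebras from \cite{HO96} and verifies that no $A_2$-subsystem of the worrisome shape can occur. Once the first assertion is in hand, the ``in particular'' clause is immediate: apply it individually to each $\gamma_j$ of the given strongly orthogonal family.
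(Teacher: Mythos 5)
The paper does not actually prove this lemma---it cites \cite{GO91}, Lemma 3.2---so I can only evaluate your argument on its own merits. Your first three reductions are correct: $\dot\tau^\sharp$ preserves $\Dhn^+$ because $\dot\tau(Z_h)=-Z_h$; $\gamma+\dot\tau^\sharp\gamma\notin\Deh$ because $\fp_{h+}$ is abelian (or by evaluating on $Z_h$); and the root-string plus Cauchy--Schwarz argument (using $\|\dot\tau^\sharp\gamma\|=\|\gamma\|$ and $\dot\tau^\sharp\gamma\neq\pm\gamma$) correctly reduces the problem to the single case $p=1$, where $\gamma$ and $\gamma':=\dot\tau^\sharp\gamma$ span an $A_2$-subsystem with $\alpha:=\gamma-\gamma'\in\Dhc$ and $\dot\tau^\sharp\alpha=-\alpha$. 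The bracket computation $\dot\tau[E,\dot\tau E]=[\dot\tau E, E]=-[E,\dot\tau E]$, giving $\dot\tau|_{\ghC_\alpha}=-\id$ and hence $\ghC_\alpha\subset\fq_{hk}^\C$, is also right (and by applying $\dot\sigma_h$, likewise $\ghC_{-\alpha}\subset\fq_{hk}^\C$).

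The gap is that you never actually derive a contradiction in this last case. The ``incompatibility between $\dot\tau Z_h=-Z_h$ and $s_\alpha(Z_h)=Z_h$'' does not do it: $\dot\tau^\sharp$ and $s_\alpha$ agree on the span of $\gamma,\gamma'$, but that tells you nothing about what happens on the rest of $\ch^*$, where $Z_h$ lives. And retreating to the classification of involutions in \cite{HO96} is not a proof. What you are not using is the standing choice of $\ch$ made at the start of \S2.2: $\fbh=\ch\cap\fq_{hk}$ is \emph{maximal abelian} in $\fq_{hk}$. From $\dot\tau^\sharp\alpha=-\alpha$ one gets $\alpha\circ\dot\tau=\alpha$, so $\alpha$ vanishes on $\fbh=\ch^{-\dot\tau}$, i.e.\ $[\fbh,\ghC_{\pm\alpha}]=0$. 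Now take $0\neq E\in\ghC_\alpha$; the vector $E+\dot\sigma_h(E)$ lies in $\gh$, in $\kh$ (since $\alpha$ is compact), and in $\fq_h$ (since $\dot\tau$ commutes with $\dot\sigma_h$ and you showed $\dot\tau=-\id$ on $\ghC_{\pm\alpha}$), hence in $\fq_{hk}$; it centralizes $\fbh$; and it is not in $\ch$. That contradicts the maximality of $\fbh$ in $\fq_{hk}$, closing the argument with no appeal to classification. With that supplied, the ``in particular'' clause follows exactly as you say.
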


\begin{lemma}\label{le-allOrNone} We have either  $\dot\tau^\sharp(\gamma_j)=\gamma_j$ for all $j=1,\ldots ,r_h$, or $\dot\tau^\sharp(\gamma_j)\not=\gamma_j$ for all
$j=1,\ldots, r_h$.
\end{lemma}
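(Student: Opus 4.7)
The plan is to first reformulate the statement in terms of vanishing of roots on a particular subspace, and then obtain a contradiction from Moore's Theorem together with the simplicity of $\gh$. I would begin with the key equivalence: $\dot\tau^\sharp\gamma_j = \gamma_j$ if and only if $\gamma_j|_{\ch^{\dot\tau}} = 0$, where $\ch^{\dot\tau} := \ch\cap\fg$ is the $+1$-eigenspace of $\dot\tau|_\ch$. This follows from the orthogonal decomposition $\ch = \ch^{\dot\tau}\oplus\fbh$ (where $\dot\tau$ acts as $+\id$ and $-\id$ respectively) combined with the formula $\dot\tau^\sharp\gamma(H) = -\gamma(\dot\tau H)$, which yields $\dot\tau^\sharp\gamma|_{\ch^{\dot\tau}} = -\gamma|_{\ch^{\dot\tau}}$ and $\dot\tau^\sharp\gamma|_{\fbh} = \gamma|_{\fbh}$. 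Thus the lemma reduces to showing that $J_0 := \{j:\gamma_j|_{\ch^{\dot\tau}}=0\}$ is either $\{1,\ldots,r_h\}$ or empty.

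Next, I would argue by contradiction. Suppose $J_0$ and $J_1 := \{1,\ldots,r_h\}\setminus J_0$ are both non-empty; fix $j_0\in J_0$ and $j_1\in J_1$. Since $\gh$ is simple, Theorem~\ref{th-Moore} gives that $\Sigma_h$ is irreducible of type $C_{r_h}$ or $BC_{r_h}$; in particular $\frac12(\beta_{j_0}+\beta_{j_1})\in\Sh$. Pulling back through the Cayley transform $\bC$, there exists $\alpha\in\Dhn^+$ with $\alpha|_\th = \frac12(\gamma_{j_0}+\gamma_{j_1})|_\th$. Since $\dot\tau^\sharp$ preserves $\Dhn^+$, the image $\beta := \dot\tau^\sharp\alpha$ again lies in $\Dhn^+$.

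The computation I would perform is to evaluate $\beta$ on the coroots $H_k$ using $\dot\tau H_k = [\dot\tau E_k,\dot\tau F_k]$. For $k\in J_0$ this gives $\dot\tau H_k = -H_k$ (since $\dot\tau E_k$ is a nonzero multiple of $F_k$), while for $k\in J_1$ the previous lemma yields $\dot\tau E_k\in\ghC_{-\gamma_k'}$ with $\gamma_k':=\dot\tau^\sharp\gamma_k$ strongly orthogonal to $\gamma_k$, so $\dot\tau H_k$ is a negative scalar multiple of $H_{\gamma_k'}$. Moreover, since $\dot\tau^\sharp$ preserves strong orthogonality and fixes $\gamma_{j_0}$, the root $\gamma_{j_1}'$ is also strongly orthogonal to $\gamma_{j_0}$. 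Plugging in, one finds $\beta(H_{j_0}) = 1$ and $\beta(H_{j_1}) = c_{j_1}\alpha(H_{\gamma_{j_1}'})$, which vanishes by strong orthogonality of $\gamma_{j_1}'$ with both $\gamma_{j_0}$ and $\gamma_{j_1}$.

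The main obstacle, and the heart of the argument, is to transform this profile into a contradiction. By Moore's classification of $\Dhn^+$ every root $\beta\in\Dhn^+$ has restriction $\beta|_\th$ of the form $\frac12(\gamma_a+\gamma_b)|_\th$ for some $a, b\in\{1,\ldots,r_h\}$ (with $a=b$ allowed, or $a$ alone in the $BC$ case). Combined with $\beta(H_{j_0})=1$ and $\beta(H_{j_1})=0$, this forces $\beta|_\th = \frac12(\gamma_{j_0}+\gamma_k)|_\th$ for some index $k\in \{1,\ldots,r_h\}\setminus\{j_0,j_1\}$. I would then split into cases on whether $\gamma_{j_1}'\in\{\gamma_1,\ldots,\gamma_{r_h}\}$ or not: in the former case $\dot\tau^\sharp$ induces a nontrivial permutation of $\{\gamma_k\}$ mixing $J_0$ and $J_1$, contradicting preservation of $J_0$ under the $\dot\tau^\sharp$-action; in the latter, the non-vanishing of $\beta|_{\ch^{\dot\tau}} = -\frac12\gamma_{j_1}|_{\ch^{\dot\tau}}\neq 0$ contradicts the vanishing obtained from the $\frac12(\gamma_{j_0}+\gamma_k)$-profile (since both $\gamma_{j_0}$ and $\gamma_k$ would lie in $J_0$ after iterating the argument). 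Irreducibility of $\Sh$ — hence simplicity of $\gh$ — is essential in ensuring that the partition $J_0\sqcup J_1$ cannot be closed under the relevant combinatorial operations.
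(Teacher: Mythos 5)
Your approach is genuinely different from the paper's: the paper's own proof of Lemma~\ref{le-allOrNone} consists of a one-line appeal to the case-by-case classification of the pairs $(\gh,\fg)$ in the Appendix (Tables 3--5), so any conceptual root-theoretic argument like yours would be new content. Unfortunately, the present plan has several concrete gaps, any one of which breaks the chain.

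First, the claimed vanishing $\beta(H_{j_1})=0$ is not justified. You have $\beta(H_{j_1})=c\,\alpha(H_{\gamma_{j_1}'})$ with $c>0$ and $\gamma_{j_1}':=\dot\tau^\sharp\gamma_{j_1}$, and you appeal to strong orthogonality of $\gamma_{j_1}'$ with $\gamma_{j_0}$ and $\gamma_{j_1}$ to conclude $\alpha(H_{\gamma_{j_1}'})=0$. But you only know $\alpha|_{\th}=\tfrac12(\gamma_{j_0}+\gamma_{j_1})|_{\th}$, i.e.\ $\alpha=\tfrac12(\gamma_{j_0}+\gamma_{j_1})+\mu$ with $\mu|_{\th}=0$, and $H_{\gamma_{j_1}'}$ need not lie in $\th$ (indeed $\gamma_{j_1}'$ need not be one of $\gamma_1,\ldots,\gamma_{r_h}$). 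Orthogonality of $\gamma_{j_1}'$ with $\gamma_{j_0},\gamma_{j_1}$ controls only part of $\langle\alpha,\gamma_{j_1}'\rangle$; the term $\langle\mu,\gamma_{j_1}'\rangle$ is unconstrained. The identical problem undermines the later identity $\beta|_{\ch^{\dot\tau}}=-\tfrac12\gamma_{j_1}|_{\ch^{\dot\tau}}$: the restriction $\alpha|_{\ch^{\dot\tau}}$ is not determined by $\alpha|_{\th}$, since $\ch^{\dot\tau}$ is generically larger than the part of $\ch$ seen by $\th$.

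Second, the case split at the end does not yield a contradiction. In the case $\gamma_{j_1}'=\gamma_m$ for some $m$, you claim $\dot\tau^\sharp$ ``mixes $J_0$ and $J_1$.'' It does not: since $\dot\tau^\sharp$ is an involution, $\dot\tau^\sharp\gamma_m=\gamma_{j_1}\neq\gamma_m$, so $m\in J_1$ as well. Thus $\dot\tau^\sharp$ simply swaps two indices inside $J_1$, which is perfectly consistent with $J_0$ and $J_1$ both being nonempty. This is precisely the configuration the lemma is trying to exclude; asserting it is contradictory is circular. You would need a genuinely new constraint here, and nothing in the Moore profile $\beta|_{\th}=\tfrac12(\gamma_{j_0}+\gamma_k)$ supplies one (you also haven't excluded the BC-case profile $\beta|_{\th}=\tfrac12\gamma_{j_0}$, which is consistent with $\beta(H_{j_0})=1$, $\beta(H_{j_1})=0$). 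The correct resolution in the paper is simply to read the five exceptional cases off the classification tables; if you want a structural proof you will need a different mechanism, for instance exploiting irreducibility of the restricted root system $\Sigma_c$ or the action of $W_{cc}$ (Lemma~\ref{Wcc}) on $\Sigma_{cn}^+$, rather than the unsupported vanishings above.
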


This follows from the classification in \ref{se-Classification}. The classification also shows that $\dot\tau^\sharp\gamma_j
\not= \gamma_j$ only for the following five cases:
\begin{enumerate}
\item $\Gh=G\times G$ is not simple and $G/K$ is embedded into $G/K\times \overline{G/K}$ diagonally.
\item $\gh=\so (2,n)$ and $\fg=\so (1,n)$, $n\ge 3$.
\item $\gh = \su (2p,2q)$ and $\fg=\mathfrak{sp} (p,q)$.
\item $\gh = \sp (2n,\R)$ and $\fg=\mathfrak{sp}(n,\C)$.
\item $\gh =\mathfrak{e}_{6(-14)}$ and $\fg=\mathfrak{f}_{4(-20)}$.
\end{enumerate}
As we will see later, the cases $\dot\tau^\sharp(\gamma_j)=\gamma_j$ and $\dot\tau^\sharp(\gamma_j)\not=\gamma_j$ are very different from the point
of view of the underlying geometry.

In the case $\dot\tau^\sharp(\gamma_j)=\gamma_j$ we set $r=r_h$, while in the case $\dot\tau^\sharp(\gamma_j)\not= \gamma_j$ we set $r=r_h/2$.  In the latter case we order the strongly orthogonal roots so that $\dot\tau^\sharp(\gamma_{2j-1})
=\gamma_{2j}$, $j=1,\ldots ,r$, see \cite{{GO}91}, Section 3, for more details and discussion.

\begin{lemma}\label{cor:2.11} Assume that $r=r_h$. Then we can choose $E_j$ and $F_j=\dot\sigma_h(E_j)$ such that  $\dot\tau (E_j)=F_j$,
$\dot\eta (E_j)=E_j$, and $\dot\eta (F_j)=F_j$. So with $X_j = E_j + F_j$ and $Y_j = i(E_j - F_j)$, then  $\dot\eta(X_j )=\dot\tau (X_j)=X_j$ and
$\dot\eta (Y_j)=\dot\tau (Y_j)= -Y_j$. In particular,
$$\fa:=\fa_h = \bigoplus_{j=1}^r \R X_j\subset\fp, \text{  and}$$
$$\fa_{hq}: = \bigoplus_{j=1}^r \R Y_j\subset\fq_{hp}.$$
Moreover since $\rank G/K = \rank \Gh/\Kh=r$, $\fa$ is maximal abelian in $\fp$ and in $\fp_h$, while $\fa_{hq}$ is  maximal abelian in $\fq_{hp}$ and in $\fp_h$. 
\end{lemma}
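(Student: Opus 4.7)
The plan is to use the residual phase freedom in the initial choice of $E_j$ to arrange $\dot\tau(E_j) = F_j$; every remaining assertion will then reduce to a short calculation. Specifically, rescaling $E_j \mapsto e^{i\alpha_j} E_j$ forces $F_j = \dot\sigma_h(E_j) \mapsto e^{-i\alpha_j} F_j$ (since $\dot\sigma_h$ is conjugate-linear), and this rescaling preserves both $H_j = [E_j, F_j]$ and the normalization $\gamma_j(H_j) = 2$, so the only real question is whether $\alpha_j$ can be chosen to arrange $\dot\tau(E_j) = F_j$.

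The assumption $\dot\tau^\sharp \gamma_j = \gamma_j$ unpacks to $\gamma_j \circ \dot\tau = -\gamma_j$, which gives $\dot\tau(\ghC_{\gamma_j}) = \ghC_{-\gamma_j}$, so $\dot\tau(E_j) = c_j F_j$ for some $c_j \in \C^*$. Since $\dot\tau$ is the complex-linear extension of a real involution of $\gh$, it automatically commutes with $\dot\sigma_h$, and combined with $\dot\tau^2 = \id$ one computes
\[ E_j = \dot\tau^2(E_j) = c_j\, \dot\tau(F_j) = c_j\, \dot\sigma_h \dot\tau(E_j) = c_j\, \overline{c_j}\, E_j, \]
whence $|c_j| = 1$. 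Writing $c_j = e^{i\theta_j}$ and taking $\alpha_j = -\theta_j/2$ yields $\dot\tau(E_j) = F_j$ and $\dot\tau(F_j) = E_j$. Applying $\dot\eta = \dot\sigma_h \circ \dot\tau$ then immediately gives $\dot\eta(E_j) = E_j$, $\dot\eta(F_j) = F_j$, and the four identities for $X_j = E_j + F_j$ and $Y_j = i(E_j - F_j)$ follow by direct calculation, using that the conjugate-linearity of $\dot\eta$ turns the $i$ in $Y_j$ into $-i$.

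For the final assertions, the noncompactness of $\gamma_j$ places $X_j$ and $Y_j$ in $\ph$, and their $\dot\tau$-eigenvalues then give $X_j \in \ph \cap \fg = \fp$ and $Y_j \in \ph \cap \fq_h = \fq_{hp}$. Strong orthogonality of distinct $\gamma_j, \gamma_k$ forces $[E_j, E_k] = [E_j, F_k] = [F_j, F_k] = 0$ and hence pairwise commutation among the $X_j$'s and among the $Y_j$'s, so $\fa := \bigoplus_j \R X_j$ and $\fa_{hq} := \bigoplus_j \R Y_j$ are $r$-dimensional abelian subspaces of $\ph$. Since $r = r_h = \rank \ph$, each is already maximal abelian in $\ph$, and a fortiori in the subspaces $\fp$ and $\fq_{hp}$ respectively; maximality of $\fa$ in $\fp$ is also ensured by the stated rank identity $\rank G/K = r$. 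The only non-obvious step in the entire argument is the phase-normalization in the second paragraph, which is really just the observation that the $\U(1)$ freedom in the choice of $E_j$ is precisely what is needed to align $\dot\tau$ with $\dot\sigma_h$ on the root pair $(\ghC_{\gamma_j}, \ghC_{-\gamma_j})$.
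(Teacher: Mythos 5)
Your proof is correct, and it reaches the same goal by a slightly different mechanism than the paper. The paper uses the observation that $\dot\tau^\sharp\gamma_j = \gamma_j$ forces $\dot\eta(\ghC_{\gamma_j}) = \ghC_{\gamma_j}$, so the root space acquires a real form $\fg_{c\gamma_j} = (\ghC_{\gamma_j})^{\dot\eta}$; it then simply \emph{selects} $E_j$ inside $\fg_{c\gamma_j}$ (normalized via the Killing form so that $[E_j, -\dot\theta_c(E_j)] = H_j$), after which $\dot\eta(E_j)=E_j$ is automatic and $\dot\tau(E_j) = \dot\sigma_h(E_j) = F_j$ falls out in one line. You instead start from an arbitrary normalized $E_j$, note $\dot\tau(E_j) = c_jF_j$ because $\dot\tau$ swaps $\ghC_{\pm\gamma_j}$, prove $|c_j| = 1$ from $\dot\tau^2 = \id$ and the commutation of $\dot\tau$ with $\dot\sigma_h$, and then burn the $\mathrm{U}(1)$ phase freedom to set $c_j = 1$. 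The content is the same (your rescaled $E_j$ is exactly an element of $\fg_{c\gamma_j}$), but the two perspectives differ: the paper's is a clean existential choice, whereas yours is an a~posteriori adjustment that makes explicit why the freedom in the earlier normalization $\gamma_j(H_j)=2$ is exactly a phase and why that phase suffices. Your version arguably shows more clearly that the new normalization is compatible with the old one. The remainder of your argument — the four eigenvector identities for $X_j, Y_j$, the placement $X_j\in\fp$ and $Y_j\in\fq_{hp}$ via $\dot\tau$-eigenvalues, and maximality from strong orthogonality plus the dimension count $r = r_h = \rank\Gh/\Kh$ — is routine and matches what the paper leaves implicit.
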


\begin{proof} As $\dot\tau^\sharp(\gamma_j)=\gamma_j$ it follows that $\dot\eta (\ghC_{\gamma_j})=\ghC_{\gamma_j}$ so
\[\ghC_{\gamma_j}=\ghC_{\gamma_j}\cap \fg_c\oplus i ( \ghC_{\gamma_j}\cap \fg_c).\]
Thus we can  choose $E_j\in \fg_{c\gamma_j}$ such that $-B_c(E_j,\dot\theta_c(E_j))=1$, where $B_c$ denotes the Killing form on $\fg_c$. Then $[E_j,-\dot\theta_c(E_j)]=H_j$. Notice that $-\dot\theta_c(E_j)=\dot\tau(E_j)$ as $E_j\in \php$. Furthermore,
$E_j\in \fg_c$ and hence $E_j=\dot\sigma_h(\dot\tau (E_j))$ or $\dot\tau (E_j)=\dot\sigma_h(E_j)=F_j$.
\end{proof}

Similarly in the other case we have

\begin{lemma}\label{le-rnot} Assume that $r\not= r_h$. Then we can choose $E_j$ and $F_j=\dot\sigma_h(E_j)$
such that $\dot\tau (E_{2j-1})=
F_{2j}$,  and  $\dot\tau (E_{2j})=F_{2j-1}$  for $1\le j \le r$, hence $\dot\eta (E_{2j-1})= E_{2j}$ 
and $\dot\eta (F_{2j -1}) = F_{2j}$. So with $X_l = E_l + F_l$ and $Y_l = i(E_l - F_l)$, then
$\dot\tau (X_{2j-1})=X_{2j} = \dot\eta  (X_{2j-1})$ while  $\dot\tau (Y_{2j-1})=  - Y_{2j} = \dot\eta (Y_{2j-1})$. One has $\fa_h = \fa \oplus \fa_h^q$ with
\begin{equation}\label{def-aq}
\fa =\fa_h\cap \fg=\bigoplus_{j=1}^r \R (X_{2j-1}+ X_{2j})\quad \text{ and }\quad \fa_h^q=
\bigoplus_{j=1}^r \R (X_{2j-1}-X_{2j})\subset \fq_{hp}\, .
\end{equation}
Moreover, $\fa$ is maximal abelian in $\fp$ and $\rank G/K=\frac{1}{2}\rank \Gh/\Kh=r$.
\end{lemma}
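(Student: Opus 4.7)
The proof would proceed in parallel with Lemma \ref{cor:2.11}, modified to accommodate the pairing $\dot\tau^\sharp(\gamma_{2j-1}) = \gamma_{2j}$. First, from $\dot\tau^\sharp \gamma = -\gamma \circ \dot\tau$ together with the complex-linearity of $\dot\tau$, I would observe that $\dot\tau({\ghC}_\alpha) = {\ghC}_{-\dot\tau^\sharp \alpha}$; in particular $\dot\tau({\ghC}_{\gamma_{2j-1}}) = {\ghC}_{-\gamma_{2j}}$. For each $j = 1,\ldots,r$ I would then pick $E_{2j-1} \in {\ghC}_{\gamma_{2j-1}}$ and normalize it (up to a phase) so that $[E_{2j-1}, F_{2j-1}] = H_{2j-1}$, where $F_{2j-1} := \dot\sigma_h(E_{2j-1})$, and set
\[
F_{2j} := \dot\tau(E_{2j-1}) \in {\ghC}_{-\gamma_{2j}}, \qquad E_{2j} := \dot\sigma_h(F_{2j}) = \dot\eta(E_{2j-1}) \in {\ghC}_{\gamma_{2j}}.
\]

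Next I would verify the asserted symmetries. Since $\dot\tau$ preserves $\gh$ it commutes with $\dot\sigma_h$, so $\dot\tau(F_{2j-1}) = \dot\sigma_h(\dot\tau E_{2j-1}) = \dot\sigma_h(F_{2j}) = E_{2j}$ and $\dot\tau(E_{2j}) = \dot\sigma_h(\dot\tau^2 E_{2j-1}) = F_{2j-1}$. Feeding these into $X_l = E_l + F_l$ and $Y_l = i(E_l - F_l)$ gives $\dot\tau(X_{2j-1}) = X_{2j}$ and $\dot\tau(Y_{2j-1}) = -Y_{2j}$; the companion $\dot\eta$ identities follow using that the extra factor of $i$ in $Y_l$ is conjugated by the anti-linear $\dot\sigma_h$.

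The one non-cosmetic point is to check that the single normalization at $E_{2j-1}$ automatically yields $[E_{2j}, F_{2j}] = H_{2j}$. Applying $\dot\tau$ to $[E_{2j-1}, F_{2j-1}] = H_{2j-1}$ gives
\[
\dot\tau(H_{2j-1}) = [\dot\tau E_{2j-1}, \dot\tau F_{2j-1}] = [F_{2j}, E_{2j}] = -[E_{2j}, F_{2j}],
\]
so the problem reduces to showing $\dot\tau(H_{2j-1}) = -H_{2j}$. Since $\dot\tau$ is a Lie algebra involution preserving the Killing pairing, it sends the coroot of any root $\alpha$ to the coroot of $\alpha \circ \dot\tau$; with $\gamma_{2j-1} \circ \dot\tau = -\dot\tau^\sharp \gamma_{2j-1} = -\gamma_{2j}$ this yields $\dot\tau(H_{2j-1}) = H_{-\gamma_{2j}} = -H_{2j}$, as required.

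Finally, to obtain (\ref{def-aq}) I would decompose $\fa_h = \bigoplus_l \R X_l$ into the $\pm 1$-eigenspaces of $\dot\tau$: since $\dot\tau$ swaps $X_{2j-1}$ and $X_{2j}$, the $+1$-eigenspace $\fa_h \cap \fg$ is spanned by the vectors $X_{2j-1}+X_{2j}$ and lies in $\fp$, while the $-1$-eigenspace $\fa_h \cap \fq_h$ is spanned by $X_{2j-1}-X_{2j}$ and lies in $\fp_h \cap \fq_h = \fq_{hp}$. Maximality of $\fa$ in $\fp$ then follows from $\dim \fa = r$ together with the rank identity $\rank G/K = \tfrac{1}{2}\rank \Gh/\Kh = r$, read off from the classification list preceding the lemma. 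The main obstacle is exactly the coroot computation $\dot\tau(H_{2j-1}) = -H_{2j}$; once that is in hand, everything else is sign and (conjugate-)linearity bookkeeping.
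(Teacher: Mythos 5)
Your proof is correct. The paper does not supply a proof of Lemma \ref{le-rnot}; it is presented as the companion of Lemma \ref{cor:2.11} via the phrase ``similarly in the other case,'' and your argument is the faithful completion of that indication. Two points are worth noting. First, you correctly identify that the only step requiring an idea beyond Lemma \ref{cor:2.11} is the compatibility of the normalization across the pair $\{2j-1,2j\}$, i.e.\ that $[E_{2j},F_{2j}]=H_{2j}$ once $[E_{2j-1},F_{2j-1}]=H_{2j-1}$, and your reduction to $\dot\tau(H_{2j-1})=-H_{2j}$ followed by the coroot computation ($\dot\tau$ is a complex-linear automorphism preserving $\mathfrak c_h^{\C}$, hence $\dot\tau(H_\alpha)=H_{\alpha\circ\dot\tau}$, and $\gamma_{2j-1}\circ\dot\tau=-\dot\tau^\sharp\gamma_{2j-1}=-\gamma_{2j}$) is exactly what is needed. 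Second, your appeal to the classification for $\rank G/K=r$ is consistent with the paper's own treatment, which asserts maximality without an intrinsic argument; a reader wanting a structural proof would have to supply it separately, but that is not a defect of your write-up relative to the source.
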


To allow for uniform treatment of the cases we introduce the notation
$E_j^\prime =E_j$, $F_j^\prime = F_j$,  $X^\prime_j=X_j$ etc. in case $r=r_h$, and
$E^\prime_j=E_{2j-1}+E_{2j}$, $F^\prime_j=F_{2j-1}+F_{2j}$, $X^\prime_j=X_{2j-1}+X_{2j}$, etc. in
case $r\not= r_h$. Then in all cases we have $\dot\tau(E_j^\prime) = F_j^\prime$ and
\[\fa=\bigoplus_{j=1}^r \R X^\prime_j\, .\]
The order in $\fa^*$ is obtained from the lexicographic order with respect to the basis $\{X^\prime_1,\ldots ,X^\prime_r\}$.

Similarly we will need an extension of this notation to include subsets and signs. So for $I\subset \{1,\ldots ,r\}$ and $\epsilon\in\{-1,1\}^{\# I}$, if $r=r_h$ set $I^\prime = I$ and $\epsilon^\prime=\epsilon$; 
otherwise, set $I^\prime =\{2j-1,2j\mid j\in I\}= (2I-1)\cup 2I$ with $\epsilon^\prime_{2j-1}=\epsilon^\prime_{2j}=\epsilon_j$. Then we will have $E^\prime(I^\prime ,\epsilon^\prime )$ equal to either $E(I,\epsilon)$ in the first case, and to $E(2I - 1,\epsilon) + E(2I,\epsilon)$ in the second case. 

\begin{remark}\label{re:Realization}
We mention that all the
classical irreducible Riemannian symmetric
spaces, with a possible extension by the abelian group $\R^+=\{t\in \R\mid t>0\}$, arise in this way as a real form of a bounded
symmetric domain in $\C^n$, see Tables 3 and 4.
The Riemannian symmetric spaces that do not occur this way are those that correspond to the symmetric pairs: $(\fe_{6(2)}, \su (6) \times \su(2))$, $(\fe_{6(6)} , \sp (4))$, $(\e_{7(7)},\su(8))$,
$(\e_{7(-5)},\so (12) \times \su (2))$, $(\e_{8(8)},\so (16))$, $(\e_{8(-24)},\e_{7} \times \su (2))$, $(\ff_{4(4)}, \sp (3) \times \su(2))$,
and $(\fg_{2(2)},\su(2) \times \su(2))$, namely, those with a quaternionic K\"ahler metric or associated to a split exceptional group.

The extra factor $\R^+$ occurs in the cases exactly where
$\DP\simeq \R^k+i\Omega$ is a tube type domain and (up to finite coverings)  $G\simeq \GL (\Omega)_o$ is the
automorphism group of the symmetric cone $\Omega$; moreover, here $r = r_h$. These are not all the tube domains, but those for which $\gh = \fg_c$. The simplest case is when $\Gh=\SU (1,1)$ and
$G=\{\exp t X_1\mid t\in \R\}$ (see (\ref{def-X1Y1}) for the notation). In this  case $K$ is trivial and
$\exp tX_1$ acts on $\cD_+ $ by 
\[\exp tX_1\cdot x=\frac{x+\tanh (t)}{x\tanh (t)+1}\]
 according to
(\ref{eq-actXj}).  In the general case the $\R^+$ factor is $\exp \R H_0$ where
$H_0=X_1+\cdots +X_r$. The element $H_0 (=-iZ_h$) is centralised by $K$. Let $\fa =\bigoplus_{j=1}^r \R X_j$, which by Lemma \ref{cor:2.11} 
is maximal abelian in $\fp$, and set
$A=\exp \fa$. Then $G=KAK$. It follows that the action of the $\R^+$ is given by
\[\exp (tH_0)\cdot (ka\cdot 0)= k\exp \sum (t_j+t)X_j\cdot 0= k \cdot \sum_{j=1}^r \tanh (t_j+t)E_j .\]

The Lie algebra $\fg$ is simple except for the aforementioned tube type cases and the case $\gh=\so (2,p+q)$, $\fg =\so (1,p)\times \so (1,q)$, $p,q\ge 2$. If $G/K$ itself is a bounded symmetric domain in $\C^n$, then $\Gh=G\times G$
and $G/K$ is embedded diagonally into
$\DP\times \overline{\DP}$ ($ \overline{\DP}$ the conjugate structure). This is the only case where $\Gh$ is not simple.

Non-uniqueness of the bounded realization occurs  if $\fg=\so (1,p )$, then one can take $\gh = \su (1,p)$ or
$\gh = \so (2,p+1)$; while for $\fg = \sp (2,2)$ one has the choices $\gh = \su (4,4)$ or $\mathfrak{e}_{6(-14)}$.

\end{remark}

\subsection{Boundary Orbits of $\cD$}\hfill

The $G$ orbit of the basepoint, $G\cdot x_o$, is $\cD$, an open domain in $\cP_c =G_c/ L_cN_{-}$. On the other hand, the Bruhat cell $N_+\cdot x_o$ is open and dense in $\cP_c$. As in the complex case, by means of  $\log$ one obtains an analytic isomorphism $N_+\cdot x_o\simeq N_+ \cong\nP$,  
\begin{equation}
\cD \simeq G/K \simeq \cD_+ \subset \nP, 
\end{equation}
the Harish-Chandra bounded realization of $\cD$. 

There are two possible ways to consider the closure of $\cD$ and the corresponding boundary 
orbits: we can consider the closure in the generalized flag manifold $\cP_c $, 
or in the open dense set $\exp \fn_+\cdot x_0 \simeq \fn_+$. As for the complex case,
Lemma \ref{lem:ClosureSame}, the two agree. 

\begin{lemma} Denote by $c({\cD})$ the closure of $\cD$ in $\cP_c$. Then $c({\cD})$ is also the closure of $\cD$ in 
$\GhC/\KhC\Phm $ and the closure of $\exp \cD_+\cdot x_0$ in $\exp \fn_+\cdot x_0$. In
particular, the action of $G$ on $\cD_+$ extends
to the boundary of $ \cD_+$.
\end{lemma}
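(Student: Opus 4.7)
The plan is to mimic the proof of Lemma \ref{lem:ClosureSame} in the complex case, replacing the ingredients of the Harish-Chandra realization of $\Gh/\Kh$ by those of $G/K$ given in Lemma \ref{cor:2.11} and Lemma \ref{le-rnot}. Three containments should be exhibited, and then a single argument shows the three purported closures coincide.

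First, I would set up the natural maps. The inclusions $L_c\subseteq \KhC$ and $N_-\subseteq \Phm$ (the second because $\fn_-=\phm\cap\fg_c\subseteq\phm$) yield a canonical $G_c$-equivariant real-analytic map
\[\iota:\cP_c=G_c/L_cN_-\longrightarrow \GhC/\KhC\Phm,\qquad gL_cN_-\longmapsto g\KhC\Phm,\]
and on the image of $\cD$ this map is just the composition $\cD\hookrightarrow \cD_h\subset\GhC/\KhC\Phm$ via the inclusion $\cD_+\subset \DP$ followed by the Borel embedding. In particular $c(\cD)$ in $\cP_c$ maps into the closure in $\GhC/\KhC\Phm$, and the latter is certainly contained in $c(\DP)=c(\cD_h)$ (closure in $\exp(\php)\cdot x_o$ by Lemma \ref{lem:ClosureSame}). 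The remaining task is to show every limit point $z$ actually lies in $\exp\fn_+\cdot x_o$ (and hence in $\iota(\cP_c)$), and that the three candidate closures then coincide set-theoretically.

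Second, I would exploit the Cartan decomposition $G=KAK$ with $A=\exp\fa=\exp\bigoplus_{j=1}^{r}\R X_j'$ from Lemma \ref{cor:2.11}/Lemma \ref{le-rnot}. Let $z=\lim_n g_n\cdot x_o$ with $g_n\in G$ and write $g_n=k_na_nk_n'$. Since $K\subseteq L_c\subseteq L_cN_-$, the factor $k_n'$ fixes $x_o$, so $z=\lim_n k_na_n\cdot x_o$. Compactness of $K$ lets us pass to a subsequence with $k_n\to k\in K$. Writing $a_n=\exp\sum_{j=1}^{r}t_{jn}X_j'$, the $\SU(1,1)$-reduction used to derive \eqref{eq-actXj} (applied either to each $X_j$ when $r=r_h$, or to each commuting pair $X_{2j-1}+X_{2j}$ when $r\ne r_h$, using that $\dot\kappa_{2j-1,1}$ and $\dot\kappa_{2j,1}$ commute) gives
\[a_n\cdot x_o=\exp\Bigl(\textstyle\sum_{j=1}^{r}\tanh(t_{jn})\,E_j'\Bigr)\cdot x_o.\]
The coefficients lie in $[-1,1]$, so after a further subsequence $\tanh(t_{jn})\to s_j\in[-1,1]$ and hence $a_n\cdot x_o\to \exp\bigl(\sum s_jE_j'\bigr)\cdot x_o$, which lies in $\exp\fn_+\cdot x_o$ because each $E_j'\in\fn_+=\php\cap\fg_c$.

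Third, I would finish by invoking that $K\subseteq L_c$ centralizes $H_0$ and therefore $\mathrm{Ad}(k)$ preserves the eigenspace $\fn_+=\fg_c(\mathrm{ad}\,H_0;1)$. Consequently $k\cdot \exp\fn_+\cdot x_o=\exp\fn_+\cdot x_o$, and
\[z=k\cdot\exp\Bigl(\textstyle\sum s_jE_j'\Bigr)\cdot x_o=\exp\bigl(\mathrm{Ad}(k)\textstyle\sum s_jE_j'\bigr)\cdot x_o\in \exp\fn_+\cdot x_o.\]
This shows simultaneously that every limit in $\cP_c$, every limit in $\GhC/\KhC\Phm$, and every limit of points of $\exp\cD_+\cdot x_o$ in $\exp\fn_+\cdot x_o$ is represented by the same element of $\exp\fn_+\cdot x_o$, so all three closures agree. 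The extension of the $G$-action to the boundary is then automatic since $G$ acts on the closure in $\cP_c$.

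The step I expect to require the most care is the explicit computation of $a_n\cdot x_o$ in the split case $r\ne r_h$: one must verify, using the commutativity of the standard homomorphisms $\kappa_{2j-1}$ and $\kappa_{2j}$ noted after \eqref{eq:SLkappa}, that $\exp(t(X_{2j-1}+X_{2j}))\cdot 0$ reduces to the two-dimensional $\SU(1,1)\times\SU(1,1)$ computation and lands in $\R E_{2j-1}'\subseteq\fn_+$. Once this is in hand the rest is a direct transcription of Lemma \ref{lem:ClosureSame}.
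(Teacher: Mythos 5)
Your proof is correct, but it takes a substantially different — and much longer — route than the paper's. The paper's entire proof is one sentence: $\cP_c=G_c/L_cN_-$ is a compact flag manifold, hence a closed subset of $\GhC/\KhC\Phm$; therefore the closure of $\cD$ in $\GhC/\KhC\Phm$ automatically lands in $\cP_c$, so the two closures coincide. The remaining identification with the closure inside $\exp\fn_+\cdot x_0$ then drops out of the complex-case Lemma \ref{lem:ClosureSame} by intersecting with $\cP_c$ (the paper's follow-up remark phrases this as $\cP_c=(\GhC/\KhC\Phm)^\eta$ and taking $\eta$-fixed points). Your approach instead re-derives the conclusion from scratch by running the $KAK$/$\SU(1,1)$-reduction argument of Lemma \ref{lem:ClosureSame} inside $G$: you decompose $g_n=k_na_nk_n'$, use $K\subseteq L_c$ to discard $k_n'$, compute $a_n\cdot x_o$ explicitly, pass to subsequences, and use $\Ad(K)\fn_+=\fn_+$. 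This works, and it is more self-contained since it doesn't invoke compactness of the flag manifold; but it duplicates a computation the authors had already packaged into Lemma \ref{lem:ClosureSame}, and it obscures the cleanest reason all three closures coincide, which is purely topological (a closed subspace) rather than representation-theoretic. The one place where your argument is genuinely doing extra work is the $r\ne r_h$ case, where you correctly note that one must check the commuting $\kappa_{2j-1},\kappa_{2j}$ give $\exp(t(X_{2j-1}+X_{2j}))\cdot 0=\tanh(t)E_j'$ — but this verification is unnecessary once one uses the compactness shortcut.
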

\begin{proof} This follows from the fact that $\cP_c$ is compact and hence closed in $\GhC/\KhC\Phm$.
\end{proof}

\begin{remark} The above statement is also a consequence of the fact that $\cP_c=(\GhC/\KhC\Phm)^{\eta}$.
The rest then follows from  Lemma \ref{lem:ClosureSame} by taking $\eta$-fixed points.
\end{remark} 

Denote by $\partial \cD =c({\cD})\setminus \cD$ the topological boundary of $\cD$.

\begin{proposition} Let $I\subseteq \{1,\ldots ,r\}$ and let $\epsilon \in \{-1,1\}^{\# I}$.
\begin{enumerate}
\item If $r=r_h$, then $\dot\eta (E (I,\epsilon ))=E(I,\epsilon)$ and $\dot\eta (\cO_h(I,\epsilon))=\cO_h (I,\epsilon)$.
\item If $r\not=r_h$, then $\dot\eta (E (2I-1,\epsilon ))=E(2I,\epsilon)$ and $\dot\eta (\cO_h (2I-1,\epsilon))=
\cO_h(2I,\epsilon)$.
\item Uniformly in all cases we have $\dot\eta (E^\prime(I^\prime ,\epsilon^\prime ))=E^\prime(I^\prime ,\epsilon^\prime)$ and
$\dot\eta (\cO_h(I^\prime ,\epsilon^\prime))=\cO_h(I^\prime ,\epsilon^\prime).$
\end{enumerate}
\end{proposition}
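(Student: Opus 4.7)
The plan is to combine two ingredients: (i) a direct computation of $\dot\eta$ on the basic elements $E_j$ using the explicit normalizations from Lemmas \ref{cor:2.11} and \ref{le-rnot}, and (ii) the observation that $\eta$ descends to a well-defined involution on the flag manifold $\cP_h$ which is $\GhC$-equivariant and which, under the open embedding $\php \hookrightarrow \cP_h$, restricts to $\dot\eta$.

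For (i), since $\dot\eta$ is conjugate-linear and the scalars $\epsilon_j \in \{\pm 1\}$ are real, one has $\dot\eta\bigl(\sum_{j} \epsilon_j E_j\bigr) = \sum_{j} \epsilon_j \dot\eta(E_j)$. In the case $r = r_h$, Lemma \ref{cor:2.11} gives $\dot\eta(E_j) = E_j$, yielding $\dot\eta(E(I,\epsilon)) = E(I,\epsilon)$. In the case $r \neq r_h$, Lemma \ref{le-rnot} gives $\dot\eta(E_{2j-1}) = E_{2j}$, yielding $\dot\eta(E(2I - 1, \epsilon)) = E(2I, \epsilon)$. For the uniform identity, $E'(I', \epsilon')$ is a sum of $\dot\eta$-fixed summands in both cases (either $\epsilon_j E_j$ or the symmetric pair $\epsilon_j(E_{2j-1} + E_{2j})$), so $\dot\eta(E'(I', \epsilon')) = E'(I', \epsilon')$.

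For (ii), I first verify that $\eta$ preserves $\KhC\Phm$. From Lemma \ref{le-tauZo}, $\dot\tau(Z_h) = -Z_h$, and $\dot\sigma_h(Z_h) = Z_h$ since $Z_h \in \gh$; hence $\dot\eta(Z_h) = -Z_h$ and $\dot\eta(H_0) = H_0$ (using $H_0 = -iZ_h$ and conjugate-linearity). Since $\khC$ and $\phpm$ are the real-eigenvalue eigenspaces of $\ad H_0$, the conjugate-linear $\dot\eta$ preserves each of them, so $\eta(\KhC) = \KhC$ and $\eta(\Phm) = \Phm$. Consequently $\eta$ is a well-defined involution on $\cP_h$ fixing the basepoint $x_o$, and satisfies $\eta(g \cdot x) = \eta(g) \cdot \eta(x)$. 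Under the map $Z \mapsto \exp(Z) \cdot x_o$, this flag-manifold involution pulls back to $\dot\eta$ on $\php$. Finally, $\eta|_{\Gh} = \tau|_{\Gh}$, since $\eta = \sigma_h \circ \tau$ and $\sigma_h$ is the identity on $\Gh$, so $\eta(\Gh) = \Gh$.

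Putting these two pieces together, for any $Z \in c(\DP) \subset \php$ one gets
\[
\dot\eta(\Gh \cdot Z) = \eta(\Gh) \cdot \dot\eta(Z) = \Gh \cdot \dot\eta(Z).
\]
Taking $Z$ equal to $E(I, \epsilon)$, $E(2I - 1, \epsilon)$, and $E'(I', \epsilon')$ in turn, and combining with (i), yields the three orbit statements. The only subtle step is (ii) — the passage from $\dot\eta$ on $\php$ to $\eta$ on $\cP_h$ — which relies on the key identity $\dot\eta(H_0) = H_0$; the rest is direct bookkeeping given the root-vector normalizations chosen in Lemmas \ref{cor:2.11} and \ref{le-rnot}.
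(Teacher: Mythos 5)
Your proof is correct and takes essentially the same approach as the paper's, which is a one-line appeal to Lemmas \ref{cor:2.11} and \ref{le-rnot} together with $\tau(\Gh)=\Gh$. You simply spell out the implicit details: that $\dot\eta(H_0)=H_0$ (so $\eta$ preserves $\KhC\Phm$ and hence descends to a $\GhC$-equivariant involution of $\cP_h$ restricting to $\dot\eta$ on the big cell), and that $\eta|_{\Gh}=\tau|_{\Gh}$ — a point the paper itself establishes earlier in the proof that $\sigma_h^+=\dot\eta|_{\DP}$.
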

\begin{proof} This follows from Lemma \ref{cor:2.11} and Lemma \ref{le-rnot} as $\tau (\Gh)=\Gh$.
\end{proof}

Clearly $\partial \cD = (\partial \DP)^{\dot\eta}$ and each $\cO_h (I^\prime ,\epsilon^\prime)^{\dot\eta}$ is $G$-invariant although the orbits are yet to be determined. However from Theorem \ref{th-boundh} we can conclude

\begin{lemma}\hfill
\samepage
\begin{enumerate}
\item If $r=r_h$, then $\displaystyle \partial \cD =\dot\bigcup_{b=1}^r \cO_h(b)^{\dot\eta}$.
\item If $r\not=r_h$ then, $\displaystyle \partial \cD = \dot \bigcup_{b=1}^r\cO_h(2b)^{\dot\eta}$.
\end{enumerate}
\end{lemma}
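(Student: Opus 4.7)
The plan is to take $\dot\eta$-fixed points in the complex boundary decomposition and then match the non-empty pieces with $G$-orbits through the basepoints $E'(b)$. The starting observation is $\partial \cD = (\partial \DP)^{\dot\eta}$: from $\cD_+ = \DP^{\dot\eta}$ and the continuity of $\dot\eta$, closures and boundaries commute with the fixed-point functor. Combining this with the disjoint union $\partial \DP = \dot\bigcup_{b=1}^{r_h} \cO_h(b)$ of Theorem~\ref{th-boundh} gives without work
\[
\partial \cD \;=\; \dot\bigcup_{b=1}^{r_h} \cO_h(b)^{\dot\eta},
\]
and the task reduces to identifying which $\cO_h(b)^{\dot\eta}$ are non-empty.

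In case (1), $r=r_h$, this is already the claimed formula; as a consistency check, Lemma~\ref{cor:2.11} gives $\dot\eta(E_j)=E_j$, whence $E(b)\in\cO_h(b)^{\dot\eta}$ for every $b$. In case (2), $r_h=2r$, Lemma~\ref{le-rnot} gives $\dot\eta(E_{2j-1})=E_{2j}$, so $\dot\eta(E(2b))=E(2b)$ and the even-indexed pieces $\cO_h(2b)^{\dot\eta}$ are non-empty. The remaining content is the vanishing $\cO_h(2b-1)^{\dot\eta}=\emptyset$ for $b=1,\dots,r$.

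To establish this vanishing I would prove the real analog of Theorem~\ref{th-boundh}: every $z\in\partial\cD$ has the form $z=g\cdot E'(b)$ for some $g\in G$ and $b\in\{1,\dots,r\}$, where $E'(b)=E(b)$ in case (1) and $E'(b)=E(2b)$ in case (2). The argument parallels the complex case. By Lemma~\ref{le-rnot} the subspace $\fa=\bigoplus_{j=1}^r \R X'_j$ is maximal abelian in $\fp$, and pairwise commutativity of the $\su(1,1)$-triples attached to the strongly orthogonal roots means formula (\ref{eq-actXj}) extends to
\[
\exp\!\Bigl(\sum_{j=1}^r t_j X'_j\Bigr)\cdot 0 \;=\; \sum_{j=1}^r \tanh(t_j)\,E'_j \;\in\; \cD_+ .
\]
Writing $z=\lim k_n\exp(\sum_j t_{jn}X'_j)\cdot 0$ with $k_n\in K$ and $t_{jn}\in(-1,1)$, one uses the Weyl group $N_K(\fa)/Z_K(\fa)$, acting by signed permutations of $\{X'_1,\dots,X'_r\}$, to arrange $t_{1n}\ge t_{2n}\ge\dots\ge t_{rn}\ge 0$. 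Passing to the limit and applying the same Cayley-transform cancellation as in the proof of Theorem~\ref{th-boundh} gives $z=g\cdot E'(b)$. In case (2) this places $z\in G\cdot E(2b)\subset \cO_h(2b)$, and disjointness of the complex boundary orbits immediately forces $\cO_h(2b-1)^{\dot\eta}=\emptyset$.

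The principal obstacle is the signed-permutation statement for $N_K(\fa)/Z_K(\fa)$: one needs the sorting realized by representatives inside $K$, not merely inside $K_h$. In case (1) this is inherited essentially verbatim from the complex Moore theorem since $\fa=\fa_h$ and the relevant normalizers lie in $K_h\cap G=K$. In case (2) the basis $X'_j=X_{2j-1}+X_{2j}$ is genuinely new, and the claim must be verified case by case against the classification recalled around Lemma~\ref{le-allOrNone}, exhibiting explicit $K$-representatives for the $r$-dimensional signed permutations by combining the $\dot\tau$-symmetric pairs $(\gamma_{2j-1},\gamma_{2j})$ with signed permutation representatives already available in $K_h$; the five entries (i)--(v) in the discussion following Lemma~\ref{le-allOrNone} are precisely the cases where this check is nontrivial.
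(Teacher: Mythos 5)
Your overall strategy is the right one and is exactly what the paper carries out when it actually does the work, namely in the proofs of Theorem~\ref{th-Orbit1} and Theorem~\ref{th:Orb}: take $\dot\eta$-fixed points of $\partial\DP=\dot\bigcup_{m=1}^{r_h}\cO_h(m)$, and then prove the real analogue of Theorem~\ref{th-boundh} — every $z\in\partial\cD$ is of the form $g\cdot E'(b)$ with $g\in G$, using the $KAK$ decomposition of $G$, the $\su(1,1)$-reduction formula $\exp(\sum t_j X'_j)\cdot 0=\sum\tanh(t_j)E'_j$, passage to subsequences, and absorption of the interior coordinates; the even-rank landing points $E'(b)=E(2b)$ then force $\cO_h(2b-1)^{\dot\eta}=\emptyset$ for case (2), while case (1) is immediate from the index count $r=r_h$. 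The paper's phrase ``from Theorem~\ref{th-boundh} we can conclude'' is shorthand; the substance is precisely the limit argument you sketch.

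Where you go astray is in the mechanism of normalization. You propose to realize signed permutations of $\{X'_1,\dots,X'_r\}$ by $N_K(\fa)/Z_K(\fa)$ and to arrange $t_{1n}\ge\cdots\ge t_{rn}\ge 0$ before passing to the limit, flagging this as requiring case-by-case verification against the classification. But the paper does not rely on signed permutations living in $W(\fg,\fa)$, and indeed cannot in general: your remark for case (1) that the normalizers ``lie in $K_h\cap G=K$'' is false — $N_{K_h}(\fa_h)\not\subset K$ — and Theorem~\ref{th-Orbit1} has to distinguish the cases $-1\in W$ and $-1\not\in W$ precisely because sign changes may fail. Fortunately, for the present lemma case (1) requires no sorting at all. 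For case (2), the paper's argument is uniform and avoids your obstacle entirely: after taking the limit one has $z=\sum_{\nu\in I}\epsilon_\nu E'_\nu$ with $E'_\nu\in\fg_{c\alpha_\nu}$ a restricted root space for $\fa_c$; since $\alpha_\nu=\gamma_{2\nu-1}|_{\fa_c}=\gamma_{2\nu}|_{\fa_c}$ one has $\dim\fg_{c\alpha_\nu}\ge 2$, so $Z_{K_c}(\fa_c)_o\subset K$ (Lemma 2.5) acts transitively on the unit sphere there and conjugates $E'_\nu$ to $-E'_\nu$; the permutation of indices is realized by the reflections $s_{(\alpha_i-\alpha_j)/2}\in W_{cc}$ (Lemma~\ref{Wcc}, which has representatives in $K$). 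No appeal to the classification is needed. Adopting this mechanism closes the gap you identify, and then your conclusion — $z\in G\cdot E'(b)\subset\cO_h(2b)$, whence disjointness kills the odd pieces — goes through as stated.
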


Indeed more can be said in both cases, but we start with some simple observations about the strongly 
orthogonal roots $\gamma_j$ and the maximal abelian subspace $\fa_c$. 

In the case $r=r_h$ we have
$\gamma_j\circ \tau = -\gamma_j$ and $\ft_h=\bigoplus \R H_j \subset \fa_c$ (see (\ref{def-th}) for
notation). Let $\alpha_j=\gamma_j|_{\fa_c}$. Then   $\{\alpha_1 ,\ldots ,\alpha_r\}$ is a maximal set
of strongly orthogonal roots in $\Sigma_{cn}$.

In the case $r=r_h/2$ we have $\dim \ft_h\cap \fq_c= \frac{1}{2}\dim \ft_h$ and $r_h$ is even. We
let $\alpha_j = \gamma_{2j}|_{\fa_c} =\gamma_{2j-1}|_{\fa_c}$, $j=1,\ldots ,r$.  Then the set
$\{\alpha_1 ,\ldots , \alpha_r \}$ is a maximal set of strongly orthogonal roots
in $\Sigma_{cn}$.

\begin{lemma}[\cite{NO00} Lemma 2.23]\label{transitive}Let $\{\beta_1,\dots, \beta_r\}\subset \ScnP$ be a maximal set of strongly orthogonal roots. Then given a permutation $\beta_i \to \beta_{\sigma(i)}$ there is an element $k\in K$ that implements it, in particular $Ad (k)(H_{\beta_i}) = H_{\beta_{\sigma(i)}}$.
\end{lemma}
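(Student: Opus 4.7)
The plan is to reduce the statement to showing that every adjacent transposition $(i,i+1) \in S_r$ is realized by an element of $W_{cc}$, since adjacent transpositions generate the full symmetric group. By Lemma \ref{Wcc} we already know $W_{cc} = N_K(\fa_c)/Z_K(\fa_c)$, so once the transposition is realized by a Weyl reflection in $W_{cc}$ we automatically obtain a representative $k \in N_K(\fa_c) \subset K$.

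First, I would identify a compact restricted root $\alpha_{ij} \in \Scc$ whose reflection swaps $\beta_i$ and $\beta_j$. Recall that $\ScnP = \Dhn^+|_{\fa_c}$ and $\SccP = \Dhc^+|_{\fa_c} \setminus \{0\}$. In the case $r = r_h$ the $\beta_j$ are exactly the restrictions $\gamma_j|_{\fa_c}$, and by the Moore classification (Theorem \ref{th-Moore}) applied to $\Delta_h$ the root $\tfrac{1}{2}(\gamma_i - \gamma_j)$ is compact for $i \ne j$, so $\alpha_{ij} := \tfrac{1}{2}(\beta_i - \beta_j) \in \Scc$. In the case $r \ne r_h$ one has $\beta_j = \gamma_{2j-1}|_{\fa_c} = \gamma_{2j}|_{\fa_c}$, and a similar argument using the strongly orthogonal pairs $\{\gamma_{2i-1},\gamma_{2i}\}$ shows again that $\tfrac{1}{2}(\beta_i - \beta_j)$ (or a positive rational multiple, depending on how restriction collapses) lies in $\Scc$.

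Next, using strong orthogonality of $\beta_1,\ldots,\beta_r$, a one-line computation in the inner product dual to $\fa_c^*$ gives
\begin{equation*}
s_{\alpha_{ij}}(\beta_i) = \beta_j, \qquad s_{\alpha_{ij}}(\beta_j) = \beta_i, \qquad s_{\alpha_{ij}}(\beta_k) = \beta_k \quad (k \ne i,j),
\end{equation*}
because $\beta_k \perp \beta_i, \beta_j$ implies $\beta_k \perp \alpha_{ij}$. Transporting via duality by the Killing form, $\Ad(k_{ij})$ sends $H_{\beta_i} \leftrightarrow H_{\beta_j}$ and fixes the other $H_{\beta_k}$, for $k_{ij} \in N_K(\fa_c)$ a representative of $s_{\alpha_{ij}}$. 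Composing such $k_{i,i+1}$'s produces $k \in K$ representing an arbitrary $\sigma \in S_r$, and by construction $\Ad(k)(H_{\beta_i}) = H_{\beta_{\sigma(i)}}$.

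The main obstacle is step one — verifying that the relevant half-difference is genuinely a restricted compact root in $\Scc$. This requires a careful case-by-case analysis: in the tube versus non-tube subcases of Moore's theorem, and more delicately in the case $r \ne r_h$ where the involution $\dot\tau^\sharp$ permutes the strongly orthogonal roots $\gamma_j$ nontrivially and restriction to $\fa_c$ identifies pairs. Once this root-system bookkeeping is settled (essentially the content of the cited \cite{NO00} Lemma 2.23), the rest of the argument is the standard Weyl-group reduction.
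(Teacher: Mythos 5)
The paper offers no proof here---it only cites \cite{NO00}---so I will assess your argument on its own terms. The skeleton (reduce to transpositions, realize each in $W_{cc}$, lift to $N_K(\fa_c)$ via Lemma \ref{Wcc}) is sound, but the mechanism you propose for realizing a transposition is wrong. You claim that $\alpha_{ij} := \tfrac{1}{2}(\beta_i - \beta_j)$, or some positive rational multiple of $\beta_i - \beta_j$, lies in $\Scc$, citing ``Moore's classification (Theorem \ref{th-Moore}) applied to $\Delta_h$.'' That citation does not apply: Theorem \ref{th-Moore} describes $\Sigma(\gh,\fa_h)$ for $\fa_h$ maximal abelian in $\fp_h$, a space whose dimension is exactly $r_h$. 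The root system relevant here is $\Sigma_c = \Sigma(\fg_c,\fa_c)$, where $\fa_c$ is maximal abelian in $\fs_c$, and $\dim\fa_c$ is in general much larger than $r$. The Moore structure (in which half-differences of the strongly orthogonal roots are again roots) does \emph{not} carry over to $\Sigma_c$, and $\tfrac{1}{2}(\gamma_i-\gamma_j)$ is not a root of $\Delta_h$ either --- strong orthogonality of $\gamma_i,\gamma_j$ in fact forbids $\gamma_i-\gamma_j \in \Delta_h$.

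Concretely, take $\gh = \su(p,q)$, $\fg = \so(p,q)$ with $p,q\ge 2$, so $\fg_c = \sl(p+q,\R)$, $\fl_c = \fs(\gl(p,\R)\times\gl(q,\R))$, and $r=r_h=\min(p,q)$. Here $\fa_c$ is a full Cartan of $\sl(p+q,\R)$ of rank $p+q-1$, $\Sigma_c = \{e_a-e_b\}$ is of type $A_{p+q-1}$, $\Scc = A_{p-1}\times A_{q-1}$ (the roots annihilating $H_0$), and $\ScnP=\{e_i-e_{p+j}\}$. A maximal strongly orthogonal family is $\beta_k = e_k - e_{p+k}$, $k=1,\dots,\min(p,q)$. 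Then $\beta_i - \beta_j = e_i-e_j+e_{p+j}-e_{p+i}$ has four nonzero coefficients, so no rational multiple of it is a root of $A_{p+q-1}$: your reflection $s_{\alpha_{ij}}$ simply does not exist. The transposition \emph{is} realized in $W_{cc}\cong S_p\times S_q$, but only by the product $s_{e_i-e_j}\,s_{e_{p+i}-e_{p+j}}$ of two reflections in $\Scc$, not by a single one. So the lemma's conclusion is correct, but the single-reflection argument fails; you need a different route to produce a $W_{cc}$-element transposing $\beta_i,\beta_j$ and fixing the other $\beta_k$ (and that existence statement is the real content of \cite{NO00} Lemma 2.23, which you have in effect assumed rather than proved).
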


Suppose that $r=r_h$. Now $\gamma_j\in \ScnP$ and $E_j\in  \fg_{c\gamma_j}$. Then $E(b) =E_1 +\cdots + E_b$ with $E_j\in \fg_{c\gamma_j}$.  It follows from Lemma \ref{transitive} that $\dim_{\R} \fg_{c\gamma_j}$ is independent of $j$, so denote it by $a$. Also $\fg_{2\gamma_j}=0$ as can be seen from Lemma \ref{transitive}  and Moore's Theorem. If $a>1$ then $Z_{K_c}(\fa_c)$ acts transitively on the unit sphere in $\fg_{c\gamma_j}$ ( \cite{Wa}  Theorem 8.11.3, p. 265). But in this case the unit sphere is connected so $Z_{K_c}(\fa_c)_o$ acts transitively. 
We also know from Lemma 2.5 that the Lie algebra of $Z_{K_c}(\fa_c)_o$ is contained in $\fk$. Hence $Z_{K_c}(\fa_c)_o\subset K$.
It follows that $E_{j} $ and $-E_{j}$ are conjugate under $Z_K(\fa_c)\subset G$.
 Now apply this argument to each  of the analytic subgroups of $G_c$ corresponding
to the Lie algebra generated by $\R E_j\oplus \R H_j\oplus \R E_{-j}$ to see that we can find $k_j\in G_j$
such that $\Ad (k_j)E_j = -E_j$. But the groups $G_i$ and $G_j$ commute if $i\not= j$, thus with $k=\prod_j^b k_j^{(1+\epsilon_j)/2}$ we have $\Ad (k) E(b,\epsilon)=E_b$.

\begin{lemma}\label{le:MinusOne} The following are equivalent:
\begin{enumerate}
\item there exists $m\in N_K(\fa)$ such that $\Ad (m)|_{\fa}=-1$;
\item there exists $m\in K$ such that
$\Ad (m)E_j^\prime=-E_j^\prime $, $j=1, \ldots ,r$; 
\item there exists $m\in K$ such that $\Ad (m)F_j^\prime =-F_j^\prime$, $j=1,\ldots ,r$.
\end{enumerate}
\end{lemma}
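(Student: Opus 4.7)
The plan is to prove the implications (1) $\Rightarrow$ (2), (2) $\Leftrightarrow$ (3), and (2) $\Rightarrow$ (1), using only two basic facts: that $K \subset K_h$ preserves the decomposition $\fp_h^{\C} = \php \oplus \phm$, and that any $m \in K \subset G = G_h^{\tau}$ satisfies $\mathrm{Ad}(m) \circ \dot\tau = \dot\tau \circ \mathrm{Ad}(m)$ on $\ghC$.

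For (1) $\Rightarrow$ (2) and (1) $\Rightarrow$ (3): by the very definition $X_j^\prime = E_j^\prime + F_j^\prime$ with $E_j^\prime \in \php$ and $F_j^\prime \in \phm$. If $m \in N_K(\fa)$ satisfies $\mathrm{Ad}(m)|_{\fa} = -1$, then $\mathrm{Ad}(m) X_j^\prime = -X_j^\prime$ for all $j$. Since $\mathrm{Ad}(m)$ preserves $\phpm$ (because $K \subset K_h$ and $\mathrm{Ad}(K_h)$ fixes the central element $Z_h$ defining the decomposition), we may project the equation $\mathrm{Ad}(m)(E_j^\prime + F_j^\prime) = -(E_j^\prime + F_j^\prime)$ onto $\php$ and $\phm$ separately, giving $\mathrm{Ad}(m) E_j^\prime = -E_j^\prime$ and $\mathrm{Ad}(m) F_j^\prime = -F_j^\prime$.

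For (2) $\Leftrightarrow$ (3): from the definition of $E_j^\prime$ and $F_j^\prime$ in Lemma \ref{cor:2.11} (case $r=r_h$) and Lemma \ref{le-rnot} (case $r \neq r_h$), one directly verifies $\dot\tau(E_j^\prime) = F_j^\prime$. Since any $m \in K$ lies in $G = G_h^\tau$, the intertwining relation $\mathrm{Ad}(m) \circ \dot\tau = \dot\tau \circ \mathrm{Ad}(m)$ holds, and hence
\[
\mathrm{Ad}(m) F_j^\prime \,=\, \mathrm{Ad}(m)\dot\tau(E_j^\prime) \,=\, \dot\tau(\mathrm{Ad}(m) E_j^\prime),
\]
so $\mathrm{Ad}(m)E_j^\prime = -E_j^\prime$ if and only if $\mathrm{Ad}(m)F_j^\prime = -F_j^\prime$.

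Finally, (2) $\Rightarrow$ (1): given $m \in K$ with $\mathrm{Ad}(m) E_j^\prime = -E_j^\prime$, the equivalence (2) $\Leftrightarrow$ (3) gives $\mathrm{Ad}(m) F_j^\prime = -F_j^\prime$, and summing yields $\mathrm{Ad}(m) X_j^\prime = -X_j^\prime$ for each $j$. Since $\fa = \bigoplus_{j=1}^r \R X_j^\prime$ by construction, we conclude $\mathrm{Ad}(m)|_{\fa} = -1$, and in particular $m$ normalizes $\fa$, so $m \in N_K(\fa)$. There is no real obstacle in this proof; the only subtle point is verifying that $\mathrm{Ad}(K)$ acts on $\phpm$ separately (which reduces to $\mathrm{Ad}(K_h) Z_h = Z_h$) and that the various $E_j^\prime, F_j^\prime, X_j^\prime$ have the stated behavior under $\dot\tau$, both of which are immediate consequences of the normalizations already fixed in Lemmas \ref{cor:2.11} and \ref{le-rnot}.
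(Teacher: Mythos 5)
Your proof is correct and follows essentially the same argument as the paper: use $\dot\tau(E_j') = F_j'$ together with $\tau|_K = \id$ to get (2) $\Leftrightarrow$ (3), use the $\mathrm{Ad}(K)$-invariance of the $\pm i$-eigenspaces of $\ad Z_h$ to split the action on $X_j' = E_j' + F_j'$ for (1) $\Rightarrow$ (2), and sum to recover (1). The paper phrases the invariance via $K \subset L_c$ preserving $\fn_\pm$ rather than via $K \subset K_h$ preserving $\phpm$, but these are the same observation.
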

\begin{proof} As noted following Lemma \ref{le-rnot}, $\dot\tau (E_j^\prime  ) = F_j^\prime$. Since $\tau|_{K}=\id$, it follows that (2) and (3) are equivalent.
Assume that there exists $m\in K$ such that $\Ad (m)|_{\fa}=-1$. Then $\Ad (m)(E_j^\prime +F_j^\prime)=
- E_j^\prime - F_j^\prime$, $j=1,\ldots ,r$. As $K\subset L_c$ we have
$\Ad (m)\fn_\pm =\fn_\pm$. Hence $\Ad (m)E_j^\prime = -E_j^\prime$ and $\Ad (m)F_j^\prime = -F_j^\prime$.
On the other hand, if (2) and (3) hold then, as $X_j^\prime = E_j^\prime + F_j^\prime$, $\Ad (m)X_j^\prime = -X_j^\prime$.
Since $\fa = \bigoplus_j \R X_j^\prime$, the claim follows.
\end{proof}
 
 \begin{remark}  It follows from Lemma \ref{transitive} that it is
enough to assume that (2) and (3) above hold for one $j$.
\end{remark}

\begin{corollary}\label{co:MinusOne}  Assume that $r=r_h$. If $-1$ is not in the Weyl group $W= N_K(\fa )/Z_K (\fa)$, then $E(b,\epsilon)$ is
not conjugate to $E(b,\epsilon^\prime)$ if
$\epsilon\not=\epsilon^\prime$.
\end{corollary}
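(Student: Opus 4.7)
The plan is to prove the contrapositive: assuming $\Ad(g)E(b,\epsilon)=E(b,\epsilon')$ for some $g\in G$ with $\epsilon\neq\epsilon'$, I will derive $-1\in W$, contradicting the hypothesis. Since $g\in G=G_h^\tau$ commutes with $\dot\tau$, and $\dot\tau(E_j)=F_j$ by Lemma \ref{cor:2.11}, applying $\dot\tau$ to the hypothesis yields $\Ad(g)F(b,\epsilon)=F(b,\epsilon')$; summing gives $\Ad(g)X(b,\epsilon)=X(b,\epsilon')$ with both elements in $\fa\subset\fp$. Because $\fa$ is maximal abelian in $\fp$, the standard $\fa$-conjugacy result furnishes $w\in N_K(\fa)$ with $\Ad(w)X(b,\epsilon)=X(b,\epsilon')$; since $w\in K\subset L_c$ normalizes $\fn_\pm$, comparing $\fn_+$-components gives $\Ad(w)E(b,\epsilon)=E(b,\epsilon')$.

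Next I exploit the structure of $W$. In the case $r=r_h$, one has $\fa=\fa_h$ by Lemma \ref{cor:2.11}, and by Moore's theorem (Theorem \ref{th-Moore}) Case I, the ambient Weyl group $W(\fg_h)=N_{K_h}(\fa_h)/Z_{K_h}(\fa_h)$ is the hyperoctahedral group $S_r\ltimes\{\pm 1\}^r$ acting on $\{X_1,\dots,X_r\}$ by signed permutations, so $W\subseteq S_r\ltimes\{\pm 1\}^r$. The hypothesis $-1\notin W$ first forces the root multiplicity $a:=\dim_{\R}\fg_{c,\gamma_j}=1$, for otherwise $Z_{K_c}(\fa_c)_o\subset K$ acts transitively on the unit sphere in $\fg_{c,\gamma_j}$, producing $k\in K$ with $\Ad(k)E_j=-E_j$ and (via $\dot\tau$) $\Ad(k)X_j=-X_j$; iterating over all indices by Lemma \ref{transitive} would give $-\id\in W$. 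With $a=1$, matching roots forces $\Ad(w)E_j=\pm E_{\sigma(j)}$ for some $\sigma\in S_r$ preserving $\{1,\dots,b\}$, and Lemma \ref{transitive} allows me to absorb $\sigma$ into an additional $K$-element, reducing to $\Ad(w)E_j=\delta_jE_j$ with $\delta_j\epsilon_j=\epsilon_j'$ for $j\le b$. Because $\epsilon\neq\epsilon'$, some $\delta_{j_0}=-1$, so $W\cap\{\pm 1\}^r$ is non-trivial.

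The remaining step, which I expect to be the main obstacle, is to upgrade this non-trivial pure sign-flip element to the element $-\id\in W$. The plan is to observe that $W\cap\{\pm 1\}^r$ is normal in $W$ and $S_r$-invariant (via Lemma \ref{transitive}); the $S_r$-invariant subgroups of $\{\pm 1\}^r$ are $\{e\}$, $\{\pm e\}$, the even-parity subgroup, and the full group, each of which contains $-e$ except possibly the even-parity subgroup when $r$ is odd. Ruling out that last possibility is the subtle combinatorial point, which I expect to handle by direct inspection using the list of real forms with $r=r_h$ enumerated via Remark \ref{re:Realization}, together with the irreducibility of $\fa$ as a $W$-module when $\fg$ is simple; in each case the sign-flip subgroup is either trivial or contains $-e$, which completes the argument.
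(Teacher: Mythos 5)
There is a genuine gap at the very first step. In this paper ``conjugate'' refers to the action of $G$ on the boundary of the Harish--Chandra realization inside $\cP_c=G_c/L_cN_-$, i.e.\ the hypothesis to be contradicted is $g\cdot E(b,\epsilon)=E(b,\epsilon')$ for some $g\in G$, not $\Ad(g)E(b,\epsilon)=E(b,\epsilon')$. These are different conditions: by (\ref{eq-actXj}) every $a\in A=\exp\fa$ fixes $E(b,\epsilon)$ under the $\cdot$-action, while $\Ad(a)E(b,\epsilon)\ne E(b,\epsilon)$ for $a\ne e$. The two notions agree precisely for $k\in K\subset L_c$, which is exactly why the paper's Lemma~\ref{le:MinusOne} is phrased via $\Ad$ for elements of $K$; it does not carry over to general $g\in G$. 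Consequently your first, and most elegant, step --- applying $\dot\tau$ to $\Ad(g)E(b,\epsilon)=E(b,\epsilon')$, adding to produce $\Ad(g)X(b,\epsilon)=X(b,\epsilon')$ in $\fa$, and invoking the standard $\fa$-conjugacy to land in $N_K(\fa)$ --- proves the non-existence of an \emph{adjoint} conjugator, which neither implies nor is implied by non-existence of a $\cdot$-conjugator. The reduction you need is from $g\cdot E(b,\epsilon)=E(b,\epsilon')$ to $k\cdot E(b,\epsilon)=E(b,\epsilon')$ with $k\in K$; this is clear when the orbit $\cO(\kappa)$ is $K$-homogeneous (case (5) of Lemma~\ref{thm:MainR}), but in case (4) the orbit fibers over $K/K\cap L^{(2)}_\kappa$ with non-compact fibers and the reduction requires an argument (e.g.\ tracking the $A$-asymptotics of $g\exp(tX(b,\epsilon))$ in a $KAK$ decomposition as $t\to\infty$, which is the mechanism already visible in the proof of Theorem~\ref{th-Orbit1}).

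Two smaller points. First, $r=r_h$ does not put you in Case~I of Moore's theorem (tube type); Case~II also occurs, though both cases yield the hyperoctahedral Weyl group for $\Gh$, so the conclusion you want is fine but the reason given is not. Second, once you have reached a $w\in N_K(\fa)$ with some $\Ad(w)E_{j_0}=-E_{j_0}$, the lengthy combinatorial analysis you flag as the ``main obstacle'' is unnecessary: the Remark immediately following Lemma~\ref{le:MinusOne} (which rests on Lemma~\ref{transitive}) says that condition~(2) holding for a single $j$ already implies it for all $j$, hence $-1\in W$ by Lemma~\ref{le:MinusOne}. That is the short route the paper evidently intends, and it avoids the $S_r$-invariant-subgroup-of-$\{\pm1\}^r$ discussion and the case-by-case classification check entirely.
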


\begin{theorem}\label{th-Orbit1}
Assume that $r=r_h$ and let $1\le b\le r$.
\begin{enumerate}
\item If $-1\in W$ then $(\cO_h(b))^{\dot\eta}=G\cdot E(b)=: \cO(b)$ is one $G$-orbit and
\[\partial \cD =\dot \bigcup_{b=1}^r \cO(b)\, .\]
\item If $-1\not\in W$ then $\displaystyle (\cO_h(b))^{\dot\eta}=\dot \bigcup_{\epsilon\in \{-1,1\}^b}G\cdot E(b,\epsilon)$ and
\[\partial\cD =\dot\bigcup_{b=1}^r\dot\bigcup_{\epsilon\in\{-1,1\}^b} \cO(b,\epsilon)\quad \text{ with }\quad
\cO(b,\epsilon):=G\cdot E(b,\epsilon)\, .\]
\end{enumerate}
\end{theorem}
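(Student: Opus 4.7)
The preceding lemma already gives $\partial \cD = \dot\bigcup_{b=1}^r (\cO_h(b))^{\dot\eta}$ when $r=r_h$, so my task is to decompose each fiber $(\cO_h(b))^{\dot\eta}$ into $G$-orbits, which I plan to carry out in three steps.

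First, I would show that every $z\in (\cO_h(b))^{\dot\eta}$ is $G$-conjugate to some $E(b,\epsilon)$ with $\epsilon\in\{\pm 1\}^b$, by transposing the proof of Theorem \ref{th-boundh} to the real setting. Since $\fa=\bigoplus_j \R X_j$ is maximal abelian in $\fp$ by Lemma \ref{cor:2.11}, an approximating sequence $z_n\to z$ in $\cD$ takes the form $z_n = k_n\exp\bigl(\sum_j t_{jn}X_j\bigr)\cdot 0 = k_n\sum_j \tanh(t_{jn})E_j$ with $k_n\in K$. Using the permutation part of the Weyl group, supplied by Lemma \ref{transitive}, to reorder coordinates, I may assume that the first $b$ values $|t_{jn}|$ tend to infinity with limiting signs $\epsilon_1,\dots,\epsilon_b$, while the remaining $t_{jn}$ converge to finite values $s_{b+1},\dots,s_r$. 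Applying the element $a=\exp\bigl(\sum_{j>b}(-\tanh^{-1}(s_j))X_j\bigr)\in A$ and invoking the coordinatewise Möbius formula (\ref{eq-actXj}) --- together with the observation that $\pm 1$ are fixed points of each one-variable action --- drives the finite coordinates to $0$ without disturbing the boundary coordinates, yielding $z\in G\cdot E(b,\epsilon)$.

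Case (2) then follows immediately from Corollary \ref{co:MinusOne}, which guarantees that distinct $\epsilon$ produce disjoint $G$-orbits whenever $-1\notin W$.

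The third step is case (1), which I expect to be the main obstacle. My plan here is to combine Lemma \ref{le:MinusOne} with the sphere-transitivity argument set up just before that lemma: provided $\dim_\R \fg_{c\gamma_j}>1$, the transitive action of $Z_{K_c}(\fa_c)_o$ on the unit sphere in $\fg_{c\gamma_j}$, together with its containment in $K$, will supply commuting $k_j\in G$ satisfying $\Ad(k_j)E_j=-E_j$ and $\Ad(k_j)E_i=E_i$ for $i\neq j$. A product of the $k_j$'s over indices where $\epsilon_j=-1$ will then conjugate $E(b,\epsilon)$ to $E(b)$, showing that $(\cO_h(b))^{\dot\eta}=G\cdot E(b)$ is a single orbit. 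The delicate point is the degenerate case $\dim_\R \fg_{c\gamma_j}=1$, where the sphere-transitivity argument collapses; I would dispose of it by inspecting the classification in \S\ref{se-Classification} to show that in the $r=r_h$ setting this degeneracy forces $-1\notin W$, so that it is excluded by the hypothesis of (1). Taking disjoint unions over $b$ then produces both announced decompositions of $\partial \cD$.
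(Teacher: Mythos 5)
Your Step 1 (reduction of a boundary point to some $E(b,\epsilon)$ via approximating sequences, Weyl group reordering, and \eqref{eq-actXj}) and Step 2 (case (2) via Corollary \ref{co:MinusOne}) reproduce the paper's proof. For Step 3, the paper deduces case (1) by citing Lemma \ref{le:MinusOne} together with Corollary \ref{co:MinusOne}; the sphere-transitivity construction you invoke is the discussion in the paper immediately preceding Lemma \ref{le:MinusOne}, so there too you are close, though your phrasing is imprecise: an element of $Z_{K_c}(\fa_c)_o$ chosen to flip $E_j$ need not fix the other $E_i$. To get commuting selective flips $k_j$ one must, as the paper does, take each $k_j$ inside the rank-one subgroup $G_j$ of $G_c$ generated by $\fg_{c,\pm\gamma_j}$ and $H_j$; strong orthogonality then gives $[G_i,G_j]=\{e\}$, which is what forces $\Ad(k_j)E_i=E_i$ for $i\neq j$.

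The genuine gap is the closing claim that $\dim_\R\fg_{c\gamma_j}=1$ forces $-1\notin W$ when $r=r_h$. This is neither checked nor, as far as I can see, correct. Take $\fg_h=\su(p,p)\supset\fg=\so(p,p)$ with $p$ even: this pair is not among the five exceptions of Lemma \ref{le-allOrNone}, so $r=r_h=p$. Since the compact Cartan $\ch$ of $\su(p,p)$ is purely imaginary it lies entirely in $\fq_{hk}$, hence $\fbh=\ch$ and $\fa_c$ is a full maximally split Cartan of $\fg_c=\sl(2p,\R)$, whose root multiplicities are all $1$; so $\dim_\R\fg_{c\gamma_j}=1$. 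But $\Sigma(\fg,\fa)$ is of type $D_p$ and so $-1\in W$ precisely when $p$ is even. Thus a classification check cannot dispose of the $a=1$ subcase. Some other idea is required there: the all-flip $m$ from Lemma \ref{le:MinusOne} combined with the permutations of Lemma \ref{transitive} gives only even sign changes on the $E_j$, which is enough to reach every $E(b,\epsilon)$ when $b<r$ (use spare indices $>b$), but not a priori when $b=r$ and $\prod_j\epsilon_j=-1$. You should either produce the missing argument for that configuration or explain why it cannot occur; as it stands, the plan does not close case (1).
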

\begin{proof} Let $z\in (\cO_h(b))^{\dot\eta}\subseteq \partial \cD$. Using the familiar argument we
can choose $k_j\in K$ and $a_j\in A$ such that
$k_ja_j\cdot 0\to z$. Again, $k_j$ has a convergent subsequence, so we can assume that $k_j\to k\in K$.
Replace $z$ by $w=k^{-1}z$
in the same $G$-orbit. Write $a_j=\exp \sum_{\nu =1}^r t_{\nu, j}X_\nu$. Then
\[a_j\cdot 0=\sum_{j=1}^r\tanh (t_{\nu ,j})E_j\, .\]
As $a_j\cdot 0\to w$ it follows that there exists a set $I$ such that $\tanh (t_{\nu,j})\to \epsilon_\nu\in \{-1,1\}$ for $\nu \in I$ and
$\tanh (t_{\nu,j})\to x_\nu\in (-1,1)$ for $\nu\not\in I$. Hence

\[w=\sum_{\nu\in I}\epsilon_\nu E_\nu +\sum_{\nu\not\in I}x_\nu E_\nu\, .\]

If $t_\nu\in\R$ is so that $x_\nu=\tanh (t_\nu)$ then $\exp(-\sum_{\nu\not\in I}t_\nu X_\nu)\cdot \sum_{\nu\not\in I}x_\nu E_\nu
=0$ so we can assume that $w= \sum_{\nu\in I}\epsilon_\nu E_\nu $. As $E_\nu\in \fg_{c\gamma_\nu}$ from Lemma \ref{transitive} we can assume that there exists a $b$ and $\epsilon\in \{-1,1\}^b$ such that
$w=E(b,\epsilon)$. The claim now follows from Lemma \ref{le:MinusOne} and Corollary \ref{co:MinusOne}.
\end{proof}

\begin{theorem}\label{th:Orb}
Assume that $r\not=r_h$. Let $1\le b\le r$. Then $\cO(2b)^{\dot\eta}=G\cdot E(2b) = G\cdot E^\prime(b)$ is one $G$-orbit. In particular,
with $\cO(b)=G\cdot E^\prime(b)$ we have
\[\partial \cD =\dot\bigcup_{b=1}^r \cO(b)\, .\]
\end{theorem}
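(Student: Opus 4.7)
The approach parallels Theorem \ref{th-Orbit1}, using the $KAK$-decomposition of $G$ and the explicit action of $A$, with the key new ingredient that in the case $r\neq r_h$ the combined Cartan element $\tilde Z_\nu := Z_{2\nu-1}-Z_{2\nu}$ lies in $\fk$. First, take $z\in\cO_h(2b)^{\dot\eta}\subseteq\partial\cD$ and write $z=\lim k_j a_j\cdot 0$ with $k_j\in K$ and $a_j=\exp\sum_\nu t_{\nu,j}X'_\nu\in A$. After passing to a convergent subsequence, $k_j\to k\in K$; replace $z$ by $w=k^{-1}z$. Because $X_{2\nu-1}$ and $X_{2\nu}$ span commuting $\sl(2)$-triples (by strong orthogonality of $\gamma_{2\nu-1},\gamma_{2\nu}$), formula \eqref{eq-actXj} applies in each factor and yields $a_j\cdot 0=\sum_{\nu=1}^r\tanh(t_{\nu,j})E'_\nu$. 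Splitting indices according to whether $\tanh t_{\nu,j}\to\pm 1$ or tends to a number in $(-1,1)$, and absorbing the latter part by an element of $A$, we reduce to $w=\sum_{\nu\in I}\epsilon_\nu E'_\nu$ with $\epsilon_\nu\in\{\pm 1\}$.

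The crux is to construct an element of $K$ negating any individual $E'_\nu$. Using Lemma \ref{le-rnot} (namely $\dot\tau(E_{2\nu-1})=F_{2\nu}$ and $\dot\tau(F_{2\nu-1})=E_{2\nu}$), a direct computation gives $\dot\tau(Z_{2\nu-1})=-Z_{2\nu}$, hence $\dot\tau(\tilde Z_\nu)=\tilde Z_\nu$, so $\tilde Z_\nu\in\fg\cap\fk_h=\fk$. From $[Z_j,E_j]=2iE_j$ and strong orthogonality one obtains
\[ [\tilde Z_\nu,E_{2\nu-1}]=2iE_{2\nu-1},\qquad [\tilde Z_\nu,E_{2\nu}]=-2iE_{2\nu},\qquad [\tilde Z_\nu,E'_\mu]=0 \text{ for } \mu\neq\nu, \]
so $k_\nu:=\exp\bigl(\tfrac{\pi}{2}\tilde Z_\nu\bigr)\in K$ satisfies $\Ad(k_\nu)E'_\nu=-E'_\nu$ while fixing every $E'_\mu$ for $\mu\neq\nu$. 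An appropriate product of the $k_\nu$ clears all signs, giving $w=\sum_{\nu\in I}E'_\nu$.

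Next, by Lemma \ref{transitive} applied to the maximal strongly orthogonal set $\{\alpha_1,\ldots,\alpha_r\}\subset\ScnP$, there is an element of $K$ realizing the permutation sending $I$ onto $\{1,\ldots,|I|\}$; this element maps $E'_\nu$ to a unit vector in the real $2$-dimensional root space $(\fg_c)_{\alpha_{\sigma(\nu)}}=\R E'_{\sigma(\nu)}\oplus\R\,i(E_{2\sigma(\nu)-1}-E_{2\sigma(\nu)})$. The commuting family $\{\exp(\R\tilde Z_\mu)\}$ acts as $SO(2)$-rotations on each such root space independently (the matrix of $\ad\tilde Z_\nu$ in the basis $\{E'_\nu,\,i(E_{2\nu-1}-E_{2\nu})\}$ is $\bigl(\begin{smallmatrix}0&-2\\2&0\end{smallmatrix}\bigr)$), so we may rotate each component back to $\pm E'_{\sigma(\nu)}$ and then apply a further sign-flip from step two. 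This gives $w=E'(b)$, and disjointness of the decomposition $\partial\cD=\dot\bigcup_b\cO_h(2b)^{\dot\eta}$ from the preceding lemma forces $|I|=b$.

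The main obstacle is the construction in paragraph two. In the complex case $r=r_h$ the naive analogue $\exp\bigl(\tfrac{\pi}{2}Z_\nu\bigr)$ lies in $K_h$ but not in $K$, since $\dot\tau(Z_\nu)=-Z_\nu$; this is precisely the source of the $-1\in W$ dichotomy in Theorem \ref{th-Orbit1}. In our setting the pairing $\dot\tau(Z_{2\nu-1})=-Z_{2\nu}$ instead lets the \emph{difference} $\tilde Z_\nu$ lie in $\fk$, producing the desired sign-changing element of $K$ directly and yielding a single orbit unconditionally.
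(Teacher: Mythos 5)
Your reduction in paragraph one and your sign-flipping construction in paragraph two are both correct, and the second paragraph is a genuinely nicer argument than the paper's: you exhibit an explicit one-parameter subgroup $\exp(\R\tilde Z_\nu)\subset K$ (with $\tilde Z_\nu = Z_{2\nu-1}-Z_{2\nu}\in\fk$ by Lemma~\ref{le-rnot}) that negates $E'_\nu$ while fixing every $E'_\mu$, $\mu\ne\nu$, whereas the paper instead observes $\dim\fg_{c\alpha_i}\ge 2$ and invokes the transitivity of $Z_K(\fa_c)_o$ on unit spheres in the restricted root spaces (the argument used in the $r=r_h$ case). Your construction makes the pairing $\dot\tau(Z_{2\nu-1})=-Z_{2\nu}$ visible as the structural reason the $-1\in W$ dichotomy of Theorem~\ref{th-Orbit1} disappears, which the paper's proof leaves implicit.

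Paragraph three, however, contains a gap. You assert that $\fg_{c\alpha_{\sigma(\nu)}}$ is exactly the two-dimensional space $\R E'_{\sigma(\nu)}\oplus\R\,i(E_{2\sigma(\nu)-1}-E_{2\sigma(\nu)})$, and then rotate within it using the $\tilde Z_\mu$. But the paper only establishes $\dim\fg_{c\alpha_i}\ge 2$, and the multiplicity is strictly larger than $2$ in general. For instance, with $\gh=\sp(2n,\R)$, $\fg=\sp(n,\C)$ (so $r=n$, $r_h=2n$), one has $\fg_c=\sp(n,n)$, whose restricted root system is $C_n$ with the long roots (which are precisely the strongly orthogonal $\alpha_i\in\ScnP$) of multiplicity $3$; for $\su(2p,2q)\supset\sp(p,q)$ one gets $\fg_c=\su^*(2(p+q))$ with multiplicity $4$. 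The element $k\in K$ provided by Lemma~\ref{transitive} is only known to normalize $\fa_c$ and permute the $H_{\alpha_i}$, so $\Ad(k)E'_\nu$ is merely some unit vector in $\fg_{c\alpha_{\sigma(\nu)}}$ and need not lie in the two-plane on which $\ad\tilde Z_{\sigma(\nu)}$ is a rotation; the commuting family $\{\exp(\R\tilde Z_\mu)\}$ therefore cannot in general bring it to $\pm E'_{\sigma(\nu)}$. To close the gap you still need the sphere-transitivity of $Z_K(\fa_c)_o$ on $\fg_{c\alpha_i}$ (valid precisely because $\dim\ge 2$ makes the sphere connected), which is exactly the tool your paragraph two was designed to avoid; alternatively one must say more about how $k$ acts on the root vectors $E'_\nu$ rather than only on $\fa_c$.
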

\begin{proof} Let $z\in (\cO(2b))^{\dot\eta}$. By replacing  $X_{2j-1} + X_{2j}$ with $X_j^\prime$ we see as above that
we can assume that $z=\sum_{\nu =1}^b \epsilon_\nu E_\nu^\prime$ for some $b$.
As before let
$\alpha_i=\gamma_{2i-1}|_{\fa_c}$. Then $\alpha_i\in \ScnP$ and 
\[\alpha_i|_{\fa_c}=\gamma_{2i}|_{\fa_c}=\dot\tau^\sharp(\gamma_{2i-1})|_{\fa_c}.\]
It follows that $\dim \fg_{c\alpha_i}\ge 2$. We also have $2\alpha_i\not\in\Sc$. Thus $Z_K(\fa_c)$ acts transitively on spheres in
$\fg_{c\alpha_j}$ which implies that $E_i^\prime$ and $-E_i^\prime$, which are both in $\fg_{c\alpha_i}$ are conjugate via $Z_K(\fa_c)$. Thus we can take $\epsilon_i=1$ for all $i$.
The roots $\alpha_i$ and $\alpha_j$ are conjugate by
$s_{(\alpha_i-\alpha_j)/2}\in W_{cc}$ and $E_\nu^\prime\in \fg_{\alpha_\nu}$. It follows that we can assume that $J=\{1,\ldots ,b\}$ for some $b\le r$.
\end{proof}

 \subsection{Isotropy of $E(b,\epsilon)$}\hfill

In this section we describe the stabilizer in $G$ of $E(b,\epsilon)$, respectively $E(2b)$, on the boundary of
$\cD$. On the way we give some extra information about the structure
of each part in the stabilizer. Our notation for subgroups of $G$ will be the same as that used
for $\Gh$ except we drop the subscript \lq\lq h\rq\rq and $L=Z_G(H_0)$. Our standard homomorphism will
always been assumed to be of the form $\kappa_{I,\epsilon}$ for $I=\{1,\ldots ,b\}\subset \{1,\ldots ,r\}$.  We
define $I^\prime$ as in the  earlier subsection and then write
 $\kappa$ instead of $\kappa_{I^\prime,\epsilon}$ wherever the exact form does not matter. As before,
we write $E_\kappa$, $H_\kappa$, $X_\kappa$, $\cO(\kappa )$ etc. for $E(b ,\epsilon)$, $H(b,\epsilon)$,
$X (b,\epsilon)$, $\cO(I^\prime ,\epsilon)$. We have $\dot\eta (E_\kappa )=E_\kappa$.
Hence if $\GhC^{E_\kappa}$ is the stabilizer of $E_\kappa$ in $\GhC$ then
the stabilizer $G_c^{E_\kappa}$ of $E_\kappa$ in $G_c$ is $G_c^{E_\kappa}=\left(\GhC^{E_\kappa}\right)^\eta$
and the stabilizer $G^{E_\kappa}$ in $G$ is $(G_c^{E_\kappa})^\tau$. Same argument holds also for
the Lie algebra of the stabilizers.

\begin{Basic Example} $\SU (1,1)$ - cont.\hfill

We return to the prototype example, \S 1.3, and introduce an anti-holomorphic involution. Consider the map $\dtau_1 : \sl (2,\C)\to \sl (2,\C)$ given by the matrix multiplication
\[\dot\tau_1\left(\begin{pmatrix} x & y \\ z & -x\end{pmatrix}\right) =X_1 \begin{pmatrix} x & y \\ z & -x\end{pmatrix}  X_1
= \begin{pmatrix} -x & z \\ y & x\end{pmatrix} \text{ where } X_1=\begin{pmatrix} 0 & 1\\ 1 & 0\end{pmatrix}.\]
Clearly $\dot\tau_1$ is complex linear, whereas for $X\in \su (1,1)$  one has $\dot \tau_1 (X)=\overline{X}$, in particular $\dot\tau_1(\su (1,1))=\su (1,1)$. Recall that the conjugate linear extension of $\dtau_1$ from $\su(1,1)$ to $\sl(2,\C) (= \su(1,1)^\C)$ is denoted  $\dot\eta_1$,  and so on $\su(1,1)$ is also given by complex conjugation, as is $\eta_1$ on $SU(1,1)$. For the involution $\dtau_1$, the Lie subalgebra of $\sl(2,\C)$ denoted $\fg_c$ is $\sl (2,\R)\cong \su(1,1)$. Thus for the subgroup $G\subset SU(1,1)\cap SL(2,\R)$ we have
 \[G=\left\{\left. \begin{pmatrix} \cosh (t) & \sinh (t)\\ \sinh (t)&\cosh(t)\end{pmatrix}\, \right|\, t\in \R\right\}
 \text{ and }\]
 \[ \cD_+ =\DPt{\sigma_h^+ }= (-1,1)\subset \DP = D_1\subset \php.\]

Since $\gh \cong \fg_c$ we know (cf. Table 4) that $\fg$ has an $\R$-factor and that $r = r_h$ (cf.  Lemma 2.10). As regards compatibility of the involutions, 
\begin{equation}\label{eq-kappatau}
\kappa \circ  \tau_1= \tau\circ  \kappa\text{ and  }
\kappa \circ \theta_1=\theta \circ \kappa\,,
\end{equation}
consequently\footnote{We remark that the results in this subsection are valid for all standard homomorphisms 
satisfying (\ref{eq-kappatau}).}
$$\kappa\circ \eta_1=\eta \circ \kappa.$$  
Moreover, with $\kpr:=
\dot\kappa_{I^\prime ,\epsilon^\prime}$ we similarly have $\kpr \circ \dot\tau_1=\dot\tau \circ \kpr$.
\end{Basic Example}

Earlier we recalled the decomposition obtained from $\pi_\kappa :=\ad \circ \dot\kappa $:
\begin{equation}\label{eq:g0etc}
\gh = \ghZ\oplus \ghOne\oplus \ghTwo\quad \text{and}\quad \ghC = \ghCZ\oplus \ghCOne\oplus \ghCTwo.
\end{equation} 

\begin{lemma} If $\pi$ is a finite dimensional representation of $\sl (2,\C)$ then $\pi$ and $\pi\circ \dot\tau$ are equivalent.
\end{lemma}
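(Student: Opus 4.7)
The plan is to exploit the fact that the involution $\dot\tau_1$ on $\sl(2,\C)$ is an \emph{inner} automorphism. A direct computation verifies $\dot\tau_1 = \Ad(iX_1)$: indeed $iX_1 \in \SL(2,\C)$ since $\det(iX_1) = -i^2 = 1$, and $(iX_1)Y(iX_1)^{-1} = X_1YX_1 = \dot\tau_1(Y)$ for every $Y \in \sl(2,\C)$. This observation will be the crux of the argument.

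Given this, I would lift the finite dimensional representation $\pi : \sl(2,\C) \to \gl(V)$ to a holomorphic representation $\Pi : \SL(2,\C) \to \GL(V)$, using that $\SL(2,\C)$ is simply connected. Setting $T := \Pi(iX_1)$, the standard intertwining identity $\pi(\Ad(g)X) = \Pi(g)\,\pi(X)\,\Pi(g)^{-1}$ specialized to $g = iX_1$ gives
\[ \pi(\dot\tau_1(X)) = T\,\pi(X)\,T^{-1}, \qquad X \in \sl(2,\C), \]
exhibiting $T$ as an explicit intertwiner realizing $\pi \circ \dot\tau_1 \simeq \pi$. The structure of $\pi$ plays no role beyond finite dimensionality.

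A second, purely representation-theoretic route is worth recording. I would decompose $\pi = \bigoplus_i \pi_i$ into irreducibles; since $\dot\tau_1$ is an automorphism of $\sl(2,\C)$, each $\pi_i \circ \dot\tau_1$ is irreducible on the same underlying vector space as $\pi_i$, hence of the same dimension. The classification of irreducible representations of $\sl(2,\C)$ by dimension then forces $\pi_i \circ \dot\tau_1 \simeq \pi_i$, and summing yields the lemma. There is no substantive obstacle in either approach; one could alternatively invoke that the Dynkin diagram of type $A_1$ has no nontrivial symmetry, so $\sl(2,\C)$ admits no outer automorphisms at all, and the statement in fact holds for any Lie algebra automorphism in place of $\dot\tau_1$.
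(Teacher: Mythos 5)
Your proposal is correct, and your ``second route'' is exactly the paper's proof: the authors reduce to irreducible $\pi$, observe that $\pi\circ\dot\tau$ is irreducible of the same dimension, and invoke the classification of irreducible $\sl(2,\C)$-modules by dimension; you merely make the reduction to irreducibles explicit. Your first route --- observing that $\dot\tau_1 = \Ad(iX_1)$ is an inner automorphism and exhibiting the intertwiner $T=\Pi(iX_1)$ directly --- is a genuinely different argument and in some ways a stronger one. It is constructive (it produces the intertwiner rather than appealing to classification), it requires only that $\SL(2,\C)$ be simply connected so that $\pi$ integrates, and, as you note, it generalizes immediately to \emph{any} automorphism of $\sl(2,\C)$ because type $A_1$ has no outer automorphisms. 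The paper's route is shorter to state and is all that is needed in context, since the only consequence drawn is that the isotypic decomposition $\gh=\ghZ\oplus\ghOne\oplus\ghTwo$ is $\dot\tau$- and $\dot\eta$-stable; your inner-automorphism argument would give that stability as well but carries the additional information of the intertwiner, which the paper does not use.
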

\begin{proof} This is well known. We assume that $\pi$ is irreducible, then $\pi$ is uniquely determined by its dimension. As the
dimension of $\pi$ and $\pi\circ \dot\tau $ are equal and $\pi\circ \dot\tau$ is irreducible the result follows.
\end{proof}

It follows from this Lemma that the decompositions in (\ref{eq:g0etc}) are preserved under $\dot\tau $ and
$\dot\eta$. In particular, where the superscript refers to $\dot\eta$-fixed points, respectively intersection:
\[\fg_c=\fg_c^{[0]}\oplus \fg_c^{[1]}\oplus \fg_c^{[2]}, \,\, 
\fg = \fg^{[0]}\oplus \fg^{[1]}\oplus \fg^{[2]} \text{ and }
\fg_h=  \fg^{[0]}\oplus \fg^{[1]}\oplus \fg^{[2]}\oplus  \fq_h^{[0]}\oplus \fq_h^{[1]}\oplus \fq_h^{[2]}.\]

\begin{remark} Recall that we have defined $\kappa$ such that it defines a homomorphism $\su(1,1)\to
\fg_h$. But as pointed out in \ref{eq:SLkappa} 
one can, by extending $\kappa$ to $\sl (2,\C)$ and then restrict to $\sl (2,\R)$, view $\kappa$ as a
homomorphism $\sl (2,\R)$ into $\fg_c$. Then the first decomposition in (\ref{eq:g0etc}) is the isotypic decomposition of
the representation $\ad_{\fg^c} \circ \kappa$ of $\sl (2,\R)$. The second decomposition is then obtained
by taking the $\dot\tau$-fixed point in each of the spaces $\fg_c^{[j]}$.  We will discuss that in more details
in the next section. Note that the spaces $\fg^{[j]}$, $j=1,2$,
are not necessarily $\kappa (\sl (2,\R))$-invariant.
\end{remark} 
  
As $\dtau (X_\kappa)=\dot\eta (X_\kappa)=X_\kappa$ it follows that the eigenspaces of
$\ad X_\kappa$ are $\dtau$ and $\dot\eta$ stable and compatible with the decomposition
$\gh= \fg\oplus \fq_h$
and $\ghC=\fg_c\oplus i\fg_c$. In short, all the essential structure from the previous sections
is invariant under $\dtau$ and $\dot\eta$. In particular,
\begin{equation}\label{eq:MoreDecomp}
\mhZero =\mZero \oplus \mhZero\cap \fq_h,\,\, \nhOne= \nOne\oplus \nhOne\cap \fq_h, \,\,
\nhTwo = \nTwo \oplus \nhTwo\cap \fq_h , \text{ and }\qk=\qRk \oplus \qk\cap \fq_h\, .
\end{equation}

Let $H_0$ and $Z_h $ be as before. Let $H_\kappa = \kappa (H_1)$ and
$H_\kappa^{(1)}=H_0-\frac{1}{2}H_\kappa$ and note that 
$H_\kappa ,H_\kappa^{(1)}\in \fa_c\subset \fq_c\cap \fs_c$.
Complexifying the decomposition in (\ref{eq:MoreDecomp}) and then taking $\dot\eta$ and $\dot\tau$ fixed points
we get
\[\fn_{c\kappa}^j=\fn_{h\kappa}^{j\C}\cap \fg_c=\fg_c(\ad X_\kappa ; j),\quad j=1,2,\]
and
\[\fn_\kappa^j=\fn_{h\kappa}^j\cap \fg=\fg_c(\ad X_\kappa ; j)^{\dot\tau}=\fg (\ad X_\kappa ; j),\quad j=1,2. \]

For the complexification of the Levi factor of the maximal parabolic subalgebra $\fq_{h\kappa}$ and
its intersection with $\fg_c$ we also have with $\fl_{c\kappa} =\fz_{\fg_c}(X_\kappa)$ and with the obvious 
notation: 
\[\fp_{c\kappa} = \fl_{c\kappa}\oplus \fn_{c\kappa}= (\fl_{c\kappa}^{(1)}\oplus \fl_{c\kappa}^{(2)})\oplus \R X_\kappa
\oplus (\fn_{c\kappa}^1\oplus \fn_{c\kappa}^2) \]
and
\[\fp_{\kappa} = \fl_{\kappa}\oplus \fn_{\kappa}= (\fl_{\kappa}^{(1)}\oplus \fl_{\kappa}^{(2)})\oplus \R X_\kappa
\oplus (\fn_\kappa^1\oplus \fn_\kappa^2) \]
semidirect products.

Let $L_{c\kappa} =Z_{G_c}(X_\kappa)=L^{(1)}_{c\kappa}L^{(2)}_{c\kappa}A_\kappa$ where   $L^{(1)}_{c\kappa}$ is the 
analytic subgroup of $G_c$ with Lie algebra $\fl_{c\kappa}^{(1)}$,
$L^{(2)}_{c\kappa}=Z_{G_c}(X_\kappa, H_0),$ and $A_\kappa = \exp \R X_\kappa$.
We use analogous notation for $\fg$ and $G$ 
dropping the index $c$. Up to connected components for $L^{(1)}_\kappa$, those
Lie algebras, respectively Lie groups, 
are obtained by taking $\dot\tau$, respectively $\tau$ fixed points. Finally we
let 
\[P_{c\kappa}=N_{G_c}(\fn_{c\kappa}) =L_{c\kappa}^0A_\kappa N_{c\kappa} \quad\text{and}\quad 
P_\kappa = N_{G}(\fn_\kappa )
=L_\kappa^0 A N_\kappa.\]

Theorem  \ref{thm:MainS2}, parts (7) and (8) now imply:

\begin{lemma}\label{thm:MainR} The following holds true
\begin{enumerate}
\item  $P_{c\kappa}$ is a maximal parabolic subgroup of $G_c$.
\item If $H^{(1)}_\kappa\not= 0$ then $H^{(1)}_\kappa$ is central in
$\fl_\kappa^{(1)}\cap \fs_c$ and $L_{c\kappa}^{(1)}/L_{\kappa}$ is, up to compact factors, a split-Hermitian
symmetric space.
\item $P_\kappa$ is a parabolic subgroup in $G$.
\item If $H^{(1)}\not=0$ then $L_\kappa^{(2)}/K\cap L^{(2)}_\kappa$ is the
fixed point set of the conjugation $\eta$ in the Hermitian symmetric space
$M_{h\kappa}^{(1)}/K_h\cap M_{h\kappa}^{(1)}$ and we have a fibration
\[L^{(1)}_\kappa /K\cap L^{(1)}_\kappa \to \cO (\kappa ) \to
K/K\cap L^{(2)}_\kappa .\]
\item If $H^{(1)}_\kappa = 0$ then the stabilizer of $E_\kappa$ in $G$ is $P_\kappa$ and
$\cO(\kappa)=G/P_\kappa = K/K\cap L^{(2)}$ is a compact symmetric $R$-space.
\end{enumerate}
\end{lemma}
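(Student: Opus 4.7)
My plan is to deduce each statement from Theorem \ref{thm:MainS2} by descending twice: first taking $\dot\eta$-fixed points to pass from $\fg_h^{\C}$ to $\fg_c$, and then $\dot\tau$-fixed points to pass from $\fg_c$ to $\fg$. The machinery is already in place: by the preceding discussion the decomposition $\fg_h = \ghZ \oplus \ghOne \oplus \ghTwo$ and the $\ad X_\kappa$-eigenspace grading are both $\dot\tau$- and $\dot\eta$-stable, giving \eqref{eq:MoreDecomp} and the identities $\fn_{c\kappa}^j = \fn_{h\kappa}^{j\C}\cap \fg_c$, $\fn_\kappa^j = \fn_{h\kappa}^j\cap \fg$, together with the analogous statements for the Levi pieces.

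For (1) and (3), I would argue that $\fp_{c\kappa}$ is the sum of non-negative $\ad X_\kappa$-eigenspaces in $\fg_c$, hence is a parabolic subalgebra, and it is maximal because $\R X_\kappa$ is one-dimensional inside $\fa_c\cap\fp_{c\kappa}$ and the grading has only three steps. At the group level, $P_{c\kappa}$ is by construction $N_{G_c}(\fn_{c\kappa})$, which gives (1); since $\dot\tau$ preserves each factor of the Langlands decomposition $\fp_{c\kappa} = \fl_{c\kappa}\oplus \R X_\kappa \oplus \fn_{c\kappa}$, intersecting with $G$ gives (3), with $P_\kappa$ obtained as the $\tau$-fixed points of $P_{c\kappa}$.

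For (2), the point is that, by Theorem \ref{thm:MainS2}(6), $Z_\kappa^{(1)}$ is the $Z_h$-element for the Hermitian type algebra $\ghone$, so $H^{(1)}_\kappa = -iZ_\kappa^{(1)}$ is central in $\fl^{(1)}_\kappa\cap \fs_c$ and one-dimensional in $\fa_c\cap\fl_{c\kappa}^{(1)}$. Applying the constructions of \S2.1 to the smaller Hermitian triple $(G_{h\kappa}^{(1)}, K_h\cap M^{(1)}_{h\kappa}, \dot\eta|_{\ghCOne})$, the quotient $L^{(1)}_{c\kappa}/L_\kappa$ is identified as the associated split-Hermitian symmetric space, the compact ideal $\mathfrak l_2$ from \eqref{eq-decomp} accounting for the qualifier \emph{up to compact factors}. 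For (4), I would take $\tau$-fixed points of the fibration in Theorem \ref{thm:MainS2}(7): the fiber $M^{(1)}_{h\kappa}/K_h\cap M^{(1)}_{h\kappa}$ is Hermitian symmetric, and its $\tau$-fixed set is precisely the bounded real form $L^{(2)}_\kappa/K\cap L^{(2)}_\kappa$ by the basic $G/K \hookrightarrow G_h/K_h$ picture of \eqref{DR}. Connectedness of the fibers after taking fixed points is guaranteed by Proposition \ref{le:conn}.

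Finally (5) follows directly from Theorem \ref{thm:MainS2}(8): when $H^{(1)}_\kappa = 0$, the fiber in the previous fibration collapses, the stabilizer of $E_\kappa$ in $G_h$ is already $Q_{h\kappa}$, and intersecting with $G$ yields $P_\kappa$; the equality $G/P_\kappa \cong K/K\cap L^{(2)}$ is a compact-Iwasawa style identification, giving a compact symmetric $R$-space. The main obstacle I anticipate is in (4) and in the case-split behavior of \S2.2: when $r \neq r_h$ the involution $\dot\tau$ exchanges the $E_j$ in pairs rather than fixing them, so one must verify that the primed homomorphism $\kappa = \kappa_{I',\epsilon'}$ and the decomposition \eqref{eq:g0etc} behave correctly under the reduction, and that the identification of the fixed-point fiber with $L^{(2)}_\kappa/K\cap L^{(2)}_\kappa$ is uniform across both cases of Lemma \ref{le-allOrNone}. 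Once this bookkeeping is in hand, everything is formal from Theorem \ref{thm:MainS2}.
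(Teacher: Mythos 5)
Your proposal is correct and follows the same route the paper intends: the paper states Lemma \ref{thm:MainR} with no proof beyond the remark that ``Theorem \ref{thm:MainS2}, parts (7) and (8) now imply'' it, having set up in the preceding paragraphs exactly the $\dot\tau$- and $\dot\eta$-stability of the isotypic decomposition and the $\ad X_\kappa$-grading (cf.\ \eqref{eq:MoreDecomp} and the definitions of $\fp_{c\kappa}$, $\fp_\kappa$, $L_{c\kappa}$, $L_\kappa$) that you invoke. Your descent via $\eta$-fixed points to $\fg_c$ and then $\tau$-fixed points to $\fg$ is the intended argument, and your observation that $\eta|_{\fg_h}=\tau$ correctly reconciles the conjugations in part (4).
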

 
  \section{Finer Structure of $Q_\kappa$}

\noindent
In this section we discuss the finer structure of the stabilizer of $E_\kappa$. This material will not be used
in this article but we still think it is worth including.
Recall from \S1.5 that $\ghZ =\mathfrak{z}_{\gh}( \dot\kappa (\su(1,1) )$ is a subalgebra which has ideal  $\ghKappa=\bigoplus_{\gh_j\subseteq\ghZ,\, j\ge 1}\gh_j$.

 \begin{lemma} Let $V\subset \fg_h^{[k]}$ be an irreducible $\ghKappa$-module. Then exactly one of the following holds:
\begin{enumerate}
\item  $\dtau (V) = V$ and $\dtau|_V=\id$. In this case $V\subset \fg$ and the action of $\ghKappa$ is trivial.
\item   $\dtau (V) = V$ and $\dtau|_V=-\id$. In this case $V\subset \fq_h$ and the action of $\ghKappa$ is trivial..
\item $\dtau (V)=V$ and $\dtau|_V\not= \pm \id$. Then $\dim V> 1$ and $\dim V\cap \fg =1$. If $\dim V=2$, then
$V\cap \fg\subset \fn^{1}$ or $V\cap \fg\subset \fn^{-1}$. If $\dim V=3$, then  $V\cap \fg \subset
\gZ \cap \fg $.  
\item $\dtau (V)\not= V$. Then $\dim V>1$ and $V\cap \dtau (V)=\{0\}$ and $\dtau|_{V\oplus \dtau (V)}: V\oplus \dtau (V)\to V\oplus \dtau (V)$ is
given by $\dtau (X,Y)=(\dtau (Y),\dtau (X))$ and $(V\oplus \dtau (V))\cap \fg =\{X+\dtau (X)\mid X\in V\}$ is three dimensional.
\end{enumerate}
\end{lemma}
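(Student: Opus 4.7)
The strategy is a case analysis on how $\dtau$ acts on $V$, exploiting the fact that the isotypic decomposition $\fg_h=\fg_h^{[0]}\oplus \fg_h^{[1]}\oplus \fg_h^{[2]}$ is $\dtau$-stable (as noted in \S2.4, since $X_\kappa$ is $\dtau$-fixed and the decomposition is characterized by the $\dot\kappa(\sl (2,\R))$-action, with $\kpr\circ\dot\tau_1=\dot\tau\circ\kpr$). The collection of simple noncompact Hermitian ideals comprising $\ghKappa$ is, consequently, permuted by $\dtau$ as a set.

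First I would observe that $\dtau(V)\subset \fg_h^{[k]}$ is again an irreducible $\ghKappa$-submodule: for $X\in \ghKappa$ and $v\in V$ one has $[X,\dtau(v)]=\dtau([\dtau(X),v])\in \dtau(V)$, and irreducibility transfers through the involution. The usual dichotomy for irreducible submodules then yields either $\dtau(V)=V$ or $\dtau(V)\cap V=\{0\}$, which produces the fundamental split between cases (1)--(3) and case (4).

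When $\dtau(V)=V$, I would decompose $V=V^+\oplus V^-$ into $\pm 1$-eigenspaces of $\dtau|_V$. Writing $\ghKappa=\ghKappa^+\oplus \ghKappa^-$ for the corresponding decomposition of the algebra, the $\dtau$-fixed part $\ghKappa^+$ preserves each $V^\pm$ while $\ghKappa^-$ interchanges them. In the extremal cases $V^-=0$ or $V^+=0$, this forces $\ghKappa^-$ to act trivially on $V$; combined with the structure of $\ghKappa$ as a direct sum of simple Hermitian ideals (both the $\dtau$-stable ones and those swapped in pairs) and irreducibility of $V$, one concludes $\dim V=1$ and the entire $\ghKappa$-action is trivial, which settles (1) and (2). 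In the intermediate case $\dtau|_V\neq\pm\id$, both $V^\pm$ are nonzero so $\dim V>1$, and $V\cap \fg=V^+$ is one-dimensional by irreducibility of the $\ghKappa^+$-action combined with the dimension constraints on $V$. The refinements for $\dim V=2$ and $\dim V=3$ then follow by combining the $\ghKappa$-action with the $\ad X_\kappa$-grading of $\fg_h^{[k]}$ and invoking Lemma \ref{le:Complex} and Theorem \ref{thm:MainS2} to locate $V^+$ within the appropriate graded piece. For case (4), $\dtau(V)\cap V=\{0\}$ holds by the dichotomy, the formula $\dtau(X,Y)=(\dtau(Y),\dtau(X))$ is immediate from $\dtau^2=\id$, and an element $v+w\in V\oplus\dtau(V)$ lies in $\fg$ iff $w=\dtau(v)$, producing the parametrization $\{X+\dtau(X)\mid X\in V\}$; the three-dimensionality then reduces to identifying $\dim V$ from the $\dot\kappa(\sl (2,\R))$-type of the isotypic piece $\fg_h^{[k]}$ containing $V$.

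The main obstacle I expect is case (3): pinning down precisely that $V\cap \fg\subset \fn^{\pm 1}$ when $\dim V=2$ and $V\cap\fg\subset \gZ\cap \fg$ when $\dim V=3$ requires a delicate compatibility between the $\ghKappa$-action (which centralises $\dot\kappa(\sl (2,\R))$) and the $\ad X_\kappa$-grading, since $V^+$ may a priori project nontrivially onto several graded pieces. Controlling these projections is the technical heart of the argument and draws on the full structure developed in Lemma \ref{le:Complex} and Theorem \ref{thm:MainS2}.
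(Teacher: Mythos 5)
Your proposal reads $\ghKappa$ per the definition at the start of \S 3, namely $\bigoplus_{\gh_j\subseteq\ghZ,\,j\ge 1}\gh_j$, the sum of noncompact Hermitian ideals of $\ghZ$, and builds the whole argument around the action of these ideals. The paper's own proof, however, is visibly about $V$ as an irreducible module over $\dot\kappa(\sl(2,\R))$: it leans on ``$e$ and $f$ act differently on $\R^2$ and $\sl(2,\R)$'' and on $\dot\kappa\circ\dot\tau_1=\dtau\circ\dot\kappa$, which only gives information if $\ghKappa$ contains $E_\kappa=\dot\kappa(e)$ and $F_\kappa=\dot\kappa(f)$ --- elements that lie outside $\ghZ$, since $\ghZ$ centralizes $\dot\kappa(\su(1,1))$ by definition. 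Under that $\sl(2)$ reading, the bound $\dim V\le 3$ is automatic (the cited Satake Lemma 1.1 on the $\sl(2)$-types of $\ad\circ\dot\kappa$) and the lemma follows quickly: $\dot\tau_1$ fixes $X_1=e+f$ and swaps $e$ with $f$, so if $\dtau|_V=\pm\id$ then $\ad E_\kappa|_V=\ad(\dtau E_\kappa)|_V=\ad F_\kappa|_V$, which forces the $\sl(2)$-module $V$ to be trivial; when $\dtau|_V\ne\pm\id$, one identifies $V\cap\fg$ via $\dot\kappa$ with $\su(1,1)^{\dot\tau_1}=\R X_1$ and then reads off its location in the $\ad X_\kappa$-grading.

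Under the reading you adopted, the argument has real gaps and the statement is in fact false. If $\ghKappa$ contains a simple ideal $\mathfrak{m}$ with $\dtau|_{\mathfrak{m}}=\id$ (so $\mathfrak{m}\subset\fg\cap\ghZ$), take $V=\mathfrak{m}$, an irreducible adjoint $\ghKappa$-module inside $\ghZ$. Then $\dtau|_V=\id$, putting you in case (1), yet the $\ghKappa$-action on $V$ is nontrivial and $\dim V>1$. Your step concluding triviality from ``$\ghKappa^-$ acts trivially, combined with the structure of $\ghKappa$ as a direct sum of simple Hermitian ideals and irreducibility of $V$'' is exactly where this breaks: $\ghKappa^-$ can vanish, and then nothing forces the $\ghKappa^+$-action to be trivial. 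Likewise, in case (3) you appeal to unspecified ``dimension constraints on $V$'' to get $\dim V^+=1$; those constraints are $\sl(2,\R)$ facts, not consequences of the $\ghKappa^+$-module structure alone, so under the ideals reading they are simply not available.

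(As an aside: item (3) asserts $V\cap\fg\subset\gZ\cap\fg$ when $\dim V=3$, but such a $V$ lies in $\ghTwo$ and $\ghZ\cap\ghTwo=\{0\}$; what the paper's proof actually shows is $V\cap\fg\subset\mhZero$, so $\gZ$ there is presumably a typo for $\mhZero$.)
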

\begin{proof} It is clear that exactly one of the conditions (1) to (4) must hold. In the case where $\dim V=2$ or $\dim V=3$ the
action of $\ad |_{\ghKappa }$ and $\ad\circ \dtau|_{\ghKappa}$ on $V$ are different as $e$ and $f$ act differently
on $\R^2$ and $\sl (2,\R)$. Thus, if $\tau|_V=\pm \id$, we must have that the action of $\ghKappa$ is trivial
as $\ad\circ \dtau_1=\tau\circ \ad$. Then (1) and (2) follow.

Assume that
$\tau (V)= V$ and $\tau|_V\not=\pm \id$. Then clearly $\dim V>1$. Assume that $\dim V=2$. Then $V=\Im (\id +\dot \tau)\oplus
\Im (\id -\tau)=V(\dtau, 1)\oplus V(\dtau ,-1)$ and each of the eigenspaces is one dimensional. As $\dtau_1(X_1)=X_1$,
$\R^2 =\R^2(X_1,1)\oplus \R^2(X_1,-1)$. Since
$\dot \kappa\circ\dot \tau_1=\dot \tau \circ\dot\kappa$, it follows that $\ad X_\kappa|_{V(\dot\tau ,1)}=\pm 1$. If $\dim V=3$, then the
action is the standard $\su (1,1)$ action on its Lie algebra and 
\[\su (1,1)\cap \fg =\R X_1=\su (1,1)(\ad X_1,0)=\su (1,1)^{\dot\tau_1}.\]

For (4) we note that $V\cap \dtau (V)$ is invariant. As $V$ is assumed irreducible, we either have $V=\dtau( V)$ or
$V\cap \dtau (V)=\{0\}$. The rest is now obvious.
 
\end{proof}
\begin{lemma} Assume that (3) above holds and $\dim V=2$.  Then $\theta (V)\cap V=\{0\}$. Furthermore,
\begin{enumerate}
\item $\theta (V)$ is $\ghKappa$-stable.
\item $\theta (V(\dtau ,\pm 1))=\theta( V)(\dtau ,\pm 1)$.
\item $\theta (V(\ad X_\kappa ,\pm 1))=\theta (V)(\ad X_\kappa , \mp 1)$.
\item If $0\not= X\in V(\ad X_\kappa, \pm 1)$ then $\theta (X)\in V(\ad X_\kappa , \mp 1)$
and $[X,\theta X]\in \fm^0\cap \fp$.
\end{enumerate}
\end{lemma}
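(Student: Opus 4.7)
The strategy is to set up a simultaneous $(\dot\tau,\ad X_\kappa)$-eigenspace decomposition of $V$ and derive all four parts from it. Since $\dot\kappa\circ\dot\tau_1=\dot\tau\circ\dot\kappa$ and $\dot\tau_1(X_1)=X_1$, the element $X_\kappa$ is $\dot\tau$-fixed, so $\ad X_\kappa$ commutes with $\dot\tau$. As in the preceding lemma's proof, each of the one-dimensional subspaces $V(\dot\tau,\pm 1)$ is $\ad X_\kappa$-stable with eigenvalue in $\{\pm 1\}$ (since $\ghOne$ carries only those $\ad X_\kappa$-eigenvalues). Writing $V_\pm:=V(\ad X_\kappa,\pm 1)$ we get $V=V_+\oplus V_-$ with each summand $\dot\tau$-stable. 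The hypothesis $\dot\tau|_V\neq\pm\id$ forces (when both $V_\pm$ are nonzero) $\dot\tau$ to act by opposite signs on $V_+$ and $V_-$, placing (up to relabelling) $V_+\subset\fg$ and $V_-\subset\fq_h$.

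For~(0), observe first that $\theta$ preserves $\dot\kappa(\sl(2,\C))$ (from $\theta X_\kappa=-X_\kappa$, $\theta Y_\kappa=-Y_\kappa$, $\theta Z_\kappa=Z_\kappa$), hence its centralizer $\ghZ$; further, $\theta$ permutes the simple ideals of $\ghZ$ preserving compact/noncompact type, so $\theta(\ghKappa)=\ghKappa$ and $\theta V$ is an irreducible $\ghKappa$-module. Then $V\cap\theta V$ is a submodule of $V$, so is $0$ or $V$. If it were $V$, then $\theta X_\kappa=-X_\kappa$ would force $\theta V_+\subset V_-$, but $\theta\dot\tau=\dot\tau\theta$ gives $\theta V_+\subset\fg$ while $V_-\subset\fq_h$, so $\theta V_+\subset\fg\cap\fq_h=\{0\}$, contradicting $V_+\neq 0$.

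Items~(1)--(3) are then formal: (1) holds because $\theta$ is an automorphism of $\gh$ fixing $\ghKappa$; (2) follows from $\theta\dot\tau=\dot\tau\theta$; (3) follows from $\theta X_\kappa=-X_\kappa$, as $[X_\kappa,Y]=\pm Y$ yields $[X_\kappa,\theta Y]=-\theta[X_\kappa,Y]=\mp\theta Y$.

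For~(4), the first assertion — that $\theta X$ has $\ad X_\kappa$-eigenvalue $\mp 1$ — is~(3) applied to $X$. For $[X,\theta X]\in\mZero\cap\fp$: adding eigenvalues, $[X,\theta X]\in\gh(\ad X_\kappa,0)=\mhZero$; the identity $\theta[X,\theta X]=[\theta X,X]=-[X,\theta X]$ puts it in $\ph$; and since $X\in V_\pm$ is a $\dot\tau$-eigenvector with $\dot\tau X=\epsilon X$ for some $\epsilon\in\{\pm 1\}$, $\dot\tau[X,\theta X]=[\epsilon X,\epsilon\theta X]=[X,\theta X]$ shows the bracket lies in $\fg$. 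Combining, $[X,\theta X]\in\mhZero\cap\ph\cap\fg=\mZero\cap\fp$. The main obstacle is~(0), overcome by the observation in the first paragraph that $V_\pm$ are simultaneously the $\dot\tau$-eigenspaces and thus separate $\fg$ from $\fq_h$, which is what forbids $\theta V=V$.
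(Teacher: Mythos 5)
Your overall strategy --- playing off the commutation of $\dot\theta_h$ with $\dot\tau$ against its anticommutation with $\ad X_\kappa$ --- is the right one, and items (1), (2), (3) are fine. The gap is in your identification of the $\ad X_\kappa$-eigenspaces $V_\pm$ with the $\dot\tau$-eigenspaces $V\cap\fg$ and $V\cap\fq_h$, which enters in both (0) and (4). Since $\ghKappa\subset\ghZ$ centralizes $X_\kappa$, the eigenspaces $\gh(\ad X_\kappa;\pm 1)$ are $\ghKappa$-stable, so $V\cap\gh(\ad X_\kappa;\pm 1)$ are $\ghKappa$-submodules of the irreducible $V$ and each is $\{0\}$ or $V$. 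Because $V\cap\fg$ is nonzero and, by case (3) of the preceding lemma, contained in a single eigenspace $\fn^{\epsilon}$, one gets $V\subset\gh(\ad X_\kappa;\epsilon)$: one of your $V_\pm$ is all of $V$, the other is $\{0\}$, and they do \emph{not} separate $\fg$ from $\fq_h$. So the contradiction you draw in (0), ``$\theta V_+\subset\fg\cap\fq_h$'', does not stand as written (when $V_+=V$, $V_+\not\subset\fg$). The repair is immediate: if $\theta V=V$, then since $\theta$ anticommutes with $\ad X_\kappa$, the single eigenvalue $\epsilon$ on $V$ would have to equal $-\epsilon$, absurd. This is essentially what the paper does, more directly: it fixes $v\in V(\dot\tau,1)$ (which is one-dimensional and an $\ad X_\kappa$-eigenspace), notes that $\theta(v)$ has the opposite $\ad X_\kappa$-eigenvalue but the same $\dot\tau$-eigenvalue, and concludes $\theta(v)\notin V(\dot\tau,1)$, hence $\theta(v)\notin V$.

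The same mis-identification undermines your argument for (4): with $V\subset\gh(\ad X_\kappa;\epsilon)$, a general $0\ne X\in V(\ad X_\kappa,\epsilon)=V$ is \emph{not} a $\dot\tau$-eigenvector --- that is precisely what $\dot\tau|_V\ne\pm\id$ says --- so the step $\dot\tau[X,\theta X]=[\epsilon X,\epsilon\,\theta X]=[X,\theta X]$ does not apply. Writing $X=v+w$ with $v\in V\cap\fg$ and $w\in V\cap\fq_h$ both nonzero, the cross-terms $[v,\theta w]+[w,\theta v]$ land in $\fq_h$ and there is no visible reason for them to vanish. What your computation does prove is the correct assertion for $X\in V(\dot\tau,\pm1)$, which is simultaneously a one-dimensional $\ad X_\kappa$-eigenspace, and that is surely the intended reading of (4); with that hypothesis your $\ad X_\kappa$-, $\theta$- and $\dot\tau$-bookkeeping is exactly right.
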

\begin{proof} Fix $v\in V(\dtau ,1)$. If $[X_\kappa,v]=v$ then $[X_\kappa ,\dtheta (v)]=-\dtheta (v)$,
hence $v$ and $\dtheta (v)$ are linearly independent. As $\dim V(\dtau ,1)=1$ it follows that $\dtheta (v)\not\in V$. Similarly, if
$[X_\kappa ,v]=-v$ then $[X_\kappa,\dtheta (v)]=v$ and $v\not\in V$. It follows that $V\cap \theta (V)=\{0\}$.

\end{proof}

The conclusion from this is
\begin{corollary} If $\nhOne\not= \{0\}$, then $\nOne\not=\{0\}$  and $\dim \nOne=\frac{1}{2}\dim \nhOne$. Furthermore, $\dtau|_{\nhOne}$
defines a conjugation on $\nhOne$ so $\nOne$ is a totally real subspace.
\end{corollary}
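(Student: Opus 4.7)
The natural approach is to exploit the almost complex structure
\[ I_{o}=2\,\ad(Z^{(1)}_{\kappa})|_{\nhOne} \]
on $\nhOne$ provided by Lemma \ref{le:Complex}(2), and to show that $\dot\tau|_{\nhOne}$ is anti-linear with respect to $I_{o}$. Once this is in place, $\dot\tau|_{\nhOne}$ is an involutive anti-linear operator on the complex vector space $(\nhOne,I_{o})$, i.e.\ a conjugation, and all three assertions of the corollary fall out at once: $\nOne=\nhOne^{\dot\tau}$ is the corresponding real form, has half the real dimension of $\nhOne$, and is nonzero whenever $\nhOne$ is.

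First I would check that $\dot\tau$ preserves $\nhOne$. From the matrix identity $X_{1}^{2}=I$ one has $\dot\tau_{1}(X_{1})=X_{1}X_{1}X_{1}=X_{1}$, and since $\dot\kappa\circ\dot\tau_{1}=\dot\tau\circ\dot\kappa$ this gives $\dot\tau(X_{\kappa})=X_{\kappa}$. Hence $\dot\tau$ commutes with $\ad X_{\kappa}$ and preserves each eigenspace, in particular $\nhOne=\fg_{h}(\ad X_{\kappa};1)$. Next I need the key sign computation $\dot\tau(Z^{(1)}_{\kappa})=-Z^{(1)}_{\kappa}$. By Lemma \ref{le-tauZo}, $\dot\tau(Z_{h})=-Z_{h}$. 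A direct calculation in $\sl(2,\C)$ gives $\dot\tau_{1}(h)=X_{1}hX_{1}=-h$, so $\dot\tau_{1}(Z_{1})=\dot\tau_{1}(ih)=-ih=-Z_{1}$ (note $\dot\tau_{1}$ is complex \emph{linear}), and applying $\dot\kappa$ gives $\dot\tau(Z_{\kappa})=-Z_{\kappa}$. Combining these with $Z^{(1)}_{\kappa}=Z_{h}-\tfrac12 Z_{\kappa}$ yields $\dot\tau(Z^{(1)}_{\kappa})=-Z^{(1)}_{\kappa}$, as required.

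With these identities, anti-linearity is immediate: for $v\in\nhOne$,
\[ \dot\tau(I_{o}v)=2\,\ad(\dot\tau(Z^{(1)}_{\kappa}))(\dot\tau v)=-2\,\ad(Z^{(1)}_{\kappa})(\dot\tau v)=-I_{o}(\dot\tau v).\]
Combined with $\dot\tau^{2}=\id$, this shows $\dot\tau|_{\nhOne}$ is a conjugation of the complex vector space $(\nhOne,I_{o})$. Standard linear algebra then gives the decomposition $\nhOne=\nOne\oplus I_{o}(\nOne)$, whence $\dim_{\R}\nOne=\tfrac12\dim_{\R}\nhOne$ and the fixed set is a totally real subspace. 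Finally, if $\nhOne\neq\{0\}$, pick any $v\neq 0$; then one of $v+\dot\tau(v)$ and $I_{o}(v-\dot\tau(v))$ is a nonzero element of $\nOne$, so $\nOne\neq\{0\}$.

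The only nontrivial point is the sign computation in Step~2 ($\dot\tau(Z^{(1)}_{\kappa})=-Z^{(1)}_{\kappa}$); everything else is formal, relying on Lemma \ref{le:Complex} to express $I_{o}$ as an inner derivation so that the interaction of $\dot\tau$ with $I_{o}$ is controlled purely by the action of $\dot\tau$ on $Z^{(1)}_{\kappa}$. I would expect the rest of the argument to be routine once the Basic Example is invoked to pin down how $\dot\tau$ acts on $\dot\kappa(\sl(2,\C))$.
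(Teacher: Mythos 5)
Your argument is, in substance, the alternative proof that the paper itself records as a remark immediately after the corollary. There $I_o$ is written as $-\ad(Y_\kappa)\circ\dthh$ and one observes that $\dtau$ commutes with $\dthh$ and anti-commutes with $\ad Y_\kappa$ (since $\dtau(Y_\kappa)=-Y_\kappa$), giving $I_o\dtau=-\dtau I_o$. You instead use the equivalent expression $I_o=2\ad(Z^{(1)}_\kappa)$ from Lemma \ref{le:Complex}(2) and derive $\dtau(Z^{(1)}_\kappa)=-Z^{(1)}_\kappa$ from $\dtau(Z_h)=-Z_h$ and $\dtau(Z_\kappa)=-Z_\kappa$; these are the same computation filtered through a different formula for $I_o$, and all your sign checks are correct, as is the preliminary observation that $\dtau(X_\kappa)=X_\kappa$ guarantees $\dtau$-stability of $\nhOne$. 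The argument the paper actually presents as its primary one is different: it reads the corollary off the decomposition of $\gh$ into irreducible $\ghKappa$-modules carried out in the two preceding lemmas, in particular the one-dimensionality of $V\cap\fg$ in case (3) when $\dim V=2$ and the fact that $\theta(V)\cap V=\{0\}$. That route carries extra structural information about how $\nOne$ sits inside $\nhOne$, whereas the conjugation argument (yours and the remark's) reaches the corollary more directly and with fewer moving parts. Your proposal is correct.
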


\begin{remark} This follows also from the following observation. Lemma \ref{le:Complex} states that $I_o=-\ad (Y_\kappa ) \circ \dthh$ defines
a complex structure on $\nhOne$. $\dtau$ commutes with $\dthh$ and anti-commutes with $\ad (Y_\kappa)$. Hence 
$I_o\dtau = -\dtau I_o$ which shows that $\dtau|_{\nhOne}$ is conjugate linear. Hence $\nOne=\nhOne\cap \fg$ is a real form
for $\nhOne$ and $\nhOne = \nOne \oplus I_o\nOne$.
\end{remark}

\begin{lemma} Let $V\subset \gh$ be one dimensional or a  simple ideal. Then either $\dtau (V)=V$, or $\dtau (V)\cap V=\{0\}$
and we have the ``group case'' where $V\times  \dtau (V)$ is an ideal, $V$ and $\dtau (V)$ commute, and 
$(V\times \dtau(V) )^{\dtau}=\{(X,\dtau (X))\mid X\in V\}$.
\end{lemma}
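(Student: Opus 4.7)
The plan is to exploit the fact that $\dot\tau$ is a Lie algebra automorphism of $\fg_h$, so that $\dot\tau(V)$ is again an ideal of the same type as $V$ (one dimensional, respectively simple). This reduces the whole statement to a standard ``sum/intersection of ideals'' argument together with an identification of $\dot\tau$-fixed points on a product.

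First I would consider the subspace $V\cap \dot\tau(V)$. Since $V$ and $\dot\tau(V)$ are both ideals of $\fg_h$, their intersection is an ideal of $\fg_h$ contained in $V$. By the hypothesis that $V$ is either one dimensional or simple, this intersection is either $\{0\}$ or all of $V$. In the latter case $V\subseteq \dot\tau(V)$, and applying $\dot\tau$ (an involution, hence dimension preserving) yields $\dot\tau(V)\subseteq V$, so $\dot\tau(V)=V$. This gives the first alternative.

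Next, assume $V\cap \dot\tau(V)=\{0\}$. I would verify the three ``group case'' properties in turn. The sum $W:=V+\dot\tau(V)=V\oplus \dot\tau(V)$ is an ideal since it is the sum of two ideals. To see that $V$ and $\dot\tau(V)$ commute, note that $[V,\dot\tau(V)]$ lies in $V$ because $V$ is an ideal and $\dot\tau(V)\subseteq \fg_h$, and simultaneously lies in $\dot\tau(V)$ for the symmetric reason; hence $[V,\dot\tau(V)]\subseteq V\cap\dot\tau(V)=\{0\}$. Thus $W$ is a direct product of Lie algebras $V\times \dot\tau(V)$. For the fixed-point description, observe that under the identification $W=V\times \dot\tau(V)$ the involution $\dot\tau$ swaps the two summands (using $\dot\tau^2=\id$, so $\dot\tau(\dot\tau(V))=V$), and is given by $\dot\tau(X,Y)=(\dot\tau(Y),\dot\tau(X))$. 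The fixed-point equation forces $X=\dot\tau(Y)$, equivalently $Y=\dot\tau(X)$, and the second component condition $\dot\tau(X)=Y$ is then automatic; so $W^{\dot\tau}=\{(X,\dot\tau(X))\mid X\in V\}$, the ``diagonal''.

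The argument is almost purely formal once one recognizes that $\dot\tau(V)$ is an ideal of the same type as $V$, so I do not expect any serious obstacle. The only point that warrants a moment of care is the commutativity step $[V,\dot\tau(V)]=0$, which is the reason the construction really produces a direct product rather than a more general extension; this is precisely where we use that both $V$ and $\dot\tau(V)$ are ideals (not merely subalgebras) and that their intersection is trivial.
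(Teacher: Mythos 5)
Your proof is correct and follows essentially the same route as the paper's, which simply observes that $V\cap\dot\tau(V)$ is an ideal in $V$, invokes the one-dimensional/simple hypothesis, and declares "the rest is obvious." You have merely written out the details the paper leaves to the reader: the commutativity $[V,\dot\tau(V)]\subseteq V\cap\dot\tau(V)=\{0\}$ (which is the small non-trivial point), the use of $\dot\tau^2=\id$ to identify the action on $V\oplus\dot\tau(V)$ as a swap, and the resulting diagonal description of the fixed points.
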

\begin{proof} If $V\cap \dtau (V)\not=\{0\}$ then $V\cap \dtau (V)$ is an ideal in $V$. As $V$ is either one dimensional or
simple it follows that $V=\dtau (V)$.  The rest is obvious.
\end{proof}

\begin{lemma} $\dtau \f(l_2)=\fl_2$ and $\fl_2\cap \fg$ is an ideal in $\mZero$. Let $L_2$ be the analytic subgroup of $\Gh$ with Lie algebra $\fl_2$.
Then $L_2/G\cap L_2$ is a compact symmetric space.
\end{lemma}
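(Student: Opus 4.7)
The plan is to verify the three claims in order, exploiting the isotypic decomposition of $\fg_h$ under $\pi_\kappa$ that was established at the start of this section.

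First, since $\pi_\kappa\circ\dot\tau$ is equivalent to $\pi_\kappa$ as an $\sl(2,\C)$-representation, the isotypic pieces $\fg_h^{[0]},\fg_h^{[1]},\fg_h^{[2]}$ are each stable under $\dot\tau$, so $\ghEven=\ghZ\oplus\ghTwo$ is $\dot\tau$-stable. Now $\fl_2=\gh_0$ is defined as the unique maximal compact ideal of $\ghEven$, an intrinsic invariant of that Lie algebra. Since $\dot\tau$ restricts to an automorphism of $\ghEven$, it permutes the simple ideals $\gh_j$ and preserves the property of being compact, so the sum of compact ideals $\fl_2$ satisfies $\dot\tau(\fl_2)=\fl_2$.

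Second, from the decompositions recalled in Section~1.5 one has $\fl_2\subset\mhone\subset\mhZero\subset\ghEven$, and $\fl_2$ is an ideal of $\ghEven$ by its very definition. Hence $[\mhZero,\fl_2]\subset[\ghEven,\fl_2]\subset\fl_2$, so $\fl_2$ is an ideal of $\mhZero$. Intersecting with the $\dot\tau$-fixed points, and using $\mZero=\mhZero\cap\fg$, we obtain
\[
[\mZero,\fl_2\cap\fg]\subset[\mhZero,\fl_2]\cap\fg\subset\fl_2\cap\fg,
\]
so $\fl_2\cap\fg$ is an ideal in $\mZero$.

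Third, since $\fl_2$ is a compact ideal of the reductive algebra $\ghEven$, it is contained in $\kh\cap\ghEven$, so $L_2\subset\Kh$ is compact and connected. By the first claim, $\tau$ restricts to an involutive automorphism of $L_2$, and by the definition $G=\Gh^{\tau}$ we have $L_2\cap G=L_2^{\tau}$. The main (minor) obstacle is connectedness of the fixed-point subgroup: although Proposition \ref{le:conn} is stated for simply connected groups, $L_2$ is compact semisimple and hence has a compact simply connected cover to which the proposition applies, giving that $L_2^{\tau}$ is connected. Therefore $L_2/(L_2\cap G)=L_2/L_2^{\tau}$ is the quotient of a compact connected Lie group by the identity component of the fixed point set of an involution, i.e.\ a compact Riemannian symmetric space.
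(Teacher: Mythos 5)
Your argument for the first two claims is essentially the paper's, though you invoke maximality of $\fl_2$ among compact ideals of $\ghEven$ whereas the paper phrases it via maximality among compact ideals of $\mhZero$; the two formulations are interchangeable here, and the ideal computation $[\mZero,\fl_2\cap\fg]\subset[\mhZero,\fl_2]\cap\fg\subset\fl_2\cap\fg$ is correct.

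The third step has a genuine gap. You claim $L_2^\tau$ is connected because the simply connected cover $\widetilde{L}_2$ has connected fixed-point set. But connectedness does not descend along covering maps: for $L_2=\SO(3)$ and $\tau$ conjugation by $\mathrm{diag}(1,1,-1)$, the fixed-point set is isomorphic to $\mathrm{O}(2)$, which is disconnected, even though $\SU(2)^{\tilde\tau}$ is connected. There is also no guarantee that $L_2$ is simply connected (it is an analytic subgroup of $\Gh$, and $\GhC$ being simply connected says nothing about analytic subgroups), nor even that $\fl_2$ is semisimple — the maximal compact ideal $\fl_2=\gh_0$ of $\ghEven$ is allowed to contain the maximal abelian ideal. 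Consequently the identification $L_2\cap G=L_2^\tau$ is also not justified: since $G=(\Gh^\tau)_0$, a component of $L_2^\tau$ lying outside $(\Gh^\tau)_0$ would not be in $G$. Fortunately none of this is needed for the conclusion. One has $(L_2^\tau)_0\subset L_2\cap G\subset L_2^\tau$: the left inclusion because the analytic subgroup with Lie algebra $\fl_2\cap\fg\subset\fg$ lies in $G$, the right because $G\subset\Gh^\tau$. A compact connected group modulo any subgroup pinched between $(L_2^\tau)_0$ and $L_2^\tau$ is a compact Riemannian symmetric space, which is exactly the statement to prove; the spurious connectedness claim should simply be dropped.
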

\begin{proof} $\fl_2$ is the maximal compact ideal of $\mhZero$. As $\fl_2+\dtau (\fl_2)$ is a compact ideal it follows that
$\dtau \f(l_2)=\fl_2$.
The rest of the Lemma is now obvious.
\end{proof}
\begin{lemma} Assume that $\ghone\not=\{0\}$. We have $\dtau (\ghone) = \ghone$ and $\dtau (Z_\kappa^1)=-Z_\kappa^1$.  Let
$G_h^1$ be the analytic subgroup of $G_k$ with Lie algebra $\ghone$. Then $G_h^1$ is $\thh$ and $\tau$ invariant. If
$K_h^1=(G_h^1)^{\thh} = K_h\cap G_h^1$ then $K_h^1$ is maximal compact in $G_h^1$, $G_h^1$ is a bounded
domain, $\tau $ defines a conjugation on $G_h^1/K_h^1$ and $(G\cap G_h^1)/( G\cap K_h^1)=(G_h^1/K_h^1)^\tau$ is
a real form of $G_h^1/K_h^1$. 
\end{lemma}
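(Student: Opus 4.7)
The plan is to verify the two invariance/sign claims at the Lie algebra level first, then pass to groups, and finally check that the induced involution on the Hermitian symmetric space $G_h^1/K_h^1$ is a conjugation.

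For $\dot\tau(\ghone)=\ghone$ and $\dot\tau(Z_\kappa^{(1)})=-Z_\kappa^{(1)}$: First I would note that the compatibility (\ref{eq-kappatau}) combined with $\dot\tau\circ\dot\kappa=\dot\kappa\circ\dot\tau_1$ shows that $\dot\tau$ carries the isotypic decomposition $\gh=\ghZ\oplus\ghOne\oplus\ghTwo$ to itself, because the representations $\ad\circ\dot\kappa$ and $\ad\circ\dot\kappa\circ\dot\tau_1=\dot\tau\circ(\ad\circ\dot\kappa)\circ\dot\tau^{-1}$ are equivalent. Since $\dot\tau$ commutes with $\dthh$, it preserves the Cartan decomposition of the reductive algebra $\ghEven=\ghZ\oplus\ghTwo$, and so permutes the simple ideals of $\ghEven$, sending compact ones to compact ones and noncompact ones to noncompact ones. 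Hence the maximal sum of noncompact ideals of $\ghEven$ contained in $\ghZ$ is preserved, i.e.\ $\dot\tau(\ghone)=\ghone$. For the sign, Lemma \ref{le-tauZo} gives $\dot\tau(Z_h)=-Z_h$, while a direct computation $\dot\tau_1(Z_1)=X_1(ih)X_1=-ih=-Z_1$ combined with $\dot\tau(Z_\kappa)=\dot\kappa(\dot\tau_1(Z_1))$ gives $\dot\tau(Z_\kappa)=-Z_\kappa$. Subtracting yields $\dot\tau(Z_\kappa^{(1)})=\dot\tau(Z_h-\tfrac12 Z_\kappa)=-Z_h+\tfrac12 Z_\kappa=-Z_\kappa^{(1)}$.

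Next, because $\ghone$ is both $\dthh$-stable (as an ideal of the $\dthh$-invariant algebra $\ghEven$) and $\dot\tau$-stable, the analytic subgroup $G_h^1\subset\Gh$ is invariant under both $\thh$ and $\tau$. The global Cartan decomposition $\gh=\kh\oplus\ph$ restricts to a Cartan decomposition $\ghone=\kh^{(1)}\oplus\phone$ with $\kh^{(1)}=\ghone\cap\kh$ and $\phone=\ghone\cap\ph$, so $K_h^1=(G_h^1)^{\thh}=\Kh\cap G_h^1$ is a maximal compact subgroup of $G_h^1$. By construction $\ghone$ is a sum of simple Hermitian noncompact ideals whose $Z_h$-element is $Z_\kappa^{(1)}$ (Theorem \ref{thm:MainS2}(6)), so $G_h^1/K_h^1$ is a Hermitian symmetric space of noncompact type, and via Harish-Chandra is realized as a bounded symmetric domain in $(\phone)^+$.

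Finally, to see that $\tau$ induces a conjugation on $G_h^1/K_h^1$, I would check anti-holomorphicity on the tangent space $\phone$ at the base point. Since $\dot\tau$ is a Lie algebra automorphism,
\[
\dot\tau\circ\ad(Z_\kappa^{(1)})=\ad(\dot\tau(Z_\kappa^{(1)}))\circ\dot\tau=-\ad(Z_\kappa^{(1)})\circ\dot\tau
\]
on $\phone$, so $\dot\tau|_{\phone}$ anticommutes with the complex structure $J^{(1)}=\ad(Z_\kappa^{(1)})|_{\phone}$, i.e.\ is conjugate-linear. Hence $\tau$ is an anti-holomorphic involution of $G_h^1/K_h^1$, and its fixed point set
\[
(G_h^1/K_h^1)^\tau=(G_h^1)^\tau/(K_h^1)^\tau=(G\cap G_h^1)/(K\cap G_h^1)
\]
is a real form of $G_h^1/K_h^1$; the identification of fixed points on the group side uses Proposition \ref{le:conn} applied to $G_h^1$ and $K_h^1$ (which are semisimple with finite center since $\ghone$ is semisimple, with the usual caveat about passing to identity components if $G_h^1$ is not simply connected).

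The main potential obstacle is the first step: ruling out that $\dot\tau$ could mix $\ghone$ with other parts of $\ghEven$, in particular with the compact ideal $\mathfrak{l}_2$. This is handled cleanly by combining the preservation of the $\dot\kappa$-isotypic decomposition with the fact that $\dot\tau$ commutes with $\dthh$, so the compact/noncompact dichotomy among the simple ideals is preserved.
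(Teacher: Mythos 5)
The paper states this lemma without proof, so your argument must be judged on its own. It is correct in substance and natural in approach: you reuse the already-established fact that $\dot\tau$ preserves the isotypic decomposition $\gh=\ghZ\oplus\ghOne\oplus\ghTwo$, observe that $\dot\tau$ commutes with $\dthh$ and hence permutes the simple ideals of $\ghEven$ preserving the compact/noncompact dichotomy, and conclude $\dot\tau(\ghone)=\ghone$. The $\SU(1,1)$ computation $\dot\tau_1(Z_1)=-Z_1$ together with $\dot\tau\circ\dot\kappa=\dot\kappa\circ\dot\tau_1$ and Lemma \ref{le-tauZo} correctly gives $\dot\tau(Z_\kappa^{(1)})=-Z_\kappa^{(1)}$, and the anti-commutation of $\dot\tau|_{\phone}$ with $\ad(Z_\kappa^{(1)})|_{\phone}$ is the right way to verify anti-holomorphicity.

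One small imprecision: you justify $\dthh$-stability of $\ghone$ by calling it ``an ideal of the $\dthh$-invariant algebra $\ghEven$,'' but an ideal of a $\dthh$-invariant reductive algebra need not itself be $\dthh$-invariant. The correct justifications, both available here, are either (i) $\thh=\Ad(\exp\pi Z_h)$ is inner on $\gh$ and $Z_h\in\ghEven$, so $\dthh|_{\ghEven}$ is inner and therefore fixes each simple ideal, or (ii) $\ghone$ is characterized as the sum of noncompact simple ideals of $\ghEven$ lying in $\ghZ$, and $\dthh$ preserves $\ghZ$, permutes simple ideals, and preserves the compact/noncompact dichotomy, so it preserves $\ghone$ — exactly the same structure you used for $\dot\tau$. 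With that repair, the proof is complete. The final identification $(G_h^1/K_h^1)^\tau=(G_h^1)^\tau/(K_h^1)^\tau$ does need the global Cartan-decomposition argument you indicate via Proposition \ref{le:conn}; invoking it for $G_h^1$ is appropriate and the hypotheses (connected, semisimple, commuting $\tau$ and $\thh$) are in place.
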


\begin{lemma} $\fg^{(1)}:=\fg\cap \ghone$ is an ideal in $\mZero$.
\end{lemma}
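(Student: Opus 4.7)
The plan is to obtain $\fg^{(1)}$ as the $\dot\tau$-fixed point set of an ideal of $\mhZero$, and then conclude by intersecting with $\fg$. Concretely, I will first show that $\ghone$ is already an ideal of $\mhZero$, then descend to the real form.

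First, by definition $\ghone = \bigoplus_{\gh_j \subseteq \ghZ}\gh_j$ is a sum of simple ideals of $\ghEven$ from the decomposition $\ghEven = \bigoplus_j \gh_j$. Hence $\ghone$ is itself an ideal of $\ghEven$, i.e.\ $[\ghEven,\ghone]\subseteq\ghone$. Next I would verify that $\mhZero\subseteq\ghEven$. This follows from the $\sl(2,\C)$-isotypic decomposition $\gh=\ghZ\oplus\ghOne\oplus\ghTwo$: since $\ad X_\kappa$ has eigenvalues $\pm 1$ on every irreducible summand of $\ghOne$ (the $2$-dimensional type) and eigenvalues $\{-2,0,2\}$ on every summand of $\ghTwo$ (the $3$-dimensional type), the $0$-eigenspace $\mhZero=\gh(\ad X_\kappa;0)$ lies entirely in $\ghZ\oplus\ghTwo=\ghEven$. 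Combining these two facts gives
\[
[\mhZero,\ghone]\subseteq[\ghEven,\ghone]\subseteq\ghone,
\]
so $\ghone$ is an ideal in $\mhZero$.

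Now I descend to the real form. By the preceding lemma, $\dot\tau(\ghone)=\ghone$, so $\ghone$ is $\dot\tau$-stable and decomposes as $\ghone=(\ghone\cap\fg)\oplus(\ghone\cap\fq_h)=\fg^{(1)}\oplus(\ghone\cap\fq_h)$. Because $\dot\tau$ commutes with $\dot\theta_h$ and preserves $X_\kappa$ (recall $\dot\tau(X_\kappa)=X_\kappa$), we likewise have $\dot\tau(\mhZero)=\mhZero$ and $\mZero=\mhZero\cap\fg$. To conclude, take $X\in\mZero$ and $Y\in\fg^{(1)}$. Then $X\in\mhZero$ and $Y\in\ghone$, so $[X,Y]\in\ghone$ by the ideal property established above; but also $X,Y\in\fg$, and $\fg$ is a subalgebra, so $[X,Y]\in\fg$. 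Therefore $[X,Y]\in\ghone\cap\fg=\fg^{(1)}$, proving that $\fg^{(1)}$ is an ideal in $\mZero$.

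The argument is essentially bookkeeping; there is no genuine obstacle. The only point that needs care is the inclusion $\mhZero\subseteq\ghEven$, which one could alternatively deduce from Theorem~1.10(3) (the identity $\ghEven=\gh(\bC_\kappa^4;1)$ together with the eigenvalue analysis of $\ad X_\kappa$ on $\ghOne$), but the direct isotypic-component argument above seems cleanest.
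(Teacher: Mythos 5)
Your proof is correct and follows essentially the same route as the paper's one-line argument, which reads $[\mZero,\fg^{(1)}]\subset\ghone\cap\fg=\fg^{(1)}$. You supply the justification that the paper leaves implicit, namely that $[\mhZero,\ghone]\subseteq\ghone$: the paper obtains this from the stated decomposition $\mhZero=\mhone\oplus\mhtwo\oplus\R X_\kappa$ with $\mhone=\fl_2\oplus\ghone$ (so $\ghone$ is explicitly an ideal of $\mhZero$), while you derive it directly from $\mhZero\subseteq\ghEven$ (via the eigenvalue analysis of $\ad X_\kappa$ on the isotypic components) together with $\ghone$ being a sum of simple ideals of $\ghEven$; both observations are fine, and the remaining intersection-with-$\fg$ step is identical.
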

\begin{proof} We have $[\mZero ,\fg^{(1)}]\subset \ghone\cap \fg=\gone$.
\end{proof}

The next result follows easily from the above.
\begin{lemma} $\dtau (\mhtwo )=\mhtwo$ and $\mtwo = \mhtwo \cap \fg$ is an ideal in $\mZero$.
\end{lemma}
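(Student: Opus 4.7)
The plan is first to pin down the $\dot\tau$-invariance of $\mhtwo$, then to deduce the ideal property from the structural decomposition of $\mhZero$ already established in Section 1.5.

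For the invariance $\dtau(\mhtwo) = \mhtwo$, I begin by noting that $\mhZero = \gh(\ad X_\kappa; 0)$ is $\dtau$-stable: the uniform construction following Lemma \ref{le-rnot} gives $\dtau(X_\kappa) = X_\kappa$, so if $[X_\kappa, Y] = 0$ then $[X_\kappa, \dtau Y] = \dtau[X_\kappa, Y] = 0$. Next, recall the decomposition $\mhZero = \mathfrak{l}_2 \oplus \ghone \oplus \mhtwo \oplus \R X_\kappa$. By the two lemmas immediately preceding this one, $\dtau(\mathfrak{l}_2) = \mathfrak{l}_2$ and $\dtau(\ghone) = \ghone$, so $\dtau(\mhone) = \mhone$; together with $\dtau(X_\kappa) = X_\kappa$ and the $\dtau$-stability of $\mhZero$, the direct sum decomposition forces $\dtau$ to preserve the remaining summand $\mhtwo$ as well.

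For the ideal statement, the key auxiliary fact is that $\mhtwo$ is itself an ideal of $\mhZero$: the decomposition (\ref{eq-decomp}) expresses $\ghEven$ as a sum of ideals with $\mhone \subseteq \ghone \oplus \mathfrak{l}_2$ and $\mhtwo \subseteq \ghKa$ coming from disjoint simple summands, so $[\mhone, \mhtwo] = 0$, while $X_\kappa$ is central in $\mhZero$ and commutes with $\mhtwo$. Taking $\dtau$-fixed points, $\mZero = \mhZero \cap \fg$ is contained in $\mhZero$, and $\mtwo = \mhtwo \cap \fg$. Thus for any $Y \in \mZero$ and $X \in \mtwo$, the bracket $[Y, X]$ lies in $\mhtwo$ by the ideal property and in $\fg$ because both factors do, hence $[Y, X] \in \mhtwo \cap \fg = \mtwo$.

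The argument is bookkeeping rather than substance: each $\dtau$-invariance input has already been verified in the preceding lemmas, and the ideal property reduces to intersecting the ideal relation in $\mhZero$ with $\fg$. The only step that requires a moment of care is recognizing that $\mhtwo$ (not merely $\mhtwo \oplus \R X_\kappa$) is an ideal of $\mhZero$, which follows from the orthogonal ideal decomposition of $\ghEven$ in (\ref{eq-decomp}) together with the centrality of $\R X_\kappa$.
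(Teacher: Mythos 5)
Your argument fills in what the paper only asserts (``follows easily from the above''), and it is essentially correct. The ideal half is fine: once one knows $\mhtwo$ is an ideal of $\mhZero$ --- which holds because the decomposition $\mhZero = \mhone \oplus \mhtwo \oplus \R X_\kappa$ is into mutually commuting pieces --- the computation $[\mZero,\mtwo]\subseteq \mhtwo\cap\fg=\mtwo$ is exactly the one-line argument the paper uses two lemmas earlier for $\fg^{(1)}$.

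The one place that needs more justification than you give is the sentence claiming that ``the direct sum decomposition forces $\dtau$ to preserve the remaining summand $\mhtwo$.'' That is not a property of direct sums: an involution of $V_1\oplus V_2\oplus V_3$ can preserve $V_1$ and $V_3$ without preserving $V_2$ (take $\dtau v_1=-v_1$, $\dtau v_2 = v_1+v_2$, $\dtau v_3=v_3$). What rescues the conclusion here is precisely the orthogonality you mention only in the later paragraph: since $\dtau$ is an automorphism it preserves the restriction of the Killing form, which is non-degenerate on the reductive subalgebra $\mhZero$ and makes the ideal decomposition orthogonal; hence $\dtau$ preserves the Killing-orthogonal complement of $\mhone\oplus\R X_\kappa$ in $\mhZero$, which is $\mhtwo$. (Equivalently, $\dtau$ permutes the canonical set of simple ideals of $\mhZero$, and those in $\mhone$ are already accounted for.) You do invoke ``orthogonal ideal decomposition'' at the end, but in service of showing $\mhtwo$ is an ideal, whereas it is the $\dtau$-invariance step that actually requires it; moving that observation up would close the gap.
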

 
As $\dtau (X_\kappa)=X_\kappa$ we have $F_\kappa \subset \Gh^{\tau}$. Let $\tilde F_\kappa := F_\kappa \cap G$.
 
\section{Lift from $K$ to $(L'_c)_0$}

\noindent
One of the results in the paper (\S 6) will be an extension of sections of homogeneous vector bundles over $G/K$ to its closure, and hence the boundary orbits. A key step in the proof will be a lift of irreducible representations of $K$ to $L_c$. In this section we will do the lift from $\fk$ to $\fl'_c$, i.e. from $K$ to $(L'_c)_o$. Subsequently we will treat the full $L_c$. A glance at Table 5 shows the real forms $G$ divided into three types.  In subsequent subsections the proof of the lift will be done for each type. 

\subsection{The case OCCC}\hfill

We shall use the terminology of $\sigma$-normal system of roots  for which a convenient reference is \cite {WrI} p. 21-24. For this subsection only we shall denote by $G$ a non-compact connected semisimple Lie group with Lie algebra $\fg$, later the results will be applied to $(L'_c)_o$ in Table 5. The Killing form on $\fg$ induces a non-degenerate symmetric bilinear form on $\fg^*$ for which we use $\langle \cdot,\cdot\rangle$.  Let  $\theta$ be a Cartan involution and write $\fg=\fk\oplus \fs$ for the Cartan decomposition of $\fg$. Let $\fa$ be a maximal abelian subspace in $\fs$ and, as usual, let $\fm=\fz_{\fk} (\fa)$,
 and extend $\fa$ to a Cartan subalgebra
$\ft=\ft_+\oplus \fa$ of $\fg$. Denote by $\Delta = \Delta (\fg^\C,\ft^\C)$ the set of roots of $\ft^\C$ in $\fg^\C$. Clearly $\Delta$ is a reduced system of roots. Our assumption in this subsection is that all Cartan subalgebras in $\fg$ are conjugate, to be denoted  OCCC.

\begin{lemma} $\ft_+$ is a Cartan subalgebra of $\fk$ and $\fm$.
\end{lemma}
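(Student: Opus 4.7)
The plan is to separate the two assertions. An immediate preliminary observation: since $\ft$ is abelian and $\ft_+$ sits inside $\fk$ (as the $+1$-eigenspace of $\theta|_\ft$), the bracket $[\ft_+,\fa]=0$ forces $\ft_+\subset\fz_\fk(\fa)=\fm$, so $\ft_+$ is an abelian subalgebra of both $\fm$ and $\fk$.

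For the Cartan assertion in $\fm$, which does not require the hypothesis OCCC, I would argue as follows. Suppose $X\in\fm$ centralizes $\ft_+$. Then $X$ automatically centralizes $\fa$ (by definition of $\fm$), hence centralizes all of $\ft=\ft_+\oplus\fa$; since $\fz_\fg(\ft)=\ft$, this forces $X\in\ft\cap\fk=\ft_+$. Thus $\ft_+$ is maximal abelian in the compact Lie algebra $\fm$, and is therefore a Cartan subalgebra of $\fm$.

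For the Cartan assertion in $\fk$ I would invoke OCCC together with two standard structure-theoretic facts: (i) every Cartan subalgebra of $\fg$ is $\mathrm{Int}(\fg)$-conjugate to a $\theta$-stable one, and (ii) $\fg$ always admits a \emph{fundamental} $\theta$-stable Cartan subalgebra $\ft'=\ft'_+\oplus\fa'$, meaning one for which $\ft'_+$ is a full Cartan subalgebra of $\fk$. Under OCCC, our $\ft$ is conjugate to such a $\ft'$; using the standard refinement that two $\theta$-stable Cartans of $\fg$ which are $\mathrm{Int}(\fg)$-conjugate are already $K_0$-conjugate, I obtain $k\in K_0$ with $\mathrm{Ad}(k)\ft'=\ft$. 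Since $\mathrm{Ad}(k)$ commutes with $\theta$, it maps $\ft'_+$ onto $\ft_+$, and $\ft_+$ inherits from $\ft'_+$ the property of being a Cartan subalgebra of $\fk$.

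The main obstacle — really the only non-bookkeeping step — is the refinement from $\mathrm{Int}(\fg)$-conjugacy to $K_0$-conjugacy for $\theta$-stable Cartan subalgebras; this is well known (for instance from Knapp's \emph{Lie Groups Beyond an Introduction}) and I would simply cite it rather than reprove it.
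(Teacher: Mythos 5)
Your argument is correct, but it follows a different route from the paper. The paper proves the $\fk$-assertion by introducing, for each Cartan subalgebra $\fc$ of $\fg$, the toral and vector parts $\fc_I$ and $\fc_R$ and noting that their dimensions are constant on an $\mathrm{Int}(\fg)$-conjugacy class; under OCCC there is only one class, so if $\ft_+$ were not maximal abelian in $\fk$ one could enlarge $\ft_+$ to a Cartan subalgebra $\tilde\ft_+$ of $\fk$, extend it to a Cartan subalgebra $\tilde\fc$ of $\fg$, and get $\dim\tilde\fc_I>\dim\ft_I$, a contradiction. You instead invoke the existence of a fundamental $\theta$-stable Cartan subalgebra together with the refinement that $\mathrm{Int}(\fg)$-conjugate $\theta$-stable Cartans are already $K$-conjugate; under OCCC your $\ft$ is therefore $K$-conjugate to a fundamental one, and the $\theta$-equivariance of $\Ad(k)$ transports the Cartan property of the $+1$-eigenspace. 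Both proofs are essentially packaging the same structure-theoretic content (uniqueness of the Cartan class plus the interplay of $\theta$ with conjugacy), but the paper's version is slightly more self-contained since it does not need the $K$-conjugacy refinement. One place where you add something the paper leaves implicit: you give a direct, OCCC-free proof that $\ft_+$ is a Cartan subalgebra of $\fm$ (via $\fm\subset\fz_\fg(\fa)$ and $\fz_\fg(\ft)=\ft$), whereas the paper's proof as written addresses only $\fk$ and leaves the reader to notice that $\ft_+\subset\fm\subset\fk$ then yields the $\fm$-assertion for free; your observation that this half of the lemma is independent of OCCC is a nice clarification.
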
 

\begin{proof} For a Cartan subalgebra $\fc$ of $\fg$ let
\[\fc_{R}=\{X\in \fc\mid (\forall \alpha\in \Delta (\fg^\C,\fc^\C))\,\, \alpha (X)\in \R\}\]
and
\[\fc_{I}=\{X\in \fc\mid (\forall \alpha\in \Delta (\fg^\C,\fc^\C))\,\, \alpha (X)\in i\R\}\, .\]
Then $\fc=\fc_I\oplus \fc_R$ and the dimensions $\dim \fc_I$ and $\dim \fc_R$ are constant on each conjugacy class. In particular, for $\fc = \ft$,
$\ft_R=\fa$ and $\ft_I=\ft_+$.

If $\ft_+$ is not a Cartan subalgebra of $\fk$, then $\ft_+$ extends to a Cartan subalgebra $\tilde\ft_+$ of $\fk$ which in turn extends to a Cartan subalgebra $\tilde\fc$ of $\fg$ such that $\ft_+$ is a proper subspace of
$\tilde\ft_+$, or $\ft_+\subsetneq\tilde\fc_I$ which is not possible by the above discussion.
\end{proof}

It follows that $\ft$ is a fundamental Cartan subalgebra as well as a maximally split Cartan subalgebra. As $\ft=\ft_+\oplus \fa$ we can restrict roots from $\Delta$ to either $\ft_+$ or $\fa$. Denote by $\Sigma =\Sigma (\fg,
\fa)$ the set of (restricted) roots of $\fa$ in $\fg$, i.e. $\Sigma =\{\beta |_\fa \mid \beta\in \Delta\}\setminus \{0\}$. For $\alpha \in \Sigma$ and $\Delta (\alpha ):=\{\beta\in \Delta \mid \beta|_\fa =\alpha\}$ we let $\fg_\alpha \subset \fg$ be the restricted root space, and set
\[m_\alpha : =\dim \fg_\alpha =\# \Delta (\alpha )\, .\] That $\fg$ has one conjugacy class of Cartan subalgebra is equivalent to all multiplicities $m_\alpha$, $\alpha\in \Sigma$ are even. 
Next we define the involution that will serve as the $\sigma$ of the $\sigma$-normal system. Let $\ft_{\R} = i\ft_+\oplus \fa$. For $\lambda \in \ft_{\R}^*$ let
\[\lambda^\theta := \lambda \circ \theta ,\,\, \lambda^\sharp = -\lambda^\theta ,  \lambda^+:= 
\frac{1}{2}\left(\lambda +\lambda^\theta\right) =\frac{1}{2}\left(\lambda -\lambda^\sharp\right),\]
 and
\[ \lambda^-:=\frac{1}{2}\left(\lambda -\lambda^\theta\right) =
\frac{1}{2}\left(\lambda +\lambda^\sharp\right) .\]
We identify $\lambda^+$ with $\lambda|_{\ft^+}$ and similarly write $\lambda^-$ for $\lambda|_\fa$. 

If $\alpha \in \Delta$ then $\alpha^\theta, \alpha^\sharp$ are in $\Delta$ because $\fg^\C_{\alpha^\theta}=\theta (\fg^\C_\alpha)$ and $\fg^\C_{\alpha^\sharp}=
\theta (\fg^\C_{-\alpha})$. Also $\theta$ and $^\sharp$ are isometries for $\langle\cdot,\cdot\rangle$. 

It is also clear that
\[\Delta_{\bullet}:=\{\alpha \in \Delta \mid \alpha^\theta= \alpha\}=\Delta (\fm^\C,\ft_+^\C)\, = \{\alpha \in \Delta \vert \alpha^-= 0\} .\]
\begin{lemma}\label{L-2} Assume OCCC. Then $\beta^\sharp\not= \beta$ for all $\beta \in\Delta$. In fact, $\beta^\theta - \beta \notin \Delta$.
\end{lemma}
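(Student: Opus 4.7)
The plan is to reduce both assertions to a single statement---that no $\beta\in\Delta$ can satisfy $\beta^\theta=-\beta$---and then to derive that statement from OCCC.

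First I would unwind the definitions. Since $\beta^\sharp:=-\beta^\theta$, the equation $\beta^\sharp=\beta$ says exactly that $\beta^\theta=-\beta$. For the second clause, if I set $\gamma:=\beta^\theta-\beta$ then
\[
\gamma^\theta=\beta-\beta^\theta=-\gamma,
\]
so a putative $\gamma\in\Delta$ again satisfies $\gamma^\theta=-\gamma$. Because $\theta|_{\ft_+}=\id$ and $\theta|_{\fa}=-\id$, the condition $\alpha^\theta=-\alpha$ for $\alpha\in\Delta$ is equivalent to $\alpha|_{\ft_+}=0$; that is, to $\alpha$ being a real root in the usual sense. Thus both clauses will follow once I show that $\Delta$ contains no real roots.

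Next I would invoke OCCC. By the preceding lemma $\ft_+$ is a Cartan subalgebra of $\fk$, and by construction $\fa$ is maximal abelian in $\fs$, so $\ft$ is simultaneously maximally compact and maximally split. The integers $\dim\ft_+$ and $\dim\fa$ are conjugacy invariants of Cartan subalgebras, hence under OCCC every Cartan of $\fg$ shares these dimensions. Suppose, for contradiction, that $\beta\in\Delta$ is real. Then $H_\beta\in\fa$ and I can choose root vectors $E_{\pm\beta}\in\fg^\C_{\pm\beta}$ with $\theta E_\beta=-E_{-\beta}$ and $[E_\beta,E_{-\beta}]=H_\beta$, producing an $\sl(2,\R)$-triple inside $\fg$. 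The Cayley element $c_\beta:=\exp\frac{\pi i}{4}(E_\beta+E_{-\beta})$ acts via $\Ad$ on $\fg^\C$, and the standard computation shows that $\ft':=\Ad(c_\beta)\ft$ is a Cartan subalgebra of $\fg$ with $\dim\fa'=\dim\fa-1$, contradicting OCCC.

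The main step that needs care is verifying that $\Ad(c_\beta)\ft$ really lies in $\fg$ and that the split rank drops by exactly one; this is classical but sensitive to sign conventions in the $\sl(2,\R)$-triple. An alternative route, avoiding the Cayley transform altogether, is a multiplicity count: for $\alpha=\beta|_\fa$ the involution $\delta\mapsto-\delta^\theta$ acts on $\Delta(\alpha)$, fixes each real root with restriction $\alpha$ (of which there is at most one), and pairs off the remaining roots. A real $\beta\in\Delta(\alpha)$ would force $m_\alpha$ to be odd, contradicting the equivalence of OCCC with ``every $m_\alpha$ is even''. Either route delivers both clauses of the lemma.
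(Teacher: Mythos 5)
Your proposal is correct and matches the paper in substance: your route (B) (the parity-of-multiplicities count, using that OCCC is equivalent to all $m_\alpha$ being even) is precisely the paper's argument for the first clause, while the paper handles the second clause by observing $\gamma^+=0$ and then invoking that $\ft$ is a fundamental Cartan subalgebra (hence admits no real roots), which is your Cayley-transform route (A) packaged as a standard fact. Your reduction of both clauses to the single statement ``$\Delta$ has no real roots'' is a slightly cleaner organization of the same ideas.
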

\begin{proof} Let $\beta \in \Delta$. Suppose that $\beta^\sharp = \beta$. Then $\beta^+= 0$, hence $\beta\in \Sigma$. But then 
\[\Delta (\beta )=\{\beta \}\cup \{\gamma \in \Delta \mid \gamma^\sharp\not= \gamma\,,\,\, \gamma|_\fa=\beta\}.\]
Hence $m_\beta$ is odd which contradicts OCCC. 

If $\beta^\theta = \beta$ then $\beta^\theta - \beta = 0$, so is not a root. Assume that $\beta^\theta \not= \beta$ and that $\gamma = \beta^\theta - \beta \in \Delta$. Then $\gamma ^\theta= -\gamma$ so that $\gamma^+ = 0$, i.e. $\gamma$ is a real root. But $\ft$ is fundamental so there are no real roots.
\end{proof}

\begin{corollary} $(\Delta,\theta)$ is a normal $\sigma$-system of roots per \cite {WrI}.
\end{corollary}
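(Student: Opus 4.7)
The plan is to verify the axioms defining a normal $\sigma$-system of roots from \cite{WrI}, with $\sigma$ being the induced action of $\theta$ on the weight space $\ft_{\R}^*$. The heavy lifting for the normality condition has already been done in Lemma \ref{L-2}; what remains is essentially bookkeeping.

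First I would record that $\theta$ preserves $\ft$. Since $\ft_+\subset \fk$ and $\fa\subset \fs$, we have $\theta|_{\ft_+}=\mathrm{id}$ and $\theta|_{\fa}=-\mathrm{id}$, so $\theta(\ft)=\ft$. Consequently the transpose $\lambda\mapsto \lambda^\theta$ is an involutive automorphism of $\ft_{\R}^*$, and it preserves $\langle\cdot,\cdot\rangle$ because $\theta$ preserves the Killing form (this was already noted in the text preceding Lemma \ref{L-2}). Moreover, for each root $\alpha\in\Delta$ one has $\fg^{\C}_{\alpha^\theta}=\theta(\fg^{\C}_\alpha)$, so $\theta$ permutes $\Delta$. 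This gives the first axiom of a $\sigma$-system: $\theta$ is an involutive isometric permutation of $\Delta$.

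Next, I would choose a positive system compatible with $\theta$ in the standard way: take any ordering on $\fa^*$, extend it to $\ft_{\R}^*$ by ordering on $(i\ft_+)^*$ lexicographically after $\fa^*$, so that $\alpha>0$ whenever $\alpha^-:=\alpha|_{\fa}>0$. Then $\alpha>0$ with $\alpha^\theta\neq -\alpha$ (equivalently $\alpha^-\neq 0$) implies $\alpha^\theta>0$ as well. This supplies the compatibility of positivity with $\theta$ required in the definition.

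Finally, the key normality condition is the statement: for every $\beta\in\Delta$, $\beta-\beta^\theta\notin\Delta$. This is precisely the second assertion of Lemma \ref{L-2}. (When $\beta^\theta=\beta$ the difference is $0$, which is not a root by convention; when $\beta^\theta\neq\beta$ the lemma applies directly, using that OCCC forces all multiplicities $m_\alpha$ to be even and hence that $\ft$ is fundamental, ruling out real roots.) Since these three items---isometric involutive permutation of $\Delta$, compatibility of a positive system with $\theta$, and the normality exclusion $\beta-\beta^\theta\notin\Delta$---together constitute the defining axioms of a normal $\sigma$-system in \cite{WrI}, the corollary follows. The only point that required genuine work was the last one, and that was handled in Lemma \ref{L-2} via the OCCC hypothesis; no further obstacle is expected.
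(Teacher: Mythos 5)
Your account of the first axiom (the isometric involutive permutation of $\Delta$ induced by $\theta$) and of the third (the normality condition, supplied by Lemma~\ref{L-2}) is correct and matches the paper's intent; the paper offers no separate proof and treats the Corollary as an immediate consequence of Lemma~\ref{L-2}. However, your construction of the $\theta$-compatible order is wrong, and the error is not merely cosmetic.

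You propose to order $\fa^*$ first and claim that $\alpha>0$ together with $\alpha^\theta\neq-\alpha$ (which you say is equivalent to $\alpha^-\neq0$) forces $\alpha^\theta>0$. Both the equivalence and the implication fail. Since $\theta$ acts as $+1$ on $\ft_+$ and as $-1$ on $\fa$, one has $(\alpha^\theta)^+=\alpha^+$ and $(\alpha^\theta)^-=-\alpha^-$; hence $\alpha^\theta=-\alpha$ if and only if $\alpha^+=0$, i.e.\ $\alpha$ is a \emph{real} root, not $\alpha^-=0$ (the imaginary case) -- you have swapped real and imaginary roots. Moreover, with an $\fa$-first lexicographic order, a root with $\alpha^->0$ has $(\alpha^\theta)^-=-\alpha^-<0$, so $\alpha^\theta<0$, the opposite of what you assert. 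The correct choice -- and the one the paper makes explicitly in the paragraph following the Corollary -- is to order $\ft_+$ first: choose a basis $H_1,\dots,H_\ell$ of $\ft_\R$ with $H_1,\dots,H_{\ell_+}$ a basis of $i\ft_+$. Then for any $\alpha>0$ that is not real (so $\alpha^+\neq0$, and by Lemma~\ref{L-2} there are no real roots at all), positivity is decided by $\alpha^+=(\alpha^\theta)^+$, hence $\alpha^\theta>0$, as required for a $\sigma$-order with $\sigma=\theta$. Your $\fa$-first order is instead a $\sigma$-order for the involution $\alpha\mapsto\alpha^\sharp=-\alpha^\theta$, which is a different map on $\Delta$; the Corollary is stated for $\theta$. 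With this corrected ordering your outline goes through.
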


From this, various properties of the roots will follow. The OCCC condition will impose some additional constraints which we will identify in the next few results.

\begin{lemma}\label{L-3} Let $\alpha\in\Delta$. Then $\alpha^+\in \Delta (\fk^\C,\ft^\C_+)$.
\end{lemma}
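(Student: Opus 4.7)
The plan is to exhibit, for each $\alpha\in\Delta$, a nonzero $\ft_+^\C$-weight vector in $\fk^\C$ whose weight is $\alpha^+$, and to verify $\alpha^+\neq 0$.

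First I would observe that $\alpha^+$ never vanishes. Indeed $\alpha^+=0$ would mean $\alpha^\theta=-\alpha$, i.e.\ $\alpha$ is a real root with respect to $\ft$. But Lemma \ref{L-2} already rules that out, since if such an $\alpha$ existed then $\alpha^\sharp=-\alpha^\theta=\alpha$, contradicting the first assertion of Lemma \ref{L-2}. (Equivalently, $\ft$ being fundamental carries no real roots.)

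Next I would produce the weight vector. Fix $0\neq E_\alpha\in\fg^\C_\alpha$ and set
\[
X_\alpha := E_\alpha+\theta(E_\alpha).
\]
Since $X_\alpha$ is $\theta$-fixed, it lies in $\fk^\C$. If $\alpha^\theta=\alpha$ then $\alpha\in\Delta_\bullet=\Delta(\fm^\C,\ft_+^\C)$ and already $E_\alpha\in\fm^\C\subset\fk^\C$ is a weight vector of weight $\alpha=\alpha^+$, so the claim is immediate. If $\alpha^\theta\neq\alpha$, then $\theta(E_\alpha)\in\fg^\C_{\alpha^\theta}$ sits in a $\ft^\C$-root space distinct from $\fg^\C_\alpha$, so the two summands of $X_\alpha$ are linearly independent, and in particular $X_\alpha\neq 0$.

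Finally I would compute the action of $\ft_+^\C$. For $H\in\ft_+^\C$ one has $\theta H=H$, hence $\alpha^\theta(H)=\alpha(\theta H)=\alpha(H)=\alpha^+(H)$, and therefore
\[
[H,X_\alpha]=\alpha(H)E_\alpha+\alpha^\theta(H)\theta(E_\alpha)=\alpha^+(H)\,X_\alpha.
\]
Combined with the first step ($\alpha^+\neq 0$) this shows $\alpha^+\in\Delta(\fk^\C,\ft_+^\C)$, completing the proof.

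The only conceptual point is the first step — exclusion of real roots — and this is exactly what the hypothesis OCCC (together with Lemma \ref{L-2}) is engineered to provide. Everything else is a routine verification inside $\fg^\C$, so I do not anticipate any serious obstacle.
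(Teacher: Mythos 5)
Your proof is correct and takes essentially the same route as the paper: both use the $\theta$-symmetrization $X_\alpha^+ = \tfrac12(X_\alpha+\theta X_\alpha)$ as the candidate $\ft_+^\C$-weight vector in $\fk^\C$ with weight $\alpha^+$. Your explicit case split ($\alpha^\theta=\alpha$ versus $\alpha^\theta\neq\alpha$) for nonvanishing, and your separate check that $\alpha^+\neq 0$ via Lemma~\ref{L-2}, are minor tidyings of the argument rather than a different method.
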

\begin{proof} Let $X_\alpha =X_\alpha^+ + X_\alpha^-\in\fg^\C_{\alpha}$. Here $X_\alpha^{\pm}=\frac{1}{2}\left(X_\alpha \pm \theta (X_\alpha )\right)$.
If $H\in \fa$ then
\[ [H,X_\alpha]=\alpha (H)(X_\alpha^++X_\alpha^-)=[H,X_\alpha^+]+[H,X^-_\alpha ]\, .\]
It follows that 
\[[H,X^\pm_\alpha]=\alpha (H)X_\alpha^{\mp}\, .\]
Thus $X_\alpha^\pm \not= 0$. But the same argument shows that for $H\in \ft_+$ we have $[H,X_\alpha^\pm]=\alpha (H)X_\alpha^\pm$ and therefore
$\fk^\C_{\alpha^+}\not= \{0\}$.
\end{proof}
\begin{lemma} Assume OCCC. Let $\alpha\in\Delta\setminus \Delta_\bullet$. Then $\alpha$ and $\alpha^\theta$ are strongly orthogonal.
\end{lemma}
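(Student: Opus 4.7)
The goal is to show $\alpha\pm \alpha^\theta \notin \Delta$. The ``$-$'' direction is immediate: Lemma \ref{L-2} gives $\alpha^\theta -\alpha\notin\Delta$, and since $\Delta=-\Delta$ this also rules out $\alpha-\alpha^\theta$. So the whole task is to exclude $\alpha+\alpha^\theta\in\Delta$, and I would argue by contradiction.

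Suppose $\gamma:=\alpha+\alpha^\theta\in\Delta$. Because $\theta$ is an involution on $\Delta$ and $\gamma^\theta =\alpha^\theta +\alpha =\gamma$, we get $\gamma\in \Delta_\bullet$. By definition $\Delta_\bullet = \Delta(\fm^\C,\ft_+^\C)$, and the inclusion $\fm\subset \fk$ together with $\ft_+^\C\subset \fm^\C$ makes any root space of $\fm^\C$ in $\ft_+^\C$ also a root space of $\fk^\C$ in $\ft_+^\C$; hence $\gamma\in \Delta(\fk^\C,\ft_+^\C)$.

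Now observe $\gamma=\alpha+\alpha^\theta =2\alpha^+$. By Lemma \ref{L-3}, $\alpha^+\in \Delta(\fk^\C,\ft_+^\C)$ as well (and $\alpha^+\ne 0$, since $\alpha^+=0$ would mean $\alpha^\theta=-\alpha$, i.e.\ $\alpha^\sharp =\alpha$, which is excluded by the first part of Lemma \ref{L-2} under OCCC). Thus $\alpha^+$ and $2\alpha^+$ are both elements of $\Delta(\fk^\C,\ft_+^\C)$. But $\fk^\C$ is reductive with Cartan subalgebra $\ft_+^\C$, so $\Delta(\fk^\C,\ft_+^\C)$ is a reduced root system, contradicting $\{\alpha^+,2\alpha^+\}\subset \Delta(\fk^\C,\ft_+^\C)$.

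The mildly tricky point is recognizing that the right contradiction is not visible inside $\Delta$ itself (whose reducedness alone is not enough, since $\alpha^+$ need not be a root of $\fg^\C$), but surfaces after projecting to $\ft_+^\C$ and comparing against the compact root system $\Delta(\fk^\C,\ft_+^\C)$ via Lemmas \ref{L-2} and \ref{L-3}.
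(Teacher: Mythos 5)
Your proof is correct and follows essentially the same path as the paper's: both handle $\alpha-\alpha^\theta$ directly via Lemma~\ref{L-2}, and both use Lemma~\ref{L-3} together with the reducedness of $\Delta(\fk^\C,\ft_+^\C)$ to exclude $\alpha+\alpha^\theta$. In fact you spell out the "$+$'' step more carefully than the paper does: the paper only asserts that $\ft_+^\C$ being a Cartan subalgebra of $\fk^\C$ forces $2\alpha^+\notin\Delta^+$, whereas you make explicit the bridge that a hypothetical $\alpha+\alpha^\theta\in\Delta$ would be $\theta$-fixed, hence land in $\Delta_\bullet=\Delta(\fm^\C,\ft_+^\C)\subset\Delta(\fk^\C,\ft_+^\C)$, which then collides with $\alpha^+\in\Delta(\fk^\C,\ft_+^\C)$. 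That is the intended argument, and your filling of the gap is welcome.
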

\begin{proof} We have $\alpha- \alpha^\theta = 2\alpha^-$. By the above $2\alpha^-\not\in\Delta$. Similarly we have $\alpha+\alpha^\theta
=2\alpha^+$. We just saw that $\alpha^+\in \Delta (\fk^\C,\ft_+^\C)$. As $\ft_+^\C$ is a Cartan subalgebra of $\fk^\C$ it follows that $2\alpha^+\not\in\Delta^+$.
\end{proof}
\begin{corollary} Assume OCCC. If $\alpha\in\Delta\setminus \Delta_\bullet=\{\alpha\in\Delta\mid \alpha\not=\alpha^\theta\}$. Then $\|\alpha^+\|
=\|\alpha^-\|$.
\end{corollary}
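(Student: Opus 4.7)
The plan is to extract the equality from the inner product $\langle \alpha,\alpha^\theta\rangle$, which the preceding lemma forces to vanish. The key observation is that the $\pm 1$ eigenspace decomposition of $\theta$ on $\ft_\R^*$ is orthogonal with respect to $\langle\cdot,\cdot\rangle$: since $\theta$ is an isometry and $(\alpha^+)^\theta = \alpha^+$, $(\alpha^-)^\theta = -\alpha^-$, one has $\langle\alpha^+,\alpha^-\rangle = \langle(\alpha^+)^\theta,(\alpha^-)^\theta\rangle = -\langle\alpha^+,\alpha^-\rangle$, so $\langle\alpha^+,\alpha^-\rangle = 0$.

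Second, I would note that $\alpha^\theta = \alpha^+ - \alpha^-$, so that
\[
\langle\alpha,\alpha^\theta\rangle \;=\; \langle \alpha^+ + \alpha^-,\, \alpha^+ - \alpha^-\rangle \;=\; \|\alpha^+\|^2 - \|\alpha^-\|^2,
\]
using the orthogonality just established.

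Third, I would invoke the previous lemma: under OCCC, $\alpha$ and $\alpha^\theta$ are strongly orthogonal in $\Delta$. For roots in a reduced root system, strong orthogonality actually implies usual orthogonality, because by the standard root-string argument, if $\langle\alpha,\alpha^\theta\rangle \neq 0$ then one of $\alpha \pm \alpha^\theta$ would lie in $\Delta$, contradicting strong orthogonality. Hence $\langle\alpha,\alpha^\theta\rangle = 0$, which combined with the computation above yields $\|\alpha^+\|^2 = \|\alpha^-\|^2$, as claimed.

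There is no real obstacle here; the only subtle point is the implication ``strongly orthogonal $\Rightarrow$ orthogonal,'' which some readers might want spelled out, so I would include a one-line justification using the fact that $2\langle\alpha,\beta\rangle/\langle\beta,\beta\rangle \in \Z$ is the length of the $\alpha$-string through $\beta$ and is nonzero precisely when $\alpha \pm \beta$ produces a root.
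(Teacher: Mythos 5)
Your proof is correct and follows essentially the same route as the paper's: both deduce orthogonality $\langle\alpha,\alpha^\theta\rangle=0$ from the preceding lemma's strong orthogonality, and then identify $\langle\alpha,\alpha^\theta\rangle=\|\alpha^+\|^2-\|\alpha^-\|^2$. You merely spell out two steps the paper leaves tacit --- that $\alpha^+\perp\alpha^-$ because $\theta$ is an isometry, and that strong orthogonality implies orthogonality (which, as you suggest, needs $\alpha\neq\pm\alpha^\theta$; the inequality $\alpha\neq-\alpha^\theta$ is Lemma \ref{L-2}, so the side hypothesis is indeed satisfied).
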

\begin{proof} This follows from the last lemma which implies that $\alpha$ and $\alpha^\theta$ are orthogonal or $\langle \alpha ,\alpha^\theta\rangle =\|\alpha^+\|^2-\|\alpha^-\|^2=0$.
\end{proof}

\begin{lemma} Let $\Delta_\sharp=\{\alpha^+\mid \alpha^\theta\not=\alpha\}$ (not counted with multiplicities). Then $\Delta (\fk^\C,\ft^\C_+)=\Delta_\sharp\dot{\bigcup} \Delta_\bullet$.
\end{lemma}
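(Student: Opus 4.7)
The plan is to verify the equality as two inclusions combined with a disjointness argument. First I would check $\Delta_\bullet\cup\Delta_\sharp\subseteq\Delta(\fk^\C,\ft_+^\C)$. If $\alpha\in\Delta_\bullet$, then $\alpha^\theta=\alpha$ forces $\alpha|_\fa=0$, so $\fg^\C_\alpha\subseteq\fz_{\fg^\C}(\fa)=\fm^\C\oplus\fa^\C$; as $\alpha\neq 0$, actually $\fg^\C_\alpha\subseteq\fm^\C\subseteq\fk^\C$, and therefore $\alpha|_{\ft_+^\C}$ is a root of $(\fk^\C,\ft_+^\C)$. The inclusion $\Delta_\sharp\subseteq\Delta(\fk^\C,\ft_+^\C)$ is precisely the content of Lemma \ref{L-3}, just proved.

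For the reverse containment, I would take an arbitrary $\beta\in\Delta(\fk^\C,\ft_+^\C)$, pick a nonzero $Y\in\fk^\C_\beta$, and decompose $Y=\sum_\alpha Y_\alpha$ along the $\ft^\C$-weight spaces in $\fg^\C$. The identity $[H,Y]=\beta(H)Y$ for $H\in\ft_+^\C$ forces every contributing $\alpha$ to satisfy $\alpha|_{\ft_+}=\beta$. For any such $\alpha$ with $Y_\alpha\ne 0$: either $\alpha^\theta=\alpha$, in which case $\beta=\alpha\in\Delta_\bullet$; or $\alpha^\theta\neq\alpha$, in which case $\beta=\alpha^+\in\Delta_\sharp$.

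The substantive content of the lemma lies in the \emph{disjointness} of the union, and this is where I expect the main obstacle. Suppose, toward a contradiction, that $\beta\in\Delta_\bullet\cap\Delta_\sharp$. Then there exist $\alpha'\in\Delta$ with $(\alpha')^\theta=\alpha'$ and $\alpha\in\Delta$ with $\alpha^\theta\neq\alpha$ such that $\alpha|_{\ft_+}=\alpha'=\beta$. Because $\alpha'\in\ft_+^*$ and $\alpha^-\in\fa^*$ are orthogonal under $\langle\cdot,\cdot\rangle$, one computes $\langle\alpha,\alpha'\rangle=\langle\alpha^++\alpha^-,\alpha'\rangle=\|\alpha'\|^2>0$. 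Since $\alpha$ and $\alpha'$ are distinct (they differ on $\fa$ because $\alpha^-\neq 0$), the standard root-system fact yields $\alpha-\alpha'\in\Delta$. But $\alpha-\alpha'=\alpha^-\in\fa^*$ would then be a real root, contradicting the absence of real roots in the fundamental Cartan $\ft$ — the same ingredient that underpins Lemma \ref{L-2} under the OCCC hypothesis. The obstacle is not the length of the argument but in pinpointing the pair $\alpha,\alpha'$ whose difference is forced to be a nonzero real root.
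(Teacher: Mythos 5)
Your proof is correct and follows essentially the same approach as the paper: your two inclusions correspond to the paper's decomposition $\fk^\C = \fm^\C \oplus \bigoplus_{\gamma\in\Sigma^+}\{X_\gamma+\theta(X_\gamma)\mid X_\gamma\in\fg_\gamma^\C\}$ combined with Lemma \ref{L-3}. Where the paper dismisses disjointness as ``clear,'' you supply a genuine argument: $\langle\alpha,\alpha'\rangle=\|\alpha'\|^2>0$ forces $\alpha-\alpha'=\alpha^-$ to be a root, hence a real root, contradicting that $\ft$ is fundamental (the same no-real-roots ingredient underlying Lemma \ref{L-2}); since disjointness is not immediate from the $\fk^\C$-decomposition alone (both summands are $\ft_+^\C$-invariant, so a weight could a priori appear in each), this is a worthwhile addition rather than a change of route.
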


\begin{proof} It is clear that the union is disjoint. Let $\Sigma^+$ be a set of positive roots in $\Sigma$ and, as usual, $\fn=\bigoplus_{\gamma\in\Sigma^+}\fg_\gamma$. Then
\[\fk^\C=\fm^\C\oplus\bigoplus_{\gamma\in\Sigma^+}\{X_\gamma +\theta (X_\gamma)\mid X_\gamma \in \fg_\gamma^\C\}\, .\]
As $\Sigma =\{\alpha |_\fa\mid \alpha^\theta \not=\alpha\}$ the claim follows now using the same argument as in the proof of Lemma \ref{L-3}.

\end{proof}
The following set of simple roots is adapted from \cite {WrI} p. 21-24 with slightly different notation. Let $\ell_+:=\dim \ft_+$, $\ell_2:=\dim \fa$ and $\ell =\ell_++\ell_2=\dim \ft_\R$. We choose a lexicographical ordering in $\ft_\R^*$ with respect to  a basis $H_1,\ldots H_{\ell}$ so that $H_1,\ldots ,H_{\ell_+}$ is a basis for
$i\ft_+$. Let $\Delta^+$ be the corresponding set of positive roots and $\Pi$ the set of simple roots. Then by Lemma \ref{L-2} and \cite {WrI} there exists $\ell_1$ such that the following holds:
\begin{enumerate}
\item $\Pi_\bullet =\{\alpha_1,\ldots ,\alpha_{\ell_1}\}$ is a set of simple roots for $\Delta_\bullet$ (contained in $\Delta_\bullet^+=\Delta_\bullet \cap \Delta^+$). Furthermore $\Pi_\bullet =\{\alpha\in\Pi\mid \alpha^\theta =\alpha\}$.
\item $\ell = \ell_1+2\ell_2$.
\item If $1\le \nu \le \ell_2$ then $\alpha_{\ell_1+\nu}^\theta = \alpha_{\ell_1+\ell_2+\nu}$ and
$\alpha^\theta_{\ell_1+\ell_2+\nu}=\alpha_{\ell_1+\nu}$.
\end{enumerate}

\begin{lemma} $\Pi_c=\{\alpha_1,\ldots ,\alpha_{\ell_1},\alpha_{\ell_1+1}^+,\ldots ,\alpha_{\ell_1+\ell_2}^+\}$ is a
simple system in $\Delta ^+(\fk^\C,\ft^\C_+)$.
\end{lemma}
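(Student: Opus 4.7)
The plan is to verify three conditions for $\Pi_c$: (i) every element lies in $\Delta^+(\fk^\C,\ft_+^\C)$; (ii) every root in $\Delta^+(\fk^\C,\ft_+^\C)$ is a non-negative integer combination of elements of $\Pi_c$; and (iii) $\Pi_c$ is linearly independent. Note that $|\Pi_c|=\ell_1+\ell_2=\ell_+=\dim\ft_+$, which is the correct cardinality for a simple system in $\ft_+^*$.

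For (i), each $\alpha_i$ with $i\le \ell_1$ lies in $\Delta_\bullet=\Delta(\fm^\C,\ft_+^\C)\subseteq \Delta(\fk^\C,\ft_+^\C)$ and is positive by construction. For $\nu\in\{1,\ldots,\ell_2\}$, Lemma \ref{L-3} yields $\alpha_{\ell_1+\nu}^+\in\Delta(\fk^\C,\ft_+^\C)$. Positivity is immediate from the chosen lexicographic ordering on $\ft_\R^*$: the first $\ell_+$ basis vectors $H_1,\ldots,H_{\ell_+}$ span $i\ft_+$, so $\alpha_{\ell_1+\nu}^+(H_i)=\alpha_{\ell_1+\nu}(H_i)$ for $i\le \ell_+$. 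Lemma \ref{L-2} prevents $\alpha^+$ from vanishing, so the first non-zero coordinate of $\alpha_{\ell_1+\nu}^+$ coincides with that of the positive root $\alpha_{\ell_1+\nu}$ and is positive.

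For (ii), I would invoke the decomposition $\Delta(\fk^\C,\ft_+^\C)=\Delta_\bullet\,\dot\cup\,\Delta_\sharp$ from the preceding lemma. A root $\beta\in\Delta_\bullet^+$ is already a non-negative integer combination of $\Pi_\bullet\subseteq\Pi_c$ because $\Pi_\bullet$ is a simple system of $\Delta_\bullet$. For $\beta=\alpha^+\in\Delta_\sharp^+$ with $\alpha\in\Delta^+\setminus\Delta_\bullet$, expand $\alpha=\sum_{j=1}^\ell n_j\alpha_j$ with $n_j\in\mathbb{Z}_{\ge 0}$. The key identity $\alpha_{\ell_1+\ell_2+\nu}^+=\alpha_{\ell_1+\nu}^+$ — immediate from $\alpha_{\ell_1+\ell_2+\nu}=\alpha_{\ell_1+\nu}^\theta$ — collapses the expansion to
\[
\alpha^+ = \sum_{i=1}^{\ell_1} n_i\alpha_i + \sum_{\nu=1}^{\ell_2}\bigl(n_{\ell_1+\nu}+n_{\ell_1+\ell_2+\nu}\bigr)\alpha_{\ell_1+\nu}^+,
\]
which is the desired non-negative integer combination of $\Pi_c$.

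For (iii), a relation $\sum_{i=1}^{\ell_1} c_i\alpha_i + \sum_{\nu=1}^{\ell_2} d_\nu\alpha_{\ell_1+\nu}^+=0$ in $\ft_+^*$ lifts unchanged to an identity in $\ft_\R^*$ via $\alpha_{\ell_1+\nu}^+=\tfrac{1}{2}(\alpha_{\ell_1+\nu}+\alpha_{\ell_1+\ell_2+\nu})$: the $\fa$-components cancel automatically because $\alpha_i|_\fa=0$ for $i\le \ell_1$ and $\alpha_{\ell_1+\nu}|_\fa+\alpha_{\ell_1+\ell_2+\nu}|_\fa=\alpha_{\ell_1+\nu}|_\fa-\alpha_{\ell_1+\nu}|_\fa=0$. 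Linear independence of the simple system $\{\alpha_1,\ldots,\alpha_\ell\}$ of $\Delta$ then forces $c_i=0$ and $d_\nu=0$. The main obstacle is step (ii): correctly folding the $\theta$-paired simple roots $\alpha_{\ell_1+\nu},\alpha_{\ell_1+\ell_2+\nu}$ of $\Delta$ into the single simple root $\alpha_{\ell_1+\nu}^+$ of $\Delta(\fk^\C,\ft_+^\C)$, which is essentially the content of the Satake-diagram construction attached to $(\fg,\theta)$ and uses crucially the ordering conventions from \cite{WrI}.
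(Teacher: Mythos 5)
Your argument is correct. The paper actually states this lemma without proof, implicitly deferring to the $\sigma$-normal-system formalism of Warner \cite{WrI}; the proof you supply is the standard one that that formalism provides. All three checks are sound: positivity of $\alpha_{\ell_1+\nu}^+$ via Lemma \ref{L-2} ($\alpha^+\neq 0$ guarantees the first nonzero coordinate lies in the $\ft_+$-block, where $\alpha^+$ agrees with $\alpha$), the collapse $\alpha^+=\sum_{i\le\ell_1}n_i\alpha_i+\sum_\nu(n_{\ell_1+\nu}+n_{\ell_1+\ell_2+\nu})\alpha_{\ell_1+\nu}^+$ using $\alpha_{\ell_1+\ell_2+\nu}^+=\alpha_{\ell_1+\nu}^+$, and linear independence lifted from that of $\Pi$ using that every element of $\Pi_c$ already vanishes on $\fa$.
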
 
Let $\Psi =\{\mu_1,\ldots ,\mu_{\ell}\}$ denote the set of fundamental weights for $\Pi$. 
\begin{lemma} Let $\Psi_c:=\{\mu_j^+\mid j=1,\ldots ,\ell_1+\ell_2\}$ (where we identify 
$\mu_j$ with $\mu_j^+$ for $j=1,\ldots ,\ell_1$). Then
$\Psi_c$ is the set of fundamental weights corresponding to the simple system $\Pi_c$.
\end{lemma}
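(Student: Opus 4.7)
The plan is to verify directly that the elements $\mu_i^+$ satisfy the defining Cartan integer relations with respect to the simple roots in $\Pi_c$. Each $\mu_i^+$ is $\theta$-invariant by construction, hence vanishes on $\fa$ and restricts to a well-defined functional on $\ft_+^\C$. What must be shown is that for all $i, j \in \{1,\ldots ,\ell_1+\ell_2\}$,
\[
\frac{2\langle \mu_i^+,\beta_j\rangle}{\langle \beta_j,\beta_j\rangle}=\delta_{ij},
\]
where $\beta_j=\alpha_j$ for $j\le \ell_1$ and $\beta_j=\alpha_j^+$ for $\ell_1<j\le \ell_1+\ell_2$.

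The tools I would use are: (a) $\theta$ is an isometry of $\ft_\R^*$, so $\langle \lambda^\theta,\mu\rangle=\langle \lambda,\mu^\theta\rangle$; (b) the prescribed action of $\theta$ on $\Pi$ (namely $\alpha_j^\theta=\alpha_j$ for $j\le \ell_1$ and $\alpha_{\ell_1+\nu}^\theta=\alpha_{\ell_1+\ell_2+\nu}$); (c) the preceding corollary, which via the OCCC-driven strong orthogonality of $\alpha$ and $\alpha^\theta$ gives $\langle\alpha^+,\alpha^+\rangle=\tfrac12\langle\alpha,\alpha\rangle$ for any $\alpha\in\Delta\setminus\Delta_\bullet$; and (d) the defining property $\frac{2\langle \mu_i,\alpha_j\rangle}{\langle \alpha_j,\alpha_j\rangle}=\delta_{ij}$ of the original fundamental weights $\mu_i$.

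I would then split the verification in two cases. For $\beta_j=\alpha_j$ with $j\le \ell_1$, applying (a) and the invariance $\alpha_j^\theta=\alpha_j$ from (b) yields
\[
\langle \mu_i^+,\alpha_j\rangle=\tfrac12\bigl(\langle \mu_i,\alpha_j\rangle+\langle \mu_i,\alpha_j^\theta\rangle\bigr)=\langle \mu_i,\alpha_j\rangle,
\]
so the Cartan integer equals $\delta_{ij}$ by (d) (valid whether $i\le \ell_1$ or $i>\ell_1$). For $\beta_j=\alpha_j^+$ with $j>\ell_1$, expanding and using (a) gives
\[
\langle \mu_i^+,\alpha_j^+\rangle=\tfrac12\bigl(\langle \mu_i,\alpha_j\rangle+\langle \mu_i,\alpha_j^\theta\rangle\bigr);
\]
by (b), $\alpha_j^\theta=\alpha_{j+\ell_2}$ lies outside $\{\alpha_1,\ldots ,\alpha_{\ell_1+\ell_2}\}$, so (d) makes the second term vanish and forces $\langle \mu_i,\alpha_j\rangle=\tfrac12\delta_{ij}\langle\alpha_j,\alpha_j\rangle$. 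Combining with (c), which supplies $\langle \alpha_j^+,\alpha_j^+\rangle=\tfrac12\langle \alpha_j,\alpha_j\rangle$, the ratio collapses to $\delta_{ij}$.

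No serious obstacle appears; the argument is pure bookkeeping in the $\theta$-symmetrized root picture, and all nontrivial input (the OCCC orthogonality of $\alpha$ with $\alpha^\theta$, the $\theta$-symmetry of $\Pi$, and the isometry property of $\theta$) is already available. The only potential pitfall is keeping the normalization $\alpha^+=\tfrac12(\alpha+\alpha^\theta)$ straight and remembering to apply (c) to rescale the length of $\alpha_j^+$ in the denominator, but both are mechanical.
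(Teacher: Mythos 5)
Your proof is correct and follows essentially the same route as the paper's: you verify the Cartan integer conditions $\tfrac{2\langle\mu_i^+,\beta_j\rangle}{\langle\beta_j,\beta_j\rangle}=\delta_{ij}$ by expanding with the isometry property of $\theta$, the $\theta$-stability of $\Pi$ (with $\alpha_j^\theta=\alpha_j$ for $j\le\ell_1$ and $\alpha_{\ell_1+\nu}^\theta=\alpha_{\ell_1+\ell_2+\nu}$), and the OCCC-derived rescaling $\|\alpha^+\|^2=\tfrac12\|\alpha\|^2$. The paper's computation is a compact version of the same bookkeeping, organized by cases on $\nu$ rather than on $j$, but the inputs and the mechanics are identical.
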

\begin{proof} We have to show that
\[\frac{2\langle \mu^+_\nu ,\alpha_\sigma^+\rangle}{\langle \alpha_\sigma^+,
\alpha_\sigma^+\rangle}=\delta_{\nu,\sigma}\, .\]
This is clear for $\nu=1,\ldots ,\ell_1$ as in this case $\mu_\nu=\mu_\nu^+$. Assume now that $\ell_1+1\le \nu \le \ell_1+\ell_2$.
Then for $1\le \sigma \le \ell_1$ we have
\[0=\langle \mu_\nu , \alpha_\sigma \rangle=\langle \mu_\nu^+,\alpha_\sigma\rangle\, .\]
Assume  $\ell_1+1\le \sigma \le \ell_1+\ell_2$ and write $\sigma = \ell_1+j$, $1\le j\le \ell_2$. then
\begin{eqnarray*}
\langle \mu_\nu^+,\alpha_\sigma^+\rangle &=& \langle \mu_\nu^+,\frac{1}{2}(\alpha_\sigma +\alpha_\sigma^{\theta} ) \rangle\\
&=&\frac{1}{2}\langle \mu_\nu ,\alpha_\sigma +\alpha_\sigma^{\theta}\rangle
\end{eqnarray*}
because $\theta $ is an involution. As $\alpha_{\sigma}^\theta = \alpha_{\ell_1+\ell_2+j}$ and
$\|\alpha_\sigma^+\|^2=\frac{1}{2}\|\alpha_\sigma\|^2=\frac{1}{2}\|\alpha_\sigma^\theta\|^2$ we get
\begin{eqnarray*}
\frac{2\langle \mu_\nu^+, \alpha_\sigma^+\rangle}{\|\alpha_\sigma^+\|^2}
&=&\frac{2\langle \mu_\nu , \frac{1}{2}\left(\alpha_\sigma +\alpha_\sigma^\theta \right) \rangle}{\|\alpha_\sigma^+\|^2}\\
&=&\frac{2\langle \mu_\nu, \frac{1}{2}\alpha_\sigma\rangle}{\frac{1}{2}\|\alpha_\sigma\|^2}
 +\frac{2\langle \mu_\nu , \frac{1}{2}\alpha_{\ell_1+\ell_2+j}\rangle}{\frac{1}{2}\|\alpha_{\ell_1+\ell_2+j}\|^2}\\
 &=&\delta_{\nu,\sigma}\, 
 \end{eqnarray*}
\qedhere
\end{proof}

Denote by $\Lambda^+(K)$ the set of highest weights of irreducible representations of $K$ and similarly by $\Lambda^+(G)$ the space of highest weights of
irreducible finite-dimensional representations of $G$. If $\mu\in \Lambda^+(K)$ then we denote the corresponding irreducible representation of
$K$ by $\sigma_\mu$. If $\mu \in \Lambda^+(G)$ then the corresponding irreducible representation of $G$ with highest weight $\mu$ is denoted by 
$\tau_\mu$.

Let $\widetilde{G}$ be the universal covering of $G$ and let $\widetilde{K}$ denote the
analytic subgroup of $\widetilde{G}$ corresponding to the Lie algebra $\fk$. Then 
$\widetilde{K}$ is simply connected and locally isomorphic to $K$. Furthermore,
the center of $\widetilde{G}$, $Z(\widetilde{G})$, is contained in $\widetilde{K}$.

\begin{theorem} Let $\mu=\sum_{j=1}^{\ell_1+\ell_2}k_j\mu^+_j \in \Lambda^+(K)$. Set $\widetilde{\mu}:=
\sum_{j=1}^{\ell_1+\ell_2}k_j \mu_j$. Then $\widetilde{\mu}\in\Lambda^+(\widetilde{G})$,
 descends to be in $\Lambda^+(G)$. Moreover $\sigma_\mu$ is contained in
$\tau_{\widetilde{\mu}}|_K$ with multiplicity one.
\end{theorem}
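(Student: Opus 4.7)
The plan is to verify the three assertions in order, leveraging the description of $\Pi_c$ and $\Psi_c$ just established. For the first, the fundamental-weight characterization gives $\tfrac{2\langle \mu_j, \alpha_i\rangle}{\langle \alpha_i, \alpha_i\rangle} = \delta_{ji}$ for $1 \le i,j \le \ell$, so $\widetilde{\mu} = \sum_{j=1}^{\ell_1+\ell_2} k_j \mu_j$ pairs to the nonnegative integer $k_j$ against $\alpha_j$ when $j \le \ell_1+\ell_2$ and to $0$ against $\alpha_j$ when $j > \ell_1+\ell_2$; thus $\widetilde{\mu}\in\Lambda^+(\widetilde{G})$.

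For descent to $G$, I would exploit that $\widetilde{G}$ being simply connected semisimple forces $Z(\widetilde{G})\subseteq \widetilde{K}$, and centrality together with Lemma~5.1 then places $Z(\widetilde{G})$ inside the maximal torus $\widetilde{T}_+:=\exp(\ft_+)$ of $\widetilde{K}$. Consequently $\ker(\widetilde{G}\to G)=\ker(\widetilde{K}\to K)$ sits in $\widetilde{T}_+$. The construction gives $\mu_j^+=\mu_j|_{\ft_+}$, hence $\widetilde{\mu}|_{\ft_+}=\mu$; since $\mu$ exponentiates to a character of $K$, the character $e^{\widetilde{\mu}}$ restricts trivially to $\ker(\widetilde{G}\to G)$, and $\tau_{\widetilde{\mu}}$ factors through $G$.

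For the multiplicity-one containment, let $V=V_{\widetilde{\mu}}$ carry a $\widetilde{G}$-highest weight vector $v$. I will first verify that $v$ is already a $\widetilde{K}$-highest weight vector of weight $\mu$. Its $\ft_+$-weight is $\widetilde{\mu}|_{\ft_+}=\mu$. For $\beta\in\Delta^+(\fk^\C,\ft_+^\C)$, writing $\beta=\alpha^+$ with $\alpha\in\Delta$, the lex ordering with $H_1,\dots,H_{\ell_+}\in i\ft_+$ first, combined with $\theta|_{\ft_+}=\id$, forces both $\alpha$ and $\alpha^\theta$ into $\Delta^+$ (their common $\ft_+$-restriction equals the positive $\beta$). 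Hence $\fk^\C_\beta\subseteq \fg^\C_\alpha\oplus \fg^\C_{\alpha^\theta}$ annihilates $v$.

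For the uniqueness, any $\ft$-weight of $V$ has the shape $\widetilde{\mu}-\sum_{i=1}^{\ell} n_i\alpha_i$ with $n_i\in\Z_{\ge 0}$; using $\alpha_{\ell_1+\nu}^\theta=\alpha_{\ell_1+\ell_2+\nu}$, its restriction to $\ft_+$ reads
\[
\mu \;-\; \sum_{i=1}^{\ell_1} n_i\alpha_i^+ \;-\; \sum_{\nu=1}^{\ell_2}\bigl(n_{\ell_1+\nu}+n_{\ell_1+\ell_2+\nu}\bigr)\alpha_{\ell_1+\nu}^+.
\]
Since $\Pi_c=\{\alpha_1^+,\dots,\alpha_{\ell_1+\ell_2}^+\}$ is linearly independent, equality with $\mu$ forces $n_i=0$ for $i\le\ell_1$ and $n_{\ell_1+\nu}+n_{\ell_1+\ell_2+\nu}=0$, and nonnegativity gives $n_i=0$ for all $i$. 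Thus the $\mu$-weight space of $V$ under $\widetilde{K}$ is one-dimensional and spanned by $v$, yielding multiplicity one. The principal obstacle is this final weight-counting step: it essentially uses the paired simple-root structure $\alpha_{\ell_1+\nu}^\theta=\alpha_{\ell_1+\ell_2+\nu}$ together with dominance $n_i\ge 0$; absent nonnegativity, cancellations among paired roots could a priori produce additional $K$-highest weight vectors of weight $\mu$.
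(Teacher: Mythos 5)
Your proof is correct and follows the same two pillars as the paper's: descend by showing the central character is trivial on $\widetilde{Z}=\ker(\widetilde{G}\to G)$, and obtain multiplicity one by showing that $\widetilde{\mu}$ is the unique $\ft$-weight of $V_{\widetilde{\mu}}$ whose restriction to $\ft_+$ equals $\mu$. The one point of genuine divergence is in the descent step. The paper first records that $\widetilde{\sigma}_\mu$ sits inside $\widetilde{\tau}_{\widetilde{\mu}}|_{\widetilde{K}}$, observes $\widetilde{\sigma}_\mu|_{\widetilde{Z}}=\id$ because $\mu\in\Lambda^+(K)$, and then uses Schur's lemma (central + irreducible) to promote this to $\widetilde{\tau}_{\widetilde{\mu}}|_{\widetilde{Z}}=\id$. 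You instead place $\widetilde{Z}$ directly inside $\widetilde{T}_+=\exp\ft_+$ and read off the central character as $e^{\widetilde{\mu}}|_{\ft_+}=e^\mu$, which descends to $T_+\subset K$. Both work; the paper's version is self-contained (it never needs the slightly delicate fact that the center of the possibly non-compact $\widetilde{K}$ lies in a maximal torus of $\fk$), whereas yours bypasses any reference to $\widetilde{\sigma}_\mu$ until the very end and makes the central character explicit. On multiplicity one, you do essentially the paper's weight count, but spell out the computation with the paired simple roots $\alpha_{\ell_1+\nu}^\theta=\alpha_{\ell_1+\ell_2+\nu}$ and linear independence of $\Pi_c$, rather than appealing to the positivity of the $\alpha^+$ directly; you also supply the (omitted in the paper) verification that the $\widetilde{G}$-highest weight vector is already a $\widetilde{K}$-highest weight vector of weight $\mu$, using the lexicographic ordering putting $\ft_+$ first together with the absence of real roots. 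That is a nice explicit detail and makes the ``Clearly $\widetilde{\sigma}_\mu$ is contained in $\widetilde{\tau}_{\widetilde{\mu}}|_{\widetilde{K}}$'' step of the paper fully transparent.
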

\begin{proof} It is clear that $\widetilde{\mu} \in\Lambda^+(\widetilde{G})$ and $\mu\in\Lambda^+(\widetilde{K})$.
Denote by 
$\widetilde{\tau}_{\widetilde{\mu}}$ respectively
$\widetilde{\sigma}_\mu$ the corresponding representation of $\widetilde{G}$, respectively
$\widetilde{K}$. Clearly $\widetilde{\sigma}_\mu$ is contained in
$\widetilde{\tau}_{\widetilde{\mu}}|_{\widetilde{K}}$. Let $\widetilde{Z}$ be the kernel of the canonical projection
$\widetilde{G}\to G$. Then $\widetilde{Z}\subset \widetilde{K}$ and $K \simeq \widetilde{K}/\widetilde{Z}$.
 Since $\mu\in \Lambda^+(K)$ it follows that $\widetilde{\sigma}_\mu|_{\widetilde{Z}}=\id$.
As $\widetilde{Z}$ is central in $\widetilde{G}$ and $\widetilde{\tau}_{\widetilde{\mu}}$ is
irreducible one has $\widetilde{\tau}_{\widetilde{\mu}}|_{\widetilde{Z}}$ is a scalar.
But $\widetilde{\sigma}_\mu$ is contained in $\widetilde{\tau}_{\widetilde{\mu}}|_{\widetilde{K}}$, it follows that
$\widetilde{\tau}_{\widetilde{\mu}}|_{\widetilde{Z}}=\id$. Hence $\widetilde{\tau}_{\widetilde{\mu}}$ defines
a representation of $G$ and $\widetilde{\mu}\in \Lambda^+(G)$. The multiplicity one assertion is
clear because there is no way to write $\mu$ as a non-trivial linear combination $(\mu,0)-\sum
n_\alpha (\alpha^+,\alpha^-)|_{\ft_+}$ of positive roots $(\alpha^+,\alpha^-)$ and $n_\alpha \ge 0$
(and at least one $\not= 0$). The rest is now obvious. 
\end{proof}

\subsection{The special cases}\hfill

We turn to the third type in Table 5. The technique is a variation of $\sigma$-systems from
the previous subsection. Here we  use some results from \cite {Kn} on Vogan diagrams. The 
procedure parallels that followed in the OCCC case. One begins with $\ft=\ft_+\oplus \fa$ a
 fundamental Cartan subalgebra of $\fg$ but here not a maximal split Cartan. Hence again there 
are no real roots. Of course $\ft$ determines a parabolic subalgebra which will play no direct role. 
We have $\Delta = \Delta (\fg^\C,\ft^\C)$ the set of roots of $\ft^\C$ in $\fg^\C$, $W$ the Weyl group 
of $\Delta$; let $ \Delta (\fk^\C,\ft^\C_+)$ be the set of roots of $\ft^\C_+$ in $\fk^\C$ and $W_K$ its 
Weyl group. We choose a lexicographical ordering in $\ft_\R^*$ with respect to  a basis $H_1,\ldots H_{\ell}$ 
so that $H_1,\ldots ,H_{\ell_+}$ is a basis for
$i\ft_+$. Let $\Delta^+$ be the corresponding set of positive roots and $\Pi = \{\alpha_1,\cdots, \alpha_l\}$ 
the set of simple roots. Denote by $\Psi =\{\mu_1,\ldots ,\mu_{\ell}\}$ the set of fundamental weights for $\Pi$. 
As before, for $\lambda \in \ft_{\R}^*$ let
$\lambda^\theta := \lambda \circ \theta. $ Then we have the restriction to 
$\ft_+, \,\lambda^+:= \frac{1}{2}\left(\lambda +\lambda^\theta\right),$ and the 
restriction to $\fa$,  $\lambda^-:=\frac{1}{2}\left(\lambda -\lambda^\theta\right).$  A different
but important feature arises here in that imaginary roots can be compact or noncompact. 
Thus we must examine $\Sigma^+$, the restrictions of $\Delta^+$ to $\ft_+$. Also we 
make a choice of simple roots for $ \Delta (\fk^\C,\ft^\C_+)$ compatible with $\Delta^+$. 
To us it seemed easiest to continue with the remaining details in each case separately.

\begin{example}\label{ex:311}
We start with $\fg = \so (5,5)$ and $\fk = \sp(2) \times \sp(2) = \so (5)\times\so (5)$. Using standard notation and as presented in \cite {Kn} p. 359 we have $\Pi = \{ \alpha_1 = e_1 - e_2, \alpha_2 = e_2 -e_4, \alpha_3 = e_4 - e_5, \alpha_4 = e_5 - e_3, \alpha_5 = e_5 +e_3\}$, where $\ft_+ =<e_1, e_2, e_4, e_5>$ and $\fa = <e_3>$. Clearly $\theta:\Pi\to \Pi$ interchanges $\alpha_4$ and $\alpha_5$, so relative to the involution $\theta$ we have a normal $\sigma$-system with a $\sigma$ order for which $\Pi$ is a $\sigma$-fundamental system.  A computation using the Cartan on \cite[p. 359]{Kn}
 determines the set of restrictions, $\Sigma^+$, of $\Delta^+$ to $\ft_+$ which, from \cite {WrI}, is a (non-reduced) root system and contains the positive roots of the Levi subalgebra $\so (4,4) \oplus \fa$. We let $W_{\Sigma^+}$ be its 
 Weyl group. Now set $W_{\theta} = \{ w\in W \vert w\circ \theta = \theta \circ w\}$. Then $W_\theta$ induces 
 a map on $\ft_+$ and, from \cite {WrI} p. 24, as there are no real roots we have $W_\theta\vert_{\ft_+} = W_{\Sigma^+}$. Finally with regard to Weyl groups (following \cite{Han} p. 1016 and others) we will take a distinguished set 
 of representatives for $W_\theta/W_K$, viz. let $D_{\ft_+}^+$ be the positive Weyl chamber for $ \Delta^+ (\fk^\C,\ft^\C_+)$ and let $D_{\fg}^+ $ be the projection of the positive chamber for $\Delta^+$ in $\ft (=\ft_+\oplus \fa)$ to $\ft_+$. 
 Then set $W^1 = \{ w\in W_\theta\vert w\vert_{\ft_+} (D_{\fg}^+ )\subset D_{\ft_+}^+\}$. Then $W^1 $ gives the required coset representatives.

Yet another computation is necessary to obtain $ \Delta ^+(\fk^\C,\ft^\C_+) = \{e_1, \alpha_1^+, \alpha_2^+ + \alpha_3^+ +\alpha_4^+, \alpha_1^+ +  2\alpha_2^+ + 2\alpha_3^+ +2\alpha_4^+\} \cup \{e_4, \alpha_3^+, \alpha_4^+, \alpha_3^+ +2\alpha_4^+\}$,  with compatible basis of simple roots $\{\alpha_1^+ = e_1 - e_2, \alpha_2^+ + \alpha_3^+ +\alpha_4^+ = e_2\} \cup\{\alpha_3^+ = e_4 - e_5, \alpha_4^+ = e_5\}.$ As for the fundamental weights for $\fg$, one gets

\[
\mu_1 = e_1, \quad \mu_2 = e_1 + e_2, 
\quad \mu_3 = e_1+e_2+e_4, \]
\[\mu_4 = \frac{e_1 + e_2-e_3+e_4+e_5}{2},\quad
\mu_5 = \frac{e_1 + e_2+e_3+e_4+e_5}{2};
\]
 while for $\fk$, 
$$\mu_1^+ = e_1 = \mu_1, \mu_2^+ = e_1 +e_2 = \mu_2,\mu_3^+ =  e_4, \mu_4^+ = e_4+e_5 .$$

In terms of the $e_i$, for a highest weight $\widetilde{\mu}$ we have $\widetilde{\mu} =\sum_1^5 m_i\mu_i = (m_1+m_2+m_3+\frac {m_4+m_5}{2}) e_1 + (m_2+m_3+\frac {m_4+m_5}{2})e_2 + (\frac{m_4-m_5}{2}) e_3 +
(m_3+\frac {m_4+m_5}{2}) e_4 + (\frac {m_4+m_5}{2})e_5$.  So, similar to the procedure in the Theorem above, to obtain $\widetilde{\mu}$ as a natural lift from $\ft^+ $ we take $m_4  = m_5$ giving $\widetilde{\mu}^+  = \widetilde{\mu} = M_1e_1+ M_2 e_2 + M_4e_4 + M_5 e_5$ with $M_1\ge M_2\ge M_4 \ge M_5 \ge 0$. 
Now take a candidate highest weight $\mu = \sum_1^4 n_i\mu_i^+$ of $\fk$ to lift to $\widetilde{\mu}$. In terms of the $e_i$ we have $\mu = (n_1 + n_2) e_1 + n_2e_2 + (n_3 + n_4) e_4 + n_4 e_5 = N_1e_1 +N_2 e_2 +N_4e_4 +N_5 e_5$ and $N_1\ge N_2\ge 0, N_4\ge N_5\ge 0$.  Clearly when $N_2 = m_2 + N_4$, i.e. $N_2 \ge N_4$, we have a $\mu$ to lift to $\widetilde{\mu}$ . However this determines a chamber in $\ft^+$ for the action of $W^1$. Now 
$\fg = \so (5,5)$ is type $D_5$ so the Weyl group contains all permutations of the $e_i$. We summarize in Table 1
Case \ref{ex:311} various possibilities for the chamber and an element of $W^1 $ that maps the chamber to the original one. We use the abbreviation $i \longleftrightarrow N_i$ and $e_i -e_j \longleftrightarrow s_{e_i -e_j}\in  W^1 $.

 \begin{table}[h]
\center
\begin{tabular}{|c|c|c|}
\hline
$\mu$ & $w$& orbit\\
\hline
$1\ge 2\ge 4\ge 5$ & id& $1\, 2\, 4\, 5$\\
$1\ge 4\ge 2\ge 5$ & $e_2 -e_4$& $1\, 2\, 4\, 5 \to 1\, 4\, 2\, 5$\\
$1\ge 4\ge 5\ge 2$& $e_2-e_4 \circ e_2 -e_5$&$1\, 2\, 4\, 5 \to 1\, 5\, 4\, 2 \to 1\, 4\, 5\, 2$\\
$4\ge 1\ge2\ge 5$& $e_2 -e_4 \circ e_1 -e_4$ & $1\, 2\, 4\, 5\to 4\, 2\, 1\, 5 \to 4\, 1\, 2\, 5$\\
$4\ge 1\ge 5\ge 2$& $e_2 - e_4 \circ e_2 - e_5 \circ e_1 - e_4$& $1\, 2\, 4\, 5\to 4\, 2\, 1\, 5\, \to 4\, 5\, 1\, 2\to 4\, 1\, 5\, 2$\\
$4\ge 5\ge 1\ge 2$& $e_2 - e_5 \circ e_1 -e_4$& $1\, 2\, 4\, 5\to 4\, 2\, 1\, 5\to 4\, 5\, 1\, 2$\\
\hline
\end{tabular}
 \medskip
 
\caption{Case \ref{ex:311}} 
\end{table}

 So given $\mu\in\Lambda^+(K)$ one finds it in the first column, applies $w^{-1}$ to it obtaining a highest weight of the form $1\ge 2\ge 4\ge 5$ which can be lifted to a natural $\widetilde{\mu} \in\Lambda^+(\widetilde{G})$. It is clear the $w^{-1} $ belongs to $W^1$ as it takes a chamber of dominant $K$-weights to another. The result then follows from the multiplicity one Theorem in \cite{Han} which says the $K$-type $w(\widetilde{\mu})\vert_ {\ft^+}$ occurs with multiplicity 1 in $V_{\widetilde{\mu}}$.
 
 An alternative approach to the existence of the $K$-submodule is to use the generalization of the PRV conjecture (\cite {MPR}), but this does not yet give multiplicity 1.
 \end{example}
 
 \begin{example}\label{ex:412}
Next we consider $\fg = \fe_{6(6)}$ and $\fk = \sp (4)$. We shall use the notation of \cite{Bo} so that we have a basis 
$\{ \alpha_1,\alpha_2,\alpha_3,\alpha _4,\alpha_5,\alpha_6\}$ for $ \Delta^+ (\fg^\C,\ft^\C)$. We use Table C p. 532 in \cite{Kn} for a compatible basis of the simple roots for  $\fk = \sp (4)\subset \fe_{6(6)}$. In particular, 
node 2 is black, and under $\theta$, nodes $3\leftrightsquigarrow 5$ and $1\leftrightsquigarrow 6$. 
This suggests the following basis for $ \Delta^+ (\fk^\C,\ft^\C_+)$: $\{ \gamma_1 = \alpha_2 + \alpha_4 + \frac{\alpha_3 + \alpha_5}{2}, \gamma_2 = \frac {\alpha_1 + \alpha_6}{2}, \gamma_3 = \frac{\alpha_3 + \alpha_5}{2}, \gamma_4 = \alpha_4 \}$. Note that we use $\gamma$ because these are not always the projections to $\ft^+$, e.g. $\gamma_2 \ne \alpha_2^+$.

From \cite {Bo} one computes that $\langle \alpha_i,\alpha_i\rangle = 2$, and since the fundamental weights satisfy $2\frac {\langle \mu_i,\alpha_j\rangle}{\langle \alpha_j,\alpha_j\rangle} = \delta_{i,j}$ we have that the fundamental weights $\mu_i$ are the dual basis to the simple roots $\alpha_i$. Similarly one obtains that $1 = \langle \gamma_1,\gamma_1\rangle = \langle \gamma_2,\gamma_2\rangle = \langle \gamma_3,\gamma_3\rangle$ while $\langle \gamma_4,\gamma_4\rangle = 2$. Then for the fundamental weights of $ \Delta^+ (\fk^\C,\ft^\C_+)$ we can take $\omega_1 = \frac{\mu_2}{2}, \omega_2  = \frac {\mu_1 + \mu_6}{2}, \omega_3 = \frac{\mu_3 + \mu_5 - \mu_2}{2}, \omega_4  = \mu_4 - \mu_2$.

Let $\mu \in \Lambda^+(K)$. Then $\mu = \sum_1^4 n_i \omega_i$ with $n_i\ge 0$ and integers. In terms of the $\mu_i$ we have 
\begin{align}
\mu &= n_1 \frac{\mu_2}{2} + n_2 \frac{\mu_1 +\mu_6}{2} + n_3 \frac{\mu_3 +\mu_5 -\mu_2}{2} + n_4(\mu_4 - \mu_2)\\
\mu &= n_2\frac {\mu_1 + \mu_6}{2} + n_3 \frac{\mu_3 +\mu_5}{2} + n_4 \mu_4 + (\frac{n_1 - n_3}{2} - n_4) \mu_2
\notag\end{align}

Here we must make the assumption that $n_1 - n_3$ is an even integer. Then, as before, we are left with a few cases which will be handled using the Weyl group, i.e. $W^1$. We begin with the case $n_1 -n_3 -2n_4 \ge 0$. Here we lift $\mu$ to $\widetilde{\mu} = n_2 \mu_1 + n_3 \mu_3 + n_4 \mu_4 +(\frac{n_1 - n_3}{2} - n_4) \mu_2$. Then $\widetilde{\mu}^+ = \mu$ so we have a valid lift. In Table 2  Case \ref{ex:412}, similar to that above, the first column contains the various cases for $\mu$, the second the sequence of roots whose reflections give $w$, and the third the lift to $\Lambda^+(\widetilde{G})$ to which you apply $w^{-1}$ and the restriction gives $\mu$.

 \begin{table}[!h]
 \center
 \scriptsize{
\begin{tabular}{|c|c|c|}
\hline
$\mu$ & $w$& Lift $\widetilde{\mu}$\\
\hline
$n_1 -n_3 -2n_4 \ge 0$ & id& $n_2 \mu_1 + n_3 \mu_3 + n_4 \mu_4 +(\frac{n_1 - n_3}{2} - n_4) \mu_2 $\\
\hline
$n_1 -n_3 -2n_4 < 0, n_1 -n_3\ge 0$ & $\alpha_2$& $n_2 \mu_1 + n_3 \mu_3 + \frac{n_1 - n_3}{2}  \mu_4 -(\frac{n_1 - n_3}{2} - n_4) \mu_2 $\\
\hline
$n_1 -n_3 -2n_4 < 0, n_1 -n_3 < 0$& $\alpha_5\circ\alpha_4\circ\alpha_6\circ\alpha_5\circ\alpha_4\circ\alpha_2$&$ n_2 \mu_1 +(n_3 +\frac{n_1 -n_3}{2} -n_4)\mu_3$\\
$n_3\ge n_4-\frac{n_1-n_3}{2} $& $$& $+n_4 \mu_5 +(-\frac{(n_1 - n_3)}{2})\mu_6 $\\
\hline
$n_1 -n_3 -2n_4 < 0, n_1 -n_3 < 0$& $\alpha_5\circ\alpha_4\circ\alpha_6\circ\alpha_5\circ\alpha_4\circ\alpha_2$& $ [n_2 + n_3+(\frac{n_1 - n_3}{2} - n_4)]\mu_1 + (\frac{n_1 + n_3}{2})\mu_5$\\
$n_3<n_4-\frac{n_1-n_3}{2} $& $\alpha_2\circ\alpha_4\circ\alpha_3\circ$& $-(n_3 + \frac{n_1 -n_3}{2} -n_4)\mu_2 + (-\frac{n_1 - n_3}{2} )\mu_6$\\
$n_2 + n_3 \ge  n_4- \frac{n_1 -n_3}{2}$&$$&$$\\
\hline
$n_1 -n_3 -2n_4 < 0, n_1 -n_3 < 0$& $\alpha_5\circ\alpha_4\circ\alpha_6\circ\alpha_5\circ\alpha_4\circ\alpha_2$& $n_2\mu_2 - [n_2 + n_3 + (\frac{n_1 -n_3}{2} -n_4)]\mu_4$\\
$n_3<n_4-\frac{n_1-n_3}{2} $& $\alpha_2\circ\alpha_4\circ\alpha_3\circ$& $[n_1 +n_2+n_3 -n_4]\mu_5 + (-\frac{n_1 - n_3}{2} )\mu_6$\\
$n_2 + n_3  <n_4-\frac{n_1-n_3}{2} $&$\alpha_4\circ\alpha_3\circ\alpha_1\circ$&$$\\
$\frac{n_1+n_3}{2}+ n_2+n_3 \ge n_4 -\frac{n_1-n_3}{2}$ &$ $ & $ $\\
\hline
$n_1 -n_3 -2n_4 < 0, n_1 -n_3 < 0$& $\alpha_5\circ\alpha_4\circ\alpha_6\circ\alpha_5\circ\alpha_4\circ\alpha_2$& $n_2\mu_2 + \frac{n_1 + n_3}{2} \mu_4$\\
$n_3<n_4-\frac{n_1-n_3}{2} $& $\alpha_2\circ\alpha_4\circ\alpha_3\circ$& $  -[\frac{n_1+n_3}{2}+ n_2+n_3 - n_4 +\frac{n_1-n_3}{2} ]\mu_5 $\\
$n_2 + n_3  <n_4-\frac{n_1-n_3}{2}$&$\alpha_4\circ\alpha_3\circ\alpha_1\circ$&$+ [n_2 + 2n_3 - n_4 +\frac{n_1-n_3}{2} ]\mu_6 $\\
$\frac{n_1+n_3}{2}+ n_2+n_3 < n_4 -\frac{n_1-n_3}{2} $ &$  \alpha_5\circ$ & $ $\\
$n_2 + 2 n_3 -n_4 +\frac{n_1-n_3}{2}\ge 0$&$$&$$\\
 \hline
 $n_1 -n_3 -2n_4 < 0, n_1 -n_3 < 0$& $\alpha_5\circ\alpha_4\circ\alpha_6\circ\alpha_5\circ\alpha_4\circ\alpha_2$& $n_2\mu_2 + \frac{n_1 + n_3}{2} \mu_4$\\
$n_3<n_4-\frac{n_1-n_3}{2} $& $\alpha_2\circ\alpha_4\circ\alpha_3\circ$& $  -\frac{n_1 - n_3}{2} \mu_5  $\\
$n_2 + n_3  <n_4-\frac{n_1-n_3}{2}$&$\alpha_4\circ\alpha_3\circ\alpha_1\circ$&$- [n_2 + 2n_3 - n_4 +\frac{n_1-n_3}{2} ]\mu_6 $\\
$\frac{n_1+n_3}{2}+ n_2+n_3 < n_4 -\frac{n_1-n_3}{2} $ &$  \alpha_6\circ\alpha_5\circ$ & $ $\\
$n_2 + 2 n_3 -n_4 +\frac{n_1-n_3}{2}< 0$&$$&$$\\
\hline
 
\end{tabular}
}
\medskip

\caption{Case \ref{ex:412}}
\end{table}

So here, given $\mu = \sum_1^4 n_i \omega_i \in\Lambda^+(\widetilde{K})$ ($n_1 - n_3$ an even integer) one finds it in the first column, applies $w^{-1}$ to it obtaining a highest weight  $\widetilde{\mu} \in\Lambda^+(\widetilde{G})$. It is clear the $w^{-1} $ belongs to $W^1$ as it takes a chamber of dominant $K$-weights to another. The result then follows from the multiplicity one Theorem in \cite{Han} which says the $K$-type $w^{-1} (\widetilde{\mu})\vert_ {\ft^+}$ occurs with multiplicity 1 in $V_{\widetilde{\mu}}$.
Life would be easier if one knew more about the action of $W^1$ on the chambers $D_{\fg}^+ $; unfortunately, we were unable to obtain the result we needed which necessitated the lengthy computations. These computations were facilitated by having the expressions of the simple roots of $ \fe_{6(6)}$ expressed in terms of the fundamental weights.
\end{example}

\begin{example}
The next case $\fg = \so(1,n-1)$ and $\fk = \so (n - 1)$ is elementary and surely in several places in the literature. Assume that $n-1 \ge 3$ to avoid the Abelian case. Base extend the Lie algebras to $\mathbb C$. The fundamental representations of $\fk$ are either exterior powers of the standard representation or spin. All these are known to occur with multiplicity one in the similar representation of $\fg$. Then define a length function on highest weights in the usual way: $l(\mu) = l(\sum_1^l n_i\omega_i) = \sum_1^ln_i$. Induction and using Cartan composition provides a natural lift.

One can be more precise using standard material on highest weights and branching, e.g. as in \cite{GoWa} p. 351. Say relative to a suitable Cartan subalgebra the highest weight of $\fg^\C$ is given by a decreasing sequence $\Lambda_i$ while the highest weight of $\fk^\C$ is given by a similar sequence $\mu_i$. Then depending on the parity of $n-1$, i.e. $n-1 = 2k$ or $n-1 = 2k-1$, either one takes $\Lambda_i = \mu_i, i<k \text {and} \Lambda_k = \vert \mu_k\vert$, or $\Lambda_i = \mu_i, i<k \text {and} \Lambda_k = 0.$ 

For $\fg = \so(1,2) \cong \sl(2,\R)$ and $\fk = \so (2)$ the procedure is the same as in the previous examples, viz., each irreducible unitary representation of $\so(2)$ occurs as the highest (lowest) weight of an irreducible finite dimensional representation of $\sl(2,\R)$. 
\end{example}

\begin{example}
The remaining special case is $\fg = \so (p,q)$  $\fk = \so (p)\times \so (q)$ and $p,q \ne 2$. If at least one of the factors in $\fk$ has even parity then we have an equal rank situation. Then we have in $W^1$ all reflections generated by noncompact roots, in particular we have transpositions between $e_i,  1 \le i\le p$, and $e_j, p+1\le j\le q$. By means of these we can arrange the highest weights of the factors to be in decreasing order for $\fg$ and thus obtain a lift for any highest weight of $\fk$. If both $p,q$ are odd then we are not equal rank but $\fg$ is still of type $D_l$ whose Weyl group contains enough reflections to accomplish the same goal.

\end{example}

\subsection{The Isometries}\hfill

We turn to the remaining type in Table 5. Here $\fk$ is the Lie algebra of the isometries of a standard representation on a finite dimensional vector space while $\fg$ is the Lie algebra of all automorphisms of the vector space. For the cases at hand we will have no need of the spin representation of $\fk$. It is classical that all other such representations are obtainable from exterior powers of the standard representation together with Cartan composition, all of which have natural lifts to $\fg$.

\section{Extension from $(L_c^\prime)_0$ to $L_c$}

\noindent
In the previous section we considered the extension of representations from $K$ to $(L_c^\prime)_0$. In this
section first we discuss the extension from the connected group
$(L_c^\prime)_0$ to $L^\prime_c$. For that we need more information about
$L_c/(L_c)_0$. Let $P_{min}^0=M_{min}^0A_c^0N^0$ be a minimal parabolic subgroup in $(L_c^\prime)_0$, where
$\fa_c=\fa_c^0\oplus \R H_0$, so that $\fa_c^0$ is maximal abelian in $\fl_c^\prime\cap \fs_c$. Then   $P_{min}=M_{min}A_{c}N_{min}$ is a minimal parabolic subgroup in $G_c$ where
$N_{min}=N^0N_-$, $A=\exp \R H_0$,  $A_{c}=A_c^0A$ and $M_{min}=Z_{K_c}(\fa_c)$.  Note that
$M_{min}$ has the same Lie algebra as $M_{min}^0$ and hence $(M_{min})_0 = (M_{min }^0)_0$.

We now use well known results about the connected components of $M_{min}$ to describe the
connected components of $L_c$. As $\fa_c=\fa_c^0\oplus \R H^0$ where
$\fa_c^0$ is maximal abelian in $\fl_c^\prime$, the roots $\Sigma_{cc}$ can be identified with
$\Sigma (\fl^\prime_c,\fa_c^0)$ via restriction. 

\begin{lemma} We have $L_c^\prime =M_{min}(L_c^\prime)_0$
\end{lemma}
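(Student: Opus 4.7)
My approach is to combine the classical Harish-Chandra decomposition of the full centralizer $L_c = Z_{G_c}(H_0)$ with the inclusion $M_{min} \subset L_c^\prime$, and then reduce to a component-group statement via a Cartan decomposition of $L_c^\prime$.

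The starting point is the standard structural theorem for connected semisimple Lie groups with finite center: the centralizer of any element $H_0 \in \fa_c$ satisfies $L_c = M_{min} (L_c)_0$, with the component group $L_c/(L_c)_0$ represented by a finite subgroup $F \subset M_{min}$ built out of Cartan elements of the form $\exp(\pi i H_\alpha)$ for suitable roots $\alpha \in \Sigma_{cn}$. Next I would verify $M_{min} \subset L_c^\prime$: the Lie algebra $\fm_{min} = \fz_{\fk_c}(\fa_c)$ is contained in $\fk_c \cap \fl_c$ and, being Killing-orthogonal to the split part $\R H_0$ of $Z(\fl_c)$, lies in $[\fl_c,\fl_c] = \fl_c^\prime$; moreover every element of $F$ lies in the analytic Cartan subgroup of $(L_c^\prime)_0$ with Lie algebra $\ft^+ \oplus \fa_c^0$ for an appropriate $\theta_c$-stable compact torus $\ft^+$, hence in $L_c^\prime$. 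This gives $M_{min} \subset L_c^\prime$ and therefore $M_{min}(L_c^\prime)_0 \subset L_c^\prime$.

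For the reverse inclusion, take $g \in L_c^\prime$ and apply the Cartan decomposition for the (possibly disconnected) reductive group $L_c^\prime$: write $g = k \exp(X)$ with $k \in K_c \cap L_c^\prime$ and $X \in \fs_c \cap \fl_c^\prime$. Since $\exp(X) \in (L_c^\prime)_0$, the claim reduces to $K_c \cap L_c^\prime \subseteq M_{min} (K_c \cap (L_c^\prime)_0)$, a statement about the component group of the compact subgroup $K_c \cap L_c^\prime$. To establish this, I would choose a $\theta_c$-stable Cartan subalgebra $\fc = \ft^+ \oplus \fa_c^0$ of $\fl_c^\prime$, observe that every component of $K_c \cap L_c^\prime$ meets the associated Cartan subgroup, and note that the component representatives (again Cartan elements of the form $\exp(\pi i H_\alpha)$) all centralize $\fa_c$, hence lie in $M_{min}$.

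The main obstacle is this last step, namely certifying that the component group of $K_c \cap L_c^\prime$ is generated by elements of $M_{min}$ and not by elements that merely normalize $\fa_c^0$ without centralizing all of $\fa_c$. This is controlled by the relative Weyl group $W(\fl_c^\prime, \fa_c^0) = N_{K_c \cap (L_c^\prime)_0}(\fa_c^0)/Z_{K_c \cap (L_c^\prime)_0}(\fa_c^0)$, which is entirely realized by elements of $K_c \cap (L_c^\prime)_0$ (the classical realization of the little Weyl group inside the identity component), so any normalizing contribution can be absorbed into $(L_c^\prime)_0$, leaving a genuine centralizer of $\fa_c$ as the component representative.
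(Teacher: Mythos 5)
The paper proves this in one line by citing Wallach, \emph{Real Reductive Groups I}, Lemma 2.2.8, which is precisely the statement that for nested parabolic pairs $(P,A)\supset(P_{\min},A_{\min})$ the ${}^0$-part of the Levi of $P$ equals $M_{\min}$ times its identity component; here ${}^0L_c=L_c^\prime$, so the lemma is immediate. Your proposal instead tries to re-derive that structural fact from scratch, and two of its steps are sound: the Cartan decomposition $L_c^\prime=(K_c\cap L_c^\prime)\exp(\fs_c\cap\fl_c^\prime)$ reducing everything to the compact part, and the containment $M_{\min}\subset L_c^\prime$ via Killing-orthogonality of $\fm_{\min}\subset\fk_c$ to $\R H_0\subset\fs_c$. (Note though that if you genuinely take $L_c=M_{\min}(L_c)_0$ as known, as your opening paragraph suggests, the proof is already over: $M_{\min}\subset L_c^\prime$ is compact and $(L_c)_0=(L_c^\prime)_0\exp\R H_0$, and intersecting with $L_c^\prime$ finishes it; everything after that is redundant.)

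The genuine gap is the assertion that ``every component of $K_c\cap L_c^\prime$ meets the associated Cartan subgroup.'' This is false as stated: already in $O(2)$ the nonidentity component misses the maximal torus $SO(2)$, and in general a component of a disconnected compact group meets the \emph{normalizer} of a maximal torus, not the torus itself. The correct argument --- which your last paragraph gestures toward but does not cleanly substitute for the false step --- is the usual conjugation-plus-Weyl-group one: for $g\in K_c\cap L_c^\prime$, conjugate $\Ad(g)\fa_c^0$ back to $\fa_c^0$ by an element of $K_c\cap(L_c^\prime)_0$ (all maximal abelian subspaces of $\fs_c\cap\fl_c^\prime$ are so conjugate), so that after this adjustment $g\in N_{K_c\cap L_c^\prime}(\fa_c^0)$; then absorb the induced element of $W(\fl_c^\prime,\fa_c^0)$ into $K_c\cap(L_c^\prime)_0$, since the little Weyl group of a connected reductive group is realized there; what remains centralizes $\fa_c^0$, hence lies in $Z_{K_c}(\fa_c^0)\cap L_c^\prime=Z_{K_c}(\fa_c)=M_{\min}$, using that everything in $L_c=Z_{G_c}(H_0)$ already centralizes $\R H_0$. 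You need this in place of, not in addition to, the ``meets the Cartan subgroup'' claim, and the auxiliary compact torus $\ft^+$ plays no role once the argument is set up this way.
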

\begin{proof} This follows from \cite[Lem. 2.2.8]{Wa1}.
\end{proof}

Let $F_1:=\exp (i\fa_c)\cap K_c$. We note that
if $f\in F_1$ then
\[f=\eta (f)=\eta (\exp iH)=\exp (- i\dot\eta H)= \exp (-i H)=f^{-1}.\]
Thus $f^2=e$ and $F_1\simeq \Z_2^s$ for some $s$. We remark that were $F_1$ cyclic then the desired extension can be found  in \cite{Kn2} Lemma 14.22.  
Choose generators $f_1,\ldots ,f_u\in F_1$ so that with $F=\prod \{e,f_j\}$ we have
$M_{min}=F(M_{min})_0\simeq F\times( M_{min})_0$, see  \cite[Ch. VII]{He78} for details, in particular Theorem 8.5. But we
will not need the exact form of $F_1$. The following lemma now follows:
\begin{lemma} Let $F$ be as above. Then $L_c^\prime = F(L_c^\prime)_0$. 
\end{lemma}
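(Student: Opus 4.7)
The plan is to chain the two preceding decompositions together, observing that the connected part $(M_{min})_0$ is already absorbed in $(L_c^\prime)_0$.

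First I would invoke the previous lemma to write $L_c^\prime = M_{min}(L_c^\prime)_0$. Next I would use the decomposition $M_{min} = F(M_{min})_0$ coming from the discussion above (and \cite[Ch.~VII, Thm.~8.5]{He78}), so that
\[
L_c^\prime \;=\; F\,(M_{min})_0\,(L_c^\prime)_0.
\]
The crux is therefore to verify that $(M_{min})_0 \subset (L_c^\prime)_0$. But this is built into the setup: by construction $M_{min}^0 A_c^0 N^0$ is a minimal parabolic subgroup of the connected group $(L_c^\prime)_0$, so $M_{min}^0 \subset (L_c^\prime)_0$, and as noted in the paragraph preceding the previous lemma one has $(M_{min})_0 = (M_{min}^0)_0 \subset (L_c^\prime)_0$.

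With this containment, $(M_{min})_0 (L_c^\prime)_0 = (L_c^\prime)_0$, and the displayed equation above collapses to $L_c^\prime = F(L_c^\prime)_0$, as required. There is no real obstacle here; the work has been done in the preceding two lemmas and in the choice of the generators $f_1,\ldots,f_u$ of $F$ inside $F_1 \subset K_c \cap \exp(i\fa_c) \subset M_{min}$. The only thing worth double-checking during write-up is that the generators $f_j$ chosen for $M_{min} = F(M_{min})_0$ indeed lie in $L_c^\prime$ (equivalently, centralize $H_0$), which is automatic since $f_j \in \exp(i\fa_c) \subset Z_{G_c}(\fa_c) \subset Z_{G_c}(H_0) = L_c$, and $L_c^\prime$ contains every element of $L_c$ that normalizes the derived algebra and lies in $K_c$; equivalently one may simply replace $\fa_c$ by $\fa_c^0$ in the definition of $F_1$ since the factor $A = \exp \R H_0$ contributes no torsion to $F_1$.
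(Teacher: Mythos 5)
Your proof is correct and is essentially the argument the paper intends: it states only that the lemma "now follows" from $L_c^\prime = M_{min}(L_c^\prime)_0$ and $M_{min} = F(M_{min})_0$, and your write-up supplies exactly the missing chaining step, the observation that $(M_{min})_0 = (M_{min}^0)_0 \subset (L_c^\prime)_0$, and the (correct) check that $F\subset L_c^\prime$.
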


\begin{lemma} Let $\tilde\mu \in \Lambda^+ ((L_c^\prime)_0 )$ and denote by $(\tau_{\tilde{\mu}},V_{\tilde\mu})$ the
corresponding irreducible representation. Let $f\in F$. Then the representations $\tau_{\tilde{\mu}}$ and
$f\cdot \tau_{\tilde{\mu}} : m\mapsto \tau_{\tilde{\mu}} (fmf)$ are equivalent.
\end{lemma}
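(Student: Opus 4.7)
The plan is to show that $f \cdot \tau_{\tilde{\mu}}$ has the same highest weight as $\tau_{\tilde{\mu}}$ on a well-chosen Cartan subalgebra of $(\fl_c')^{\C}$, and then invoke the theorem of the highest weight for the connected semisimple group $(L_c')_0$.

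First I would rewrite conjugation by $f$ in a way that is intrinsic to $\fl_c'$. Since $f \in F \subseteq F_1 = \exp(i\fa_c) \cap K_c$, write $f = \exp(iH)$ with $H \in \fa_c$. Using the decomposition $\fa_c = \fa_c^0 \oplus \R H_0$, split $H = H' + tH_0$ with $H' \in \fa_c^0$. Because $\fl_c = \fz_{\fg_c}(H_0)$, the element $H_0$ is central in $\fl_c$, so $\exp(itH_0)$ is central in $L_c$ and contributes trivially under conjugation. Thus conjugation by $f$ on $(L_c')_0$ equals conjugation by $\exp(iH')$, and at the Lie algebra level $\mathrm{Ad}(f) = \exp(\mathrm{ad}(iH'))$ with $H' \in \fa_c^0 \subseteq \fl_c'$.

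Next I would choose a $\theta_c$-stable Cartan subalgebra $\fh \subseteq \fl_c'$ whose hyperbolic part is $\fa_c^0$, so $\fh = \fh_k \oplus \fa_c^0$ with $\fh_k \subseteq \fl_c' \cap \fk_c$. Since $\fh^{\C}$ is abelian and $iH' \in \fh^{\C}$, the operator $\mathrm{ad}(iH')$ annihilates $\fh^{\C}$; hence $\mathrm{Ad}(f)$ restricts to the identity on $\fh^{\C}$. On each $\fh^{\C}$-root space in $(\fl_c')^{\C}$ corresponding to a root $\alpha$, $\mathrm{Ad}(f)$ acts by the scalar $e^{\alpha(iH')}$, so for any choice of positive roots $\Delta^+$, $\mathrm{Ad}(f)$ preserves the root space decomposition and the associated triangular decomposition. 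With these preparations, let $v \in V_{\tilde{\mu}}$ be a highest weight vector of weight $\tilde{\mu}$ for the $\C$-linear extension of $\tau_{\tilde{\mu}*}$ relative to $(\fh^{\C}, \Delta^+)$. Since $(f \cdot \tau_{\tilde{\mu}})_{*}(X) = \tau_{\tilde{\mu}*}(\mathrm{Ad}(f) X)$ agrees with $\tau_{\tilde{\mu}*}$ on $\fh^{\C}$ and sends any positive root vector to a scalar multiple of itself, $v$ is still a highest weight vector of weight $\tilde{\mu}$ for $f \cdot \tau_{\tilde{\mu}}$. The theorem of the highest weight for the connected semisimple group $(L_c')_0$ then yields $f \cdot \tau_{\tilde{\mu}} \cong \tau_{\tilde{\mu}}$.

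The main conceptual hurdle is avoiding the circular move of first extending $\tau_{\tilde{\mu}}$ from $(L_c')_0$ to $L_c'$ and using $\tau_{\tilde{\mu}}(f)$ as an intertwiner, since constructing such an extension is precisely the goal of the surrounding section. Splitting off the central factor $\exp(itH_0)$ and exploiting that the remaining exponent $iH'$ lies inside a Cartan subalgebra of $(\fl_c')^{\C}$ circumvents this: it exhibits $\mathrm{Ad}(f)$ as merely rescaling root spaces while fixing the Cartan pointwise, which automatically preserves the highest weight.
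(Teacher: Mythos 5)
Your proof is correct and follows essentially the same strategy as the paper's: fix a Cartan subalgebra $\ft = \ft_+ \oplus \fa_c^0$ of $\fl_c'$, observe that $\Ad(f)$ fixes it pointwise, and conclude that the (highest) weights of $\tau_{\tilde\mu}$ and $f\cdot\tau_{\tilde\mu}$ agree. The paper simply writes $\tau_{\tilde\mu}(\exp \Ad(f)H)=\tau_{\tilde\mu}(\exp H)$ for $H\in\ft$ without spelling out why $\Ad(f)H=H$; your splitting $H_f = H' + tH_0$ with $H' \in \fa_c^0$, using centrality of $H_0$ in $\fl_c$, and then noting $iH'$ lies in the complexified Cartan so that $\ad(iH')$ annihilates it, is exactly the justification the paper leaves implicit. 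Your additional observation that $\Ad(f)$ scales positive root spaces (so the very same vector $v$ remains a highest-weight vector) is a slightly more careful version of the paper's conclusion from equality of weight sets; both finish by the theorem of the highest weight.
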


\begin{proof} Clearly $f\cdot \tau_{\tilde{\mu}}$ is an irreducible representation of $(L_c^\prime)_0$. Let $\ft=\ft_+\oplus \fa_c^0$
be a Cartan subalgebra of $\fl_c^\prime$. Then for $H\in \ft$ we have
\[f\cdot \tau_{\tilde{\mu}} (\exp H )= \tau_{\tilde{\mu}} (f\exp H f)= \tau_{\tilde{\mu}} (\exp \Ad (f)H)=\tau_{\tilde{\mu}} (\exp H) .\]
Thus $f\cdot \tau_{\tilde{\mu}}$ and $\tau_{\tilde{\mu}}$ have exactly the same weights. In particular the highest
weights are the same. Hence $f\cdot \tau_{\tilde{\mu}}\simeq \tau_{\tilde{\mu}}$.
\end{proof}

It follows that for each $f\in F$ there exists  $T_f\in \GL (V_{\tilde\mu})$ such that for all $m\in (L_c^\prime)_0$, 
$T_f \tau_{\tilde{\mu}} (fmf)=\tau_{\tilde{\mu}} (m)T_f$. If $f=e$ we take $T_f=\id$.
Note that $T_f$ is unique up to a scalar $\lambda\in\T$. Let $V_{\tilde\mu} (\tilde\mu)$ be the highest-weight space. Then
$\dim V_{\tilde\mu} (\tilde\mu )=1$. Hence there exists $0\not= v_{\tilde\mu}$ such that $V_{\tilde\mu} (\tilde\mu )=\C v_{\tilde\mu}$.
\begin{lemma}\label{lem:Tf} For $f\in F$ let $T_f$ be as above. Then we can choice $T_f$ such that
\begin{enumerate}
\item $T_f^2=\id$,
\item $T_f (v_{\tilde\mu} )=v_{\tilde\mu}$.
\end{enumerate}
$T_f$ is uniquely determined by (1) and (2).
\end{lemma}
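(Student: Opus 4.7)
The plan is first to observe that $T_f$ preserves the weight decomposition with respect to the Cartan subalgebra $\ft = \ft_+\oplus\fa_c^0$, then to pin down the scalar ambiguity by condition (2), and finally to deduce (1) for free from $f^2=e$ by a Schur argument.

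First I would recall that $T_f\in\GL(V_{\tilde\mu})$ is unique up to a nonzero scalar, and that by construction $f\in F\subset \exp(i\fa_c)\cap K_c$ centralizes $\fa_c$. Since the argument in the preceding lemma shows that $\Ad(f)$ acts trivially on $\ft$, the intertwining identity $T_f \tau_{\tilde\mu}(fmf)=\tau_{\tilde\mu}(m)T_f$ restricted to $m=\exp H$ with $H\in\ft$ gives $T_f \tau_{\tilde\mu}(\exp H) = \tau_{\tilde\mu}(\exp H) T_f$. Hence $T_f$ commutes with the action of $\ft$ and so preserves every weight space. In particular, as $\dim V_{\tilde\mu}(\tilde\mu)=1$, there exists a scalar $c\in\C^\times$ with $T_f v_{\tilde\mu}=c v_{\tilde\mu}$. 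Replacing $T_f$ by $c^{-1}T_f$ yields an intertwiner satisfying (2), and this normalization uses up exactly the remaining scalar freedom, so any $T_f$ satisfying the intertwining relation and condition (2) is unique.

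Next I would verify (1). For any intertwiner $T_f$ normalized by (2) and any $m\in(L_c')_0$ we compute
\[
T_f^2 \,\tau_{\tilde\mu}(m)=T_f\,\tau_{\tilde\mu}(fmf)\,T_f=\tau_{\tilde\mu}(f^2 m f^2)\,T_f^2=\tau_{\tilde\mu}(m)\,T_f^2,
\]
using $f^2=e$ in the last step. Hence $T_f^2$ intertwines $\tau_{\tilde\mu}$ with itself, and by Schur's lemma $T_f^2=\lambda\,\id$ for some $\lambda\in\C^\times$. Evaluating on the highest-weight vector gives $\lambda v_{\tilde\mu}=T_f^2 v_{\tilde\mu}=T_f v_{\tilde\mu}=v_{\tilde\mu}$, so $\lambda=1$ and (1) holds.

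The only subtle point is the first step, ensuring that $T_f$ is forced to preserve the highest-weight line; once one knows that $\Ad(f)|_\ft=\id$, everything else is routine Schur and a one-line group-theoretic calculation using $f^2=e$. No obstacle beyond keeping track of the normalization.
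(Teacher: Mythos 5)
Your proof is correct and follows the same Schur-plus-normalization route as the paper. The one small difference worth noting: the paper first scales $T_f$ by $c_f^{-1/2}$ to force $T_f^2=\id$, and only then by $d_f^{-1}$ (with $d_f^2=1$) to fix $T_f v_{\tilde\mu}=v_{\tilde\mu}$; you instead normalize once so that (2) holds and then observe that $T_f^2=\lambda\,\id$ with $\lambda v_{\tilde\mu}=T_f(T_f v_{\tilde\mu})=v_{\tilde\mu}$, so (1) comes for free. Your ordering is marginally cleaner since it avoids choosing a square root, but the underlying argument is identical.
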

\begin{proof} We have for $m\in (L_c^\prime )_0$ by repeating the definition twice that
\[T_f^2\tau_{\tilde{\mu}} (m)=T_f^2  \tau_{\tilde{\mu}} (f^2mf^2) = \tau_{\tilde{\mu}} (m) T_f^2  .\]
As $\tau_{\tilde{\mu}}$ is irreducible there exists $c_f\in \T$ such that $T_f^2=c_f\id$. (1) now follows 
by replacing $T_f$ by $c_f^{-1/2}T_f$. As $\dim V_{\tilde\mu}(\tilde\mu) =1$ and $T_f$ leaves the weight spaces
invariant, it follows that $T_f|_{V_{\tilde\mu}(\tilde\mu)} $ is scalar, say multiplication by $d_f\not= 0 $. By (1) it follows
that $d_f^2=1$. Hence we can replace $T_f$ by $d_f^{-1}T_f$ to obtain (2) and (1).  If
$T_f$ and $S_f$ satisfy (1) and (2) then $S_f^{-1}=S_f$ and $S_fT_f = c\id$ for some $c\in \C$. But by 
(2) it follows that $S_fT_f ( v_{\tilde\mu})=v_{\tilde\mu}=c v_{\tilde\mu}$. Hence $c=1$.
\end{proof}

From now on we always assume that $T_f$, $\in F$, satisfies (1) and (2).

\begin{lemma} Let $f,g\in F$.  Then $T_{f}T_g=T_g T_f$. 
\end{lemma}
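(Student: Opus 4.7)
The plan is to show that $T_fT_g$ and $T_gT_f$ are both intertwiners between the same pair of irreducible representations of $(L_c^\prime)_0$, and then use the uniqueness part of Lemma~\ref{lem:Tf} (via Schur's lemma and the highest-weight normalization) to conclude that they coincide.

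First, I would exploit the structure of $F$: by construction $F=\prod_{j=1}^u\{e,f_j\}$ is an elementary abelian $2$-group, so $fg=gf$ for all $f,g\in F$. This will be the one place where the abelian-ness of $F$ enters essentially.

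Next, the defining relation $T_f\tau_{\tilde\mu}(fmf)=\tau_{\tilde\mu}(m)T_f$ (valid for all $m\in(L_c^\prime)_0$) says that $T_f$ intertwines the twisted representation $m\mapsto\tau_{\tilde\mu}(fmf)$ with $\tau_{\tilde\mu}$. Composing the intertwining identity for $T_g$ (applied with $n=fmf$ in place of $m$) and then the intertwining identity for $T_f$, one finds
\[
T_fT_g\,\tau_{\tilde\mu}(gfmfg)\;=\;T_f\,\tau_{\tilde\mu}(fmf)\,T_g\;=\;\tau_{\tilde\mu}(m)\,T_fT_g.
\]
Using $gf=fg$ in $F$, the same computation with $f$ and $g$ interchanged gives
\[
T_gT_f\,\tau_{\tilde\mu}(fgmgf)\;=\;\tau_{\tilde\mu}(m)\,T_gT_f,
\]
with the twist $m\mapsto\tau_{\tilde\mu}(fgmgf)=\tau_{\tilde\mu}(gfmfg)$ being identical. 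Hence $T_fT_g$ and $T_gT_f$ are both intertwiners between $\tau_{\tilde\mu}$ and its twist by $fg$.

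Since $\tau_{\tilde\mu}$ is irreducible, Schur's lemma yields $T_fT_g=\lambda\,T_gT_f$ for some $\lambda\in\C^\times$. To pin down $\lambda$, evaluate both sides on the highest-weight vector $v_{\tilde\mu}$: using condition (2) of Lemma~\ref{lem:Tf} twice, $T_fT_g(v_{\tilde\mu})=T_f(v_{\tilde\mu})=v_{\tilde\mu}$, and similarly $T_gT_f(v_{\tilde\mu})=v_{\tilde\mu}$. Therefore $\lambda=1$, which gives $T_fT_g=T_gT_f$. There is no real obstacle here — the only subtlety is remembering that the normalization (2) is what makes $T_f$ genuinely unique (and not just unique up to a scalar), which is exactly what converts Schur's scalar into the identity.
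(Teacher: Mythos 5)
Your proof is correct, and it follows a slightly different route from the paper's. The paper first shows that $S=(T_fT_g)^2$ commutes with $\tau_{\tilde\mu}$ (using $fgfg=f^2g^2=e$), applies Schur plus the highest-weight normalization to conclude $S=\id$, and then peels off $T_f^2=\id$ and $T_g^2=\id$ to land on $T_fT_g=T_gT_f$; it therefore invokes both conditions (1) and (2) of Lemma~\ref{lem:Tf}. You instead observe directly that $T_fT_g$ and $T_gT_f$ are both intertwiners between $\tau_{\tilde\mu}$ and its twist by $fg=gf$, apply Schur to get $T_fT_g=\lambda\,T_gT_f$, and fix $\lambda=1$ by evaluating on $v_{\tilde\mu}$. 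Your variant is marginally more economical in that it uses only condition (2) (the highest-weight normalization) and never needs $T_f^2=T_g^2=\id$; the paper's version produces the intermediate fact $(T_fT_g)^2=\id$ along the way, which is mildly informative but not required for the statement. Both arguments rest on the same essential ingredients: $F$ is abelian, $\tau_{\tilde\mu}$ is irreducible, and the $T_f$ are normalized on the highest-weight line.
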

\begin{proof}  As $fg=gf$ it follows that $fgfg=f^2g^2=e$. As above this implies that
$S=T_fT_gT_fT_g=(T_fT_g)^2$ is an $\tau_{\tilde{\mu}}$-intertwining operators.  Hence there
exists $d\in \C^*$ such that $S=d\,\id$. But $T_f|_{V_{\tilde\mu} (\tilde\mu )}=T_g|_{V_{\tilde\mu} (\tilde\mu )}=1$.  Hence
\[d = S|_{V_\mu (\mu )}=1 .\]
Thus $d=1$ and $S=\id$. As $T_f^2=T_g^2= \id$ it follows, by multiplying $S$ first by
$T_f$ and then by $T_g$ that $T_fT_g=T_gT_f$. Hence, the claim.
\end{proof} 
\begin{corollary} Let $f_1,\ldots , f_u$ be generators for $F$ and let $f=f_1^{i_1}\cdots f_u^{i_u}$, $i_j \in \{0,1\}$.
Then $T_f=T_{f_1}^{i_1}\cdots T_{f_u}^{i_u}$.
\end{corollary}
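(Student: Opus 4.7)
My plan is to apply the uniqueness clause of Lemma \ref{lem:Tf}: set $S := T_{f_1}^{i_1}\cdots T_{f_u}^{i_u}$ and verify that $S$ satisfies the same three defining properties that characterize $T_f$, namely the intertwining identity $S\,\tau_{\tilde\mu}(fmf) = \tau_{\tilde\mu}(m)\,S$ for all $m \in (L_c')_0$, together with $S^2 = \id$ and $S(v_{\tilde\mu}) = v_{\tilde\mu}$. By the uniqueness half of that lemma, this forces $S = T_f$.

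Properties (1) and (2) come for free from the preceding material. The fixing of $v_{\tilde\mu}$ passes from each individual $T_{f_j}$ to any composition, so $S(v_{\tilde\mu}) = v_{\tilde\mu}$. For $S^2$, I invoke the previous lemma which established that the $T_{f_j}$ commute pairwise; together with $T_{f_j}^2 = \id$ this gives $S^2 = \prod_j T_{f_j}^{2i_j} = \id$.

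The main step is the intertwining identity, which I would prove by induction on $k := i_1+\cdots+i_u$. The base case $k=0$ gives $S = \id$ and $f = e$, making the identity trivial. For the inductive step, choose some index $j_0$ with $i_{j_0}=1$, and split $S = T_{f_{j_0}}\,S'$ and $f = f_{j_0}f'$ accordingly, where $S'$ is the product of the remaining factors. Since $F$ is abelian, $fmf = f'(f_{j_0}mf_{j_0})f'$; applying the inductive hypothesis to $S'$ and $f'$ with the element $f_{j_0}mf_{j_0}$ in place of $m$, and then invoking the defining intertwining property of $T_{f_{j_0}}$, yields
\[
S\,\tau_{\tilde\mu}(fmf) \;=\; T_{f_{j_0}}\,\tau_{\tilde\mu}(f_{j_0}mf_{j_0})\,S' \;=\; \tau_{\tilde\mu}(m)\,T_{f_{j_0}}\,S' \;=\; \tau_{\tilde\mu}(m)\,S,
\]
as required.

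The only delicate point is the reshuffling in the argument of $\tau_{\tilde\mu}$, which is legitimate because the $f_j$ are mutual involutions that commute pairwise in $F$. There is no genuine obstacle here beyond bookkeeping: all the real content—the compatibility of the chosen normalizations and the commutativity of the $T_{f_j}$—is already packaged in the two preceding lemmas, and the corollary is essentially an application of uniqueness.
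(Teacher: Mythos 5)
Your proof is correct and follows essentially the same approach as the paper's: define $S = T_{f_1}^{i_1}\cdots T_{f_u}^{i_u}$, check that it satisfies the intertwining relation and conditions (1) and (2) of Lemma \ref{lem:Tf}, and invoke the uniqueness clause of that lemma. The paper's proof is terser, simply asserting that $S$ satisfies these properties; your write-up supplies the missing verification (the induction on $\sum i_j$ for the intertwining identity, and the appeal to the pairwise commutativity of the $T_{f_j}$ for $S^2 = \id$), but introduces no new ideas.
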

\begin{proof} The operator $S_f=T_{f_1}^{i_1}\cdots T_{f_u}^{i_u}$ satisfies $S\tau_{\tilde{\mu}} (fmf)=
\tau_{\tilde{\mu}} (m)S$ as well as (1) and (2) in Lemma \ref{lem:Tf}. Hence $S_f=T_f$.
\end{proof}

\begin{theorem} Let $F$ be as above, let $\tilde\mu\in \Lambda^+((L_c^\prime)_0)$ and let $T_f$, $f\in F$, as
in Lemma \ref{lem:Tf}. Define
\[\tau_{\tilde\mu}(fm):=T_f\tau_{\tilde{\mu}} (m),\quad f\in F, m\in (L_c^\prime)_0.\]
Then $\tau_{\tilde\mu}$ is an irreducible representation of $L_c^\prime$.
\end{theorem}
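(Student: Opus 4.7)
The plan is to verify three things: that the formula is well-defined on $L_c^\prime$, that it gives a group homomorphism, and that the resulting representation is irreducible. Well-definedness will come from the preceding lemma which gives $L_c^\prime = F(L_c^\prime)_0$ together with the structure of $F$ inherited from $M_{min}$: since $F$ is built from commuting involutions $f_j \in \exp(i\fa_c)\cap K_c$ with $M_{min}\simeq F\times (M_{min})_0$, the map $F\times (L_c^\prime)_0 \to L_c^\prime$, $(f,m)\mapsto fm$, is a bijection, so the value $T_f\tau_{\tilde\mu}(m)$ depends only on the product $fm$. Irreducibility is essentially free: any subspace of $V_{\tilde\mu}$ invariant under $L_c^\prime$ is a fortiori invariant under $(L_c^\prime)_0$, and the latter acts irreducibly by construction, so only $\{0\}$ and $V_{\tilde\mu}$ are possible.

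The main computation is the homomorphism property. For $f_1,f_2\in F$ and $m_1,m_2\in(L_c^\prime)_0$, I will use $f_2^{-1}=f_2$ to write
\[
(f_1m_1)(f_2m_2) = f_1f_2\,(f_2m_1f_2)\,m_2,
\]
where $f_2m_1f_2\in (L_c^\prime)_0$ because $F$ normalizes the connected component. Applying the definition and then $\tau_{\tilde\mu}$ as a homomorphism on $(L_c^\prime)_0$ gives
\[
\tau_{\tilde\mu}\bigl((f_1m_1)(f_2m_2)\bigr) = T_{f_1f_2}\,\tau_{\tilde\mu}(f_2m_1f_2)\,\tau_{\tilde\mu}(m_2).
\]
The defining property of $T_{f_2}$ (applied with $m$ replaced by $f_2m_1f_2$ and using $T_{f_2}^2=\id$) rewrites $\tau_{\tilde\mu}(f_2m_1f_2) = T_{f_2}\tau_{\tilde\mu}(m_1)T_{f_2}$, and the Corollary to the previous lemma gives $T_{f_1f_2}=T_{f_1}T_{f_2}$. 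Substituting and collapsing $T_{f_2}^2=\id$ yields
\[
T_{f_1}T_{f_2}\cdot T_{f_2}\tau_{\tilde\mu}(m_1)T_{f_2}\cdot \tau_{\tilde\mu}(m_2) = T_{f_1}\tau_{\tilde\mu}(m_1)\,T_{f_2}\tau_{\tilde\mu}(m_2) = \tau_{\tilde\mu}(f_1m_1)\,\tau_{\tilde\mu}(f_2m_2),
\]
as desired.

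The step I expect to need the most care is the well-definedness: one must verify that the decomposition $L_c^\prime = F(L_c^\prime)_0$ is actually a (semi)direct product, i.e. $F\cap (L_c^\prime)_0=\{e\}$, so that $f$ and $m$ in the formula $fm$ are uniquely determined. This is the ingredient that lets one freely define $\tau_{\tilde\mu}(fm)$ by its two factors; it follows by tracking the $F$ arising from $M_{min} \simeq F\times (M_{min})_0$ and observing that none of the generators $f_j\in F$ lie in $(L_c^\prime)_0$ (otherwise the isomorphism in the $M_{min}$ decomposition would fail). Once this is in hand, the argument above completes the proof.
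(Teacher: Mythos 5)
Your homomorphism computation is, modulo notation, identical to the paper's: factor $(fm)(gn)=fg\,(gmg)\,n$, apply $T_{fg}=T_fT_g$ from the Corollary and the defining intertwining relation for $T_g$, and recombine. Your added remarks on irreducibility and well-definedness are reasonable points the paper leaves implicit; the one slip is that $F\cap(M_{min})_0=\{e\}$ does not by itself give $F\cap(L_c^\prime)_0=\{e\}$ (since $(M_{min})_0$ is in general a proper subgroup of $(L_c^\prime)_0$), but this does not change the homomorphism argument, which matches the paper's.
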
 

\begin{proof} We need only show that $\tau_{\tilde\mu} : L_c^\prime \to \GL (V_{\tilde\mu})$
is a homomorphism, $\tau_{\tilde\mu}(fmgn) = \tau_{\tilde\mu}(fm)\tau_{\tilde\mu} (gn)$, $f,g\in F$ and $m,n\in (L_c^\prime)_0$.
But we have
\begin{align*} \tau_{\tilde\mu} (fmgn) &= \tau_{\tilde{\mu}} (fg(gmg)n)\\
&= T_{fg}\tau_{\tilde{\mu}} (gmg)\tau_{\tilde{\mu}} (n)\\
&=T_fT_g\tau_{\tilde{\mu}} (gmg)\tau_{\tilde{\mu}} (n)\\
&=T_f\tau_{\tilde{\mu}} (m)T_g\ws (n)\\
&= \tau_{\tilde\mu} (fm)\tau_{\tilde\mu} (gn) .
\end{align*}
\end{proof}  

The final step, the extension to all of $L_c$ is now easy. We use that $L_c\simeq L_c^\prime \times A$. Hence we can
take any character $\chi$ on $A$ and define
\[\tau_{\tilde\mu,\chi}(ma)=\tau_{\tilde\mu} (m)\chi (a) .\]

\begin{remark}
If one needs to extend $\tau_{\tilde\mu}$ to the complexification $L_c^\C$ of $L_c$, a common compatibility issue arises. $L_c^\C$ is not the direct
product $L_c^{\prime \C}\times A^\C$, one needs to be more careful with the choice of $\chi$. Then the requirement is
that each $T_f$ has to be scalar $c(f)$ and $c(f)=e^{i\dot \chi (H)}$ where $f=\exp iH$. For that one needs to use the
exact form of $F$ to determine possible choices of $\chi$.

On the other hand, since $\fl_c\otimes \C \cong \fk_h\otimes \C$ and we work with finite dimensional representations, a lift from $\fk$ to $\fl_c$ gives a lift from $\fk$ to $\fk_h$.
\end{remark}

 \section{Extension of sections of homogeneous vector bundles}

\noindent
We return to the notation of \S 2.   We consider the generalized flag manifold $ \cP_c=G_c/ L_cN_{-}$ and a basepoint $x_o=eL_cN_{-}$. The $G$ orbit of the basepoint, $G\cdot x_o$, is $\cD\cong G/K$, an open domain in $G_c/ L_cN_{-}$. 

For a unitary representation $(\sigma ,V)$ of $K$ on the complex vector space $V$ we let 
$\mathbb V$ denote the associated homogeneous vector bundle over $\cD$.  Without loss of generality, we can assume that $\sigma$ is 
irreducible, in which case we shall denote by $\mu$ a
highest weight and, as before, by $V_\mu$ its representation space. In \cite{Br} and \cite {Ka} homogeneous vector bundles over certain {\it complex} homogeneous spaces were shown to have an extension to natural compactifications, e.g. the wonderful compactification. In \cite {MSIII} again in the {\it complex setting} in somewhat greater generality homogeneous holomorphic vector bundles over Hermitian (locally) symmetric manifolds were extended to the Borel compactification and a detailed analysis of their restriction to the boundary orbits was obtained. 
We shall give a version of this for the real domain $\cD\subset G_c/ L_cN_{-}$. Here, we just give the extension of  $\mathbb V$  to $\widetilde{\mathbb V}$ over the 
compactification $\overline{\cD}\subset G_{c}/L_cN_{-}$, subsequently we shall analyze the restriction to the boundary orbits.

In the previous section for such $(\sigma_{\mu} ,V_{\mu})$ we produced a natural lift $(\tau_{\widetilde{\mu}},V_{\widetilde{\mu}})$ from $K$ to $L_{c}$ (with some minor exceptions). Then extending the representation trivially on $N_-$ we have an irreducible finite dimensional representation of $L_cN_{-}$. Denote the associated homogeneous bundle over $ \cP_c=G_c/ L_cN_{-}$ by $\widetilde{\mathbb V}_{\widetilde{\mu}}$. Since  $\tau_{\widetilde{\mu}}$ contains $\sigma_{\mu}$ with multiplicity one we have that $\mathbb V$ is a subbundle of $\widetilde{\mathbb V}_{\widetilde{\mu}}$. In particular, $\widetilde{\mathbb V}_{\widetilde{\mu}}$ is defined over $\partial \cD $ and gives an extension of $\mathbb V$ to the boundary of $\cD\cong G/K$.

\section{Analytic extension of $K$-finite matrix coefficients of $G$ to $G_c$}

\noindent
The first task is to construct a  $G$-invariant  domain in $G_c$ that will serve as the domain of
\lq para\rq-analytic (or split-holomorphic) extension of $K$-finite matrix coefficients of $G$.
In \cite{Ma} he provides a general setup for cycle spaces. We shall show that this also gives the target domain in $G_c$.

To prepare for this we recall some previous notation related to various involutions that have played
a role here; to simplify the notation we will omit the dot on involutions on the Lie algebra as it will
always be clear whether we are discussing the Lie algebra or the group. Then we
recall some facts about the crown of a semisimple Lie group, in particular for $G_h$ whose crown will
be denoted by $\Xi_h$, see 
\cite{AG90,KS,KSII}  and especially  \cite[Sec. 7]{KSII}. Once we recall the construction from
\cite{Ma} of a real analytic cycle domain $\Xi_M$ for $G/K$ inside
$G_c/L_c$, we then show that $\Xi_M = (\Xi_h^\eta)_o$ is a totally real submanifold
of $\Xi_h$. We also discuss the connection between the crowns
of $\Gh/\Kh$ and $G/K$, in particular in the case $r=r_h/2$ we show
that $\Xi=(\Xi_h^\tau)_o$. We
conclude the section by proving analytic extension of orbit maps of representations
to the real analytic cycle space thereby justifying the name real analytic crown.

The involution basic to this paper is $ \tau : \gh \to \gh$, giving the real form
$\fg=\fg_h^\tau$ and  $G=(\Gh^\tau)_o$. The eigenspace decomposition w.r.t. $\tau$
is $\gh=\fg \oplus \fq_h$. Recall that the complex linear extension of $\tau$ (or $\thh$) is still denoted $\tau$ (or $\thh$), while the conjugate linear extension of $\tau$ to $\ghC$ is $ \eta= \sigma_h\circ\tau=\tau\circ \sigma_h$. Then
$\fg_c=(\ghC)^\eta$ while $G_c=\GhC^{\eta}$.  $\fg_c$ is a semisimple Lie algebra stable under $\tau$ and $\thh$.  
The resulting eigenspace decompositions are $\fg_c = \fg\oplus \fq_c=\fl_c \oplus \fg_c^{-\thh}$, where 
$\fl_c = \fk \oplus i\fq_{hk}$ and $\fg_c^{-\thh} = \fp \oplus i\fq_{hp}$ (see also the
discussion after Lemma \ref{le-tauZo}). 
We
have $G=(G_h\cap G_c)_o$.
For the restrictions to $\fg_c$, resp. $G_c$, we will still use the notation $\tau$ but introduce  $\tau^a=\thh\vert_{\fg_c} $. 
Notice that $\tau$ and $\tau^a$ commute (because $\tau$ and $\thh$ do). 

The involution
$ \theta_c= \tau\circ \thh\vert_{\fg_c}$ defines a Cartan involution on $\fg_c$ with corresponding Cartan decomposition
$\fg_c=\fk_c\oplus \fs_c$. We have $\fk_c=\fk\oplus i\fq_{hp}$ and $\fs_c=\fp\oplus i\fq_{hk}$ showing
that $\theta_c$  agrees with the conjugate linear 
extension of $\theta_h$ restricted to $\fg_c$. Then it is consistent to denote this on $\fg_h$ by $\tau^a = \tau\circ\theta_h$. It should always be clear which involution is being discussed.

We have $\tau^a=\theta_c\circ\tau$ so our notation
agrees with the standard notation for the involution on $\fg_c$ associated with $\tau$.  
As is standard in this $\R$-form setup $\fl_c^\C=\fk_h^\C$, $\fl_c=\fz_{\fg_c}(H_0)$ and $\tau^a=\Ad (\exp (\pi i H_0))$.

Let, as before, $\fa_h$ be a maximal abelian subgroup of $\fp_h$. Let $\Sigma_h=\Sigma (\fg_h,\fa_h)$, 
 let $\Sigma_h^+$ be a positive system, and take the basepoint to be $x_o=e\KhC\in \GhC/\KhC$. Define  
 
\[\Omega_h=\left\{X\in \fa_h\,\left|\, ( \forall \alpha \in\Sigma_h )\,\, |\alpha (X)|<\frac{\pi}{2}\right.\right\},
\quad \widetilde \Xi_h=G_h\exp (i\Omega_h)\KhC,\quad \text{and}\quad \Xi_h = \widetilde \Xi_h\cdot x_o\, .\]
The $G_h$-invariant set $\Xi_h$ was dubbed by Gindikhin the \textit{crown} of $\Gh/\Kh$. Motivated by the results in \cite{KS} we call
$\widetilde \Xi_h$ the crown of $\Gh$.
The set $\widetilde\Xi_h$ is an open $\Gh$-invariant complex submanifold of $\GhC$.
Similarly, $\Xi_h $ is a $\Gh$-invariant complex domain in $\GhC/\KhC$. $\widetilde{\Xi}_h$ and
$\Xi_h$ are independent of the choice of $\fa_h$ as any two such are $K_h$ conjugate.
Write  $\Omega$, $\widetilde{\Xi}$ and
$\Xi$ for the corresponding sets obtained by this construction for $G$ and $G/K$.

We denote by $\partial \Xi_h$, resp. $\partial \Omega_h$, the topological boundary of
$\Xi_h$, resp. $\Omega_h$. Set
\[\Omega_h^+=\Omega_h\cap \fa_h^+ = \{X\in \Omega_h\mid \forall \alpha \in \Sigma_h^+,\,\, 
\alpha (X)>0\}\quad\text{and}\quad
 \Xi_h^+=\Gh \exp i\Omega_h^+\cdot x_o .\]
Then $\Xi_h^+$ is an open $\Gh$-invariant subset of $\Xi_h$ such that $\overline\Xi_h =(\overline{N_{\Kh}(\fa_h)\Xi_h^+})_o
=(\overline{W_h\Xi_h^+})_o$.

For restricted roots we keep the
 notation from Lemma \ref{le-rnot} and (\ref{def-aq}). Thus
 $\beta_1,\ldots ,\beta_r\in \Sigma( \fg_h,\fa_h)$ are strongly orthogonal roots
 (up to sign they are the Cayley transform of the strongly orthogonal roots $\alpha_j$, per
 the discussion after Theorem \ref{th:Orb}).
We denote by $X_j$, $j=1,\ldots ,r$, the dual basis and as usual we have $\fa_h=\bigoplus \R X_j$.
We also define $Y_j\in \fq_h\cap \fp_h$ as in Lemma \ref{cor:2.11} and let then $\fa_{hq}=\bigoplus \R Y_j$. 

If $r=r_h$
then $\fa_h=\fa$ is maximal abelian in $\fp_h $ and $\fp$, and $\fa_{hq}$ is maximal abelian
in $\fp_h$ and $\fq_h\cap \fp_h$.

If $r=r_h/2$ we choose the ordering so that $\beta_{2j}=\beta_{2j-1}\circ \tau
 =\tau^t \beta_{2j-1}$ and assume, as we may, that $\tau X_{2j-1}=X_{2j}$. Let $X_j^\prime = X_{2j-1}+X_{2j}$,
 $X_{j}^-= X_{2j-1}-X_{2j}$, $\fa=\bigoplus_{j=1}^r \R X_j'$
 and $\fa_h^q=\bigoplus_{j=1}^r \R X_j^-$. Then $\fa$ is maximal abelian in $\fp$ and $\fa_h^q$
 is maximal abelian in $\fp_h\cap \fq_h$.
 We let 
 \[\gamma_{2j-1}=\frac{1}{2}\left(\beta_{2j-1}+\beta_{2j}\right)\quad\text{and}\quad \gamma_{2j}=
 \frac{1}{2}\left(\beta_{2j-1}-\beta_{2j}\right),\quad j=1,\ldots ,r\]
 and note, that according to Moore's theorem $\gamma_k \in \Sigma^+(\fg_h,\fa_h)$.
Note that previously the notation $\gamma_j$ was used
for strongly orthogonal roots in $\Delta$. We
note that $\gamma_{2j-1}|_{\fa}=\beta_{2j-1}|_{\fa}=\beta_{2j}|_{\fa}\not= 0$ and
 $\gamma_{2j}|_{\fa_h^q}=\beta_{2j-1}|_{\fa_h^q}=-\beta_{2j}|_{\fa_h^q}\not= 0$.
 
Let's recall that  $\theta_h$ is inner, in particular $\theta_h=\Ad (\exp \pi Z_h)$. Then
\[\tau^a =\Ad (\exp \frac{\pi}{2} Z_h)\circ \tau\circ \Ad (\exp \frac{\pi}{2} Z_h) .\]
Thus $\fg$ and $\fg_h^{\tau^a}$, resp. $\fq_h$ and $\fg_h^{-\tau^a}$, are conjugate. Statements
that are formulated for $\tau$ and its eigenspaces are therefore also valid for $\tau^a$ and
its eigenspaces.

The next result can be gleaned from \cite{KSII}. 
\begin{theorem}\label{the:crown} Let the notation be as above. Then the following holds true:
\begin{itemize}
\item[(a)] $\Omega_h=\{\sum_{j=1}^r t_j X_j\mid (\forall j\in \{1,\ldots ,r_h\}) \, \, |t_j|< \pi/2\}$.
\item[(b)] We have  $g_1\exp iX_1\cdot x_o=
g_2\exp iX_2\cdot x_o$ for some  $g_1,g_2 \in \Gh$ and $Y_1,Y_2\in \Omega_h$, if and only if there exists
$k\in Z_{K_h}(Y_1)$ and $w\in N_{K_k}(\fa_h)$ such
that $g_1=g_2wk$ and $Y_1=\Ad(w^{-1})Y_2$.
\item[(c)] If $g_1\exp iX_1\cdot x_o=
g_2\exp iX_2 \cdot x_o\in \Xi_h^+$ then $X_1=X_2$ and  there exists $m\in Z_{K_h}(\fa_h)$ such that
$g_1=g_2m$.
\item[(d)] If $x_n =g_n\exp Y_n\cdot x_o\in \Xi_h $ is a sequence
such that $x_n \to \partial \Xi_h\subset \GhC/\KhC$ then $Y_n \to Y\in \partial \Omega_h$.
\item[(e)] $\widetilde{\Xi}_h\subset \mathbb{N}_{h} \mathbb{A}_h \KhC$. \end{itemize}
\end{theorem}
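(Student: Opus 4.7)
The plan is to deduce all five parts from the general crown theory developed in \cite{AG90,KS,KSII}, combined with Moore's classification (Theorem \ref{th-Moore}) of $\Sigma_h = \Sigma(\fg_h, \fa_h)$ and the $SL(2,\mathbb{C})$-reductions afforded by the standard homomorphisms $\kappa_j$. Part (a) is a direct computation: since $\{X_j\}$ is the basis dual to $\{\beta_j|_{\fa_h}\}$, for $X = \sum_j t_j X_j$ one has $\beta_i(X) = t_i$, $\tfrac{1}{2}(\beta_j \pm \beta_k)(X) = \tfrac{1}{2}(t_j \pm t_k)$, and in the non-tube Case II also $\tfrac{1}{2}\beta_i(X) = \tfrac{1}{2}t_i$. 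The tightest constraint is $|\beta_i(X)| < \pi/2$, and the other bounds follow automatically from $|t_j| < \pi/2$; conversely these bounds are necessary. Hence $\Omega_h$ is the open cube described in (a).

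For (b) and (c), the workhorse is the $\SU(1,1)$-reduction. In the basic example, a direct matrix calculation in $SL(2,\mathbb{C})$ shows that for $|t|<\pi/2$ the element $\exp(it X_1) \cdot x_o$ defines a $\SU(1,1)$-slice through $x_o$ whose only symmetry comes from the isotropy of the slice point together with the Weyl reflection $s_{\beta_1}$. Because the $\kappa_j$'s mutually commute, the full exponential $\exp(i\sum t_j X_j)$ factors as a product of the one-variable pieces, and the separate analyses assemble into a slice $\exp(i\Omega_h) \cdot x_o$ for the $\Gh$-action on $\widetilde{\Xi}_h$. Any equality $g_1 \exp(iY_1)\cdot x_o = g_2 \exp(iY_2)\cdot x_o$ can then, after a $K_h A K_h$-decomposition and diagonalization into $\kappa_j$-pieces, only come from a little-Weyl-group element $w \in N_{K_h}(\fa_h)/Z_{K_h}(\fa_h)$ conjugating $Y_2$ to $Y_1$ and an additional stabilizer $k \in Z_{K_h}(Y_1)$; this yields (b). Restricting to the positive chamber $\Omega_h^+$ kills the Weyl action since $\Omega_h^+$ is a fundamental domain, leaving only the pointwise stabilizer $Z_{K_h}(\fa_h) = M_h$, which gives (c).

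Part (d) is a compactness argument: $\Omega_h$ is bounded in $\fa_h$, so after passing to a subsequence $Y_n \to Y \in \overline{\Omega_h}$. If $Y \in \Omega_h$, then $\exp(iY)\cdot x_o \in \Xi_h$; continuity of the $\Gh$-action on the open set $\Xi_h$, together with the properness supplied by (b), would force the limit into $\Xi_h$, contradicting the hypothesis. Part (e) is the holomorphic extension of the Iwasawa decomposition established in \cite{KS}: for every $Y \in \Omega_h$ and $g \in \Gh$, the product $\exp(iY) g$ admits a factorization in $\KhC \AhC \NhC$. Translating this factorization to the group-level crown $\widetilde{\Xi}_h = \Gh \exp(i\Omega_h) \KhC$ and rearranging (using $\KhC$-invariance on the right and the Iwasawa factorization inside $\GhC$) gives the stated inclusion $\widetilde{\Xi}_h \subset \mathbb{N}_h \mathbb{A}_h \KhC$.

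The principal technical hurdle is (b): one must rigorously track how the one-dimensional slices interact when the commuting $\kappa_j$'s are combined, and verify that the indeterminacy of the parametrization $\Gh \times \Omega_h \to \widetilde{\Xi}_h$ is exactly $N_{K_h}(\fa_h) \cdot Z_{K_h}(Y_1)$ and nothing larger. All remaining parts are either formal consequences of (b) or standard applications of the general crown machinery; the real work is therefore concentrated in establishing the polar-type decomposition and its uniqueness inside the complex group $\GhC$.
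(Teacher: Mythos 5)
The paper does not actually prove this theorem; it is a compendium of citations: part (a) is noted to follow easily from Moore's theorem (a remark after Lemma 7.4 in \cite{KSII}), (b) is Proposition 3.1 in \cite{KSII}, (c) is Corollary 4.2 there, (d) is Lemma 2.3 there, and (e) is equation (1.1) there. So your blind attempt to supply actual arguments is by construction a different route from the paper, which simply points to the Kr\"otz--Stanton papers.

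On the substance: your sketch of (a) is the same argument the authors allude to (Moore's theorem makes the constraints $|\beta_i(X)|<\pi/2$ the binding ones), and your (e) is the correct reference to the holomorphic extension of the Iwasawa decomposition from \cite{KS}. Part (d) as a compactness argument is also consistent with the underlying idea, though the way you invoke ``properness supplied by (b)'' is not quite the mechanism in \cite{KSII}; there the key point is that if $Y_n\to Y\in\Omega_h$ then $\exp(iY)\cdot x_o$ would lie in the open set $\Xi_h$, contradicting $x_n\to\partial\Xi_h$, which does not actually require properness beyond (b)'s uniqueness.

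The real gap is (b). You acknowledge it yourself: the claim that the $\SU(1,1)$-slices ``assemble'' and that the only indeterminacy is $N_{K_h}(\fa_h)\cdot Z_{K_h}(Y_1)$ is asserted, not proved. The commuting homomorphisms $\kappa_j$ give you control over the torus direction $\exp(i\fa_h)$, but they do not by themselves control how two elements $g_1,g_2\in\Gh$ can compensate for a change $Y_2\mapsto Y_1$ inside $\GhC/\KhC$. The actual argument in \cite[Prop.~3.1]{KSII} works with a careful analysis of the polar map $\Gh\times\Omega_h\to\Xi_h$ and the structure of $\KhC$-double cosets, not by reduction to rank one pieces. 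In particular, one needs to show that no element of $\Gh$ outside $N_{K_h}(\fa_h)Z_{K_h}(\Omega_h)$ can stabilize a point of $\exp(i\Omega_h)\cdot x_o$, and that is exactly the step you label a ``technical hurdle'' and leave unverified. Since (c) is then deduced from (b), that gap propagates. If you intend to replace the citation by a self-contained proof, you need to make the injectivity/uniqueness statement precise at the level of $\GhC/\KhC$ rather than merely at the level of the abelian slice.
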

 \begin{proof}
 (a) is the comment after \cite[Lem. 7.4]{KSII} and follows
easily from Moore's Theorem; (b) is  \cite[Prop. 3.1]{KSII}; (c)  is   \cite[Cor. 4.2]{KSII};
  (d) is \cite[Lem. 2.3]{KSII} and (e) is (1.1). 

\end{proof}

Next we recall the construction of the cycle space or \textit{Matsuki crown} of
$G/K$ in $G_c/L_c$, per \cite{Ma}.

\begin{remark} To assist the reader we give the correspondence between the notation in \cite{Ma} with our setup. Here the left hand side lists Matsuki's notation and the right hand side the corresponding
object in this article: $\fg\leftrightarrow \fg_c$, $\fh\leftrightarrow \fg$, $\fh^\prime \leftrightarrow \fl_c$, $\fk\leftrightarrow 
\fk_c$, $\fm\leftrightarrow \fs_c$, $\fq\leftrightarrow \fq_c$. Similarly for the groups. In particular
$G\leftrightarrow G_c$ and 
$H\leftrightarrow G$. As $L_c=Z_{G_c}(H_0)$ might be  disconnected, so $H^\prime \leftrightarrow L_{c0}$.
\end{remark}

 Let $\ft $ be a maximal abelian subspace of
$\fk_c\cap \fq_c = i\fq_{hp}$. Denote by $\widetilde \Sigma (\fg_c^\C,\ft^\C)$
the roots of $\ft^\C$ in $\ghC \cong \fg_c^{\C}$. As $\theta_c|_{\ft^\C}=\id_{\ft^\C}$, given a root space
$\fg_c^{\C\alpha}=\fg_c^{\C}(\ft, \alpha)$ we have $\theta_c(\fg_c^{\C}(\ft, \alpha))=\fg_c^{\C}(\ft, \alpha)$ and one
decomposes it according to the 
eigenvalues of $\theta_c$ getting $\fg_c^{\C}(\ft, \alpha) =
 \fk_c^{\C}(\ft, \alpha)\oplus \fs_c^{\C}(\ft, \alpha)$. Let $\widetilde 
\Sigma_c(\fs_c^{\C},\ft) = \{ \alpha\in i\ft^*\vert \fs_c^{\C}(\ft, \alpha)\ne \{0\}\}$. Finally set 
\[\Omega_M =\left\{ Y\in \ft\, \left|\, (\forall \alpha \in \widetilde
 \Sigma_c(\fs_c^{\C}, \ft^\C))\,\, 
|\alpha(Y)| <\frac{\pi}{2}\right.\right\}.\]
As before we define $\Omega^+_M$ as the intersection of $\Omega_M$ with a positive Weyl chamber. 

Let  $T(\Omega_M ) = \exp \Omega_M \subset T =\exp  \ft$,  $T(\Omega_h) =\exp i\Omega_h,\,$ and define 
$$\wXi_M = GT(\Omega_M )L_c \subset G_c \text{ and } \Xi_M = \wXi_M\cdot x_o\subset G_c/L_c.$$ 

\begin{theorem}[Matsuki] $\wXi_M$ is open in $G_c$ and $\Xi_M $ is connected and open in $G_c/L_c$.
\end{theorem}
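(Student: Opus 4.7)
The proof is essentially Matsuki's construction of the cycle domain for $G/K$. My plan is to deduce openness of $\wXi_M$ from the submersion theorem, push openness down to $\Xi_M$ via the quotient projection $G_c \to G_c/L_c$, and obtain connectedness of $\Xi_M$ from a continuous-image argument.

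First I would argue openness of $\wXi_M$. Consider the smooth multiplication map
\[
\mu\colon G \times T(\Omega_M) \times L_c \longrightarrow G_c, \qquad (g,t,\ell) \mapsto gt\ell,
\]
whose image is $\wXi_M$. Since $\mu$ is left-equivariant under $G$ and right-equivariant under $L_c$, it suffices to verify that $d\mu$ is surjective at every point $(e,t_0,e)$ with $t_0 = \exp Y_0 \in T(\Omega_M)$. Computing the derivative and left-translating the tangent space $T_{t_0}G_c$ to $\fg_c$ through $t_0^{-1}$, and using $[Y_0,H]=0$ for $H\in\ft$ since $\ft$ is abelian, the image of $d\mu_{(e,t_0,e)}$ becomes
\[
\Ad(t_0^{-1})\fg + \ft + \fl_c \subseteq \fg_c.
\]
Thus openness of $\wXi_M$ reduces to the algebraic identity
\[
\Ad(t_0^{-1})\fg + \ft + \fl_c = \fg_c \qquad \text{for every } Y_0 \in \Omega_M.
\]

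Verifying this identity is the main obstacle. I would decompose $\fg_c^{\C}$ into $\ft^{\C}$-root spaces $\fg_c^{\C}(\ft, \alpha)$. Since $\ft \subseteq \fk_c$ lies in the compact part of $\fg_c$, every root $\alpha$ takes values in $i\R$ on $\ft$, so $\Ad(t_0^{-1})$ acts on $\fg_c^{\C}(\ft, \alpha)$ as the unit complex scalar $e^{-\alpha(Y_0)}$. Splitting each root space under $\theta_c$ as $\fg_c^{\C}(\ft,\alpha) = \fk_c^{\C}(\ft,\alpha) \oplus \fs_c^{\C}(\ft,\alpha)$ and expanding a real root vector $Z \in \fg$ into its $\fk_c$- and $\fs_c$-components, one finds that $\Ad(t_0^{-1})Z$ mixes these components via a rotation whose angle is controlled by $\alpha(Y_0) \in i\R$. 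The defining condition $|\alpha(Y_0)| < \pi/2$ for every $\alpha \in \widetilde{\Sigma}_c(\fs_c^{\C}, \ft^{\C})$ keeps the cosine of this rotation nonzero, which is exactly the nondegeneracy needed for $\Ad(t_0^{-1})\fg + \ft + \fl_c$ to cover the noncompact root spaces; the compact pieces are absorbed into $\fl_c$. This is, in effect, the computation carried out in \cite{Ma}.

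Openness of $\Xi_M$ in $G_c/L_c$ then follows because the quotient projection is an open map, so $\Xi_M$ is the image under an open map of the open set $\wXi_M$. For connectedness, $G$ is connected by construction as $(G_h^\tau)_o$, and $T(\Omega_M) = \exp \Omega_M$ is connected since $\Omega_M$ is a convex neighborhood of $0$ in $\ft$. Hence $\Xi_M$ is the image of the connected space $G \times T(\Omega_M)$ under the continuous map $(g,t) \mapsto gt \cdot x_o$, and is therefore connected.
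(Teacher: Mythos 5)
Your approach is genuinely different from the paper's. The paper does not reprove openness at all: it identifies a minimal $\theta_c\circ\tau^a$-stable parabolic $\widetilde{P}_c$, checks that $L_c\widetilde{P}_c$ is open in $G_c$ and, by Matsuki duality, that $G\widetilde{P}_c$ is closed, and then invokes \cite[Prop.\ 1]{Ma} directly, using the dictionary between the paper's notation and Matsuki's. You instead attempt a direct submersion argument for the multiplication map $\mu\colon G\times T(\Omega_M)\times L_c\to G_c$.

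That direct route has a genuine gap. Your reduction is correct up to the point where you assert that $d\mu$ is surjective at every $(e,t_0,e)$, i.e.\ that
\[
\Ad(t_0^{-1})\fg+\ft+\fl_c=\fg_c\quad\text{for every }Y_0\in\Omega_M .
\]
At $Y_0=0$ this fails in general. Taking $\Ad(t_0^{-1})=\id$ gives
\[
\fg+\ft+\fl_c=(\fk\oplus\fp)+\ft+(\fk\oplus i\fq_{hk})=\fk\oplus\fp\oplus i\fq_{hk}\oplus\ft ,
\]
while $\fg_c=\fk\oplus\fp\oplus i\fq_{hk}\oplus i\fq_{hp}$. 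Since $\ft$ is only a \emph{maximal abelian} subspace of $\fk_c\cap\fq_c=i\fq_{hp}$, the sum has codimension $\dim\fq_{hp}-\dim\ft$ in $\fg_c$, which is positive whenever $\fq_{hp}$ is nonabelian --- the generic case, e.g.\ $\fg_h=\su(p,q)$, $\fg=\so(p,q)$ with $p,q\ge 2$. Your root-space sketch does not detect this because at $Y_0=0$ the ``cosine nondegeneracy'' condition holds trivially ($\cos 0=1\ne 0$) and yet the identity still fails: the obstruction at the origin is not a rotation degeneracy but simply that the $\fs_c^{\C}$-parts of the nontrivial $\ft$-root spaces never appear in $\fg+\ft+\fl_c$. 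The same failure occurs at every non-regular $Y_0\in\Omega_M$, and these points actually lie in $\wXi_M$, so the implicit function theorem cannot be applied there and an independent argument is needed to show that such points are interior. That global argument is precisely what Matsuki's Proposition 1 supplies, and why the paper's proof is phrased as a hypothesis check rather than a computation.

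The rest of your write-up is fine: openness of $\Xi_M$ does follow from openness of $\wXi_M$ since $G_c\to G_c/L_c$ is an open map, and connectedness of $\Xi_M$ follows from continuity as you say.
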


\begin{proof}
This will follow from \cite[Prop. 1]{Ma} using the dictionary above. For that we need some material about 
$\tau=\tau^a\circ \theta_c$-stable parabolic subalgebras in $\fg_c$.    Let, as
before, $\fa\subset \fp$ be maximal abelian. Let $\Sigma=\Sigma (\fg_c,\fa)$ denote the set
of roots of $\fa$ in $\fg_c$ and let $\Sigma^+  $ be a set of positive roots. Define
$\widetilde\fn =\bigoplus_{\alpha \in\Sigma^+ (\fg_c,\fa)}\fg_{c\alpha} $ and
$\widetilde{\fm}=$ the orthogonal complement of $\fa$ in $\fz_{\fg_c}(\fa)$.
Then $\widetilde{\fp}_c=\widetilde\fm \oplus \fa \oplus \widetilde{\fn}$ is
a minimal $\theta_c\circ \tau^a$ stable parabolic subalgebra in $\fg_c$, see \cite{Ma79} or \cite{vdB}.
Let $\wt{P}_c=\wt{M}_cA\wt{N}_c$ be the corresponding minimal $\theta_c\circ \tau^a$ stable
parabolic subgroup. 
That $L_c\wt{P}_c$ is open in $G_c$ follows from \cite{Ma79}. Hence by Matsuki duality \cite{Ma79} $G\wt{P}_c$
is closed. Now compare this with the assumption on \cite[p. 565]{Ma} and we see that we can take $\wt{P}_c$ for
the parabolic $P$ in \cite{Ma} or \cite{vdB}, i.e. $P\leftrightarrow\wt{P}_c$.
\end{proof}

The main result in \cite{Ma} is 
\begin{theorem}[Matsuki]\label{th:Ma1} Set $S=S(G_c\widetilde{P}_c;L_c\widetilde{P}_c) = \{ x\in G_c\mid x^{-1} 
G\widetilde{P}_c \subset L_c\widetilde{P}_c\}.$ Then $S $ is open and if $S_0$ denotes a connected 
component, we have
$\wXi_M \subseteq S_0$.
\end{theorem}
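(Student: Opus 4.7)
The plan splits the assertion into three parts: (1) openness of $S$, (2) the set-theoretic containment $\wXi_M\subseteq S$, and (3) the fact that $\wXi_M$ lies in a single connected component of $S$.

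For (1), I pass to the compact flag manifold $G_c/\wt{P}_c$. The orbit $G\wt{P}_c/\wt{P}_c$ is closed there (Matsuki duality, used in the proof just before the theorem), hence compact, while $L_c\wt{P}_c/\wt{P}_c$ is open. The defining condition of $S$ is precisely that $x^{-1}$ sends the compact closed $G$-orbit into the open $L_c$-orbit. The tube lemma applied to the continuous map
\[
G_c\times(G\wt{P}_c/\wt{P}_c)\longrightarrow G_c/\wt{P}_c,\qquad (x,y)\longmapsto x^{-1}\!\cdot\!y,
\]
then yields for any $x_0\in S$ an open neighborhood $U$ of $x_0$ contained in $S$.

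For (2), I first observe that $S$ is invariant under left multiplication by $G$ and right multiplication by $L_c$: for $g\in G$, $l\in L_c$, and $x\in S$,
\[
(gxl)^{-1}G\wt{P}_c=l^{-1}x^{-1}g^{-1}G\wt{P}_c=l^{-1}x^{-1}G\wt{P}_c\subset l^{-1}L_c\wt{P}_c=L_c\wt{P}_c.
\]
Since $\wXi_M=G\,T(\Omega_M)\,L_c$, it suffices to prove $\exp Y\in S$ for every $Y\in\Omega_M$, i.e.
\[
\exp(-Y)\,G\wt{P}_c\subset L_c\wt{P}_c \qquad\text{for all }Y\in\Omega_M. \tag{$\ast$}
\]
My approach, following Matsuki \cite{Ma} and modelled on the Akhiezer--Gindikin proof of the analogous inclusion for the Riemannian crown $\Xi_h$, is an $\SL(2)$-reduction. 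Using the Cartan decomposition $G=K(\exp\fa)K$ with $\fa$ maximal abelian in $\fp$, and observing that $K\subset L_c$, the verification of $(\ast)$ reduces to showing $\exp(-Y)\exp(H)\wt{P}_c\subset L_c\wt{P}_c$ for $Y\in\ft$ and $H\in\fa$. A simultaneous diagonalisation of $\ad\ft$ and $\ad\fa$ on $\fg_c^\C$ and the standard rank-one analysis on each root subgroup reduce this to a $2\times 2$ matrix identity in $\SL(2,\R)$ or $\SL(2,\C)$; the bounds $|\alpha(Y)|<\pi/2$ for $\alpha\in\widetilde\Sigma_c(\fs_c^\C,\ft^\C)$ that define $\Omega_M$ are exactly those under which the resulting matrix lies in the big Bruhat cell. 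I expect this rank-one reduction, together with the compatibility between $\ft$ and $\fa$ coming from the interaction of $\theta_c$ and $\tau^a$, to be the main technical obstacle.

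For (3), the set $\wXi_M=G\,T(\Omega_M)\,L_c$ is the image of the continuous multiplication map from $G\times\Omega_M\times L_c$. Since $G$ is connected, $\Omega_M$ is an open convex neighborhood of $0\in\ft$ (hence connected), and the right-translates $G\,T(\Omega_M)\,l$ for $l$ in distinct components of $L_c$ can be joined inside $\wXi_M$ by paths of the form $s\mapsto\exp(sY)\,l$ running through $T(\Omega_M)\,L_c$, the image $\wXi_M$ is connected. Combined with the containment $\wXi_M\subseteq S$ proved in (2), this forces $\wXi_M$ to lie in a single connected component $S_0$ of $S$.
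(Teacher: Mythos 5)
The paper does not actually prove this statement: it is introduced with ``The main result in \cite{Ma} is'' and simply quoted, so there is no argument in the paper against which to compare your reconstruction. Taken on its own, your part~(1) is fine: openness of $S$ follows from the tube lemma applied to $(x,y)\mapsto x^{-1}\!\cdot y$, using that the $G$-orbit is compact (being closed in the compact flag manifold $G_c/\wt{P}_c$, by Matsuki duality as invoked in the preceding proof) while the $L_c$-orbit is open.

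The other two parts both have genuine gaps. For (2), you correctly isolate the reduction to $T(\Omega_M)\subseteq S$ via the $G$--$L_c$ bi-invariance of $S$, but the $\SL(2)$-reduction is not carried out (you call it ``the main technical obstacle'' yourself), and the reduction as sketched does not go through: after writing $g=k_1 a k_2$ with $G=K(\exp\fa)K$ and absorbing $k_1$ into $L_c$, a right factor $k_2\in K$ remains, and it cannot be dropped, since $L_c\wt{P}_c$ is right-$\wt{P}_c$-invariant but not right-$K$-invariant and $K\not\subset\wt{P}_c$; moreover $\Ad(k_1^{-1})Y$ need no longer lie in $\ft$. Worse, the ``simultaneous diagonalisation of $\ad\ft$ and $\ad\fa$'' cannot be set up in general: the Remark in this section points out that when $r=r_h$ the space $\fa$ is already maximal abelian in $\fp$, so there is no choice of $\ft$ commuting with $\fa$. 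For (3), the connectedness argument is simply wrong: a path $s\mapsto\exp(sY)\,l$ keeps the right factor $l$ fixed and hence stays in $GT(\Omega_M)l$, so it cannot join $GT(\Omega_M)l_1$ to $GT(\Omega_M)l_2$ when $l_1,l_2$ lie in distinct components of $L_c$. Indeed the paper states, immediately before quoting this theorem, that $\wXi_M$ ``is not connected unless $L_c$ is'' and introduces $(\wXi_M)_o=GT(\Omega_M)(L_c)_o$; the subsequent analytic-continuation theorem accordingly works with $(\wXi_M)_0\subseteq S_0$. Your claim that $\wXi_M$ is connected is therefore false when $L_c$ is disconnected, and the quoted theorem should be read with the identity component.
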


We mention a slightly different interpretation of $\Omega_M $. We refer to \cite[Chap. 5]{HO96} for a
more detailed discussion. The abelian Lie algebra $\ft $ is a maximal abelian subspace of 
$\fk_c\cap \fq_c = i\fq_{hp}$. But the generalized flag manifold $G_{c}/L_cN_{-}$ is diffeomorphic to 
$K_c/K_c\cap L_c \cong K_c/FK$ which is a Riemannian symmetric space. We have
even more, $L_cN_-$ is a maximal parabolic subgroup with abelian nilradical, $\fn_-$. Hence
$K_c/K_c\cap L_c\simeq G_c/L_cN_-$ is a symmetric $R$-space. Note that $\Ad (F)$ 
normalizes $\fk$ and hence $FK$ is a group. Furthermore, $(FK)\cap G=K$. Since $\fk_c=\fk\oplus i\fq_{hp}$ 
we have the tangent space at $eFK$ is given by $i\fq_{hp}$. Thus $\ft$ is the Lie algebra of a maximal torus 
(an Iwasawa torus) in the tangent space. But we have the open embedding $\cD\simeq G/K\subset G_c/ L_cN_{-}$ 
so the tangent space at eK can be identified with $\fp$ which has maximal abelian subalgebra $\fa$. On 
the other hand, $G_c/ L_cN_{-} \simeq K_c/FK$ is, up to covering, the compact dual symmetric space to $G/K$. 
Thus within $\fg^{\C}$ there is an $\R$-isomorphism $\phi:\fa_c \oplus \ft \to \fa_c^{\C}$, i.e. between the split-complexification and the complexification. 

As $\eta (G_h)=G_h$, $\eta (K_h)=K_h$ and we can choose $\fa_h$ so that $\eta (\fa_h)= \fa_h$ it
follows that $\eta (\wXi_h ) = \wXi_h$ and $\eta (\Xi_h) = \Xi_h$. As
$G_c/L_c = G_c\cdot x_o$ it follows that we can view $G_c/L_c$ as a real 
form of $\GhC/\KhC$.  We note that $\wXi_M$ is not connected unless $L_c$ is, but
$(\wXi_M)_o = G_c\exp \Omega_M (L_c)_o$.  

\begin{remark} The Matsuki crown is defined with respect to $G_c$. To connect the 
notation to $\fg_h$ we make some additional observations. First notice that if $\fa_1$ is maximal abelian in
$\fq_h\cap \fp_h=\fp_h^{\tau^a}$ if and only if  $\ft=i\fa_1$  is maximal abelian in $\fq_c\cap \fk_c=i(\fq_h\cap \fp_h)$.
As $\fs_c=i(\fq_h\cap \fk_h)\oplus \fp$ we have $\widetilde{\Sigma}(\fs_c^\C,\ft^\C)
=\Sigma(\fg_h^{-\tau^a},\fa_1)$ and
\[\Omega_M=i\Omega_{hq}\quad \text{where}
\quad \Omega_{hq}=\{X\in \fa_1\mid (\forall \alpha \in \Sigma(\fg_h^{-\tau^a},\fa_1)\,\, |\alpha (X)|< \pi/2\},\]
 quite analogous to the construction in the group case. This shows that there is
a fundamental difference between the case $r=r_h/2$ and $r=r_h$. In the first case we can
take $\fa=\bigoplus \R X_j^\prime$ as before, and $\fa_1=\fa_h^q=\bigoplus \R X_j^-$. In
particular $\fa$ and $\ft$ commute as in the group case. For $r=r_h$ the space $\fa$ is already maximal
abelian so there is no way to chose $\ft$ so that $\fa$ and $\ft$ commute.
\end{remark}

If $r=r_h$ we always have  $\Sigma (\fg,\fa)\subseteq \Sigma(\fg_h,\fa)$ and
$\Sigma (\fg_h^{-\tau^a},\fa_{hq})\subseteq \Sigma(\fg_h,\fa_{hq})$ which implies that
$\Omega_h\subseteq \Omega$.
So if we define $\Omega_h$ as a
subset of $\fa_{hq}$, $\Omega_h\subseteq \Omega_{hq}$.  Similarly, if $r=r_h/2$,
as $\Omega$ and $\Omega_{hq}$ are defined by via restriction of roots in $\Sigma (\fg_h,\fa_h)$ to
$\fa$, resp. $\fa_h^q$, and because $\Omega_h$ is invariant under $\tau$ and $-\tau$ it follows that
$\Omega_h\cap \fa=\pr_{\fg}(\Omega_h)\subseteq \Omega$ and
$\Omega_h\cap \fa_{h}^q=\pr_{\fq_h}(\Omega_h)\subseteq \Omega_{hq}$. Here $\pr_\fg$
is the projection along $\fq_h$ onto $\fg$ and
$\pr_{\fq_h}$ is the projection along $\fg$ onto $\fq_h$. This clearly implies that we always have
$\Xi\subseteq (\Xi_h^{\tau})_o$ and $\Xi_M \subseteq (\Xi_h^\eta)_o$.
\begin{lemma}\label{le:CrEq} Let the notation be as above.
\begin{itemize}
\item[(a)] Assume that $r=r_h$ and $\Omega_h=\Omega$. Then
$\Xi=(\Xi^\tau_h)_o$.
\item[(b)] Assume that $r=r_h$ and $\Omega_h=\Omega_{hq} \subset \fa_{hq}$. Then
$\Xi_M = (\Xi^\eta_h)_o$.
\item[(c)] Assume that $r=r_h/2$ and $\Omega_h\cap \fa=\Omega$. Then
$\Xi=(\Xi^\tau_h)_o$.
\item[(d)] Assume that $r=r_h/2$ and $\Omega_h\cap \fa_h^q=\Omega_{hq} $. Then
$\Xi_M = (\Xi^\eta_h)_o$.
\end{itemize}
\end{lemma}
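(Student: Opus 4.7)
The forward containments $\Xi\subseteq(\Xi_h^\tau)_o$ and $\Xi_M\subseteq(\Xi_h^\eta)_o$ were recorded just before the statement, so only the reverse inclusions have to be established. The plan is to treat all four cases by a uniform three-step scheme based on the uniqueness portion of the crown parametrization in Theorem \ref{the:crown}.

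First I would pass to a dense open subset. The stratum $\Xi_h^+$ is open and dense in $\Xi_h$, so $(\Xi_h^\iota)_o\cap\Xi_h^+$ is dense in $(\Xi_h^\iota)_o$ for $\iota\in\{\tau,\eta\}$, and by continuity it suffices to show this intersection lies in $\Xi$ respectively $\Xi_M$. For $z=g\exp(iY)\cdot x_o$ in this intersection (with $g\in\Gh$ and $Y\in\Omega_h^+$) I would apply $\iota$: since $\iota(\KhC)=\KhC$ one has $\iota(z)=\iota(g)\exp(\iota(iY))\cdot x_o$, with $\iota(iY)=i\tau(Y)$ for the complex linear $\tau$ and $\iota(iY)=-i\tau(Y)$ for the conjugate linear $\eta$. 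In the $\eta$ case the argument of $\exp$ need not lie in $i\Omega_h^+$, so I would use a representative of a Weyl element $w\in W_h=N_{\Kh}(\fa_h)/Z_{\Kh}(\fa_h)$ to move it back into $\Omega_h^+$, and then apply Theorem \ref{the:crown}(c) to conclude that $\tau(Y)=\pm wY$ and $g^{-1}\iota(g)\in Z_{\Kh}(\fa_h)$.

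Next I would use the structural hypothesis to pin $Y$ down to the smaller polytope. In case (a), $r=r_h$ gives $\fa_h=\fa$ with $\tau|_\fa=\id$, so $\tau(Y)=Y$ is automatic and the hypothesis $\Omega_h=\Omega$ yields $Y\in\Omega^+$. In case (c), Lemma \ref{le-rnot} provides the eigenspace splitting $\fa_h=\fa\oplus\fa_h^q$ under $\tau$, so $\tau(Y)=Y$ forces $Y\in\fa$ and $\Omega_h\cap\fa=\Omega$ again yields $Y\in\Omega^+$. In cases (b) and (d) the analogous $\eta$-analysis forces $Y$ into the $(-\tau)$-eigenspace of $\fa_h$, which is precisely the maximal abelian subspace of $\fp_h\cap\fq_h$ governing $\Omega_{hq}$, and the respective hypothesis places $iY$ in $\Omega_M^+$.

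The final and main step is the group-level descent. The relation $g^{-1}\iota(g)=m\in Z_{\Kh}(\fa_h)$ means $g$ is $\iota$-equivariant up to a compact centralizer element, and the aim is to produce $k\in Z_{\KhC}(\fa_h^\C)$ with $m=k^{-1}\iota(k)$, so that replacing $g$ by $gk$ lands it in $G$ for $\iota=\tau$ or in $G_c$ for $\iota=\eta$; this substitution preserves $z$ because $k$ centralizes $Y\in\fa_h$. This Hilbert-$90$-type assertion is the main obstacle, since $Z_{\Kh}(\fa_h)$ may be disconnected. The plan is to split $m$ into its identity-component contribution, which can be written as $k^{-1}\iota(k)$ by averaging the logarithm in the $\iota$-eigenspace decomposition of $\fz_{\kh}(\fa_h)$, and its component-group contribution, which is handled using the simple connectivity of $\GhC$ together with Proposition \ref{le:conn} to exhibit $\iota$-fixed representatives of the finite component group.
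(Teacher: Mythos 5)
The forward containments are indeed recorded in the text, but the paper proves the reverse inclusions by a quite different (and simpler) route: it observes that $\Xi$ is open in $(\Xi_h^\tau)_o$ because $\GhC$ simply connected gives $\GC=\GhC^\tau$, and then shows $\Xi$ is \emph{closed} in $(\Xi_h^\tau)_o$ by the boundary behavior of Theorem~\ref{the:crown}(d) combined with a restriction-of-roots argument; connectedness of $(\Xi_h^\tau)_o$ then finishes. Your proposal is a genuinely different pointwise strategy, but it has concrete gaps.

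First, the reduction to the regular stratum $\Xi_h^+$ is incompatible with cases (c) and (d). When $r=r_h/2$ the involution $\tau$ acts on $\fa_h$ by the nontrivial permutation $X_{2j-1}\leftrightarrow X_{2j}$, so the $\tau$-fixed subspace $\fa=\bigoplus\R X_j'$ with $X_j'=X_{2j-1}+X_{2j}$ satisfies $\fa\cap\fa_h^+=\emptyset$: any $Y\in\fa$ has $x_{2j-1}=x_{2j}$ and hence lies on a wall. Consequently, for $Y\in\Omega_h^+$ the uniqueness part of Theorem~\ref{the:crown} does \emph{not} yield $\tau(Y)=Y$; it yields $\tau(Y)=wY$ for the nontrivial Weyl element $w$ representing $\tau|_{\fa_h}$, and this never forces $Y\in\fa$. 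The step ``$\tau(Y)=Y$ forces $Y\in\fa$'' therefore cannot be reached under your regularity normalization, and the intended conclusion that the dense set lies in $\Xi$ cannot be extracted this way.

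Second, even granting density of $(\Xi_h^\iota)_o\cap\Xi_h^+$ in $(\Xi_h^\iota)_o$, the inference ``a dense subset lies in $\Xi$, therefore $(\Xi_h^\iota)_o\subseteq\Xi$'' requires $\Xi$ to be closed in $(\Xi_h^\iota)_o$. You never establish closedness, and it is precisely here that the paper expends its effort (via Theorem~\ref{the:crown}(d) and the observation that every $\alpha\in\Sigma(\fg,\fa)$ is the restriction of some $\widetilde\beta\in\Sigma(\fg_h,\fa_h)$). Until the regular-stratum issue and the closedness issue are both resolved, and until the Hilbert--$90$ descent for the disconnected centralizer $Z_{\Kh}(\fa_h)$ is actually carried out rather than sketched, the argument does not close.
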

\begin{proof} We prove only (c) and (d). The proofs of (a) and (b) are simpler following the same line
of argument.

(c): We have $\Xi=G\exp (\Omega)\cdot x_o\subset G_h\exp \Omega_h\cdot x_o=\Xi_h\subset \GhC/\KhC$.
Taking $\tau$ fixed points implies the inclusion $\Xi\subseteq (\Xi_h^\tau)_o$. As $\GhC$ is simply connected it
follows that $\GC=\GhC^\tau$. Hence $(\Xi_h^\tau)_o = (\Xi_h\cap \GC/\KC)_o$. As $\Xi$ is open in
$\GC/\KC$ it follows that $\Xi$ is open in $(\Xi_h^\tau)_o$. Assume that
$\Xi$ is not closed in $(\Xi_h^\tau)_o$. Then there
exists a sequence $\xi_j=g_j\exp Y_j\cdot x_o$, $g_j\in G$, $Y_j\in\Omega$, such
that $\xi_j\to \xi \in \partial \Xi \cap (\Xi_h^\tau)_o$.  According to Theorem \ref{the:crown} part (d)
there exists $Y\in \partial \Omega$ such that $Y_j\to Y$. Hence there exists $\alpha \in \Sigma (\fg,\fa)$
such that $|\alpha (Y_j)|\to \pi/2$. Let 
\[\fg_{h\alpha}=\{X\in \fg_h\mid (\forall H\in\fa)\,\, [H,X]=\alpha (H)X\}\not= \{0\}\, .\]
Then $\fg_{h\alpha}$ is $\ad (\fa_h)$ invariant. It follows that there exists
$\widetilde{\beta}\in\Sigma(\fg_h,\fa_h)$ such that $\widetilde{\beta}_\fa=\alpha$.
Thus $Y\in\partial\Xi_h$ contradicting the assumption that $\xi \in \Xi_h$. Thus $\Xi$ is
closed in $(\Xi_{h}^\tau)_o$.

Part (d) follows in the same way replacing $\tau$ by $\eta$ and in the last argument replacing $\fa$ by
$\fa_h^q$.
\end{proof}

\begin{lemma}\label{prRootSp}  
Assume that $r=r_h/2$. Write $\fa_h =\fa \oplus \fa_h^q$ and let $\beta\in\Sigma (\fg_h,\fa_h)$.  If 
$\beta|_{\fa_h^q}\not= 0$ and $H\in \fa_h^q$ is so that $\beta (H)=1$ then $\ad H : \pr_\fg (\fg_{h\beta})\to 
\pr_{\fq_h}(\fg_{h\beta})$ is an isomorphism. In particular, if $\beta|_{\fa}\not= 0$, then
$\{0\}\not= \pr_{\fg} (\fg_{h\beta})\subseteq \fg_{\beta|_{\fa}}$ and $\{0\}\not=\pr_{\fq_h}(\fg_{h\beta})
\subseteq (\fg_{h}^{-\tau^a})_{\beta|_{\fa_h^q}}$.
\end{lemma}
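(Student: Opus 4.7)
The plan is a direct computation exploiting the interplay between $\tau$ and $\ad\fa_h^q$. The key observation is that since $\fa_h^q \subset \fq_h$, each $H \in \fa_h^q$ satisfies $\tau H = -H$, hence $\ad H$ anticommutes with $\tau$ on $\fg_h$; equivalently $\ad H$ exchanges the summands of the decomposition $\fg_h = \fg \oplus \fq_h$, i.e. $\ad H(\fg)\subseteq \fq_h$ and $\ad H(\fq_h)\subseteq \fg$.

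For $X \in \fg_{h\beta}$ I would write $X = X_\fg + X_{\fq_h}$ with $X_\fg = \pr_\fg(X)$ and $X_{\fq_h} = \pr_{\fq_h}(X)$, and then project the eigenvalue identity $[H,X] = \beta(H)X$ onto the two summands. Using the swap property above, this produces the coupled pair
\[
[H,X_\fg] = \beta(H)\, X_{\fq_h}, \qquad [H,X_{\fq_h}] = \beta(H)\, X_\fg .
\]
Choosing $H$ with $\beta(H)=1$ shows that $\ad H$ interchanges $X_\fg$ and $X_{\fq_h}$ and squares to the identity on each of $\pr_\fg(\fg_{h\beta})$ and $\pr_{\fq_h}(\fg_{h\beta})$. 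Hence $\ad H \colon \pr_\fg(\fg_{h\beta}) \to \pr_{\fq_h}(\fg_{h\beta})$ is an involution, and in particular an isomorphism, with its own inverse.

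For the ``in particular'' clause, assume further $\beta|_\fa \neq 0$. Nonvanishing of both projections is then automatic: $\fg_{h\beta}\neq\{0\}$, the direct-sum splitting $X = X_\fg + X_{\fq_h}$ is faithful, and by the isomorphism above the two projection images vanish together or neither does. The inclusion $\pr_\fg(\fg_{h\beta}) \subseteq \fg_{\beta|_\fa}$ is direct: for $H' \in \fa$ one has $[\fa,\fg]\subseteq\fg$, so applying $\pr_\fg$ to $[H',X] = \beta(H')X$ gives $[H', X_\fg] = \beta|_\fa(H')\, X_\fg$, which means $X_\fg$ is a $\beta|_\fa$-eigenvector of $\ad\fa$ in $\fg$. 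The symmetric inclusion into $(\fg_h^{-\tau^a})_{\beta|_{\fa_h^q}}$ is the parallel statement: because $\fa_h^q \subset \fq_{hp} \subset \fg_h^{\tau^a}$ with $\tau^a = \tau\theta_h$, the operator $\ad\fa_h^q$ commutes with $\tau^a$ and therefore preserves the $(\pm\tau^a)$-decomposition of $\fg_h$; projecting the eigenvalue relation with respect to that decomposition identifies the image in $\fg_h^{-\tau^a}$ with the asserted eigenspace.

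The only real work is bookkeeping the three commuting involutions $\tau$, $\theta_h$, and $\tau^a = \tau\theta_h$ and verifying which subspaces each one stabilizes or swaps; no deeper structural input is required, and the main content of the lemma is the elementary swap identity above.
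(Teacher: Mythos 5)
Your proof is correct and follows exactly the paper's argument: projecting the eigenvalue relation $[H,X]=\beta(H)X$ onto the $\tau$-eigenspace decomposition $\fg_h=\fg\oplus\fq_h$ gives the coupled identities $[H,X_\fg]=\beta(H)X_{\fq_h}$ and $[H,X_{\fq_h}]=\beta(H)X_\fg$, and the last clause is handled by switching to the $\tau^a$-eigenspace decomposition, which is what the paper means by ``replacing $\tau$ by $\tau^a$''. Note, to your credit, that for the final inclusion you correctly use the projection onto $\fg_h^{-\tau^a}$ rather than literally $\pr_{\fq_h}$ as printed in the statement --- these differ, and $\pr_{\fq_h}(\fg_{h\beta})$ is in general not even a subspace of $\fg_h^{-\tau^a}$ (e.g.\ in the group case $\fg_h=\fg_1\times\fg_1$ one has $\pr_{\fq_h}(e,0)=\tfrac12(e,-e)\notin\fg_h^{-\tau^a}$); the paper's own proof makes the same intended substitution, and the nonvanishing of that projection comes from the $\tau^a$-analogue of the isomorphism, using $\beta|_\fa\neq 0$.
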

\begin{proof} Let $X=X_g+X_q\in \fg_{h\alpha}$ with $X_g=\pr_\fg (\fg_{h\beta})$ and
$X_q=\pr_{\fq_h}(\fg_{h\beta})$. Then $\ad H(X)=X=[H,X_g]+[H,X_q]$. As $[H,X_g]\in \fq$ and $[H,X_q]\in\fg$
it follows that $[H,X_g]=X_q$ and $[H,X_q]=X_g$. The last part follows by replacing $\tau$ by $\tau^a$ which
interchanges the role of $\fa$ and $\fa_h^q$.
\end{proof}

\begin{lemma}\label{le:OmegaEqual} We have the following.
\begin{itemize}
\item[(a)] Assume  that $r=r_h$ then we have:
\begin{itemize}
\item[(a-i)] If $\beta_j\in \Sigma (\fg,\fa)$ for all $j=1,\ldots ,r$ then $\Omega=\Omega_h$.
\item[(a-ii)] If $\beta_j\in\widetilde{\Sigma}(\fg^{-\tau^a},\fa_{hq})$ then $\Omega_M = i\Omega_{hq}$.
\end{itemize}
\item[(b)] If $r=r_h/2$ then we have:
\begin{itemize}
\item[(b-i)] If $\gamma_{2j}|_{\fa}=\beta_{2j}|_{\fa} \in \Sigma (\fg,\ga)$, $j=1,\ldots ,r$ then
$\Omega=\Omega_h\cap \fa$.
\item[(b-ii)] If $\gamma_{2j-1}|_{\fa_h^q}=\beta_{2j}|_{\fa_h^q}\in \widetilde{\Sigma}(\fg_h^{-\tau^a},\fa_h^q)$ then
$\Omega_M = i \Omega_{hq}$.
\end{itemize}
\end{itemize}
\end{lemma}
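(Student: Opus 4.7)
The plan is to establish all four parts through a single two-step template. The unifying observation is Moore's classification (Theorem \ref{th-Moore}): every restricted root in $\Sigma_h$ is expressible as $\pm\beta_i$, $\pm\tfrac{1}{2}(\beta_j\pm\beta_k)$, or in Case II also $\pm\tfrac{1}{2}\beta_i$. Consequently, as soon as one controls the coordinates $t_j=\beta_j(X)$ up to $\pi/2$, the triangle inequality forces $|\alpha(X)|<\pi/2$ for every $\alpha\in\Sigma_h$, so that $X\in\Omega_h$.

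First I would establish the ``free'' inclusion. In the equal-rank case $r=r_h$ one has $\fg\subseteq\fg_h$, hence $\Sigma(\fg,\fa)\subseteq\Sigma_h$ and similarly $\widetilde{\Sigma}(\fg_h^{-\tau^a},\fa_{hq})\subseteq\Sigma(\fg_h,\fa_{hq})$; more constraints in the smaller root system give $\Omega_h\subseteq\Omega$ and (on $\fa_{hq}$) $\Omega_h\subseteq\Omega_{hq}$, as already noted in the discussion preceding the lemma. In the case $r=r_h/2$, Lemma \ref{prRootSp} produces the analogous inclusions $\Omega_h\cap\fa\subseteq\Omega$ and $\Omega_h\cap\fa_h^q\subseteq\Omega_{hq}$ by showing that any $\beta\in\Sigma_h$ with nontrivial restriction to $\fa$ (resp.\ $\fa_h^q$) yields, via $\pr_\fg$ (resp.\ $\pr_{\fq_h}$), a root in $\Sigma(\fg,\fa)$ (resp.\ in $\widetilde{\Sigma}(\fg_h^{-\tau^a},\fa_h^q)$).

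Next I would use the hypothesis to close the reverse inclusion. Take $X$ in the smaller $\Omega$-type set and expand it in the basis dual to the relevant strongly orthogonal roots: $X=\sum t_j X_j$ in part (a) or $X=\sum t_j X_j'$ (respectively $\sum t_j X_j^-$) in part (b). The hypothesis is precisely that each $\beta_j$, suitably restricted, belongs to the root system used to define the $\Omega$-set, so its defining bound yields $|t_j|=|\beta_j(X)|<\pi/2$ immediately. The opening paragraph then delivers $|\alpha(X)|<\pi/2$ for every $\alpha\in\Sigma_h$, placing $X$ in $\Omega_h$, or in its restriction to $\fa$ or $\fa_h^q$ as required.

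The main obstacle is clerical rather than conceptual: in the case $r=r_h/2$, the asymmetry $\gamma_{2j-1}|_\fa=\gamma_{2j}|_\fa$ versus $\gamma_{2j-1}|_{\fa_h^q}=-\gamma_{2j}|_{\fa_h^q}$ requires careful tracking of which strongly orthogonal root supplies which constraint, and hence which half of the hypothesis in (b-i) or (b-ii) is actually used. For parts (a-ii) and (b-ii), one should begin by noting that $\Omega_M=i\Omega_{hq}$ is built into the definition of $\Omega_M$ once one identifies $\ft$ with $i\fa_{hq}$ (respectively $i\fa_h^q$) via the relation $\widetilde{\Sigma}(\fs_c^\C,\ft^\C)=\Sigma(\fg_h^{-\tau^a},\fa_1)$ from the remark preceding the lemma; the substantive content of those parts is then precisely the same coordinate argument after this identification.
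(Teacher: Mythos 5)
Your proposal is essentially correct and follows the paper's own route: establish the free inclusion (e.g.\ $\Omega_h\cap\fa\subseteq\Omega$, $\Omega_h\cap\fa_h^q\subseteq\Omega_{hq}$) from the fact that every root in the smaller system is a restriction from $\Sigma(\fg_h,\fa_h)$, and close the reverse inclusion by using the hypothesis to bound the coordinates $|\beta_j(X)|<\pi/2$, after which Moore's Theorem (as packaged in Theorem~\ref{the:crown}(a)) gives $X\in\Omega_h$. One small slip in your bookkeeping for case~(b): the asymmetry should be phrased in terms of the $\beta$'s, namely $\beta_{2j-1}|_{\fa}=\beta_{2j}|_{\fa}$ while $\beta_{2j-1}|_{\fa_h^q}=-\beta_{2j}|_{\fa_h^q}$; the $\gamma$'s each vanish on one of the two summands (e.g.\ $\gamma_{2j-1}|_{\fa_h^q}=0$ and $\gamma_{2j}|_{\fa}=0$), so the relation $\gamma_{2j-1}|_{\fa_h^q}=-\gamma_{2j}|_{\fa_h^q}$ as you wrote it does not hold.
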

\begin{proof} This follows directly from Moore's Theorem. 
For example consider (b-ii). We
only have to show that $\Omega_h\cap \fa_h^q\subseteq \Omega_{hq}$. Let $X=\sum_{j=1}^{r_h}
t_j X_j\in \Omega_h\cap \fa_h^q$. Then $|t_j|<\pi/2$ for all $j$. Furthermore
$X = -\tau X$. Hence $X=\sum_{j=1}^r t_{2j-1} (X_{2j-1}-X_{2j})$ and hence $|\beta_{2j-1}(X)|
<\pi/2$. The claim now follows from Moore's
Theorem as all the roots in $\widetilde{\Sigma}(\fg_h^{-\tau^a},\fa_h^q)$ are restrictions of
roots in $\Sigma (\fg_h,\fa_h)$
\end{proof}

Finally we come to the relationship of various crowns.

 \begin{theorem}\label{th:Omega}
 Let the notation be as above. Then the following holds:
 \begin{itemize}
 \item[(a)] If $r=r_h$ then $\Omega_h =\Omega_{hq}$ and
 $\Xi_M =(\Xi_h^\eta)_o$.
 \item[(b)] If  $r=r_h/2$. Then $\Omega=\Omega_h\cap \fa$ and
 $\Omega_M =\Omega_h\cap \fa_h^q$, Furthermore $\Xi_M =(\Xi_h^\eta)_o$ and
  $\Xi =(\Xi_h^\tau)_o$.
  \end{itemize}
 \end{theorem}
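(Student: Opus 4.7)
The plan is to assemble the theorem from the preceding three lemmas: Lemma~\ref{prRootSp} about projections of root spaces, Lemma~\ref{le:OmegaEqual} giving conditions for the $\Omega$-equalities, and Lemma~\ref{le:CrEq} bootstrapping $\Omega$-equalities to crown-equalities. In both cases, the real work is to verify the hypotheses of Lemma~\ref{le:OmegaEqual} from the structure theory of~\S\ref{section2}, after which the corresponding part of Lemma~\ref{le:CrEq} finishes the job.

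For part~(a), when $r=r_h$, Lemma~\ref{cor:2.11} says that $\fa=\fa_h=\bigoplus \R X_j$ and $\fa_{hq}=\bigoplus \R Y_j$ are both maximal abelian in $\fp_h$; they play symmetric roles because the Cayley element $\bC$ and the substitution $X_j \leftrightarrow Y_j$ intertwine the $\pm\tau^a$-eigenspaces of $\fp_h$. First I would note that in the basis dual to $\{X_j\}$, respectively $\{Y_j\}$, Moore's Theorem gives the restricted roots the same shape in Cases~I and~II. This identifies $\Omega_h\subset \fa_h$ with $\Omega_{hq}\subset \fa_{hq}$ under $X_j\mapsto Y_j$, establishing the first equality of (a). To apply Lemma~\ref{le:OmegaEqual}(a-ii) I must also check that $\beta_j\in\widetilde{\Sigma}(\fg_h^{-\tau^a},\fa_{hq})$; this is where one verifies that the $-\tau^a$-eigenspaces of the root spaces $\fg_{h,\beta_j}$ are non-zero, again through the symmetry induced by $\bC$. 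This yields $\Omega_M=i\Omega_{hq}$, and then Lemma~\ref{le:CrEq}(b) gives $\Xi_M=(\Xi_h^\eta)_o$.

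For part~(b), when $r=r_h/2$, I would use Lemma~\ref{prRootSp} as the engine. The hypothesis of Lemma~\ref{le:OmegaEqual}(b-i) requires that the restriction $\gamma_{2j-1}|_{\fa}=\beta_{2j-1}|_{\fa}$ (which is non-zero by construction) belongs to $\Sigma(\fg,\fa)$; Lemma~\ref{prRootSp} applied to $\beta=\beta_{2j-1}$ furnishes a non-zero vector in $\pr_{\fg}(\fg_{h,\beta_{2j-1}})\subseteq \fg_{\beta_{2j-1}|_{\fa}}$, which is precisely this. Dually, the hypothesis of Lemma~\ref{le:OmegaEqual}(b-ii) requires $\gamma_{2j}|_{\fa_h^q}=\beta_{2j-1}|_{\fa_h^q}\in\widetilde{\Sigma}(\fg_h^{-\tau^a},\fa_h^q)$, and this follows from the same lemma applied with the roles of $\fa$ and $\fa_h^q$ interchanged (which is legitimate since $\tau$ and $\tau^a=\theta_h\circ\tau$ are conjugate by $\Ad(\exp\frac{\pi}{2}Z_h)$, as noted before Theorem~\ref{the:crown}). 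These verifications give $\Omega=\Omega_h\cap\fa$ and $\Omega_M=i(\Omega_h\cap \fa_h^q)$, and then Lemma~\ref{le:CrEq}(c),(d) deliver $\Xi=(\Xi_h^\tau)_o$ and $\Xi_M=(\Xi_h^\eta)_o$.

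The step I expect to be the main obstacle is part~(a): identifying $\Omega_h$ with $\Omega_{hq}$ cleanly, because the two polytopes sit in \emph{different} maximal abelian subspaces of~$\fp_h$ and the identification requires tracking the Cayley transform carefully across Cases~I and~II of Moore's Theorem, together with showing that the $-\tau^a$-piece of each root space is genuinely non-zero (equivalently, that $\beta_j$ survives as a root of $\fa_{hq}$ in $\fg_h^{-\tau^a}$). Once that bookkeeping is in place the rest is a direct invocation of the already-established lemmas.
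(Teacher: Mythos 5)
Your plan matches the paper's own proof: Theorem~\ref{th:Omega} is assembled by verifying the root-theoretic hypotheses of Lemma~\ref{le:OmegaEqual} and then invoking Lemma~\ref{le:CrEq}, with Lemma~\ref{prRootSp} doing the verification in case~(b) exactly as you describe. For case~(a) the paper calls the required step ``$\su(1,1)$-reduction for $\tau^a$'': one checks inside each $\su(1,1)_j$ that the $(-\tau^a)$-eigenspace of the root space of $\beta_j$ restricted to $\fa_{hq}$ is nonzero. That is the same verification you identify, though your appeal to ``the symmetry induced by $\bC$'' is slightly off-target --- the Cayley element $\bc$ lies in $\GhC$, not $\Gh$, so $\bC$ does not preserve $\fg_h$; the element that implements the $\fa\leftrightarrow\fa_{hq}$ and $\tau\leftrightarrow\tau^a$ symmetry within $\fg_h$ is $\Ad(\exp\tfrac{\pi}{2}Z_h)\in\Ad(K_h)$, as noted just before Theorem~\ref{the:crown}. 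With that substitution, and noting that the intermediate statement you want from the (a-ii) step is the equality $\Omega_h=\Omega_{hq}$ (needed to invoke Lemma~\ref{le:CrEq}(b)) rather than the tautological $\Omega_M=i\Omega_{hq}$, your argument is the paper's.
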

 \begin{proof} Lemma  \ref{le:CrEq} and Lemma \ref{le:OmegaEqual} imply that we  have to show that $\beta_j|_{\fa}\in \Sigma (\fg,\fa)$ respectively
 $\beta_j|_{\fa_h^q} \in\widetilde{\Sigma}(\fg_h^{-\tau^a},\fa_h^q)$, for
 $j=1,\ldots ,r_h$. For (a) we use 
 $\su(1,1)$-reduction for $\tau^a$ to show that $\beta_j\in \widetilde{\Sigma}(\fg_h^{-\tau^a},\fa_{hq})$.
 For (b) this follows from Lemma \ref{prRootSp} as each $\beta_j$ has a non-zero restriction to
 $\fa$ and $\fa_h^q$. 
 \end{proof}

\begin{Basic Example}  $\SU (1,1)$ - cont.\hfill

Recall that $\fg_h = \su( 1,1) = \fk_h \oplus \fp_h = \fg \oplus \fq_h$, where 
\[\fk_h=\R \begin{pmatrix} i & 0 \\ 0 & -i\end{pmatrix}, \quad \fp_h =\R\begin{pmatrix} 0 & 1 \\ 1 & 0\end{pmatrix} 
\oplus \R\begin{pmatrix}0 & i\\-i &0\end{pmatrix} = \R\cdot X\oplus \R\cdot Y,\]
while $\fg = \R\cdot X$ and $\fq_h = \fk_h \oplus \R\cdot Y $. As before, $\SU (1,1)$ 
has two natural choices of Iwasawa: $A = \exp \R X$ or $A_{hq} = \exp \R Y.$

From \cite {KS} we know that either choice gives, with the obvious notation,
\[ G_h \exp \Omega  \,\K_h \subset \N \mathbb {A}\K_h \quad \text{and}\]
\[G_h \exp i\Omega_{hq} \,\K_h \subset \N_{hq} \mathbb {A}_{hq}\K_h .\]

Also from before we have $T (\Omega ) = \exp i\Omega = \exp i\Omega_{hq}.$ Since $\fa$ and
$\fa_{hq}$ are conjugate via $K_{h}$ we have 
 \[ G_h \exp i\Omega \,\K_h = G_h \exp i\Omega_{hq} \,\K_h = G_h T (\Omega) \,\K_h.\]
 
Taking fixed points of the conjugate linear $\eta$ gives \[G T (\Omega ) L_c \subset
(G_h T(\Omega) \,\K_h)^\eta\subset (\N\mathbb{A} \K_h)^{\eta} \subset G_c  ,\]
where 
\[N = \exp \R\begin{pmatrix} i & -i \\ i & -i\end{pmatrix} = \left\{\left.\begin{pmatrix} 1+iv & -iv \\ iv & 1-iv\end{pmatrix}
\, \right|\, v\in \R\right\}.\]
Now $G T (\Omega) (L_c)_0$ is connected and contains the identity. 

Take $nak \in \N\mathbb{A}\K_h$. Then 
\[nak = \begin{pmatrix} 1+iv & -iv \\ iv & 1-iv\end{pmatrix} \begin{pmatrix} \cosh (z) & \sinh (z) \\ \sinh (z) & \cosh (z)\end{pmatrix}\begin{pmatrix} w & 0 \\ 0 & w^{-1}\end{pmatrix}, \quad v,z \in \C, w\in \C^*.\]

Multiplication gives
\[nak = \begin{pmatrix} w\cosh(z) +w iv e^{-z} & w^{-1}\sinh(z)-w^{-1}iv e^{-z} \\ w iv e^{-z} + w \sinh(z)
 & w^{-1}\cosh(z) - w^{-1} iv e^{-z}\end{pmatrix} =\begin{pmatrix} a & b \\ c & d \end{pmatrix}.\]

On the other hand, if $gtl \in GT (\Omega) (L_c)_0$ then
\[ gtl =  \begin{pmatrix}\cosh (t) & \sinh (t) \\ \sinh (t) & \cosh (t)\end{pmatrix} \begin{pmatrix}\cos (\theta) &
\sin (\theta) \\ -\sin (\theta) & \cos (\theta)\end{pmatrix}\begin{pmatrix} e^r & 0 \\ 0 & e^{-r}\end{pmatrix}.\]
 
 Since in this example $G=A$ and $L_c\subset \K_h$ it suffices to express $t$ in terms of $nak$. An elementary, though tedious, computation gives the following solutions provided $0\le \vert\theta\vert<\frac {\pi}{4}$:

\begin{align*}
 e^{-2u} &= \cos(2\theta), & z=u\in\R\\
e^x &= \frac{\cos(\theta) + \sin(\theta)}{\cos(2\theta)^\frac{1}{2}}, &w = e^x\in\R \\
iv=s &= -\frac{e^u\sin(2\theta)}{2\cos(2\theta)^\frac{1}{2}}, & s\in\R.
\end{align*}

With these substitutions it is straightforward to verify that $t = nak$ with $n\in \N\cap 
G_c, a\in A, k\in (L_c)_0$.  Also notice that $[Y, \begin{pmatrix} i & -i \\ i & -i\end{pmatrix}] = 2 \begin{pmatrix} i & 
-i \\ i & -i\end{pmatrix}$, thus $\vert\theta\vert<\frac {\pi}{4}$ is the full range to describe 
$T(\Omega )$. Thus  $G T (\Omega ) L_c \subset (\N\cap G_c) A L_c$.
\end{Basic Example}
 
\begin{example}[Cayley Type Spaces] There are examples where $r=r_h$ and
$\Omega=\Omega_h$. The simplest case is the rank one case $(\so (1,n), \so(1,n-1))$ with $n\ge 3$. 
But the following example shows that we have no general statement in this case.
 Assume that $\fg =\fg^\prime \oplus \R H_0$ is
not simple with $\fg^\prime =[\fg,\fg]$ simple. Then $\fa_h = \fa = \fa^\prime
\oplus \R H_0$ with $\fa^\prime =\fa\cap \fg^\prime$. We have by
Moore's Theorem, Theorem \ref{th-Moore}, we have
\[\Sigma (\fg,\fa)=\{\frac{1}{2}(\beta_i-\beta_j)\mid i\not= j\}
\quad
\text{and}\quad \Sigma(\fg_h^{-\tau},\fa )=
\pm\{ \frac{1}{2}(\beta_i+\beta_j)\mid i,j=1,\ldots ,r=r_h\}.\]

Let again $X_1,\ldots ,X_r$ be so that $\alpha_i(X_j)=\delta_{ij}$ and use those as coordinate axes. Then
$\Omega_h=(-\pi/2,\pi/2)^r$.
On the other hand the condition for $\Omega$ is $\frac{1}{2}|x_i-x_j|<\pi/2$. Thus $\Omega_h\subsetneqq
\Omega$.
Interchanging $\tau$ and $\tau^a$ we see that $\Omega_h=\Omega$ which again leads to
$\Xi=\Xi_h^\eta$.
\end{example}
   
 Given $(\pi, E)$ an irreducible Banach representation of $G$ and a $K$-finite vector $v\in E$ 
Theorem 3.1 in \cite{KS} states that the orbit map $g\to \pi(g)v$ has a holomorphic extension to the domain
$\wXi\subset \mathbb{G}$. There is an analogous result here with the domain $\wXi_M$ just constructed and the group $G_c$ in place of $\mathbb{G}$. 
 
 \begin{theorem} Let $(\pi,E)$ be an irreducible Banach representation of $G$, and let $v\in E$ be
 a $K$-finite vector. Then the map $g\to \pi(g)v$ has an analytic extension to $(\wXi_M)_0=
 GT (\Omega_M ) (L_c)_0\subset G_c.$
 \end{theorem}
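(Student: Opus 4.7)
The plan is to parallel the Kr\"otz-Stanton (KS) proof of holomorphic extension of orbit maps to the crown of $\Gh$, but using the conjugate-linear involution $\eta$ to land inside the real form $G_c$. The central tool is the $\eta$-fixed version (c') of the KS Iwasawa extension (c), established earlier in this Section \S7.

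First, I would reduce to scalar matrix coefficients. Since analyticity of an $E$-valued map on a real-analytic manifold is equivalent to weak analyticity, it suffices to analytically continue the functions $\phi_{v,w}(g)=\langle\pi(g)v,w\rangle$ for $w\in E^*$ also $K$-finite. By Casselman's subrepresentation theorem, one may realize $\pi$ as a subrepresentation of a principal series $\mathrm{Ind}_{P_{min}}^{G}(\sigma\otimes e^{\lambda+\rho})$, so these matrix coefficients admit an integral representation
\[
\phi_{v,w}(g)=\int_K e^{-(\lambda+\rho)(H(g^{-1}k))}\,\langle F_v(\mu(g^{-1}k)),F_w(k)\rangle\,dk,
\]
where $(\mu,H)\colon G\to K\times\fa$ are the Iwasawa coordinates of $G=KAN$ and $F_v,F_w$ are the $K$-finite induction data associated to $v,w$.

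Next, I would apply the holomorphic Iwasawa of KS to extend the integrand. Choose the maximal abelian $\fa_h\subset\fp_h$ to contain $\fa_{hq}$ (possible by $K_h$-conjugacy), so that $\Omega_{hq}\subset\Omega_h$. For $g\in G$, $X\in\Omega_{hq}$, $l\in(L_c)_0$ and $k\in K$, the element $(g\exp(iX)l)^{-1}k=l^{-1}\exp(-iX)(g^{-1}k)$ lies in $(L_c)_0\exp(i\Omega_h)\Gh$; since $(L_c)_0\subset\KhC$ (because $\mathfrak{l}_c\subset\mathfrak{k}_h^{\mathbb{C}}$), this set lies in $\KhC\AhC\NhC$ by (c). Hence $H((g\exp(iX)l)^{-1}k)$ depends holomorphically on $X$, real-analytically on $g,k$, and is independent of $l$, while $\mu((g\exp(iX)l)^{-1}k)=l^{-1}\mu(\exp(-iX)g^{-1}k)\in\KhC$. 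Using the lift of $K$-representations to $(L_c)_0$ (and further to $\KhC$) established in \S4-\S5, the $K$-finite vector-valued data $F_v,F_w$ extend to $\KhC$-equivariant maps, so the integrand is well-defined and real-analytic on $GT(\Omega_M)(L_c)_0$. Identifying $T(\Omega_M)=\exp(i\Omega_{hq})$ inside $G_c$ via Theorem \ref{th:Omega} and restricting to $\eta$-fixed points through (c'), convergence of the integral follows from the KS estimates, giving the required extension of $\phi_{v,w}$ and hence of the orbit map.

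The main obstacle is handling the $(L_c)_0$-factor. Since $\pi$ is only a Banach representation of $G$, one cannot directly define $\pi(l)$ for $l\in(L_c)_0$; $K$-finiteness of $v$ is essential to confine the obstruction to a finite-dimensional $K$-module on which the \S4-\S5 lift produces the needed $(L_c)_0$-action. Verifying that the resulting continuation is independent of the non-unique factorization of an element of $(\widetilde\Xi_M)_0$ as $gtl$, and that it is compatible with the split-holomorphic Iwasawa (c') throughout the parameter range, constitutes the delicate technical core of the argument.
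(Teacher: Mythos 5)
Your proposal is in the same overall spirit as the paper's proof --- both rest on the $\eta$-fixed version of the Kr\"otz--Stanton holomorphic Iwasawa decomposition (i.e.\ (c$'$) from the Introduction, made precise via the restriction of the $\NhC\AhC\KhC$ projections to $\wXi_{M0}\subset G_c$) and on reducing to the argument of \cite[Thm.\ 3.1]{KS}. But the two arguments handle the $(L_c)_0$-factor in genuinely different ways, and this is the crux of the whole proof.

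The paper leans on the structural observation it states at the outset: $(L_c)_0$ and $G$ share the maximal compact subgroup $K$. It therefore composes the analytic projection $\ell:\wXi_{M0}\to (L_c)_0$ coming from the $\eta$-fixed Iwasawa with the ordinary Iwasawa projection $\kappa:(L_c)_0\to K$. This yields an analytic map $\wXi_{M0}\to K$ landing in $K$ itself, so the $K$-finite data never need to be pushed beyond $K$, and one is literally in the setting of \cite[Thm.\ 3.1]{KS}. No lift of representations is required. You instead retain the $\KhC$-component and invoke the \S4--\S5 lift of $K$-representations to $(L_c)_0$ to make sense of $F_v(\mu(\,\cdot\,))$. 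That lift exists, but it is constructed for extending vector bundles and is not canonical (one chooses a lift in which $\sigma_\mu$ sits with multiplicity one); you would then need to check that the resulting continuation is independent of that choice and agrees with the original orbit map along $G$ --- compatibility issues that the paper's projection-to-$K$ route avoids entirely. Your step ``$F_v,F_w$ extend to $\KhC$-equivariant maps'' also conflates the $M$-equivariance of the induction data with something stronger, and this should be made precise.

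Two further gaps: (i) you do not address the dichotomy $r=r_h$ versus $r=r_h/2$, which the paper handles separately because $\mathbb{A}_h^\eta$ has a different shape in the two cases ($\fa$ versus $A\exp i\fa_q^h$), affecting exactly the $A$-projection you want to feed into the KS estimates; (ii) the integral-formula/Casselman reduction you lay out is not what the paper spells out (it simply imports the KS proof once the maps to $A$ and $K$ are in hand), so while it may reproduce the KS argument internally, it is extra scaffolding that must be checked against the $G_c$-geometry rather than the $\GC$-geometry of \cite{KS}.
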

\begin{proof}   The key to the result is that $(L_c)_0$ and $G$ have the same maximal compact subgroup $K$. First we consider the case $r = r_h$. Then $\fa_h$ and $\fa_{hq}$ are $K_h$ conjugate, so  $ G_h \exp (i\Omega_h) \,\K_h = G_h \exp i\Omega_{hq} \,\K_h = G_h T (\Omega ) \,\K_h.$ From \cite{KS}, $G_h T (\Omega_h ) \,\K_h\subset
 \N_h \mathbb {A}_h\K_h $ is open and the projection maps to  $\mathbb {A}_h$ and $\K_h $ are holomorphic.  Now $\wXi_{M0} =GT (\Omega_M ) L_{c0}\subset S_0\subset [(G_h T(\Omega_h \,\K_h))^\eta]_0 \subset G_c.$
The restriction of the projection maps to  $\mathbb {A}_h$ and $\K_h $ gives analytic maps to $A = (\mathbb {A}_h)^\eta$ and $L_{c0} = (\K_h)^\eta$ but as $\wXi_M$ is connected, to $L_{c0}$. 
 Since $r = r_h$, $\fa\cong \fa_h$ is also an Iwasawa for $G$,  Denote the map to $L_{c0}$ by  $\ell$. 
 Since both $L_{c0}$ and $G$ have the same maximal compact subgroup, $K$,  composition of $\ell$ with the usual $\kappa $ projection of  $L_{c0}$ to $K$ gives an analytic map from $ \wXi_{M0} =GT(\Omega_M) L_{c0}
 \subset [(G_h T (\Omega)\,\K_h)^\eta]_0$ to $K$. With these analytic maps from $\wXi_{M0} $
 to $A$ and $K$ we are now in the position of the proof of Theorem 3.1 in \cite{KS} and can continue it verbatim to obtain the result.

If $r\ne r_h$ then as we have seen $r = \frac{r_h}{2}$. As in Lemma 2.12 $\fa_h = \fa \oplus \fa_q^h$ as a Lie algebra direct sum, i.e. $\fa$, $\fa_q^h$ are abelian and $[\fa,\fa_q^h] = 0$. Also from the Lemma we have $\eta$, restricted to $\fa_h$,  is one on $\fa$ and $-1$ on $\fa_q^h$. Then the conjugate linear extension $\eta$ is one on $\fa \oplus i\fa_q^h$, i.e. $\mathbb A_h^\eta = A\exp i\fa_q^h$ with $A\subset G$. Thus $\wXi_{M0} =GT (\Omega_M ) L_{c0}\subset [ (G_h T(\Omega_h \,\K_h)^\eta]_0\subset G_c$ Again has the restriction of the holomorphic projection maps taking values in $K$ and $ \exp i\fa_q^h$ with the latter isomorphic to $\exp i\fa$. Thus here to we are in the position of Theorem 3.1 of \cite{KS}.

\end{proof} 
 \begin{remark} In the \textbf{Basic Example} $G\cong \R^*$, the representations of $G$ are just characters, so from the above expression the continuation of the characters to $G T^+ L_c $ as just translation in the variable by  $-\frac{1}{2}$ log cos($2\theta$). 

\end{remark}

\begin{example}[The case of  $\SU(m,1)$]
We will show that the computations for \SU(m,1) reduce to those of the Basic Example. Here $\fg_h = \su( m,1) = \fk_h \oplus \fp_h = \fg \oplus \fq_h$, where 
\[\fk_h= \begin{pmatrix} A & 0 \\ 0 & -tr (A)\end{pmatrix}, ({\rm with} \, A= -A^*), \quad \fp_h =\begin{pmatrix} 0 & Z \\ Z^* & 0\end{pmatrix} ({\rm with}\,  Z\in \C^m)\] and $\fg = \so(m,1)$. Using obvious block matrices let 
$$X =\begin{pmatrix}0 & 0 & 0\\ 0 & 0 & 1\\0 & 1 & 0 \end{pmatrix}, \, Y = \begin{pmatrix} 0 & 0 & 0\\ 0 & 0 & i\\0 & -i & 0 \end{pmatrix}.$$
As before, $\SU (m,1)$ has two natural choices of Iwasawa: 
\[ A = \exp \R X  (X \in \fg ) \text{ or } A_{hq} = \exp \R Y (Y\in \fq_h).\]

We will do the computations for \SU(3,1) for then the procedure for \SU(m,1) will be clear. Either choice of Iwasawa gives \[ \qquad G_h \exp i\Omega \,\K_h \subset \N \mathbb {A}\K_h \quad \text{and}\]
\[G_h \exp i\Omega_{hq} \,\K_h \subset \N_{hq} \mathbb {A}_{hq}\K_h ,\]
\[ \text { moreover} G_h \exp i\Omega \,\K_h = G_h \exp i\Omega_{hq} \,\K_h = G_h T (\Omega ) \,\K_h\]
where $T (\Omega )= \exp i\Omega = \exp i\Omega_{hq}$.   Taking fixed points of
the conjugate linear $\eta$ gives
\[G T (\Omega ) L_c \subset (G_h T (\Omega ) \,\K_h)^\eta\subset (\N\mathbb{A} \K_h)^{\eta} \subset G_c  ,\]
where 
\begin{align*}
N &= \exp \left\{\begin{pmatrix} 0 & 0 & -Z_1 & Z_1 \\ 0 & 0 & -Z_2 & Z_2 \\ 
\overline{Z_1} & \overline{Z_2} & iv & -iv\\ \overline{Z_1} & \overline{Z_2} & iv & -iv \end{pmatrix}\right\} \\
&= \left\{
\begin{pmatrix} 1 & 0 & -Z_1 & Z_1 \\ 0 & 1 & -Z_2 & Z_2 \\ 
\overline{Z_1} & \overline{Z_2}  & 1 + iv -\frac{1}{2} (|Z_1|^2 + |Z_2|^2) & -iv + \frac{1}{2} (|Z_1|^2 + |Z_2|^2)\\ \overline{Z_1} 
& \overline{Z_2} & iv -\frac{1}{2}(|Z_1|^2 +|Z_2|^2) & 1 -iv +\frac{1}{2}(|Z_!|^2 + |Z_2|^2) \end{pmatrix}\right\},
\end{align*}
($Z_i \in \C, v\in \R)$ while $L_c = \begin{pmatrix} GL(3,\R) & 0 \\ 0&1/ \det\end{pmatrix}$.  Again in $G T^+ L_c $ it suffices to consider only the $t$ term. Now in $(G_h T^+ \,\K_h)^\eta $ the right action by $L_c$ has the effect of multiplying the last column by $det^{-1}$,  but
\[t = \begin{pmatrix}1 & 0 & 0 & 0\\ 0 & 1 & 0 & 0\\ 0 & 0 &\cos (\theta) & -\sin (\theta)\\ 0 & 0 & \sin (\theta)
  & \cos (\theta)\end{pmatrix}.\]
 Consequently we must have $Z_1 = 0 = Z_2$, reducing  the computations to the case \SU(1,1) thus obtaining essentially the same formulae for $t = nak$ as before. In particular, $G T (\Omega) L_c \subset (\N\cap G_c) A L_c \subset (\N\mathbb{A} \K_h)^{\eta}.$
 \end{example}
 
\appendix 
\section{The Classification}\label{se-Classification}
In the following tables we set $\gl_+(n,\C)= \sl (n,\C)\oplus \R\id$ and $\ft =i \R=$ the Lie algebra of the torus $\mathbb{T}=\{z\in\C\mid |z|=1\}$.

\begin{table}[!h]
\center
\begin{tabular}{|c|c|c|c|c|}
\hline
$\fg_c$ & $\fg_h$&$\fg$ \\
\hline
$\sl (p+q,\C)$ & $\su (p,q)\times \su (p,q)$& $\su (p,q)$\\
$\so (2n,\C)$ & $\so^*(2n)\times \so^* (2n)$& $\so^*(2n)$\\
$\so(n+2,\C)$& $\so (2,n)\times \so (2,n)$&$\so (2,n)$\\
$\sp (n,\C)$& $\sp (n,\R)\times \sp (n,\R)$ & $\sp (n,\R)$\\
$\fe_6$& $\fe_{6(-14)}\times \fe_{6(-14)}$& $\fe_{6(-14)}$\\
$\fe_7$& $\fe_{7(-25)}\times \fe_{7(-25)}$& $\fe_{7(-25)}$\\
\hline
\end{tabular} 
\medskip

\caption{$\fg$ with complex structure (group case)}
\end{table}
\medskip

In Table 4 the items listed below the line are those where
$G_h/K_h$ is a tube type domain and $\fg_c \cong\fg_h$. That happens if and only if $\fg\simeq \fl_c$ if and only if $\fg$ has a one-dimensional center. We denote the compact real form of $E_6$ by $\fe_6$. We also note that $\sl (n,\R)\times \R=\gl (n,\R)$ but we
write it using $\sl (n,\R)\times \R$ so that it fits better into the general picture. Same comments hold for $\fu (n)$ and $\su (n)\times \ft$.

\begin{table}[!h]
\center
\scriptsize{
\begin{tabular}{|c|c|c|c|c|}
\hline
$\fg_c$ & $\fg_h$&$\fg$ &$ \fl_c $ & $\fk_h$\\
\hline
$\sl (p+q,\R)$& $\su (p,q)$& $\so (p,q)$&$\fs (\gl (p,\R)\times \gl (q,\R) )$ &$\fs (\fu (p)\times \fu (q))$\\
$\su^*(2(p+q))$&$\su (2p,2q)$& $\sp (p,q)$& $\su^* (2p)\times \su^* (2q)\times \R$ &$\fs (\fu (2p)\times \fu (2q))$\\
$\so (n,n)$& $\so^*(2n)$& $\so (n,\C)$& $\sl (n,\R)\times\R$ & $ \su (n)\times \ft$\\
$\so (1,q+1) (q\ge 3) $ & $\so (2,q)$ & $\so (1,q)$ &  $\so (q)\times \R$ & $ \fs(\so (2)\times \so (q))$\\
$\so (p+1,q+1) (q\ge p\ge 2)$& $\so (2,p+q)$&$\so (1,p)\times \so (1,q)$& $\so (p,q)\times \R$ & $\so (2)\times \so (p+q)$\\
$\sp (n,n)$& $\sp (2n,\R)$& $\sp (n,\C)$& $\su^* (2n)\times \R$ & $\fu (2n)=\su (2n)\times \ft$\\
$\fe_{6(6)}$&$\fe_{6(-14)}$&$\sp (2,2)$& $\so (5,5)\times \R$ & $\so (10)\times \ft$\\
$\fe_{6(-26)}$&$\fe_{6(-14)}$ & $\ff_{4(-20)}$& $ \so(1,9)\times \R$ &$\so (10)\times \ft$ \\
$\fe_{7(7)}$&$\fe_{7(-25)}$&$\su^*(8)$& $\fe_{6(6)}\times \R$  & $\fe_6\times \ft$ \\
\hline
$\su (n,n)$& $\su (n,n)$& $\sl (n,\C)\times \R$&$ \sl (n,\C)\times \R $ & $\fs (\fu (n)\times \fu (n))$\\
$\so^*(4n)$& $\so^*(4n)$& $\su^*(2n)\times \R$& $\su^*(2n)\times \R$ &$\su (2n)\times \ft$\\
$\so (2,n)$& $\so (2,n)$ & $\so(1,n-1)\times \R$& $\so(1,n-1)\times \R$ & $\so (n)\times \ft$ \\
$\sp (n,\R)$& $\sp (n,\R)$& $\sl (n,\R)\times \R$& $\sl (n,\R)\times \R$ & $\su (n)\times \ft$  \\
$\fe_{7(-25)}$&$\fe_{7(-25)}$&$\fe_{6(-26)}\times\R$&$\fe_{6(-26)}\times\R$ &$\fe_{6}\times \ft$\\
\hline
\end{tabular}
 }
\medskip

\caption{$\fg$ without complex structure}
\end{table}

\begin{table}[!h]
\center
\scriptsize{
\begin{tabular}{|c|c|c|c|c|}
\hline
$\fg_c$ & $\fg_h$&$\fg$ &$ \fl_c $ & $\fk$\\
\hline
$\su (n,n)$& $\su (n,n)$& $\sl (n,\C)\times \R$&$ \sl (n,\C)\times \R $ & $\fu (n)$\\
$\su^*(2(p+q))$&$\su (2p,2q)$& $\sp (p,q)$& $\su^* (2p)\times \su^* (2q)\times \R$ &$ (\sp (p)\times \sp (q))$\\
$\so^*(4n)$& $\so^*(4n)$& $\su^*(2n)\times \R$& $\su^*(2n)\times \R$ &$\sp (n)$\\
$\sp (n,n)$& $\sp (2n,\R)$& $\sp (n,\C)$& $\su^* (2n)\times \R$ & $\sp (n)$\\
$\so (1,q+1)  (q\ge 3) $ & $\so (2,q)$ & $\so (1,q)$ &  $\so (q)\times \R$ & $ \so (q)$\\
$\so (2,n),(n=2k)$& $\so (2,n)$ & $\so(1,n-1)\times \R$& $\so(1,n-1)\times \R$ & $\so (n - 1)$ \\
$\fe_{6(-26)}$&$\fe_{6(-14)}$ & $\ff_{4(-20)}$& $ \so(1,9)\times \R$ &$\so (9)$ \\
$\fe_{7(-25)}$&$\fe_{7(-25)}$&$\fe_{6(-26)}\times\R$&$\fe_{6(-26)}\times\R$ &$\ff_4$\\
\hline
$\sl (p+q,\R)$& $\su (p,q)$& $\so (p,q)$&$\fs (\gl (p,\R)\times \gl (q,\R) )$ &$\so (p)\times \so (q))$\\
$\so (n,n)$& $\so^*(2n)$& $\so (n,\C)$& $\sl(n,\R)\times \R$ & $\so (n)$\\
$\sp (n,\R)$& $\sp (n,\R)$& $\sl (n,\R)\times \R$& $\sl (n,\R)\times \R$ & $\so (n)$  \\
\hline

$\so (2,n),( n=2k +1)$& $\so (2,n)$ & $\so(1,n-1)\times \R$& $\so(1,n-1)\times \R$ & $\so (n - 1)$ \\
$\so (p+1,q+1) (q\ge p\ge 2)$& $\so (2,p+q)$&$\so (1,p)\times \so (1,q)$& $\so (p,q)\times \R$ & $\so (p)\times \so (q)$\\
$\fe_{6(6)}$&$\fe_{6(-14)}$&$\sp (2,2)$& $\so (5,5)\times \R$ & $\sp(2) \times \sp(2)$\\
$\fe_{7(7)}$&$\fe_{7(-25)}$&$\su^*(8)$& $\fe_{6(6)}\times \R$  & $\sp (4)$ \\
\hline
\end{tabular}
}
\medskip

\caption{$\fg$ by type}
\end{table}

In Table 4 and Table 5 we can assume the $q\ge p$ because interchanging the role of $p$ and $q$ leads to
isomorphic cases. The case   $\fg_h = \so(2,q)\supset \fg =\so (1,q)$ corresponds to the case $p=0$,
and the case $p=1$ corresponds to the case $\fg_h =\so (2,n)\supset \fg=\so (1,n-1)\times \R$.
The case $\fg_h=\so (2,2)$ is excluded because $\so (2,2)$ is not simple. 
 
In Table 5 we have reorganized Table 4 into three groups. The first group consists of those $\fg$ for which the $\fl_c$ has 
one conjugacy class of Cartan subalgebra (denoted OCCC). The second group consists of those $\fg$ for which 
$\fl_c$ consists of automorphisms of a vector space while the maximal compact, $\fk$, of $\fg$ corresponds to 
isometries of the space. The third group consists of exceptions that will be treated individually. Of course there 
are ways, say using the octonions, to incorporate some of the third group into the second but 
we prefer this way. Notice that in all groups $\fk$ is the maximal compact for both $\fg$ and $\fl_c$.


\section*{Acknowledgements} The research by {\'O}lafsson was partially supported by NSF grants DMS-0801010
and DMS-1101337; both authors are grateful for support provided by the Max-Planck-Institut f\"ur Mathematik, Bonn


\begin{thebibliography}{99}

\bibitem[AG90]{AG90} D.N.Akhiezer and S. Gindikin,\textit{On Stein extensions of real symmetric spaces}, Math. Ann. \textbf{286}, (1990) p. 1--12.

\bibitem[vdB88]{vdB} E. P. van den Ban, \textit{ The principal series for a reductive symmetric space. I. H-fixed distribution vectors. } Ann. Sci. \'Ecole Norm. Sup. (4) \textbf{21} (1988), no. 3, 359--412. 

 \bibitem[Bo68]{Bo} N. Bourbaki, \textit{ Groupes et alg\`ebres de Lie, Chapitres 4,5 et 6}, \'El\'ements de Math\'ematique Fascicule XXXIV, Hermann, Paris 1968.

\bibitem[Br07]{Br} M. Brion, \textit{ Construction of equivariant vector bundles}. Algebraic groups and homogeneous spaces, p. 83-111, Tata Inst. Fund. Res. Stud. Math., Tata Inst. Fund. Res., Mumbai, 2007. 

\bibitem[GW98]{GoWa} Roe Goodman and Nolan R. Wallach, \textit{ Representations and Invariants of the Classical Groups}, Encyclopedia of Mathematics v.68, Cambridge University Press, United Kingdom 1998.

\bibitem[H10]{Han} G. Han, \textit{Fundamental involutory root systems and a branching Theorem for symmetric pairs}. Comm. in Algebra, {38} (2010), p. 1012--1018.

\bibitem[He78]{He78} S. Helgason, \textit{Differential geometry, Lie groups and symmetric spaces. Academic Press}. New
York London, 1978

\bibitem[H{\'O}96]{HO96} J. Hilgert and G. {\'O}lafsson, Causal Symmetric Spaces.  Perspectives in Mathematics {\bf 18}, Academic Press, 1996.

\bibitem[K85]{K85} S.  Kaneyuki, \textit{On classification of parahermitian symmetric 
spaces}. Tokyo J. Math. {\bf 8} (1985), 473--482.

\bibitem[K87]{K87} S. Kaneyuki,  {\it On orbit structure of compactifications of parahermi-
tian symmetric spaces}. Jpn. J. Math. {\bf 13} (1987), 333--370.

\bibitem[Ka05]{Ka} S. Kato, \textit {Equivariant vector bundles on group completions}, Journal f\"ur die reine und angewandte Mathematik, \textbf{581} (2005), p. 71--116.

\bibitem[Kn96]{Kn} A. W. Knapp, \textit{ Lie groups beyond an Introduction}, Birkha\"user Progress in Mathematics v. 140, 1996.

\bibitem[Kn86]{Kn2} A. W. Knapp, \textit{ Representation theory of semisimple groups}, An overview based on examples. Princeton Mathematical Series, 36. Princeton University Press, Princeton, NJ, 1986. xviii+774 pp.

\bibitem[KW65a]{KW65a} A. Kor\'anyi and  J. A. Wolf,  {\it Realization of Hermitian
symmetric spaces
as generalized half-planes}. Ann. Math. {\bf 81} (1965), 265--288.

\bibitem[KW65b]{KW65b} A. Kor\'anyi and  J. A. Wolf, {\it Generalized Cayley transformations of
bounded symmetric domains}. Amer. J. Math. {\bf 87} (1965), 899--939.

\bibitem[KS04]{KS} B. Kr\"otz and R. J. Stanton, \textit{Holomorphic extension
of representation (I): automorphic functions}, Ann. Math. {\bf 159 (2)}, (2004) p.641--724 .
 
\bibitem[KS05]{KSII} B. Kr\"otz and R. J. Stanton, \textit{Holomorphic extensions of representations:(II) Geometry and harmonic analysis}.  Geometric and Functional Analysis \textbf{15} (2005), p. 190--245.

\bibitem[Ma79]{Ma79} T. Matsuki, \textit{The orbits of affine symmetric spaces under the action of minimal parabolic
subgroups}, J. Math. Soc. Jpn. \textbf{31}, (1979),  331--357
\bibitem[Ma03]{Ma} T. Matsuki, \textit{ Stein Extensions of Riemann Symmetric Spaces and some Generalization}. Journal of Lie Theory \textbf{13} (2003) p. 563--570.

\bibitem[MPR11]{MPR}  P. L. Montagard, B. Pasquier and N. Ressayre, \textit {Two generalizations of the PRV conjecture}. 
Compos. Math. 147 (2011), no. 4, 1321-1336. 

\bibitem[MSIII]{MSIII} H. Moscovici, R. J. Stanton and J. Frahm, \textit{Holomorphic torsion with coefficients and geometric zeta functions for certain Hermitian locally symmetric manifolds }. arXiv:1802.08886.


\bibitem[N\'O00]{NO00} A. Neumann and G. {\'O}lafsson, \textit{ Minimal and Maximal Semigroups Related to Causal Symmetric Spaces}. Semigroup Forum \textbf{ 61} (2000), 57-85.

 \bibitem[{\'O}90]{GO90}  G. {\'O}lafsson, Causal Symmetric Spaces. Habilitation, \textit{Mathematica Gottingensis} \textbf{15}, 1990.

\bibitem[{\'O}91]{GO91} G. {\'O}lafsson, \textit{Symmetric spaces of Hermitian type}.  Differential Geom. Appl. \textbf{1} (1991), 195--233.

\bibitem[Ra74]{Ra}  P. K. Ra\v{s}evski\v{i},\textit{ A theorem on the connectedness of the subgroup of a simply connected Lie group that commutes with one of its automorphisms.} (Russian) Trudy Moskov. Mat. Ob. \textbf{30} (1974), 3--22. 



\bibitem[Sa80]{Sa} I. Satake, \textit{Algebraic Structures of Symmetric Domains},
Iwamani Shoten and Princeton University Press, 1980.

\bibitem[S75]{Sc} W. Schmid, \textit{On the characters of the discrete series: The Hermitian symmetric case}.
Invent. Math. \textbf{30} (1975). 47--144.

\bibitem[V77]{Va} V. S. Varadarajan, \textit{Harmonic Analysis on Real Reductive Groups}, Lecture Note in Math.
\textbf{576}. Springer Verlag 1977.

\bibitem[W73]{Wa} N. R. Wallach, \textit{Harmonic Analysis on Homogeneous Spaces}, Marcel Dekker, 1973.
\bibitem[W88]{Wa1} N. R. Wallach, Real Reductive Groups I. Academic press, 1988.

\bibitem[Wa-I]{WrI} G. Warner, \textit{Harmonic Analysis on Semi-Simple Lie Groups I}, Springer-Verlag Die Grundlehren der mathematischen Wissenschaften Band 188, 1972.

\bibitem[W69]{W69} J. A. Wolf, {\it The action of a real semisimple Lie group on a complex flag manifold, I: Orbit structure and holomorphic arc components}. Bull. Amer. Math. Soc. {\bf 75}  (1969), 1121--1237.

\bibitem[W72]{W72} J. A.
Wolf,  {\it The fine structure of hermitian symmetric spaces}. In: W. Boothby and G. Weiss, Eds., Symmetric Spaces, Marcel Dekker, New York, 1972.
\end{thebibliography}
\end{document}